\def\star@command@name#1{#1@star}
\def\nostar@command@name#1{#1@nostar}
\def\newstarcommand{%
  \newif\if@star@newstarcommand@
  \@ifstar{\@star@newstarcommand@true\@newstarcommand}{\@star@newstarcommand@false\@newstarcommand}%
}
\def\@newstarcommand#1{%
  \count@=\escapechar
  \escapechar=\m@ne
  \edef\@newstarcommand@command@string{\string#1}%
  \escapechar=\count@
  \def\@newstarcommand@command{#1}%
  \expandafter\def\expandafter\@newstarcommand@command@star\expandafter{\csname\star@command@name\@newstarcommand@command@string\endcsname}%
  \expandafter\def\expandafter\@newstarcommand@command@nostar\expandafter{\csname\nostar@command@name\@newstarcommand@command@string\endcsname}%
  \expandafter\expandafter\expandafter\expandafter\expandafter\expandafter\expandafter
  \newcommand\expandafter\expandafter\expandafter\expandafter\expandafter\expandafter\expandafter
  *\expandafter\expandafter\expandafter\expandafter\expandafter\expandafter\expandafter{%
    \expandafter\expandafter\expandafter\@newstarcommand@command\expandafter\expandafter\expandafter
  }\expandafter\expandafter\expandafter{%
    \expandafter\expandafter\expandafter\@ifstar\expandafter\@newstarcommand@command@star\@newstarcommand@command@nostar
  }%
  \@testopt\@@newstarcommand0%
}
\def\@@newstarcommand[#1]{%
  \@ifnextchar[{\@@newstarcommand@opt[{#1}]}{\@@newstarcommand@noopt[{#1}]}%
}
\def\@@newstarcommand@noopt[#1]#2#3{%
  \if@star@newstarcommand@
    \expandafter\newcommand\expandafter*\@newstarcommand@command@star[{#1}]{#2}%
    \expandafter\newcommand\expandafter*\@newstarcommand@command@nostar[{#1}]{#3}%
  \else
    \expandafter\newcommand\@newstarcommand@command@star[{#1}]{#2}%
    \expandafter\newcommand\@newstarcommand@command@nostar[{#1}]{#3}%
  \fi
}
\def\@@newstarcommand@opt[#1][#2]#3#4{%
  \if@star@newstarcommand@
    \expandafter\newcommand\expandafter*\@newstarcommand@command@star[{#1}][{#2}]{#3}%
    \expandafter\newcommand\expandafter*\@newstarcommand@command@nostar[{#1}][{#2}]{#4}%
  \else
    \expandafter\newcommand\@newstarcommand@command@star[{#1}][{#2}]{#3}%
    \expandafter\newcommand\@newstarcommand@command@nostar[{#1}][{#2}]{#4}%
  \fi
}
\theoremstyle{plain}
\newtheorem{theorem}{Theorem}
\newtheorem{corollary}[theorem]{Corollary}
\newtheorem{proposition}[theorem]{Proposition}
\newtheorem{lemma}[theorem]{Lemma}
\theoremstyle{definition}
\newtheorem{definition}[theorem]{Definition}
\theoremstyle{remark}
\newtheorem{remark}[theorem]{Remark}
\newtheorem{remarks}[theorem]{Remarks}
\theoremstyle{plain}
\newcommand*{\theoremref}[1]{\hyperref[#1]{Theorem~\ref*{#1}}}
\newcommand*{\corollaryref}[1]{\hyperref[#1]{Corollary~\ref*{#1}}}
\newcommand*{\propositionref}[1]{\hyperref[#1]{Proposition~\ref*{#1}}}
\newcommand*{\lemmaref}[1]{\hyperref[#1]{Lemma~\ref*{#1}}}
\newcommand*{\definitionref}[1]{\hyperref[#1]{Definition~\ref*{#1}}}
\newcommand*{\exampleref}[1]{\hyperref[#1]{Example~\ref*{#1}}}
\newcommand*{\remarkref}[1]{\hyperref[#1]{Remark~\ref*{#1}}}
\newcommand*{\remarksref}[1]{\hyperref[#1]{Remarks~\ref*{#1}}}
\newcommand*{\sectionref}[1]{\hyperref[#1]{Section~\ref*{#1}}}
\renewcommand{\eqref}[1]{\hyperref[#1]{\textup{\tagform@{\ref*{#1}}}}}
\newcommand*{\N}{\ensuremath{\mathbb{N}}}
\newcommand*{\Z}{\ensuremath{\mathbb{Z}}}
\newcommand*{\R}{\ensuremath{\mathbb{R}}}
\newcommand*{\C}{\ensuremath{\mathbb{C}}}
\newcommand*{\HH}{\ensuremath{\mathbb{H}}}
\newcommand*{\D}{\ensuremath{\mathrm{d}}}
\newcommand*{\E}{\ensuremath{\mathrm{e}}}
\newcommand*{\I}{\ensuremath{\mathrm{i}}}
\newcommand*{\Lie}{\ensuremath{\mathcal{L}}}
\newcommand*{\CC}{\ensuremath{\mathrm{C}}}
\newcommand*{\GL}{\ensuremath{\mathrm{GL}}}
\newcommand*{\SO}{\ensuremath{\mathrm{SO}}}
\newcommand*{\SU}{\ensuremath{\mathrm{SU}}}
\newcommand*{\Spin}{\ensuremath{\mathrm{Spin}}}
\newcommand*{\vol}{\ensuremath{\mathrm{vol}}}
\newcommand*{\sections}{\ensuremath{\mathrm{\Gamma}}}
\newcommand*{\altforms}{\ensuremath{\mathrm{\Lambda}}}
\newcommand*{\forms}{\ensuremath{\mathrm{\Omega}}}
\newcommand*{\interior}{\ensuremath{\mathbin\lrcorner}}
\let\eps=\varepsilon
\newcommand*{\wolog}{w.l.o.g.}
\let\Re=\relax
\let\Im=\relax
\DeclareMathOperator{\Re}{Re}
\DeclareMathOperator{\Im}{Im}
\DeclareMathOperator{\id}{id}
\DeclareMathOperator{\supp}{supp}
\DeclareMathOperator{\ind}{ind}
\DeclareMathOperator{\Id}{Id}
\DeclareMathOperator{\coker}{coker}
\newstarcommand*{\defeq}{\ensuremath{\phantom{\mathrel{\mathop:}}=}}{\ensuremath{\mathrel{\mathop:}=}}
\newstarcommand*{\eqdef}{\ensuremath{=\phantom{\mathrel{\mathop:}}}}{\ensuremath{=\mathrel{\mathop:}}}
\newcommand*{\dash}[1]{\nobreakdash#1\hspace{0pt}}
\newcommand*{\normal}[2][]{\ensuremath{\nu_{#1}#2}}
\newcommand*{\normalhat}[2][]{\ensuremath{\hat{\nu}_{#1}#2}}
\newcommand*{\normaltilde}[2][]{\ensuremath{\tilde{\nu}_{#1}#2}}
\newcommand*{\scriptwidetilde}[1]{\raisebox{-0.4ex}{\ensuremath{\scriptstyle\mathrlap{\,\widetilde{\phantom{#1}}}}}#1}
\newstarcommand*{\norm}{\@norm{{}\cdot{}}}{\@norm}
\DeclarePairedDelimiter{\@norm}{\lVert}{\rVert}
\newstarcommand*{\abs}{\@abs{{}\cdot{}}}{\@abs}
\DeclarePairedDelimiter{\@abs}{\lvert}{\rvert}
\newstarcommand*{\inner}{\@inner{{}\cdot{},{}\cdot{}}}{\@inner}
\DeclarePairedDelimiter{\@inner}{\langle}{\rangle}
\newstarcommand*{\set}{\emptyset}{\@set}
\DeclarePairedDelimiter{\@set}{\lbrace}{\rbrace}
\newstarcommand*{\paren}{\@paren{{}\dots{}}}{\@paren}
\DeclarePairedDelimiter{\@paren}{\lparen}{\rparen}
\newstarcommand*{\bracket}{\@bracket{{}\dots{}}}{\@bracket}
\DeclarePairedDelimiter{\@bracket}{\lbrack}{\rbrack}
\title{Deformations of Compact Cayley Submanifolds with Boundary}
\author{Matthias Ohst \\[1.5ex]
DPMMS, University of Cambridge, United Kingdom \\
\href{mailto:M.Ohst@dpmms.cam.ac.uk}{\nolinkurl{M.Ohst@dpmms.cam.ac.uk}}}
\date{}
\numberwithin{theorem}{section}
\numberwithin{equation}{section}
\begin{document}

\maketitle

\begin{abstract}
  Let $M$ be an $8$\dash-manifold with a $\Spin(7)$-structure. We first show that closed Cayley submanifolds of~$M$ form a smooth moduli space for a generic $\Spin(7)$-structure. Then we study the deformations of a compact, connected Cayley submanifold~$X$ of~$M$ with non-empty boundary contained in a given submanifold~$W$ of~$M$ such that $X$ and $W$ meet orthogonally. We show that they are rigid for a generic $\Spin(7)$-structure. We further show that they are also rigid for a generic deformation of~$W$.
\end{abstract}

\section{Introduction}
\label{sec:introduction}

Cayley submanifolds are $4$\dash-dimensional submanifolds which may be defined in an $8$\dash-manifold~$M$ equipped with a differential $4$\dash-form~$\Phi$ invariant at each point under the spin representation of $\Spin(7)$. The latter representation identifies $\Spin(7)$ as a subgroup of $\SO(8)$, and a $\Spin(7)$-structure determined by~$\Phi$ induces a Riemannian metric and orientation on~$M$. See \sectionref{sec:preliminaries} for details. Cayley submanifolds of~$\R^8$ were introduced by Harvey and Lawson \cite{HL82} as an instance of calibrated submanifolds, extending the volume-minimising properties of complex submanifolds in Kähler manifolds. Other classes of calibrated submanifolds given in \cite{HL82} are the special Lagrangian submanifolds of~$\C^n$ and the associative and coassociative submanifolds of~$\R^7$.

Calibrated submanifolds often arise in Riemannian manifolds with reduced holonomy. In particular, Cayley submanifolds in an $8$\dash-manifold are calibrated and minimal whenever the respective ``$\Spin(7)$-structure''~$\Phi$ is closed. In that case, the holonomy of the Riemannian metric induced by~$\Phi$ reduces to a subgroup of $\Spin(7)$; in particular, the metric then is Ricci-flat. The first examples of closed Riemannian $8$\dash-manifolds with holonomy $\Spin(7)$ were constructed by Joyce \cite{Joy96}. He also provided examples of closed Cayley submanifolds inside these manifolds \cite{Joy00}.

McLean \cite{McL98} studied the deformations of closed calibrated submanifolds for the calibrations introduced in \cite{HL82}. He showed, among other results, that the deformation problem in each case is elliptic or overdetermined elliptic and described the respective finite-dimensional Zariski tangent spaces. In the case of closed Cayley submanifolds, the deformations may in general be obstructed, and the Zariski tangent space can be given in terms of harmonic spinors of a certain twisted Dirac operator.

Later, other authors extended McLean's results to larger classes of submanifolds, including, in the special Lagrangian, coassociative, and associative cases, compact submanifolds with boundary. See, respectively, Butscher \cite{But03}, Kovalev and Lotay \cite{KL09}, and Gayet and Witt \cite{GW11}. The corresponding moduli spaces and Zariski tangent spaces of deformations were shown to be finite-dimensional if one assumes a suitable constraint on the boundary to lie in an appropriately chosen fixed submanifold.

In this article, we study the deformations of compact Cayley submanifolds with boundary contained in a given submanifold~$W$. We require that the deformations meet the submanifold~$W$ orthogonally, but unlike the previous results for special Lagrangian, coassociative, and associative calibrations, we allow a range of dimensions for~$W$.

Ideally, one would like to get an elliptic first-order boundary problem as for compact associative submanifolds with boundary. But it turns out (as the author checked in his doctoral thesis) that an appropriate first-order boundary problem will not be elliptic. Instead, we will consider a second-order boundary problem. A similar approach was used in \cite{KL09} for the deformation theory of compact coassociative submanifolds with boundary. On the other hand, we prove genericity results similar to the those in \cite{Gay14} for compact associative submanifolds with boundary.

We start with some preliminaries in \sectionref{sec:preliminaries}. Then we present the deformation theory of closed Cayley submanifolds in \sectionref{sec:cayley-closed}. After reviewing McLean's result and its generalisation to $\Spin(7)$-structures with torsion, we prove the following theorem in \sectionref{subsec:varying-spin7}. The proof largely amounts to finding how McLean's deformation map depends on the $\Spin(7)$-structure.

\begin{theorem} \label{thm:main-spin7-closed}
  Let $M$ be an $8$\dash-manifold with a $\Spin(7)$-structure~$\Phi$, and let $X$ be a closed Cayley submanifold of~$M$.
  
  Then for every generic $\Spin(7)$\dash-structure~$\Psi$ that is close to~$\Phi$, the moduli space of all Cayley submanifolds of $(M, \Psi)$ that are close to~$X$ is either empty or a smooth manifold of dimension $\ind D$, where $D$ is an operator of Dirac type and defined in~\eqref{eq:def-D}.
\end{theorem}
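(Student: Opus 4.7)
The plan is to apply a standard Sard--Smale argument to a universal moduli space, following the pattern used by Gayet \cite{Gay14} in the associative case. Let me describe the steps.

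First, I would set up a universal deformation map. Fix suitable Banach completions (H\"older or Sobolev) of the space of small normal vector fields $v \in \sections(\normal{X})$ and the space of $\Spin(7)$\dash-structures $\Psi$ near $\Phi$. Using the tubular neighbourhood of $X$, each small $v$ determines a nearby submanifold $X_v$, and the Cayley condition for $X_v$ with respect to~$\Psi$ is encoded by an equation $F(v, \Psi) = 0$ with $F$ taking values in sections of an auxiliary bundle $E \to X$ (the bundle associated with the ``$\Spin(7)$-complement'' used to define McLean's deformation map). By the earlier part of \sectionref{sec:cayley-closed}, the partial linearisation $D \defeq (\D_v F)_{(0, \Phi)}$ is the Dirac-type operator appearing in~\eqref{eq:def-D}, which is elliptic with finite-dimensional kernel and cokernel.

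Second, and this is the main point, I would prove that the full linearisation $\D F_{(0, \Phi)} : \sections(\normal X) \oplus T_\Phi \mathcal{S} \to \sections(E)$ is surjective, where $\mathcal{S}$ denotes the space of $\Spin(7)$-structures. Since $D$ has closed range with finite-dimensional cokernel, it suffices to show that every $\sigma \in \coker D$ can be hit by a variation $\dot{\Psi} \in T_\Phi \mathcal{S}$. The cokernel element $\sigma$ satisfies the adjoint elliptic equation, so by unique continuation it cannot vanish on any open subset of~$X$. Given this, I would compute explicitly how $F(0, \Psi)$ depends on $\Psi$ (which is already the key computation behind \theoremref{thm:main-spin7-closed}, as the author notes) and then, using a cutoff bump function supported near a point $p \in X$ where $\sigma(p) \ne 0$, construct a perturbation of $\Phi$ whose corresponding variation of $F$ pairs non-trivially with $\sigma$ in $L^2$. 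Running over a finite basis of $\coker D$ then gives surjectivity.

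Third, surjectivity plus the implicit function theorem for Banach manifolds makes $\mathcal{M} \defeq F^{-1}(0)$ a smooth Banach manifold near $(0, \Phi)$. The projection $\pi : \mathcal{M} \to \mathcal{S}$ is a smooth Fredholm map whose linearisation at $(0, \Phi)$ has the same index as $D$, namely $\ind D$. By the Sard--Smale theorem, the regular values of $\pi$ are generic (of second category) in a neighbourhood of $\Phi$. For any regular value $\Psi$ the fibre $\pi^{-1}(\Psi)$ is either empty or a smooth manifold of dimension $\ind D$, and this fibre is precisely the moduli space of Cayley deformations of $X$ with respect to $\Psi$. A final elliptic regularity argument upgrades the Banach-space solutions to smooth submanifolds, independent of the chosen function-space completion.

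The main obstacle is the surjectivity step. The explicit variation formula for how the Cayley defect of a fixed submanifold depends on the $\Spin(7)$-structure needs to be sufficiently rich: one must verify that the algebraic map $T_\Phi \mathcal{S} \to E_p$ obtained by evaluating the infinitesimal variation at a point~$p$ is surjective onto the fibre $E_p$. Combined with unique continuation for cokernel elements and a bump-function localisation, this yields the required $L^2$\dash-pairing, but the pointwise surjectivity has to be checked by unpacking the representation theory of $\Spin(7)$ acting on the tangent space to the space of $\Spin(7)$\dash-structures.
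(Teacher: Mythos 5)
Your outline follows the same Sard--Smale template as the paper: set up a universal map in Banach (H\"older) completions, establish surjectivity of the full linearisation, apply the implicit function theorem to get a universal Banach moduli space, project to the parameter space, and invoke genericity of regular values. The difference lies entirely in how you argue surjectivity of the full linearisation, and here the paper's route is more direct than yours. You propose the ``soft'' argument: since $D$ has finite-dimensional cokernel it suffices to pair nontrivially against cokernel elements, so you invoke unique continuation for solutions of the adjoint equation, locate a point $p$ where $\sigma(p) \neq 0$, check pointwise surjectivity of the algebraic variation map $T_\Phi\mathcal{S} \to E_p$, and cut off with a bump function. This works, but it is more machinery than the problem needs. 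The paper's \lemmaref{lemma:linearisation-F-tilde} gives the \emph{global} variation formula $(\D\tilde{F})_{(0,0)}(0, h(\tau_\Phi, e)) = -\pi_E(e\vert_X)$ for any $e \in \sections(\altforms^2_7 M)$, and since $\pi_E \colon \altforms^2_7 M\vert_X \to E$ is a surjective bundle projection and $e$ can be chosen freely along $X$, the variation in the $\Spin(7)$-structure alone already surjects onto all of $\sections(E)$ --- not merely onto a complement of $\im D$. So no unique continuation, no localisation, and no residual pairing argument are needed. Your approach would still need the very same fibre-wise computation to establish pointwise surjectivity $T_\Phi\mathcal{S} \to E_p$; you simply stop short of observing that once you have that formula in closed form, the global conclusion is immediate. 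One further point you gloss over: for Sard--Smale you need surjectivity along the whole zero set, not only at $(0,\Phi)$; the paper handles this by noting that \lemmaref{lemma:linearisation-F-tilde} is fibre-wise and hence persists on a $\CC^{1,\alpha}$-neighbourhood, and then uses \lemmaref{lemma:residual-c-infty} to upgrade from a residual set in each $\CC^{k,\alpha}$ to a residual set in $\CC^\infty$, which you do not address.
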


Note that $X$ itself need not be in the moduli space since $X$ need not be Cayley with respect to the $\Spin(7)$\dash-structure~$\Psi$. Also, if $\ind D < 0$, then the moduli space is necessarily empty for a generic $\Spin(7)$\dash-structure~$\Psi$.

In \sectionref{sec:cayley-boundary} we investigate the deformation theory of compact Cayley submanifolds with boundary. We first prove the following proposition in \sectionref{subsec:cayley-boundary-deformations}, which is a key technical result needed for the proof of the next two theorems.

\begin{proposition} \label{prop:main-boundary}
  Let $M$ be an $8$\dash-manifold with a $\Spin(7)$-structure, let $X$ be a compact, connected Cayley submanifold of~$M$ with non-empty boundary, and let $W$ be a submanifold of~$M$ with $\partial X \subseteq W$ such that $X$ and $W$ meet orthogonally.
  
  Then the moduli space of all local deformations of~$X$ as a Cayley submanifold of~$M$ with boundary on~$W$ and meeting~$W$ orthogonally can be embedded into the solution space of the boundary problem~\eqref{eq:boundary-second}, which is a second-order elliptic boundary problem with index~$0$.
\end{proposition}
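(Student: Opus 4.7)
The plan is to parameterise nearby submanifolds of~$X$ by sections of its normal bundle via the exponential map of the background metric: to each sufficiently small $v \in \sections(\normal{X})$ one associates a submanifold $X_v \defeq \exp(v)$ close to~$X$, and $v \mapsto X_v$ is a local diffeomorphism onto a neighbourhood of~$X$ in the space of nearby submanifolds. The Cayley condition for $X_v$ is encoded as the vanishing of a nonlinear map $F \colon \sections(\normal{X}) \to \sections(E)$ for an appropriate vector bundle~$E$ over~$X$, and the analysis of \sectionref{sec:cayley-closed} identifies its linearisation at the zero section with the Dirac-type operator~$D$ between rank-$4$ real bundles. The boundary condition $\partial X_v \subseteq W$ becomes the pointwise constraint that $v|_{\partial X}$ lies in $TW \cap \normal{X}|_{\partial X}$, while the orthogonal-meeting requirement translates into a complementary first-order condition on $v$ along~$\partial X$.

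The first difficulty is that the resulting first-order boundary problem for $F(v) = 0$ with these boundary data is not elliptic: the rank of the admissible boundary values does not match what the Lopatinski--Shapiro condition requires for an operator of Dirac type. Following the strategy used in~\cite{KL09} for the coassociative case, I would pass to second order by precomposing with the formal $L^2$\dash-adjoint~$D^*$ (more precisely, with a nonlinear operator whose linearisation is~$D^*$), and augment the boundary data with a complementary condition chosen so that solutions of $D^* F(v) = 0$ with the augmented boundary conditions are exactly the solutions of the original first-order problem; this produces the system~\eqref{eq:boundary-second}, into which the Cayley moduli space with boundary on~$W$ embeds via $v \mapsto X_v$.

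The main step is then to verify that this second-order problem is elliptic of index~$0$. Ellipticity reduces to a constant-coefficient half-space model at each point of~$\partial X$, where the invertibility of the normal part of the principal symbol of~$D$ together with the orthogonal-intersection hypothesis (which induces a clean splitting $\normal{X}|_{\partial X} = (TW \cap \normal{X}|_{\partial X}) \oplus (TW \cap \normal{X}|_{\partial X})^\perp$) allows the boundary data to be arranged naturally into a Dirichlet half along $TW \cap \normal{X}|_{\partial X}$ and a Neumann half along its orthogonal complement. Checking the Lopatinski--Shapiro condition then amounts to an algebraic compatibility between this splitting and the symbol of~$D$ restricted to the conormal direction. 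The vanishing of the index follows from formal self-adjointness of the linearised operator $D^* D$ combined with the symmetric Dirichlet/Neumann structure of the boundary data.

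The principal obstacle I anticipate is the bookkeeping needed to make all of these pieces fit together at once: producing an augmented set of boundary conditions that is simultaneously (i)~equivalent to the original first-order Cayley problem together with the orthogonal-meeting constraint, (ii)~formally self-adjoint at the level of the linearisation so the index-zero computation goes through, and (iii)~compatible with the symbol of~$D$ in the sense required by Lopatinski--Shapiro. The orthogonal intersection hypothesis $X \perp W$ along $\partial X$ is exactly what makes the relevant splittings orthogonal, so I expect it to be used decisively at this step.
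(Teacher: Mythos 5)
Your plan matches the paper's argument in its essential structure: parametrise deformations by normal sections, observe the first-order problem $F(v)=0$ with the natural boundary conditions fails to be elliptic, precompose with~$D^\ast$ to obtain a second-order system, add a complementary boundary condition, and deduce index zero from formal self-adjointness of the linearisation $D^\ast D$. This is exactly the route taken in \sectionref{subsec:cayley-boundary-deformations}.

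There is, however, one genuine gap as written. You parametrise nearby submanifolds using the exponential map \emph{of the background metric} $g$ and then assert that the condition $\partial X_v \subseteq W$ reduces to the pointwise constraint $v\vert_{\partial X}\in TW\cap\normal{X}\vert_{\partial X}$. This implication is false in general: the $g$\dash-exponential map does not carry points of $W$ to points of $W$ unless $W$ is totally geodesic. The paper handles this (following Butscher's and Kovalev--Lotay's earlier constructions) by first deforming $g$ to a metric $\hat g$, agreeing with $g$ along $\partial X$, for which $W$ is totally geodesic (\lemmaref{lemma:metric} and \propositionref{prop:adapted-tubular-neighbourhood}); the deformation map $F$ and boundary operator are then defined via the $\hat g$\dash-exponential map, while the linearisation at the zero section is unchanged because $\tau\vert_X=0$. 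Without this modification, the boundary condition $\pi_K(s\vert_{\partial X})=0$ simply does not encode $\partial X_s\subseteq W$ and the rest of the argument does not get off the ground. A secondary imprecision: you say the augmented boundary data can be chosen so that solutions of the second-order system are \emph{exactly} the solutions of the first-order Cayley problem. The paper only claims (and only needs) one inclusion: a solution of \eqref{eq:boundary-first} is automatically a solution of \eqref{eq:boundary-second} because $F(s)=0$ forces both $D^\ast F(s)=0$ and the boundary restriction of $F(s)$ to vanish; the reverse need not hold. That is precisely why the statement speaks of an \emph{embedding} of the moduli space into the solution space rather than an identification, and why the subsequent genericity theorems are formulated as finiteness rather than transversality of the actual moduli space. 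Finally, the index-zero step requires more than "symmetric Dirichlet/Neumann structure": the boundary operator $B$ contains the zeroth-order term $P$ whose symbol is not self-adjoint on the nose, so the paper computes the $L^2$\dash-orthogonal complement of the linearised operator's image explicitly and invokes that the index of an elliptic boundary problem depends only on symbols to discard the $P-P^\ast$ contribution. Your high-level heuristic points in the right direction, but the detailed Green's-formula bookkeeping in \lemmaref{lemma:boundary-second-index} is where the claim is actually earned.
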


In particular, the Zariski tangent space at~$X$ is finite-dimensional. The usefulness of the boundary problem~\eqref{eq:boundary-second} lies in the fact that the solution space is a smooth manifold under appropriate genericity assumptions. In particular, we show the following two theorems (they have the same hypotheses as in \propositionref{prop:main-boundary}) in \hyperref[subsec:varying-spin7-boundary]{Sections~\ref*{subsec:varying-spin7-boundary}} and \ref{subsec:varying-scaffold}, respectively, where we vary the $\Spin(7)$-structure (\theoremref{thm:main-spin7-boundary}) and the submanifold~$W$ (\theoremref{thm:main-scaffold}).

\begin{theorem} \label{thm:main-spin7-boundary}
  Let $M$ be an $8$\dash-manifold with a $\Spin(7)$-structure~$\Phi$, let $X$ be a compact, connected Cayley submanifold of~$M$ with non-empty boundary, and let $W$ be a submanifold of~$M$ with $\partial X \subseteq W$ such that $X$ and $W$ meet orthogonally.
  
  Then for every generic $\Spin(7)$\dash-structure~$\Psi$ that is close to~$\Phi$, the moduli space of all Cayley submanifolds of $(M, \Psi)$ that are close to~$X$ with boundary on~$W$ and meeting~$W$ orthogonally (with respect to the metric induced by~$\Psi$) is a finite set (possibly empty).
\end{theorem}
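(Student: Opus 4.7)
The plan is to adapt the Sard--Smale argument underlying \theoremref{thm:main-spin7-closed} to the boundary setting, using \propositionref{prop:main-boundary} to frame the Cayley deformation problem as an elliptic boundary problem of index zero. First I would encode the Cayley condition together with the orthogonal-meeting boundary condition as a single parametrised map
\begin{equation*}
  F \colon \mathcal{U} \times \mathcal{V} \to \mathcal{W},
\end{equation*}
where $\mathcal{U}$ is an open neighbourhood of the zero section in a suitable H\"older (or Sobolev) Banach space of normal sections of~$X$ satisfying the boundary conditions of~\eqref{eq:boundary-second}, $\mathcal{V}$ is an open neighbourhood of~$\Phi$ in the Banach manifold of nearby $\Spin(7)$\dash-structures on~$M$, and $\mathcal{W}$ is the Banach target of that boundary problem. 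A pair $(s, \Psi)$ lies in $F^{-1}(0)$ if and only if the graph of~$s$ is a Cayley deformation of~$X$ inside $(M, \Psi)$ meeting~$W$ orthogonally for the metric induced by~$\Psi$. Fixing~$\Psi$ close to~$\Phi$, the partial linearisation $\partial_s F|_{(0,\Psi)}$ is precisely the second-order elliptic boundary problem of \propositionref{prop:main-boundary}, hence Fredholm of index~$0$.

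The crucial step is showing that the total differential $\D F$ at $(0,\Phi)$ is surjective. By Fredholm theory this reduces to showing that any cokernel element $\eta$ of $\partial_s F|_{(0,\Phi)}$ that pairs trivially with $\partial_\Psi F|_{(0,\Phi)}\dot\Psi$ for every variation $\dot\Psi \in T_\Phi \mathcal{V}$ must vanish identically; such an $\eta$ is a solution of the adjoint elliptic boundary problem. Building on the computation from \sectionref{subsec:varying-spin7} of how McLean's deformation map depends on the $\Spin(7)$\dash-structure, the expression $\partial_\Psi F|_{(0,\Phi)}\dot\Psi$ is a first-order differential expression in $\dot\Psi|_X$ whose coefficients are determined by the Cayley data of~$X$ and which is unconstrained for variations supported in the interior of~$X$. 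Unique continuation for the adjoint operator forces the support of any non-zero $\eta$ to be dense in~$X$, so one can choose $\dot\Psi$ compactly supported in a small ball in the interior of~$X$ and matched to a local model for~$\eta$ there, yielding a non-zero pairing and the required contradiction.

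With surjectivity in hand, the implicit function theorem makes $F^{-1}(0)$ a smooth Banach submanifold in a neighbourhood of $(0, \Phi)$, and the projection $\pi \colon F^{-1}(0) \to \mathcal{V}$ is a smooth Fredholm map of index~$0$. The Sard--Smale theorem then provides a residual set of regular values in~$\mathcal{V}$, and for every such generic~$\Psi$ the fibre $\pi^{-1}(\Psi)$ is a smooth manifold of dimension equal to the Fredholm index, namely~$0$. After shrinking $\mathcal{U}$ if necessary, elliptic bootstrap estimates give uniform $C^\infty$\dash-bounds on the elements of this fibre, making it compact; a compact $0$\dash-dimensional manifold is finite (and possibly empty), which completes the proof. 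The main obstacle will be the surjectivity step, because one must verify that variations of the $\Spin(7)$\dash-structure are rich enough in the interior of~$X$ to probe every element of the cokernel of the adjoint boundary problem, which relies on a suitable unique continuation property for that problem up to~$\partial X$; once this is in place, the rest of the construction parallels the closed case of \theoremref{thm:main-spin7-closed}.
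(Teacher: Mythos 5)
Your overall strategy (encode the Cayley-plus-orthogonality conditions as a parametrised boundary problem, prove the total linearisation is surjective, invoke the Sard--Smale genericity theorem, and conclude finiteness from a compactness argument) agrees with the paper's. However, the key surjectivity step has a genuine gap.

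You argue via the cokernel: take $\eta$ solving the adjoint boundary problem and suppose it pairs to zero with every $\partial_\Psi F|_{(0,\Phi)}\dot\Psi$; by unique continuation for the adjoint operator the support of $\eta$ is dense, so you pick $\dot\Psi$ compactly supported in a small ball $B$ ``matched to a local model for $\eta$'' to obtain a non-zero pairing. This does not work as stated. The computation from \lemmaref{lemma:linearisation-F-tilde} gives $\partial_\Psi \tilde G|_{(0,0)}\chi = -D^\ast(\pi_E(e\vert_X))$ for $\chi = h(\tau_\Phi, e)$, so for $e$ supported in a ball $B$ away from the boundary the pairing with $\eta$ is, after integration by parts (no boundary term since $e$ is compactly supported in the interior),
\begin{equation*}
  \langle \eta, -D^\ast(\pi_E(e\vert_X))\rangle_{L^2(X)} = -\langle D\eta, \pi_E(e\vert_X)\rangle_{L^2(B)} \, \text{.}
\end{equation*}
To make this non-zero you therefore need $D\eta \neq 0$ on~$B$, not merely $\eta \neq 0$ on~$B$. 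Unique continuation for the second-order adjoint $D^\ast D$ gives the latter but not the former, and indeed one cannot rule out a priori that $D\eta \equiv 0$ (the adjoint condition $D^\ast D\eta = 0$ and the boundary conditions are compatible with $D\eta = 0$). The correct argument is global and must involve the boundary: vanishing of the pairing for all compactly supported $e$ yields $D\eta = 0$ in the interior; vanishing for all $e$ (including those not supported away from $\partial X$) then forces the boundary term in Green's formula, $\langle u\cdot\eta\vert_{\partial X}, \cdot\rangle$, to vanish, whence $\eta\vert_{\partial X} = 0$, and only then does the unique continuation property for the first-order operator $D$ give $\eta \equiv 0$. Equivalently (and this is what the paper does in \lemmaref{lemma:spin7-boundary-surjective}), one can bypass the cokernel argument entirely and construct a preimage directly: prescribe $s\vert_{\partial X}$ and $\nabla_u s\vert_{\partial X}$ to hit the two boundary components, then use the surjectivity of $D^\ast$ on the compact manifold with boundary \cite[Theorem~9.1]{BW93} to correct the interior component.

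A second omission: you do not verify that the $\Spin(7)$-structure variation does not interfere with the boundary operator. In the paper the identity $(\D\tilde{B})_{(0,0)}(0,\chi) = 0$ for $\chi = h(\tau_\Phi, e)$ is proved via \lemmaref{lemma:linearisation-H-tilde}, and this cancellation is what allows the boundary components of the surjectivity problem to be handled by varying $s$ alone. Without this computation the surjectivity argument must also account for the $\chi$-dependence of the boundary condition, and the explicit construction changes. Finally, the smoothness of the nonlinear deformation map between the relevant H\"older spaces (\propositionref{prop:c-ell}) is a prerequisite for the implicit function theorem/Sard--Smale step and should be cited; this is a technical but essential ingredient.
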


\begin{theorem} \label{thm:main-scaffold}
  Let $M$ be an $8$\dash-manifold with a $\Spin(7)$-structure, let $X$ be a compact, connected Cayley submanifold of~$M$ with non-empty boundary, and let $W$ be a submanifold of~$M$ with $\partial X \subseteq W$ such that $X$ and $W$ meet orthogonally.
  
  Then for every generic local deformation~$W^\prime$ of~$W$, the moduli space of all Cayley submanifolds of~$M$ that are close to~$X$ with boundary on~$W^\prime$ and meeting~$W^\prime$ orthogonally is a finite set (possibly empty).
\end{theorem}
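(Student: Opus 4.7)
The plan is to apply the Sard--Smale theorem to a universal moduli space parametrised by local deformations of the scaffold~$W$, and then use the elliptic theory from \propositionref{prop:main-boundary} to conclude that generic fibres are discrete and, by compactness, finite.

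First I would introduce a Banach (or Fr\'echet) parameter space $\mathcal{W}$ of local deformations of~$W$, realised as a suitable space of sections (say $C^{k,\alpha}$, $k$ large) of the normal bundle $\nu W$ in a neighbourhood of $\partial X$, and form the universal moduli space
\begin{equation*}
  \mathcal{U} \defeq \set[\big]{(X^\prime, W^\prime) : X^\prime \text{ is a Cayley deformation of } X \text{ with } \partial X^\prime \subseteq W^\prime \text{ and } X^\prime \perp W^\prime}
\end{equation*}
together with the projection $\pi \colon \mathcal{U} \to \mathcal{W}$. By \propositionref{prop:main-boundary}, after choosing a tubular neighbourhood of $X$ and writing Cayley deformations as sections $v$ of a spinor bundle over~$X$, each fibre $\pi^{-1}(W^\prime)$ embeds into the solution space of the second-order elliptic boundary problem~\eqref{eq:boundary-second}, whose fibre-wise linearisation is Fredholm of index~$0$.

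The main technical step is to promote $\mathcal{U}$ to a Banach manifold near the point $(X, W)$, which amounts to showing that the \emph{total} linearisation $L \colon T_X \oplus T_W \mathcal{W} \to \text{(range)}$ of the boundary problem, in which one varies both the deformation of~$X$ and the scaffold~$W$, is surjective. The fibre-wise linearisation (with $W$ fixed) is an elliptic boundary operator with finite-dimensional cokernel; to kill that cokernel it suffices to observe that the boundary conditions encoding ``$\partial X^\prime \subseteq W^\prime$'' and ``$X^\prime \perp W^\prime$'' depend linearly and non-degenerately on the germ of~$W^\prime$ along~$\partial X$. Concretely, given a non-zero element $\xi$ in the cokernel, one localises near a point of $\partial X$ where $\xi$ is non-vanishing and constructs a compactly supported variation of $W$ pairing non-trivially with $\xi$; this is a routine unique continuation / bump function argument, but making it compatible with the second-order reduction in~\eqref{eq:boundary-second} is the step that requires the most care and is the main obstacle.

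Once surjectivity is established, $\mathcal{U}$ is a smooth Banach manifold near $(X, W)$ and $\pi$ is Fredholm of index~$0$. The Sard--Smale theorem (applicable because $\mathcal{W}$ is separable and $\pi$ is Fredholm) yields a residual set of regular values $W^\prime \in \mathcal{W}$; for any such $W^\prime$, the fibre $\pi^{-1}(W^\prime)$ is a smooth $0$\dash-dimensional manifold, hence discrete. Finally, restricting to a sufficiently small $C^{k,\alpha}$-neighbourhood of the zero section and using elliptic regularity together with the Arzel\`a--Ascoli theorem to produce convergent subsequences of any sequence of solutions, one sees that the moduli space is locally compact in a finite-dimensional ambient space; combined with discreteness this forces finiteness, as claimed. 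A standard bootstrap then upgrades $C^{k,\alpha}$-genericity to $C^\infty$-genericity.
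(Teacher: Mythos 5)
Your overall architecture—parametrise scaffold deformations by a Banach space, build a universal moduli space, apply a Sard--Smale-type theorem, and use compactness of the solution set to conclude finiteness from discreteness—is exactly the one the paper follows (via \theoremref{thm:genericity-general} and \lemmaref{lemma:compactness}). You also correctly identify that the crux is surjectivity of the \emph{total} linearisation (varying both the normal section $s$ of $X$ and the scaffold deformation $t$) onto $\sections(\normal[M]{X}) \oplus \sections(K) \oplus \sections(\normal[W]{\partial X})$. But you defer this surjectivity to a ``routine unique continuation / bump function argument'' while explicitly flagging ``compatibility with the second-order reduction'' as ``the main obstacle'' that you do not resolve. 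That is a genuine gap, not a routine step.

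Here is why, and what the paper does instead. A compactly supported scaffold variation $t$ enters the linearised problem in \emph{two} places: it perturbs the boundary operator $\hat{B}$, but via the extension $\sigma(t)$ it also feeds back into the interior equation as $D^\ast D(\sigma(t)\vert_X)^\perp$. A naive bump-function pairing with a hypothetical cokernel element $\xi = (t^\prime, k, r)$ therefore picks up an interior contribution that could a priori cancel the boundary contribution, so the ``non-trivial pairing'' is not automatic. The paper removes this interference at the source: \lemmaref{lemma:extension} constructs an extension operator $\sigma\colon \sections(\normal[M]{W}) \to \sections(T M)$ with the pivotal property that $t\vert_{\partial X} = 0$ forces $\sigma(t)\vert_X = 0$. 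Restricting to scaffold variations with $t\vert_{\partial X} = 0$ then leaves $\hat{G}$ and $\pi_K$ untouched while the normal derivative of $t$ still sweeps out all of $\sections(\normal[W]{\partial X})$ through $\pi_\nu((g(\nabla t, u))^\sharp)$ (\lemmaref{lemma:linearisation-B-hat}). With that in hand, \lemmaref{lemma:scaffold-surjective} avoids cokernel analysis altogether: it constructs a preimage directly by first choosing $s$ to hit the $D^\ast D$- and $\pi_K$-components (using surjectivity of the Dirichlet problem for $D^\ast D$, which rests on its index being $0$ and the Unique Continuation Property forcing the kernel to vanish), and then choosing $t$ (with $t\vert_{\partial X} = 0$) to hit the remaining $\normal[W]{\partial X}$-component. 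If you wish to stick with your cokernel-annihilation strategy, you must additionally argue that a cokernel element whose trace on $\partial X$ vanishes is identically zero (this is what forces $\xi$ to be visible on the boundary), and this again requires the Green's formula plus unique continuation argument for $D$ that the paper uses in the $\dim W = 3$ discussion; you currently gesture at ``unique continuation'' without specifying the operator, the boundary data available, or why it applies. So the missing content is precisely the construction of a suitable extension operator decoupling scaffold variations from the interior equation, together with an explicit unique continuation argument — both of which the paper supplies and your proposal does not.
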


\begin{remarks}
  \mbox{}
  \begin{compactenum}[(i)]
    \item The statements of \hyperref[thm:main-spin7-closed]{Theorems~\ref*{thm:main-spin7-closed}} and~\ref{thm:main-spin7-boundary} remain true if we restrict to the smaller class of all $\Spin(7)$\dash-structures~$\Psi$ inducing the same metric as~$\Phi$.
    \item The statement of \theoremref{thm:main-spin7-boundary} remains true if we restrict to the smaller class of all torsion-free $\Spin(7)$\dash-structures~$\Psi$ (assuming $\Phi$ is torsion-free).
    \item The statement of \theoremref{thm:main-scaffold} remains true if we restrict to the smaller class of all local deformations~$W^\prime$ of~$W$ with $\partial X \subseteq W^\prime$.
    \item If we allow to vary both~$\Phi$ and~$W$, then we also get a genericity statement.
  \end{compactenum}
\end{remarks}

We give some examples for this deformation theory in \sectionref{sec:examples}. We further show versions of the volume minimising property and relate our deformation theory to the deformation theories of compact special Lagrangian, coassociative, and associative submanifolds with boundary.

\paragraph{Acknowledgements.} I am grateful to my Ph.D. supervisor, Alexei Kovalev, for his support and guidance.

\section{Preliminaries}
\label{sec:preliminaries}

In this section we recall some basic facts about $\Spin(7)$-structures on $8$\dash-manifolds and Cayley submanifolds (see, for example, \cite{HL82}, \cite{Joy00}).

Let $(x_1, \dotsc, x_8)$ be coordinates on~$\R^8$, and write $\D \vectorsym{x}_{i \dots j}$ for $\D x_i \wedge \dotsb \wedge \D x_j$. Define a $4$\dash-form $\Phi_0$ on~$\R^8$ by
\begin{equation}
  \begin{split}
    \! \Phi_0 &\defeq \D \vectorsym{x}_{1234} + \D \vectorsym{x}_{1256} - \D \vectorsym{x}_{1278} + \D \vectorsym{x}_{1357} + \D \vectorsym{x}_{1368} + \D \vectorsym{x}_{1458} - \D \vectorsym{x}_{1467} \\
    &\phantom{{}\defeq{}} {}- \D \vectorsym{x}_{2358} + \D \vectorsym{x}_{2367} + \D \vectorsym{x}_{2457} + \D \vectorsym{x}_{2468} - \D \vectorsym{x}_{3456} + \D \vectorsym{x}_{3478} + \D \vectorsym{x}_{5678} \, \text{.} \!
  \end{split} \label{eq:def-Phi-0}
\end{equation}
The subgroup of $\GL(8, \R)$ preserving~$\Phi_0$ is isomorphic to $\Spin(7)$, viewed as a subgroup of $\SO(8)$. Note that $\Phi_0$ is self-dual.

Let $M$ be an $8$\dash-manifold. Suppose that there is a $4$\dash-form~$\Phi$ on~$M$ such that for each $x \in M$ there is a linear isomorphism $i_x \colon T_x M \to \R^8$ with $(i_x)^\ast(\Phi_0) = \Phi_x$ (in a neighbourhood of each point, this can be chosen to depend smoothly on~$x$). Then $\Phi$ induces a $\Spin(7)$\dash-structure on~$M$. Conversely, if $M$ has a $\Spin(7)$\dash-structure, then there is such a $4$\dash-form~$\Phi$. Via such an identification $i_x \colon T_x M \to \R^8$ of~$\Phi_x$ with~$\Phi_0$, the metric~$g_0$ of~$\R^8$ induces a metric $(i_x)^\ast(g_0)$ on~$T_x M$. Since $\Spin(7) \subseteq \SO(8)$, this metric is independent of the chosen identification, and we get a well-defined Riemannian metric~$g = g(\Phi)$ and orientation on~$M$. By abuse of notation, we will refer to the $4$\dash-form~$\Phi$ as a $\Spin(7)$\dash-structure. The $\Spin(7)$\dash-structure is called \emph{torsion-free} if $\nabla \Phi = 0$, where $\nabla$ is the Levi-Civita connection of~$(M, g)$. This is equivalent to $\D \Phi = 0$ \cite[Theorem~5.3]{Fer86}.

If $M$ is an $8$\dash-manifold, then there exists a $\Spin(7)$-structure on~$M$ if and only if $M$ is orientable and spin and
\begin{equation}
  p_1(M)^2 - 4 p_2(M) + 8 \chi(M) = 0
\end{equation}
for some orientation of~$M$ \cite[Theorem~10.7 in Chapter~IV]{LM89}, where $p_i(M)$ is the $i$\dash-th Pontryagin class and $\chi(M)$ is the Euler characteristic of~$M$.

Now let $M$ be an $8$\dash-manifold with a $\Spin(7)$-structure~$\Phi$. Then we have pointwise orthogonal splittings \cite[Lemmas~3.1 and 3.3]{Fer86}
\begin{equation}
  \begin{split}
    \altforms^2 M &= \altforms^2_7 M \oplus \altforms^2_{21} M \quad \text{and} \\
    \altforms^4 M &= \altforms^4_1 M \oplus \altforms^4_7 M \oplus \altforms^4_{27} M \oplus \altforms^4_{35} M \, \text{.}
  \end{split} \label{eq:forms-splitting}
\end{equation}
Here $\altforms^p M \defeq \altforms^p T^\ast M$, and $\altforms^k_\ell M$ corresponds to an irreducible representation of~$\Spin(7)$ of dimension~$\ell$. Furthermore, $M$ possesses a $2$\dash-fold cross product $T M \times T M \to \altforms^2_7 M$,
\begin{equation}
  v \times w \defeq 2 \pi_7(v^\flat \wedge w^\flat) = \tfrac{1}{2} (v^\flat \wedge w^\flat - \mathord\ast (v^\flat \wedge w^\flat \wedge \Phi)) \label{eq:def-cross-2}
\end{equation}
and a $3$\dash-fold cross product $T M \times T M \times T M \to T M$,
\begin{equation}
  u \times v \times w \defeq (u \interior (v \interior (w \interior \Phi)))^\sharp \, \text{.} \label{eq:def-cross-3}
\end{equation}
They satisfy $\abs{v \times w} = \abs{v \wedge w}$, $\abs{u \times v \times w} = \abs{u \wedge v \wedge w}$, and
\begin{equation}
  h(a \times b, c \times d) = - \Phi(a, b, c, d) + g(a, c) g(b, d) - g(a, d) g(b, c) \, \text{,} \label{eq:inner-cross-2}
\end{equation}
where $h$ is the induced metric on~$\altforms^2_7 M$. There is also a vector-valued $4$\dash-form $\tau \in \forms^4(M, \altforms^2_7 M)$ (also called the $4$\dash-fold cross product),
\begin{equation}
  \tau(a, b, c, d) \defeq - a \times (b \times c \times d) + g(a, b) (c \times d) + g(a, c) (d \times b) + g(a, d) (b \times c) \, \text{,} \label{eq:def-tau}
\end{equation}
which satisfies
\begin{equation}
  h(\tau, v \times w) = w^\flat \wedge (v \interior \Phi) - v^\flat \wedge (w \interior \Phi) \, \text{.} \label{eq:inner-tau}
\end{equation}
This formula can be checked by using the invariance properties of the cross products and checking it for $v = e_1$, $w = e_2$, where $(e_1, \dotsc, e_8)$ is a $\Spin(7)$\dash-frame (see below for the definition). Note that
\begin{equation}
  w^\flat \wedge (v \interior \Phi) - v^\flat \wedge (w \interior \Phi) \in \altforms^4_7 M \, \text{,} \label{eq:forms-4-7}
\end{equation}
which follows from \cite[page~548]{Bry87}.

Let $x \in M$, and let $(e_1, \dotsc, e_8)$ be a basis of~$T_x M$. We call $(e_1, \dotsc, e_8)$ a \emph{$\Spin(7)$\dash-frame} if
\begin{equation}
  \begin{split}
    \Phi &= e^{1234} + e^{1256} - e^{1278} + e^{1357} + e^{1368} + e^{1458} - e^{1467} \\
         &\phantom{{}={}} {}- e^{2358} + e^{2367} + e^{2457} + e^{2468} - e^{3456} + e^{3478} + e^{5678} \, \text{,}
  \end{split} \label{eq:phi-spin7-frame}
\end{equation}
where $(e^1, \dotsc, e^8)$ is the dual coframe. Note that if $(e_1, \dotsc, e_8)$ is a $\Spin(7)$-frame, then it is an orthonormal frame since $\Spin(7) \subseteq \SO(8)$. Furthermore, $e_i \times e_j = \pm e_k \times e_\ell$ if and only if $\Phi(e_i, e_j, e_k, e_\ell) = \mp 1$ for $i, j, k, \ell \in \set{1, \dotsc, 8}$ different by~\eqref{eq:inner-cross-2}. So \eqref{eq:phi-spin7-frame} shows that
\begin{equation}
  \begin{split}
    e_1 \times e_5 &= \phantom{+} e_2 \times e_6 = \phantom{+} e_3 \times e_7 = \phantom{+} e_4 \times e_8 \, \text{,} \\
    e_1 \times e_6 &= - e_2 \times e_5 = \phantom{+} e_3 \times e_8 = - e_4 \times e_7 \, \text{,} \\
    e_1 \times e_7 &= - e_2 \times e_8 = - e_3 \times e_5 = \phantom{+} e_4 \times e_6 \, \text{,} \\
    e_1 \times e_8 &= \phantom{+} e_2 \times e_7 = - e_3 \times e_6 = - e_4 \times e_5 \, \text{.}
  \end{split} \label{eq:cross-product-table}
\end{equation}

If $e_1, e_2, e_3 \in T_x M$ are orthogonal unit vectors and $e_5 \in T_x M$ is a unit vector that is orthogonal to~$e_1, e_2, e_3, e_1 \times e_2 \times e_3$, then there are (uniquely determined) $e_4, e_6, e_7, e_8 \in T_x M$ such that $(e_1, \dotsc, e_8)$ is a $\Spin(7)$\dash-frame, namely $e_4 = - e_1 \times e_2 \times e_3$, $e_6 = - e_1 \times e_2 \times e_5$, $e_7 = - e_1 \times e_3 \times e_5$, and $e_8 = e_2 \times e_3 \times e_5$.

We have $\Phi_x \vert_V \le \vol_V$ for all~$x \in M$ and every oriented $4$\dash-dimensional subspace~$V$ of~$T_x M$, where $\vol_V$ is the volume form (induced by the metric~$g$ and the orientation on~$V$) and $\varphi_x \vert_V \le \vol_V$ means that $\varphi_x \vert_V = \lambda \, \vol_V$ with~$\lambda \le 1$. An orientable $4$\dash-dimensional submanifold~$X$ of~$M$ is called \emph{Cayley} if $\Phi \vert_X = \vol_X$ for some orientation of~$X$. This is equivalent to~$\tau \vert_X = 0$ \cite[Corollary~1.29 in Chapter~IV]{HL82}. If the $\Spin(7)$-structure~$\Phi$ is torsion-free, then $\Phi$ is a calibration on~$M$, and Cayley submanifolds are minimal submanifolds \cite[Theorem~4.2 in Chapter~II]{HL82}.

Now suppose that $X$ is a Cayley submanifold of~$M$. Then $\altforms^2_- X$ is isomorphic to a subbundle of~$\altforms^2_7 M \vert_X$ \cite[Section~6]{McL98} via the embedding
\begin{equation}
  \altforms^2_- X \to \altforms^2_7 M \vert_X \, \text{,} \quad \alpha \mapsto 2 \pi_7(\alpha) = \tfrac{1}{2} (\alpha - \mathord\ast (\alpha \wedge \Phi)) \, \text{,} \label{eq:subbundle-2-7}
\end{equation}
where we extend $\alpha \in \altforms^2_- X$ to $\altforms^2 M \vert_X$ by $v \interior \alpha = 0$ for all~$v \in \normal[M]{X}$. Let $E$ denote the orthogonal complement of~$\altforms^2_- X$ in~$\altforms^2_7 M \vert_X$. So
\begin{equation}
  \altforms^2_7 M \vert_X \cong \altforms^2_- X \oplus E \, \text{,} \label{eq:splitting-2-7}
\end{equation}
and $E$ has rank~$4$. Furthermore,
\begin{equation}
  E = \set{\alpha \in \altforms^2_7 M \vert_X \colon \alpha \vert_{T X} = 0} \, \text{.} \label{eq:def-E}
\end{equation}
The cross products restrict to
\begin{equation}
  T X \times T X \to \altforms^2_- X \, \text{,} \quad T X \times \normal[M]{X} \to E \, \text{,} \quad \normal[M]{X} \times \normal[M]{X} \to \altforms^2_- X \label{eq:cross-2-restriction}
\end{equation}
and
\begin{equation}
  \begin{split}
    &\phantom{T X \times \normal[M]{X} \times \normal[M]{X} \to T X \, \text{,} \quad}\mathllap{T X \times T X \times T X \to T X \, \text{,} \quad} \phantom{\normal[M]{X} \times \normal[M]{X} \times \normal[M]{X} \to \normal[M]{X}} \mathllap{T X \times T X \times \normal[M]{X} \to \normal[M]{X}} \, \text{,} \\
    &T X \times \normal[M]{X} \times \normal[M]{X} \to T X \, \text{,} \quad \normal[M]{X} \times \normal[M]{X} \times \normal[M]{X} \to \normal[M]{X} \, \text{.}
  \end{split} \label{eq:cross-3-restriction}
\end{equation}

\section{Closed Cayley Submanifolds}
\label{sec:cayley-closed}

In this section we present the deformation theory of closed Cayley submanifolds. We first review in \sectionref{subsec:cayley-closed-deformations} McLean's foundational result and its extension to $\Spin(7)$-structures that are not necessarily torsion-free. Then we prove that for a generic $\Spin(7)$\dash-structure, closed Cayley submanifolds form a smooth moduli space in \sectionref{subsec:varying-spin7}. We end this section with a remark (\sectionref{subsec:remark-torsion-free-closed}) about why we use the wider space of all $\Spin(7)$-structures in \theoremref{thm:main-spin7-closed} rather than just torsion-free $\Spin(7)$-structures.

\subsection{Deformations of Closed Cayley Submanifolds}
\label{subsec:cayley-closed-deformations}

McLean \cite{McL98} proved that if $M$ is an $8$\dash-manifold with a torsion-free $\Spin(7)$-structure and $X$ is a closed Cayley submanifold of~$M$, then the Zariski tangent space to the moduli space of all local deformations of~$X$ as a Cayley submanifold of~$M$ is isomorphic to the space of harmonic twisted spinors. Gutowski, Ivanov, and Papadopoulos \cite{GIP03} generalised this result to manifolds with $\Spin(7)$\dash-structures that have torsion. Here we give an explicit formula for the operator of Dirac type that arises as the linearisation of the deformation map in terms of the Levi-Civita connection. The equivalence of the operators can also be seen using the explicit formula for a $\Spin(7)$\dash-connection in \cite[Theorem~1.1]{Iva04}.

\begin{theorem}[{cf.~\cite[Theorem~6--3]{McL98}, \cite[Section~13]{GIP03}}] \label{thm:explicit-formula}
  Let $M$ be an $8$\dash-manifold with a $\Spin(7)$-structure~$\Phi$, and let $X$ be a closed Cayley submanifold of~$M$.
  
  Then the Zariski tangent space to the moduli space of all local deformations of~$X$ as a Cayley submanifold of~$M$ can be identified with the kernel of the operator $D \colon \sections(\normal[M]{X}) \to \sections(E)$,
  \begin{equation}
    D s \defeq \sum_{i = 1}^4 e_i \times \nabla_{e_i}^\perp s + \sum_{i = 5}^8 (\nabla_s \Phi)(e_i, e_2, e_3, e_4) (e_i \times e_1) \, \text{,} \label{eq:def-D}
  \end{equation}
  where $E$ is a vector bundle of rank~$4$ over~$X$ as defined in~\eqref{eq:def-E}, $(e_i)_{i = 1, \dotsc, 4}$ is any positive local orthonormal frame of~$X$, $(e_i)_{i = 5, \dotsc, 8}$ is any local orthonormal frame of~$\normal[M]{X}$, and $\nabla^\perp$ is the induced connection on~$\normal[M]{X}$.
\end{theorem}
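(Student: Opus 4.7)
The plan is to extend McLean's construction of the deformation map to a $\Spin(7)$-structure with nonzero torsion. Local deformations of~$X$ are parametrised by small normal vector fields: for $s \in \sections(\normal[M]{X})$, let $X_s$ be the image of $x \mapsto \exp_x(s(x))$, so that $X_s$ is Cayley iff $\tau|_{X_s} = 0$. Parallel transport along the normal geodesics identifies $\altforms^2_7 M|_{X_s}$ with $\altforms^2_7 M|_X$ and yields a smooth deformation map $F \colon \sections(\normal[M]{X}) \to \sections(\altforms^2_7 M|_X)$ whose zero set near $0$ corresponds to Cayley deformations of~$X$. Since $X$ is Cayley, $F(0) = 0$, so the Zariski tangent space at~$X$ is $\ker dF|_0$, and the theorem reduces to identifying $dF|_0$ with the operator $D$ of~\eqref{eq:def-D} and to checking that the image lies in~$E$.

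To compute $dF|_0(s)$ at a point of~$X$, I would extend $s$ to a vector field $V$ on~$M$ and use the Lie-derivative decomposition
\begin{equation*}
  (\mathcal{L}_V \tau)(e_1, \dotsc, e_4) = (\nabla_V \tau)(e_1, \dotsc, e_4) + \sum_{i=1}^4 \tau(e_1, \dotsc, \nabla_{e_i} V, \dotsc, e_4),
\end{equation*}
giving the natural splitting $dF|_0 = L_1 + L_2$ with $L_1$ the second sum and $L_2$ the first term. For $L_1$, complete $(e_1, \dotsc, e_4)$ to a $\Spin(7)$\dash-adapted frame $(e_1, \dotsc, e_8)$. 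Since $\tau$ restricted to four tangent vectors vanishes (as $X$ is Cayley), the tangential components of $\nabla_{e_i} V$ drop out and only the normal components $\nabla_{e_i}^\perp s$ contribute. Substituting~\eqref{eq:def-tau}, using orthonormality to kill the symmetric $g$\dash-terms, and applying the identity $e_2 \times e_3 \times e_4 = \pm e_1$ (which follows from the Cayley condition together with \eqref{eq:cross-product-table}) then yield the Dirac-type term $\sum_{i=1}^4 e_i \times \nabla_{e_i}^\perp s$, recovering McLean's formula.

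The torsion contribution $L_2 = (\nabla_s \tau)(e_1, \dotsc, e_4)$ requires a separate computation. Since $\tau$ is built from $\Phi$ and the metric (which is determined by~$\Phi$) through~\eqref{eq:def-tau}, and the Levi-Civita connection is metric-compatible, $\nabla_s \tau$ reduces to expressions in $\nabla_s \Phi$; expanding in the adapted frame and simplifying via the Cayley relations produces the correction $\sum_{i=5}^8 (\nabla_s \Phi)(e_i, e_2, e_3, e_4)(e_i \times e_1)$. Carrying out this algebraic rearrangement cleanly, and verifying frame-independence despite the asymmetric roles of $e_1, e_2, e_3, e_4$ in the final formula, is the main obstacle. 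To finish, I would check that $dF|_0$ takes values in~$E$: each term in~\eqref{eq:def-D} is a cross product of a tangent vector of~$X$ with a normal vector, which by~\eqref{eq:cross-2-restriction} lies in~$E$. Ellipticity of~$D$ as a Dirac-type operator then follows from its principal symbol being the cross product with a nonzero tangent vector, which by $\abs{v \times w} = \abs{v \wedge w}$ is an isomorphism $\normal[M]{X} \to E$.
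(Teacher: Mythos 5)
Your plan is correct, and it takes a genuinely different computational route from the paper. Both start by parametrising deformations via normal exponentials and identifying $(\D F)_0(s) = (\Lie_s\tau)\vert_X$, but the linearisation is then handled differently. The paper writes $\tau = \sum_{i=2}^8 \tau_i \otimes (e_1 \times e_i)$ with $\tau_i = e^i \wedge (e_1 \interior \Phi) - e^1 \wedge (e_i \interior \Phi)$ via \eqref{eq:inner-tau}, observes that the bundle-connection term drops because $\tau_i\vert_X = 0$, and then applies the Leibniz rule for $\Lie_s$ to each scalar $4$\dash-form $\tau_i$; the decomposition $\Lie_s\Phi = \nabla_s\Phi + \Phi(\nabla_\cdot s, \dotsc)$ is invoked only at the level of $\Phi$, and the torsion term emerges only at the end after several cancellations. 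You instead apply $\Lie_V = \nabla_V + (\text{contraction with } \nabla V)$ directly to the tensor $\tau \in \sections(\altforms^4 T^\ast M \otimes \altforms^2 T^\ast M)$, so the first-order (Dirac) and zeroth-order (torsion) contributions are separated from the outset. Your identification of $L_1 = \sum_{i=1}^4 \tau(e_1, \dotsc, \nabla_{e_i}^\perp s, \dotsc, e_4) = \sum_{i=1}^4 e_i \times \nabla_{e_i}^\perp s$ checks out: in each slot the $g$\dash-terms of \eqref{eq:def-tau} vanish and the alternating property of $\tau$ together with $e_{\sigma(1)} \times e_{\sigma(2)} \times e_{\sigma(3)} = \pm e_{\sigma(4)}$ gives $\tau(\dotsc, v, \dotsc) = e_i \times v$ with $v$ in slot $i$.

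Two points deserve attention, though neither is a fatal gap. First, since $\tau$ also has two covariant slots in the $\altforms^2 T^\ast M$ factor, the full tensor identity for $\Lie_V\tau$ includes contraction terms in those slots as well; these also vanish on $X$ because $\tau\vert_X = 0$, but your write-up should record that observation rather than tacitly assume a four-slot version of the formula. Second, you have correctly identified the real work as computing $L_2 = (\nabla_s\tau)(e_1, \dotsc, e_4)$: this requires differentiating \eqref{eq:def-tau}, in which $\Phi$ enters both through the $3$\dash-fold cross product in the first term and through the $2$\dash-fold cross products, and then simplifying using the Cayley relations. The paper circumvents this by never differentiating $\tau$ covariantly---it works with $\Lie_s$ of the scalar $\tau_i$ and the characterisation \eqref{eq:inner-tau}, which keeps only one occurrence of $\Phi$ in play at a time. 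If you want a shortcut for $L_2$, differentiating the identity \eqref{eq:inner-tau} in the form $h(\tau, e_1 \times e_i) = \tau_i$ and restricting to $X$ is likely the cleanest way to isolate the $\nabla_s\Phi$ contribution without chasing it through every cross product in \eqref{eq:def-tau}.
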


Note that the second part of the operator~\eqref{eq:def-D} is an operator of order~$0$ that vanishes if the $\Spin(7)$\dash-structure~$\Phi$ is torsion-free (i.e.,~if~$\nabla \Phi = 0$).

\begin{remark}
  If $X$ has a spin structure, then one can identify $E \cong \mathbb{S}_+ \otimes_\HH F$ and $\normal[M]{X} \cong \mathbb{S}_- \otimes_\HH F$ for some vector bundle~$F$ of rank~$4$ over~$X$ as in~\cite[Section~6]{McL98}. If the $\Spin(7)$\dash-structure is torsion-free, then $D$ can be identified with a twisted Dirac operator \cite[Theorem~6--3]{McL98}, that is, the Dirac operator associated to the bundle $\mathbb{S} \otimes_\HH F$ with the tensor product connection (cf. \cite[Proposition~5.10 in Chapter~II]{LM89}).
\end{remark}

We now give a proof of \theoremref{thm:explicit-formula}. This proof is mostly based on \cite{McL98}. Besides introducing some notation, the proofs of our main theorems also build up on this proof. In particular, we need the deformation map from \cite{McL98} later.

For an open subset~$U$ of a vector bundle~$E$ over a manifold~$X$, define
\begin{equation}
  \sections(U) \defeq \set{s \in \sections(E) \colon s(X) \subseteq U} \, \text{.} \label{eq:def-sections-subset}
\end{equation}
We will use a similar definition for the Hölder spaces $\CC^{k, \alpha}$ with $k \ge 0$ an integer and $0 \le \alpha \le 1$.

\begin{proof}
  The Tubular Neighbourhood Theorem (cf. the proof of \cite[Theorem IV.5.1]{Lan95}) asserts that there is an open tubular neighbourhood $U \subseteq \normal[M]{X}$ of the $0$\dash-section such that the exponential map $\exp \vert_U \colon U \to \exp(U)$ is a diffeomorphism. For a normal vector field $s \in \sections(U)$, let~$\exp_s \colon X \to M$, $x \mapsto \exp_x(s(x))$. Furthermore, let
  \begin{equation}
    F \colon \sections(U) \to \forms^4(X, \altforms^2_7 M \vert_X) \, \text{,} \quad s \mapsto (\exp_s)^\ast(\tau) \, \text{,}
  \end{equation}
  where $\tau \in \forms^4(M, \altforms^2_7 M)$ is defined as in~\eqref{eq:def-tau}.
  
  Local deformations of~$X$ are parametrised by sections $s \in \sections(U)$ via the exponential map. With this identification, $X_s \defeq \exp_s(X)$ is Cayley if and only if $(\exp_s)^\ast(\tau) = \tau \vert_{X_s} = 0$. So the moduli space of all local deformations of~$X$ as a Cayley submanifold of~$M$ can be identified with $F^{-1}(\set{0})$.
  
  Let $s \in \sections(\normal[M]{X})$. We have
  \begin{equation}
    (\D F)_0(s) = \frac{\D}{\D t} F(t s) \bigg\vert_{t = 0} = \frac{\D}{\D t} (\exp_{t s})^\ast(\tau) \bigg\vert_{t = 0} = (\Lie_s \tau) \vert_X \, \text{.}
  \end{equation}
  Let $(e_1, \dotsc, e_8)$ be a local orthonormal frame of~$M$ such that $(e_1, \dotsc, e_4)$ is a positive orthonormal frame of~$X$. Then
  \begin{equation}
    \tau = \sum_{i = 2}^8 (e^i \wedge (e_1 \interior \Phi) - e^1 \wedge (e_i \interior \Phi)) \otimes (e_1 \times e_i)
  \end{equation}
  by~\eqref{eq:inner-tau}, where $(e^1, \dotsc, e^8)$ is the dual coframe of~$(e_1, \dotsc, e_8)$. Let
  \begin{equation*}
    \tau_i \defeq e^i \wedge (e_1 \interior \Phi) - e^1 \wedge (e_i \interior \Phi) \, \text{.}
  \end{equation*}
  So $\tau = \sum_{i = 2}^8 \tau_i \otimes (e_1 \times e_i)$. Then
  \begin{align*}
    (\Lie_s \tau) \vert_X &= \sum_{i = 2}^8 (\Lie_s \tau_i) \vert_X \otimes (e_1 \times e_i) + \sum_{i = 2}^8 \tau_i \vert_X \otimes \tilde{\nabla}_s (e_1 \times e_i) \\
    &= \sum_{i = 2}^8 (\Lie_s \tau_i) \vert_X \otimes (e_1 \times e_i)
  \end{align*}
  since $\tau_i \vert_X = 0$ as $X$ is Cayley, where $\tilde{\nabla}$ is the induced connection on~$\altforms^2_7 M$. In particular, the result is independent of~$\tilde{\nabla}$. Now
  \begin{align*}
    \Lie_s \tau_i &= (\Lie_s e^i) \wedge (e_1 \interior \Phi) + e^i \wedge ((\Lie_s e_1) \interior \Phi) + e^i \wedge (e_1 \interior \Lie_s \Phi) \\
    &\phantom{{}={}} {}- (\Lie_s e^1) \wedge (e_i \interior \Phi) - e^1 \wedge ((\Lie_s e_i) \interior \Phi) - e^1 \wedge (e_i \interior \Lie_s \Phi) \, \text{.}
  \end{align*}
  Note that $(\Lie_s e^i)(e_j) = s . (e^i(e_j)) - e^i(\Lie_s e_j) = - g(\Lie_s e_j, e_i)$ for~$i, j = 1, \dotsc, 8$ and that
  \begin{equation*}
    (e^i \wedge (e_j \interior \Phi)) \vert_X =
    \begin{cases}
      \delta_{i j} \vol_X & \text{for~$i = 1, \dotsc, 4$, $j = 1, \dotsc, 8$,} \\
      0 & \text{for~$i = 5, \dotsc, 8$, $j = 1, \dotsc, 8$.}
    \end{cases}
  \end{equation*}
  Indeed, $e^i \vert_X = 0$ for~$i = 5, \dotsc, 8$, and if $i \le 4$, let $(f_1, f_2, f_3)$ be such that $(e_i, f_1, f_2, f_3)$ is a positive orthonormal frame of $X$. Then
  \begin{align*}
    (e^i \wedge (e_j \interior \Phi))(e_i, f_1, f_2, f_3) &= \Phi(e_j, f_1, f_2, f_3) = g(e_j, f_1 \times f_2 \times f_3) \\
    &= g(e_j, e_i) = \delta_{i j}
  \end{align*}
  for~$j = 1, \dotsc, 8$.
  
  So
  \begin{align*}
    ((\Lie_s e^i) \wedge (e_1 \interior \Phi)) \vert_X &= - \sum_{j = 1}^8 g(\Lie_s e_j, e_i) (e^j \wedge (e_1 \interior \Phi)) \vert_X \\
    &= - g(\Lie_s e_1, e_i) \vol_X \quad \text{for~$i = 2, \dotsc, 8$,} \\
    (e^i \wedge ((\Lie_s e_1) \interior \Phi)) \vert_X &= \sum_{j = 1}^8 g(\Lie_s e_1, e_j) (e^i \wedge (e_j \interior \Phi)) \vert_X \\
    &=
    \begin{cases}
      g(\Lie_s e_1, e_i) \vol_X & \text{for~$i = 2, 3, 4$,} \\
      0 & \text{for~$i = 5, \dotsc, 8$,}
    \end{cases} \\
    - ((\Lie_s e^1) \wedge (e_i \interior \Phi)) \vert_X &= \sum_{j = 1}^8 g(\Lie_s e_j, e_1) (e^j \wedge (e_i \interior \Phi)) \vert_X \\
    &=
    \begin{cases}
      g(\Lie_s e_i, e_1) \vol_X & \text{for~$i = 2, 3, 4$,} \\
      0 & \text{for~$i = 5, \dotsc, 8$,}
    \end{cases} \\
    - (e^1 \wedge ((\Lie_s e_i) \interior \Phi)) \vert_X &= - \sum_{j = 1}^8 g(\Lie_s e_i, e_j) (e^1 \wedge (e_j \interior \Phi)) \vert_X \\
    &= - g(\Lie_s e_i, e_1) \vol_X \quad \text{for~$i = 2, \dotsc, 8$.}
  \end{align*}
  Hence
  \begin{equation*}
    ((\Lie_s e^i) \wedge (e_1 \interior \Phi) + e^i \wedge ((\Lie_s e_1) \interior \Phi)) \vert_X =
    \begin{cases}
      0 & \text{for~$i = 2, 3, 4$,} \\
      - g(\Lie_s e_1, e_i) \vol_X & \text{for~$i = 5, \dotsc, 8$}
    \end{cases}
  \end{equation*}
  and
  \begin{equation*}
    - ((\Lie_s e^1) \wedge (e_i \interior \Phi) + e^1 \wedge ((\Lie_s e_i) \interior \Phi)) \vert_X =
    \begin{cases}
      0 & \text{for~$i = 2, 3, 4$,} \\
      - g(\Lie_s e_i, e_1) \vol_X & \text{for~$i = 5, \dotsc, 8$.}
    \end{cases}
  \end{equation*}
  Since $\nabla$ is the Levi-Civita connection of~$(M, g)$,
  \begin{align*}
    - g(\Lie_s e_1, e_i) - g(\Lie_s e_i, e_1) &= s . (g(e_1, e_i)) + g(\nabla_{e_1} s, e_i) + g(\nabla_{e_i} s, e_1) \\
    &= g(\nabla_{e_1} s, e_i) + g(\nabla_{e_i} s, e_1) \, \text{.}
  \end{align*}
  Note that
  \begin{equation*}
    (e^i \wedge (e_j \interior \Lie_s \Phi)) \vert_X = e^i \wedge (e_j \interior (\Lie_s \Phi) \vert_X) = \delta_{i j} (\Lie_s \Phi) \vert_X \quad \text{for~$i, j = 1, \dotsc, 4$}
  \end{equation*}
  since $e^i \wedge (e_j \interior \vol_X) = \delta_{i j} \vol_X$ for~$i, j = 1, \dotsc, 4$. So
  \begin{equation*}
    (e^i \wedge (e_1 \interior \Lie_s \Phi) - e^1 \wedge (e_i \interior \Lie_s \Phi)) \vert_X =
    \begin{cases}
      0 & \text{for~$i = 2, 3, 4$,} \\
      - e^1 \wedge (e_i \interior \Lie_s \Phi) \vert_X & \text{for~$i = 5, \dotsc, 8$.}
    \end{cases}
  \end{equation*}
  Let $i \in \set{5, \dotsc, 8}$. Now
  \begin{align*}
    (\Lie_s \Phi)(e_i, e_2, e_3, e_4) &= (\nabla_s \Phi)(e_i, e_2, e_3, e_4) + \Phi(\nabla_{e_i} s, e_2, e_3, e_4) \\
    &\phantom{{}={}} {}+ \Phi(e_i, \nabla_{e_2} s, e_3, e_4) + \Phi(e_i, e_2, \nabla_{e_3} s, e_4) \\
    &\phantom{{}={}} {}+ \Phi(e_i, e_2, e_3, \nabla_{e_4} s) \, \text{.}
  \end{align*}
  We have
  \begin{equation*}
    \Phi(\nabla_{e_i} s, e_2, e_3, e_4) = g(\nabla_{e_i} s, e_2 \times e_3 \times e_4) = g(\nabla_{e_i} s, e_1)
  \end{equation*}
  and
  \begin{align*}
    \Phi(e_i, \nabla_{e_2} s, e_3, e_4) &= - g(\nabla_{e_2} s, e_i \times e_3 \times e_4) \, \text{,} \\
    \Phi(e_i, e_2, \nabla_{e_3} s, e_4) &= - g(\nabla_{e_3} s, e_i \times e_4 \times e_2) \, \text{,} \\
    \Phi(e_i, e_2, e_3, \nabla_{e_4} s) &= - g(\nabla_{e_4} s, e_i \times e_2 \times e_3) \, \text{.}
  \end{align*}
  Further,
  \begin{align*}
    e_2 \times (e_i \times e_3 \times e_4) &= e_3 \times (e_i \times e_4 \times e_2) = e_4 \times (e_i \times e_2 \times e_3) \\
    &= \tau(e_i, e_2, e_3, e_4) = - e_i \times (e_2 \times e_3 \times e_4) = e_1 \times e_i
  \end{align*}
  by~\eqref{eq:def-tau} since $\tau$ is alternating.
  
  So together we get
  \begin{align*}
    &(\Lie_s \tau)(e_1, e_2, e_3, e_4) \\
    &= \sum_{i = 5}^8 (g(\nabla_{e_1} s, e_i) + g(\nabla_{e_i} s, e_1) - (\Lie_s \Phi)(e_i, e_2, e_3, e_4)) (e_1 \times e_i) \\
    &= \sum_{i = 5}^8 (g(\nabla_{e_1} s, e_i) + g(\nabla_{e_i} s, e_1) - (\nabla_s \Phi)(e_i, e_2, e_3, e_4)) (e_1 \times e_i) \\
    &\phantom{{}={}} {}- \sum_{i = 5}^8 (\Phi(\nabla_{e_i} s, e_2, e_3, e_4) + \Phi(e_i, \nabla_{e_2} s, e_3, e_4)) (e_1 \times e_i) \\
    &\phantom{{}={}} {}- \sum_{i = 5}^8 (\Phi(e_i, e_2, \nabla_{e_3} s, e_4) + \Phi(e_i, e_2, e_3, \nabla_{e_4} s)) (e_1 \times e_i) \\
    &= \sum_{i = 5}^8 g(\nabla_{e_1} s, e_i) (e_1 \times e_i) + \sum_{i = 5}^8 (\nabla_s \Phi)(e_i, e_2, e_3, e_4) (e_i \times e_1) \\
    &\phantom{{}={}} {}+ \sum_{i = 5}^8 g(\nabla_{e_2} s, e_i \times e_3 \times e_4) (e_2 \times (e_i \times e_3 \times e_4)) \\
    &\phantom{{}={}} {}+ \sum_{i = 5}^8 g(\nabla_{e_3} s, e_i \times e_4 \times e_2) (e_3 \times (e_i \times e_4 \times e_2)) \\
    &\phantom{{}={}} {}+ \sum_{i = 5}^8 g(\nabla_{e_4} s, e_i \times e_2 \times e_3) (e_4 \times (e_i \times e_2 \times e_3)) \\
    &= \sum_{i = 1}^4 \sum_{j = 5}^8 g(\nabla_{e_i} s, e_j) (e_i \times e_j) + \sum_{i = 5}^8 (\nabla_s \Phi)(e_i, e_2, e_3, e_4) (e_i \times e_1) \\
    &= \sum_{i = 1}^4 e_i \times \nabla_{e_i}^\perp s + \sum_{i = 5}^8 (\nabla_s \Phi)(e_i, e_2, e_3, e_4) (e_i \times e_1)
  \end{align*}
  since $(e_i \times e_k \times e_\ell)_{i = 5, \dotsc, 8}$ is an orthonormal frame of~$\normal[M]{X}$ for~$k, \ell \in \set{2, 3, 4}$ with~$k \ne \ell$. Hence
  \begin{equation}
    (\D F)_0(s) = \vol_X \otimes (0, D s) \, \text{,} \label{eq:linearisation-cayley}
  \end{equation}
  where we used the splitting~\eqref{eq:splitting-2-7}, that is, $(0, D s) \in \altforms^2_- X \oplus E \cong \altforms^2_7 M \vert_X$.
\end{proof}

The following proposition can be done by similar methods as in \cite{McL98}.

\begin{proposition} \label{prop:closed-surjective-smooth}
  Let $M$ be a smooth $8$\dash-manifold with a smooth $\Spin(7)$\dash-structure, let $X$ be a smooth closed Cayley submanifold of~$M$, and let $0 < \alpha < 1$.
  
  If the operator $D \colon \sections(\normal[M]{X}) \to \sections(E)$ defined in~\eqref{eq:def-D} is surjective, then the moduli space of all smooth Cayley submanifolds of~$M$ that are $\CC^{1, \alpha}$\dash-close to~$X$ is a smooth manifold of dimension $\dim \ker D = \ind D$.
\end{proposition}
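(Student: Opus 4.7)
The plan is to apply the Implicit Function Theorem in H\"older Banach spaces to the deformation map~$F$ from the proof of \theoremref{thm:explicit-formula}. First I would recast the Cayley condition $(\exp_s)^\ast \tau = 0$ as a nonlinear equation valued in the rank-$4$ bundle~$E$. A pointwise check in Cayley geometry shows that, for any oriented $4$\dash-plane $V$ in $T_x M$ sufficiently close to the Cayley plane $T_x X$, the form $\tau\vert_V$ lies in the rank-$4$ subspace $E_V \subseteq \altforms^2_7 M_x$ defined as in~\eqref{eq:def-E}; this reflects the fact that the Cayley condition on a $4$\dash-plane is cut out by exactly $4$ scalar equations. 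Composing $F$ with the orthogonal projection $\altforms^2_7 M\vert_X \to E$, which for small~$s$ restricts to a bundle isomorphism from the varying subbundle $E_{X_s}$ onto~$E$, I obtain a smooth map $\tilde F \colon \CC^{1,\alpha}(U) \to \CC^{0,\alpha}(E)$ that vanishes precisely when $X_s$ is Cayley. By~\eqref{eq:linearisation-cayley} its linearisation at~$0$ is exactly the first-order elliptic operator~$D$ of~\eqref{eq:def-D}.

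Next I would apply the Banach space Implicit Function Theorem. Since $X$ is closed and $D$ is of Dirac type, its H\"older extension $D \colon \CC^{1,\alpha}(\normal[M]{X}) \to \CC^{0,\alpha}(E)$ is Fredholm, its kernel consists of smooth sections by elliptic regularity, and it has the same index as the smooth operator. The hypothesised surjectivity of~$D$ on smooth sections combined with Schauder estimates and duality gives surjectivity of the H\"older extension, and the finite-dimensional kernel admits a closed complement in $\CC^{1,\alpha}(\normal[M]{X})$. The IFT then yields that $\tilde F^{-1}(\set{0})$ is, near~$0$, a smooth Banach submanifold of $\CC^{1,\alpha}(U)$ with tangent space $\ker D$ and dimension $\dim \ker D$. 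Surjectivity forces $\coker D = 0$ and hence $\ind D = \dim \ker D$.

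Finally, elliptic regularity applied to the quasilinear first-order equation $\tilde F(s) = 0$ promotes every $\CC^{1,\alpha}$ solution to a smooth section of~$\normal[M]{X}$, so the moduli space obtained from the IFT coincides with the smooth moduli space and parametrises genuinely smooth Cayley submanifolds of~$M$ that are $\CC^{1,\alpha}$\dash-close to~$X$.

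The step I expect to be the main obstacle is the reduction of the target bundle from $\altforms^2_7 M\vert_X$ (rank~$7$) to~$E$ (rank~$4$): without the observation that $\tau\vert_V \in E_V$, the IFT could not be applied directly since the linearisation would map between bundles of unequal rank and could not be surjective. The remaining analytic ingredients---the Banach space IFT, Fredholm and Schauder theory for~$D$, and the regularity bootstrap for $\tilde F(s) = 0$---are standard adaptations of the arguments in~\cite{McL98}.
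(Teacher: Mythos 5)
Your proposal is correct and follows essentially the same route as the paper: both reduce the target from $\altforms^2_7 M\vert_X$ to the rank-$4$ bundle~$E$ by observing that $\tau$ restricted to a nearby $4$\dash-plane is orthogonal to the corresponding $\altforms^2_-$, so that composing with $\pi_E$ loses no information, and then both invoke the Banach-space Implicit Function Theorem together with elliptic regularity for the resulting quasilinear first-order system. The paper phrases the key reduction as $\tau\vert_Y \perp \altforms^2_- Y$ (derived from \eqref{eq:inner-tau} and $(v^\flat\wedge(w\interior\Phi))\vert_Y = g(v,w)\vol_Y$ for $v,w$ tangent to $Y$), which is the same fact you state as $\tau\vert_V \in E_V$ with $\pi_E$ restricting to an isomorphism on $E_{X_s}$.
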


\begin{proof}
  If $Y$ is a $4$\dash-dimensional submanifold of~$M$ that is $\CC^1$\dash-close to~$X$, then $\altforms^2_- Y \to \altforms^2_7 M \vert_Y$, $\alpha \mapsto 2 \pi_7(\alpha)$ is injective. So we can regard $\altforms^2_-Y$ as a subbundle of~$\altforms^2_7 M \vert_Y$. Note that $\tau \vert_Y$ is orthogonal to $\altforms^2_- Y$ by~\eqref{eq:inner-tau} since $(v^\flat \wedge (w \interior \Phi)) \vert_Y = g(v, w) \vol_Y$ for $v, w \in \sections(T Y)$. In particular, if
  \begin{equation}
    \pi_E \colon \altforms^2_7 M \vert_X \to E
  \end{equation}
  denotes the orthogonal projection, then $\pi_E((\exp_s)^\ast(\tau)) = 0$ implies $(\exp_s)^\ast(\tau) = 0$ for $s \in \sections(U)$, where $\tau \in \forms^4(M, \altforms^2_7 M)$ is defined as in~\eqref{eq:def-tau} and $U \subseteq \normal[M]{X}$ is defined as in the proof of \theoremref{thm:explicit-formula}. So if we modify the definition of~$F$ from the proof of \theoremref{thm:explicit-formula} to
  \begin{equation}
    F \colon \sections(U) \to \forms^4(X, E) \cong \sections(E) \, \text{,} \quad s \mapsto \pi_E((\exp_s)^\ast(\tau)) \, \text{,}
  \end{equation}
  then the moduli space of all local deformations of~$X$ as a Cayley submanifold of~$M$ can still be identified with $F^{-1}(\set{0})$.
  
  The symbol~$\sigma_D$ of~$D$ is given by $\sigma_D(x, \xi) s = \sum_{i = 1}^4 e_i \times \xi_i s = \xi \times s$ for $x \in X$, $\xi \in T_x X$. So $\sigma_D(x, \xi) \colon (\normal[M]{X})_x \to E_x$ is bijective if $\xi \ne 0$. Hence $D$ is an elliptic operator. As we will see in the the \hyperref[subsec:smoothness]{next section} (\corollaryref{cor:smooth}), $F$ extends to a smooth map $F_{1, \alpha} \colon \CC^{1, \alpha}(U) \to \CC^{0, \alpha}(E)$. So we can apply the Implicit Function Theorem to deduce that $(F_{1, \alpha})^{-1}(\set{0})$ is a smooth manifold near~$0$ (in the $\CC^{1, \alpha}$\dash-topology) of dimension $\dim \ker D$. Note that the equation $F_{1, \alpha}(s) = 0$ is a nonlinear partial differential equation of order~$1$ whose linearisation at~$0$ is elliptic, and hence the linearisation is elliptic near~$0$ (in the $\CC^{1, \alpha}$\dash-topology). So all elements of $(F_{1, \alpha})^{-1}(\set{0})$ near~$0$ are smooth by Elliptic Regularity \cite[Theorem~6.8.1]{Mor66}.
\end{proof}

\subsection{Smoothness of the Deformation Map}
\label{subsec:smoothness}

Here we show that the deformation map is smooth as a map between Banach spaces. The following proposition is a corollary of \cite[Theorem~2.2.15]{Bai01}, which we extend to vector-valued forms. Specific smoothness results for the deformations maps for other calibrations can also be found in the literature (e.g., \cite{JS05}, \cite{Gay14}).

\begin{proposition}[{cf.~\cite[Theorem~2.2.15]{Bai01}}] \label{prop:c-ell}
  Let $(M, g)$ be a Riemannian manifold, let $X$ be a compact submanifold of~$M$, let $\mathcal{T}_X \defeq \bigoplus_i (T^\ast X)^{\otimes i}$ and $\mathcal{T}_M \defeq \bigoplus_i (T^\ast M)^{\otimes i}$ be the tensor algebras of~$X$ and~$M$, respectively, let $E$ be a vector bundle over~$M$ equipped with a connection~$\nabla$, let $U$ be an open tubular neighbourhood of the $0$\dash-section in~$\normal[M]{X}$ such that the exponential map defines a diffeomorphism from~$U$ to an open neighbourhood~$V$ of~$X$ in~$M$, let $k \ge 1$, $\ell \ge 0$ be integers, and let $0 \le \alpha \le 1$. For $(x, v) \in U$, let $\pi_{x, v} \colon E_{\exp_x(v)} \to E_x$ denote the parallel transport along the curve $[0, 1] \to M$, $t \mapsto \exp_x((1 - t) v)$ (so for all~$s \in \sections(U)$, the map $\pi_s \colon (\exp_s)^\ast E \to E \vert_X$ is an isomorphism of vector bundles).
  
  Then the map
  \begin{equation}
    \begin{split}
      \Psi \colon \CC^{k, \alpha}(U) \oplus \CC^{k - 1 + \ell, \alpha}((\mathcal{T}_M \otimes E) \vert_V) &\to \CC^{k - 1, \alpha}(\mathcal{T}_X \otimes E \vert_X) \, \text{,} \\
      (s, \Theta) &\mapsto (\id_{\mathcal{T}_X} \! \otimes \pi_s)((\exp_s)^\ast \Theta)
    \end{split}
  \end{equation}
  is of class~$\CC^\ell$.
\end{proposition}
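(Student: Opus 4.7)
The plan is to reduce the statement to the scalar-valued composition result of Bailey \cite[Theorem~2.2.15]{Bai01} by localizing on~$X$ and trivializing all bundles. Since smoothness is local on~$X$, I would choose a finite open cover of~$X$ by sets~$U_j$ over each of which $\normal[M]{X}\vert_{U_j}$, $T M\vert_{V_j}$, and $E\vert_{V_j}$ all admit smooth frames, together with a subordinate partition of unity on~$X$. Multiplication by $\CC^\infty$ cutoff functions is bilinear and continuous, so it suffices to check $\CC^\ell$\dash-smoothness of the restriction of~$\Psi$ to a single chart.

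In such a chart, $\Theta$ becomes a tuple of scalar-valued tensor components $\Theta^I_J$ on~$V_j$, and the parallel transport $\pi_{x, v}$ is represented by a matrix-valued function $\Pi(x, v)$ that is smooth on~$U\vert_{U_j}$ (it solves the linear parallel-transport ODE along radial geodesics, and the solution depends smoothly on the data). The local expression for $\Psi(s, \Theta)$ is then a finite sum of products of three kinds of factors: (i)~$\Pi(x, s(x))$, a fixed smooth function composed with~$s$; (ii)~polynomial expressions in the first-order partial derivatives of the map $x \mapsto \exp_x(s(x))$, which is itself a fixed smooth function of $(x, s(x), \partial s(x))$; and (iii)~the mixed composition $\Theta^I_J \circ \exp_s$, in which both arguments of~$\Psi$ appear. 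For factors of types (i)~and~(ii), Bailey's theorem applied to a fixed smooth function gives that $s \mapsto \Pi(\cdot, s(\cdot))$ is $\CC^\infty$ as a map $\CC^{k, \alpha}(U) \to \CC^{k, \alpha}$, and that $s \mapsto P(\partial_x \exp_s)$ is $\CC^\infty$ as a map $\CC^{k, \alpha}(U) \to \CC^{k - 1, \alpha}$. Since multiplication is a continuous bilinear map on the appropriate H\"older spaces, only factor~(iii) remains.

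For (iii) I would run the same inductive scheme as in the proof of \cite[Theorem~2.2.15]{Bai01}, but allowing the function being composed to vary with finite regularity $\CC^{k - 1 + \ell, \alpha}$. Each differentiation in~$s$ of $\Theta^I_J \circ \exp_s$ produces, by the chain rule, a term involving $(\nabla \Theta^I_J) \circ \exp_s$ contracted with a derivative of $\exp_s$ in~$s$; each differentiation in~$\Theta$ is linear and leaves one copy of $\delta \Theta^I_J \circ \exp_s$ unchanged. Thus the $\ell$\dash-fold mixed derivative contains at most $\ell$ derivatives falling on~$\Theta$, so the hypothesis $\Theta \in \CC^{k - 1 + \ell, \alpha}$ is exactly what is needed to ensure that each term still lies in $\CC^{k - 1, \alpha}$. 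The principal technical obstacle is this derivative-loss bookkeeping together with verifying continuity, and then iterated differentiability, of the resulting multilinear remainder terms in the definition of the Fr\'echet derivative. No new geometric idea is needed beyond adapting Bailey's induction to parametric dependence on a H\"older-class auxiliary function, but this extension must be carried out systematically to confirm the stated regularity exponents.
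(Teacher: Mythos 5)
Your proposal is correct and follows the same essential mechanism as the paper: localize and trivialize so that Bailey's Theorem~2.2.15 applies, after observing that its proof works verbatim when the tensor field being pulled back has only finite H\"older regularity $\CC^{k-1+\ell,\alpha}$. The one structural difference is how the reduction is packaged. You factor the local expression of $\Psi(s,\Theta)$ into three kinds of factors --- the parallel-transport matrix $\Pi(x,s(x))$, polynomial combinations of $\D\exp_s$, and the composition $\Theta\circ\exp_s$ --- and handle the first two by the smooth-target case of Bailey's theorem, re-running Bailey's induction only for the third. The paper instead pushes both the trivialization and the parallel transport onto the bundle side: it defines a \emph{smooth} $\R^n$-valued map $\varphi$ on $E\vert_V$ by $\varphi(y,e)=\tilde\varphi(\pi(y),\pi_{x,v}(e))$, where $\pi\colon V\to X$ is the tubular-neighbourhood retraction and $\tilde\varphi$ is the (cut-off) trivialization over $X$, so that $(\id_{\mathcal T_X}\!\otimes\tilde\varphi)\circ\Psi(s,\Theta)=(\exp_s)^\ast\bigl((\id\otimes\varphi)(\Theta)\bigr)$ holds \emph{exactly}, with no leftover factors. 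This puts the whole map in the single form $s\mapsto(\exp_s)^\ast(\text{fixed }\CC^{k-1+\ell,\alpha}\text{ tensor})$, which is literally the setup of Bailey's theorem; linearity of $\Psi$ in $\Theta$ then upgrades this to joint $\CC^\ell$ regularity. The upshot is that the paper's packaging avoids the product-rule bookkeeping and the separate verification that your factors land in the right H\"older spaces and multiply continuously, while your version makes explicit where the derivative count comes from. Both correctly identify that the genuine work is the extension of Bailey's inductive argument to tensor fields of class $\CC^{k-1+\ell,\alpha}$.
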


\begin{proof}
  Define $\pi \colon V \to X$ by~$\pi(\exp_x(v)) \defeq x$ for $(x, v) \in U$. Since $X$ is compact, there are open subsets $U_1, \dotsc, U_n \subseteq X$ with $U_1 \cup \dotsb \cup U_n = X$ such that $E \vert_{U_i}$ is trivial for~$i = 1, \dotsc, n$. Let $\tilde{\lambda}_1, \dotsc, \tilde{\lambda}_n \colon X \to \R$ be a partition of unity subordinate to the cover~$(U_1, \dotsc, U_n)$. Extend these to functions $\lambda_1, \dotsc, \lambda_n \colon V \to \R$ by $\lambda_i(y) \defeq \tilde{\lambda}_i(\pi(y))$ for~$y \in V$. Then $(\lambda_1, \dotsc, \lambda_n)$ is a partition of unity subordinate to the cover~$(V_1, \dotsc, V_n)$, where $V_i \defeq \pi^{-1}(U_i)$ for~$i = 1, \dotsc, n$. If the proposition is true for~$\lambda_1 \Theta, \dotsc, \lambda_n \Theta$, then it is also true for~$\Theta$ since $\Psi$ is linear in~$\Theta$. So \wolog\ we may assume that the support of~$\Theta$ is contained in some $V_i$, say in~$V_1$.
  
  Now let $\psi \colon E \vert_{U_1} \to \R^n$ be a trivialising map. Let $\rho \colon X \to \R$ be a smooth function with $\supp(\rho) \subseteq U_1$ and $\rho \vert_{\pi(\supp(\Theta))} \equiv 1$. Let $\tilde{\varphi} \colon E \vert_X \to \R^n$ be defined by $\tilde{\varphi}(x, e) \defeq \rho(x) \psi(x, e)$ for~$x \in U_1$, $e \in E_x$ and $\tilde{\varphi}(x, e) \defeq 0$ for~$x \in X \setminus U_1$, $e \in E_x$. Let $\varphi \colon E \vert_V \to \R^n$ be defined by~$\varphi(y, e) \defeq \tilde{\varphi}(\pi(y), \pi_{x, v}(e))$ for~$y \in V$, $e \in E_y$, where $v \in (\normal[M]{X})_x$ is such that $(x, v) \in U$ and $\exp_{\pi(y)}(v) = y$.
  
  Now $\Psi$ is of class~$\CC^\ell$ if and only if $(\id_{\mathcal{T}_X} \! \otimes \tilde{\varphi}) \circ \Psi$ is of class~$\CC^\ell$ since
  \begin{equation*}
    E \vert_{\pi(\supp(\Theta))} \to \pi(\supp(\Theta)) \times \R^n \, \text{,} \quad (x, e) \mapsto (x, \tilde{\varphi}(x, e))
  \end{equation*}
  is a smooth diffeomorphism. We have
  \begin{align*}
    (\id_{\mathcal{T}_X} \! \otimes \tilde{\varphi})(\Psi(s)) &= (\id_{\mathcal{T}_X} \! \otimes \tilde{\varphi})((\id_{\mathcal{T}_X} \! \otimes \pi_s)((\exp_s)^\ast \Theta)) \\
    &= (\id_{\mathcal{T}_X} \! \otimes \varphi)((\exp_s)^\ast \Theta) \\
    &= (\exp_s)^\ast((\id_{\mathcal{T}_M \vert_V} \! \otimes \varphi)(\Theta)) \, \text{.}
  \end{align*}
  But $(\id_{\mathcal{T}_M \vert_V} \! \otimes \varphi)(\Theta) \in \sections(\mathcal{T}_M \vert_V \otimes \R^n)$, and the map
  \begin{equation*}
    \CC^{k, \alpha}(U) \to \CC^{k - 1, \alpha}(\mathcal{T}_X \otimes \R^n) \, \text{,} \quad s \mapsto (\exp_s)^\ast((\id_{\mathcal{T}_M \vert_V} \! \otimes \varphi)(\Theta))
  \end{equation*}
  is of class~$\CC^\ell$ by \cite[Theorem~2.2.15]{Bai01} (note that the proof of \cite[Theorem~2.2.15]{Bai01} also works if $\Theta$ is just of class $\CC^{k - 1 + \ell, \alpha}$). Now the claim follows as $\Psi$ is linear in~$\Theta$.
\end{proof}

\begin{corollary} \label{cor:smooth}
  Let $(M, g)$ be a Riemannian manifold, let $X$ be a compact submanifold of~$M$, let $\mathcal{T}_X \defeq \bigoplus_i (T^\ast X)^{\otimes i}$ and $\mathcal{T}_M \defeq \bigoplus_i (T^\ast M)^{\otimes i}$ be the tensor algebras of~$X$ and~$M$, respectively, let $E$ be a vector bundle over~$M$ equipped with a connection~$\nabla$, let $U$ be an open tubular neighbourhood of the $0$\dash-section in~$\normal[M]{X}$ such that the exponential map defines a diffeomorphism from~$U$ to an open neighbourhood~$V$ of~$X$ in~$M$, let $\Theta \in \sections((\mathcal{T}_M \otimes E) \vert_V)$, let $k \ge 1$ be an integer, and let $0 \le \alpha \le 1$. For $(x, v) \in U$, let $\pi_{x, v} \colon E_{\exp_x(v)} \to E_x$ denote the parallel transport along the curve $[0, 1] \to M$, $t \mapsto \exp_x((1 - t) v)$ (so for all~$s \in \sections(U)$, the map $\pi_s \colon (\exp_s)^\ast E \to E \vert_X$ is an isomorphism of vector bundles).
  
  Then the map
  \begin{equation}
    \CC^{k, \alpha}(U) \to \CC^{k - 1, \alpha}(\mathcal{T}_X \otimes E \vert_X) \, \text{,} \quad s \mapsto (\id_{\mathcal{T}_X} \! \otimes \pi_s)((\exp_s)^\ast \Theta)
  \end{equation}
  is smooth.
\end{corollary}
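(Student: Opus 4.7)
The plan is to derive this corollary directly from \propositionref{prop:c-ell} by specialising the second argument of the two-variable map~$\Psi$ defined there.

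First, observe that since $\Theta$ is a smooth section, it lies in $\CC^{k - 1 + \ell, \alpha}((\mathcal{T}_M \otimes E) \vert_V)$ for \emph{every} integer $\ell \ge 0$. Next, I would note that the map in the corollary is nothing but $s \mapsto \Psi(s, \Theta)$, i.e.~the composition of the affine embedding
\begin{equation*}
  \iota_\Theta \colon \CC^{k, \alpha}(U) \to \CC^{k, \alpha}(U) \oplus \CC^{k - 1 + \ell, \alpha}((\mathcal{T}_M \otimes E) \vert_V) \, \text{,} \quad s \mapsto (s, \Theta) \, \text{,}
\end{equation*}
with the map~$\Psi$ from \propositionref{prop:c-ell}.

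Since $\iota_\Theta$ is affine (in particular smooth between Banach spaces) and $\Psi$ is of class~$\CC^\ell$ by \propositionref{prop:c-ell}, the composition $\Psi \circ \iota_\Theta$ is of class~$\CC^\ell$. As $\ell \ge 0$ was arbitrary, this partial map is $\CC^\ell$ for every~$\ell$, hence smooth.

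There is essentially no obstacle here: the content of the corollary has already been absorbed into the joint regularity statement of \propositionref{prop:c-ell}, and the only subtlety is making sure to exploit the fact that a smooth $\Theta$ has enough regularity to feed into the $\Psi$ from the proposition for every value of~$\ell$ simultaneously.
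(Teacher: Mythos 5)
Your argument is correct and is exactly what the paper has in mind: \corollaryref{cor:smooth} is stated immediately after \propositionref{prop:c-ell} with no proof given, precisely because it follows by freezing~$\Theta$ and letting $\ell$ range over all non-negative integers, just as you do. (The only implicit point, glossed over in both the proposition and your argument, is that the smooth section~$\Theta$ is assumed to have finite $\CC^{k-1+\ell,\alpha}$ norm over~$V$, which is harmless here since $X$ is compact and one may shrink to a tubular neighbourhood with compact closure.)
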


\subsection{Varying the Spin(7)-Structure}
\label{subsec:varying-spin7}

Here we prove that for a generic $\Spin(7)$\dash-structure, the moduli space of closed Cayley submanifolds is smooth.

\begin{definition}
  Let $X$ be a topological space. We say that a statement holds for \emph{generic} $x \in X$ if the set of all $x \in X$ for which the statement is true is a residual set, that is, it contains a set which is the intersection of countably many open dense subsets.
\end{definition}

In the following theorem we use the $\CC^\infty$\dash-topology for the space of all $\Spin(7)$-structures.

\begin{theorem}[\theoremref{thm:main-spin7-closed}] \label{thm:main-theorem-spin7-closed}
  Let $M$ be a smooth $8$\dash-manifold with a smooth $\Spin(7)$-structure~$\Phi$, let $X$ be a smooth closed Cayley submanifold of~$M$, and let $0 < \alpha < 1$.
  
  Then for every generic smooth $\Spin(7)$\dash-structure~$\Psi$ that is $\CC^{1, \alpha}$\dash-close to~$\Phi$, the moduli space of all smooth Cayley submanifolds of $(M, \Psi)$ that are $\CC^{1, \alpha}$\dash-close to~$X$ is either empty or a smooth manifold of dimension $\ind D$, where $D$ is defined in~\eqref{eq:def-D}.
\end{theorem}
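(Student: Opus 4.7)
The plan is to run a Sard--Smale transversality argument on a universal deformation map parametrised by both the normal section and the $\Spin(7)$-structure, extending the setup of \propositionref{prop:closed-surjective-smooth}. Fix a large integer $k$ and let $\mathcal{V}$ be a small $\CC^{k, \alpha}$-neighbourhood of $\Phi$ in the Banach manifold of $\CC^{k, \alpha}$ $\Spin(7)$-structures on $M$. I would then define a map
\begin{equation*}
  \mathcal{F} \colon \CC^{k, \alpha}(U) \times \mathcal{V} \to \CC^{k - 1, \alpha}(E) \, \text{,} \quad \mathcal{F}(s, \Psi) \defeq \pi_E \bigl( \rho_\Psi \bigl( (\exp_s)^\ast \tau_\Psi \bigr) \bigr) \, \text{,}
\end{equation*}
where $\tau_\Psi \in \forms^4(M, \altforms^2_{7, \Psi} M)$ is the $4$-fold cross product of $\Psi$ from~\eqref{eq:def-tau}, $\rho_\Psi \colon \altforms^2_{7, \Psi} M \vert_X \to \altforms^2_{7, \Phi} M \vert_X$ is a smooth bundle isomorphism defined for $\Psi$ close to $\Phi$ (for instance via $2 \pi_{7, \Phi}$), and $\pi_E$ is the projection from~\eqref{eq:splitting-2-7}. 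Then $\mathcal{F}(s, \Psi) = 0$ if and only if $\exp_s(X)$ is Cayley with respect to $\Psi$, and the smoothness of $\mathcal{F}$ as a map of Banach manifolds follows from \corollaryref{cor:smooth} together with the smooth (indeed algebraic) dependence of $\tau_\Psi$ and $\rho_\Psi$ on $\Psi$.

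The derivative $(\D \mathcal{F})_{(0, \Phi)}$ splits as $(s, \phi) \mapsto D s + L(\phi)$, where $D$ is the elliptic operator of Dirac type from~\eqref{eq:def-D} supplied by \theoremref{thm:explicit-formula}, and $L$ sends a variation $\phi$ of $\Phi$ (lying in the tangent cone to the $\Spin(7)$-orbit, which under the splitting~\eqref{eq:forms-splitting} is $\altforms^4_1 \oplus \altforms^4_7 \oplus \altforms^4_{35}$, of fibrewise dimension $43$) to the corresponding variation of $\pi_E(\rho_\Psi(\tau_\Psi \vert_X))$. The critical step is to show that $(s, \phi) \mapsto D s + L(\phi)$ is surjective onto $\CC^{k - 1, \alpha}(E)$. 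Since $D$ is elliptic with closed range, its cokernel is a finite-dimensional space of smooth sections of $E$ (namely $\ker D^\ast$), so it suffices to verify that no nonzero $\sigma$ in this cokernel is $L^2$-orthogonal to the image of $L$. I would do this pointwise: at any $x_0 \in X$ with $\sigma(x_0) \ne 0$, compute $L(\phi)(x_0)$ as an explicit algebraic expression in $\phi(x_0)$ using a $\Spin(7)$-frame adapted to $X$ at $x_0$ (so that $(e_1, \dotsc, e_4)$ spans $T_{x_0} X$), and check that the induced linear map from the $43$-dimensional admissible fibre to $E_{x_0} \cong \R^4$ is surjective. A bump-function concentrated near $x_0$ then yields a $\phi$ with $\langle L(\phi), \sigma \rangle_{L^2} \neq 0$.

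Granted surjectivity, the Implicit Function Theorem gives that $\mathcal{F}^{-1}(\set{0})$ is a smooth Banach submanifold of $\CC^{k, \alpha}(U) \times \mathcal{V}$, and the projection to $\mathcal{V}$ is a smooth Fredholm map of index $\ind D$ (since its vertical derivative at each point is a compact perturbation of $D$). The Sard--Smale Theorem, valid once $k$ is chosen larger than $\ind D$, then yields that the set of regular values of this projection is residual in $\mathcal{V}$; for each regular $\Psi$ the fibre is either empty or a smooth manifold of dimension $\ind D$, and its elements are smooth Cayley submanifolds of $(M, \Psi)$ by elliptic regularity as in the proof of \propositionref{prop:closed-surjective-smooth}. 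The passage from $\CC^{k, \alpha}$ to $\CC^\infty$ is routine, by intersecting the residual subsets obtained for every $k$ and noting that smooth structures form a dense subset of each stratum.

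The main obstacle is the surjectivity step in the second paragraph: one has to carry out the $\Spin(7)$-representation-theoretic computation showing that the $\Phi$-dependence of $\pi_E(\rho_\Psi(\tau_\Psi \vert_X))$ is rich enough to hit the full cokernel of $D$ pointwise. Everything else is standard Banach-manifold and Sard--Smale machinery supported by \corollaryref{cor:smooth}, \theoremref{thm:explicit-formula}, and elliptic regularity.
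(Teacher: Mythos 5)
Your strategy is the same as the paper's: a deformation map parametrised jointly by the normal section and the $\Spin(7)$-structure, with surjectivity of the joint linearisation feeding into Sard--Smale, then elliptic regularity and a $\CC^{k,\alpha} \to \CC^\infty$ limit. However, as written there is a genuine regularity gap. You place both $s$ and $\Psi$ in $\CC^{k,\alpha}$-spaces, targeting $\CC^{k-1,\alpha}(E)$. By \propositionref{prop:c-ell}, with $s$ of class $\CC^{k,\alpha}$ and the tensor $\tau_\Psi$ of class $\CC^{k-1+\ell,\alpha}$, the composite map is of class $\CC^\ell$; since $\tau_\Psi$ depends pointwise (algebraically) on $\Psi$, taking $\Psi \in \CC^{k,\alpha}$ gives only $\ell = 1$, no matter how large $k$ is. Sard--Smale needs the Fredholm map to be of class $\CC^q$ with $q > \max(\ind D, 0)$, so your setup fails as soon as $\ind D > 0$. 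The fix, used in the paper, is to deliberately mismatch the regularities: keep $s$ in $\CC^{1,\alpha}(U)$ (enough, since the deformation equation is first order and solutions are upgraded by elliptic regularity afterwards) and put the structure-parameter in $\CC^{k,\alpha}$; \propositionref{prop:c-ell} then gives a $\CC^k$ map and $k > \ind D$ suffices.

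The surjectivity step, which you correctly identify as the crux, is deferred in your sketch. It does go through, but it is cleaner than your cokernel-plus-bump-function plan suggests: one does not need to probe all $43$ fibre directions of $\altforms^4_1 M \oplus \altforms^4_7 M \oplus \altforms^4_{35} M$. For $e \in \sections(\altforms^2_7 M)$ and $\chi \defeq h(\tau_\Phi, e) \in \forms^4_7(M)$, the derivative of the deformation map in the $\chi$-direction at $s = 0$ equals $-\pi_E(e \vert_X)$ (this is \lemmaref{lemma:linearisation-F-tilde}, computed along a path of $\Spin(7)$-structures inducing the same metric). Since $E$ is by definition a subbundle of $\altforms^2_7 M \vert_X$ and $\pi_E$ is the orthogonal projection onto it, this already hits every section of $E$, so the joint linearisation is surjective outright, with Fredholm index $\ind D$ for the restriction to the $s$-slot. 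With these two corrections, the remainder of your plan --- Implicit Function Theorem for the universal moduli space, Sard--Smale for the projection, elliptic regularity, and the intersection over $k$ (compare \lemmaref{lemma:residual-c-infty}) --- matches the paper's proof.
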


Note that if $\ind D < 0$, then the moduli space is necessarily empty for a generic $\Spin(7)$\dash-structure. As we will see in the proof, we may restrict the class of $\Spin(7)$\dash-structures~$\Psi$ to those inducing the same metric as~$\Phi$.

\begin{remark}
  The space of smooth sections of a vector bundle over a smooth manifold is a Fréchet space with the $\CC^\infty$\dash-topology, and hence a complete metric space. So a residual set is dense by the Baire Category Theorem.
\end{remark}

\begin{proof}
  Recall from~\eqref{eq:forms-splitting} that we have a pointwise orthogonal splitting
  \begin{equation*}
    \altforms^4 M \cong \altforms^4_1 M \oplus \altforms^4_7 M \oplus \altforms^4_{27} M \oplus \altforms^4_{35} M \, \text{.}
  \end{equation*}
  There are an open tubular neighbourhood $V \subseteq \altforms^4_1 M \oplus \altforms^4_7 M \oplus \altforms^4_{35} M$ of the $0$\dash-section and a smooth bundle morphism $\Theta \colon V \to \altforms^4 M$ (we will also denote the map $\sections(V) \to \forms^4(M)$, $\chi \mapsto \Theta \circ \chi$ by~$\Theta$) such that \cite[Definition~10.5.8 and Proposition~10.5.9]{Joy00}
  \begin{compactenum}[(i)]
    \item $\Theta(0) = \Phi$,
    \item $\Theta(\chi)$ is a $\Spin(7)$\dash-structure on~$M$ for each $\chi \in \sections(V)$,
    \item if $\Psi$ is a $\Spin(7)$\dash-structure on~$M$ that it $\CC^0$\dash-close to~$\Phi$, then $\Psi = \Theta(\chi)$ for some unique $\chi \in \sections(V)$, and
    \item $(\D \Theta)_0(\chi) = \chi$ for all $\chi \in \sections(\altforms^4_1 M \oplus \altforms^4_7 M \oplus \altforms^4_{35} M)$.
  \end{compactenum}
  So $\Theta$ parametrises $\Spin(7)$\dash-structures on~$M$ that are $\CC^0$\dash-close to~$\Phi$.
  
  Let
  \begin{equation}
    \pi_E \colon \altforms^2 M \vert_X \to E
  \end{equation}
  denote the orthogonal projection, where the vector bundle~$E$ of rank~$4$ over~$X$ is defined as in~\eqref{eq:def-E}. Furthermore, let $U \subseteq \normal[M]{X}$ be defined as in the proof of \theoremref{thm:explicit-formula}, and let
  \begin{equation}
    \tilde{F} \colon \sections(U) \oplus \sections(V) \to \forms^4(X, E) \cong \sections(E) \, \text{,} \quad (s, \chi) \mapsto \pi_E((\exp_s)^\ast(\tau_{\Theta(\chi)})) \, \text{,}
  \end{equation}
  where $\tau_{\Theta(\chi)} \in \forms^4(M, \altforms^2 M)$ is defined as in~\eqref{eq:def-tau} with respect to the $\Spin(7)$-structure $\Theta(\chi)$. So the moduli space of all Cayley submanifolds of $(M, \Theta(\chi))$ near~$X$ can be identified with $\tilde{F}({}\cdot{}, \chi)^{-1}(\set{0})$.
  
  \begin{lemma} \label{lemma:linearisation-F-tilde}
    Let $e \in \sections(\altforms^2_7 M)$, and let $\chi \defeq h(\tau_\Phi, e)$, where $h$ is the metric on~$\altforms^2_7 M$ (note that $\chi \in \forms^4_7(M)$ by \eqref{eq:inner-tau} and \eqref{eq:forms-4-7}). Then
    \begin{equation}
      (\D \tilde{F})_{(0, 0)}(0, \chi) = - \pi_E(e \vert_X) \, \text{.}
    \end{equation}
  \end{lemma}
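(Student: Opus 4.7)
The plan is to compute $\dot\tau \defeq \frac{\D}{\D t} \tau_{\Theta(t\chi)} \bigg\vert_{t=0}$ directly and then use that $(\D \tilde{F})_{(0,0)}(0, \chi) = \pi_E(\dot\tau \vert_X)$, which follows from $\exp_0 = \id_X$. The key observation is that $\chi \in \forms^4_7(M)$ by~\eqref{eq:forms-4-7}, and $\altforms^4_7$ is precisely the direction tangent at $\Phi$ to the pointwise $7$\dash-dimensional orbit $\SO(8) / \Spin(7)$, which preserves the induced metric (by a standard representation-theoretic argument, or equivalently as noted in the remarks following \theoremref{thm:main-spin7-closed}). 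Hence $g_{\Theta(t\chi)} = g_\Phi + O(t^2)$, so the musical isomorphisms, Hodge star, and inner product~$h$ are first-order invariant in~$t$.

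To compute $\dot\tau$ at a point $x \in X$, pick a $\Spin(7)$\dash-frame $(e_1, \dotsc, e_8)$ at~$x$ with $(e_1, \dotsc, e_4)$ a positive orthonormal frame of~$T_x X$ (possible since $X$ is Cayley). Using~\eqref{eq:def-tau} and orthonormality, the terms involving $g(e_i, e_j)$ with $i \ne j$ in $\set{1, \dotsc, 4}$ vanish, giving $\tau_\Psi(e_1, e_2, e_3, e_4) = -e_1 \times_\Psi (e_2 \times_\Psi e_3 \times_\Psi e_4)$. By~\eqref{eq:def-cross-3}, the derivative at $t = 0$ of the inner $3$\dash-fold cross product is $\eta \defeq (e_2 \interior (e_3 \interior (e_4 \interior \chi)))^\sharp$, and at $t = 0$ this inner cross product equals~$e_1$. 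Since $e_1 \times e_1 = 0$ and the direct variation of $\times_\Psi$ at coincident arguments vanishes (as $e_1^\flat \wedge e_1^\flat = 0$), one finds
\[
  \dot\tau(e_1, e_2, e_3, e_4) = -e_1 \times \eta \, \text{.}
\]

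It remains to identify~$\eta$. For any vector~$v$ one has $g(\eta, v) = \chi(v, e_2, e_3, e_4) = h(\tau_\Phi(v, e_2, e_3, e_4), e)$. Using~\eqref{eq:def-tau} once more, $\tau_\Phi(v, e_2, e_3, e_4) = e_1 \times v$ for $v \in \set{e_5, \dotsc, e_8}$, while the expression vanishes for $v \in T_x X$ (either by antisymmetry or because $\tau_\Phi \vert_X = 0$). Hence $\eta \in \normal[M]{X}$ and
\[
  e_1 \times \eta = \sum_{j = 5}^{8} h(e_1 \times e_j, e) \, (e_1 \times e_j) \, \text{.}
\]
By~\eqref{eq:inner-cross-2} applied to the $\Spin(7)$\dash-frame, $\set{e_1 \times e_j}_{j = 5, \dotsc, 8}$ is an orthonormal set of size~$4$, equal to the rank of~$E$, hence an orthonormal basis of~$E_x$. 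The displayed sum is therefore $\pi_E(e \vert_X)$ at~$x$, giving $\pi_E(\dot\tau \vert_X) = -\pi_E(e \vert_X)$, as claimed. The only step requiring extra care is the metric-invariance observation in the first paragraph, which is standard.
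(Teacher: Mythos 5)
Your proof is correct and arrives at the same answer as the paper, but via a genuinely different route at each of the two key steps. First, where you observe that $g_{\Theta(t\chi)} = g_\Phi + O(t^2)$ by the representation-theoretic fact that $\altforms^4_7$ is tangent to the metric-preserving $\SO(8)/\Spin(7)$-orbit, the paper instead invokes \cite[Proposition~5.3.1]{Kar05} to produce a path of $\Spin(7)$-structures preserving the metric exactly, and then pulls it back through $\Theta$. Both handle the same issue (so that musical isomorphisms, $\times$, and $h$ may be treated as first-order rigid); yours is weaker but sufficient, and arguably more transparent. Second, where you differentiate $\tau_\Psi(e_1,\dotsc,e_4) = -e_1 \times_\Psi (e_2 \times_\Psi e_3 \times_\Psi e_4)$ directly from the defining formula~\eqref{eq:def-tau}, isolating the single surviving derivative term $-e_1 \times \eta$ with $\eta = (e_2\interior(e_3\interior(e_4\interior\chi)))^\sharp$, the paper instead uses the derived expansion $\tau = \sum_{i=2}^8 \tau_i \otimes (e_1 \times e_i)$ coming from~\eqref{eq:inner-tau} (the same decomposition already used in the proof of \theoremref{thm:explicit-formula}) and differentiates the scalar coefficient forms $\tau_i$, killing the other product-rule term via $\tau_i\vert_X = 0$. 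After that, both proofs finish identically: $g(\eta, e_j) = \chi(e_j, e_2, e_3, e_4) = h(e_1 \times e_j, e)$ and $\{e_1 \times e_j\}_{j=5}^8$ is an orthonormal frame of $E_x$, giving $-\pi_E(e\vert_X)$. Your approach is more self-contained and geometric; the paper's has the advantage of reusing machinery it has already set up, which keeps the two linearisation computations in the paper uniform. One small point of exposition: when you say the $g(e_i,e_j)$ terms ``vanish,'' it is worth stating explicitly that their $t$-derivative vanishes because both factors $g_{\Theta(t\chi)}(e_i,e_j)$ and $\dot g_{\Theta(t\chi)}(e_i,e_j)$ are zero at $t=0$ — you clearly understand this, but as written a reader might worry you are only using the value at $t = 0$.
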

  
  \begin{proof}
    Since $\chi \in \forms^4_7(M)$, there is a path $(\Phi_t)_{t \in (- \eps, \eps)}$ of $\Spin(7)$\dash-structures on~$M$ with $\Phi_0 = \Phi$ and $\frac{\D}{\D t} \Phi_t \big\vert_{t = 0} = \chi$ such that the metric induced by~$\Phi_t$ is the same metric as the metric induced by~$\Phi$ for $t \in (- \eps, \eps)$ \cite[Proposition~5.3.1]{Kar05}. Write $\Phi_t = \Theta(\chi_t)$ with $\frac{\D}{\D t} \chi_t \big\vert_{t = 0} = \chi$.
    
    Let $(e_1, \dotsc, e_8)$ be a local orthonormal frame of~$M$ such that $(e_1, \dotsc, e_4)$ is a positive frame of~$X$, and let $(e^1, \dotsc, e^8)$ be the dual coframe. Then
    \begin{align*}
      (\D \tilde{F})_{(0, 0)}(0, \chi) &= \frac{\D}{\D t} \tilde{F}(0, \chi_t) \bigg\vert_{t = 0} = \frac{\D}{\D t} \pi_E\bigl(\tau_{\Theta(\chi_t)} \big\vert_X\bigr) \bigg\vert_{t = 0} \\
      &= \frac{\D}{\D t} \sum_{i = 2}^8 (e^i \wedge (e_1 \interior \Phi_t) - e^1 \wedge (e_i \interior \Phi_t)) \vert_X \otimes \pi_E(e_1 \times e_i) \bigg\vert_{t = 0} \\
      &= \sum_{i = 2}^8 (e^i \wedge (e_1 \interior \chi) - e^1 \wedge (e_i \interior \chi)) \vert_X \otimes \pi_E(e_1 \times e_i) \\
      &= - \sum_{i = 5}^8 \chi(e_i, e_2, e_3, e_4) \vol_X \otimes (e_1 \times e_i) \\
      &= - \sum_{i = 5}^8 h(\tau_\Phi(e_i, e_2, e_3, e_4), e) \vol_X \otimes (e_1 \times e_i) \\
      &= \sum_{i = 5}^8 h(e_i \times (e_2 \times e_3 \times e_4), e) \vol_X \otimes (e_1 \times e_i) \\
      &= \sum_{i = 5}^8 h(e, e_i \times e_1) \vol_X \otimes (e_1 \times e_i) \\
      &= - \vol_X \otimes \pi_E(e \vert_X)
    \end{align*}
    by \eqref{eq:def-tau}.
  \end{proof}
  
  By \propositionref{prop:c-ell}, $\tilde{F}$ extends to a map
  \begin{equation}
    \tilde{F}_{1, \alpha} \colon \CC^{1, \alpha}(U) \oplus \CC^{1, \alpha}(V) \to \CC^{0, \alpha}(E)
  \end{equation}
  of class~$\CC^1$. For fixed $\chi \in \CC^{1, \alpha}(V)$, the equation
  \begin{equation}
    \tilde{F}_{1, \alpha}(s, \chi) = \pi_E((\exp_s)^\ast(\tau_{\Theta(\chi)})) = 0 \label{eq:F-tilde-chi}
  \end{equation}
  is a nonlinear partial differential equation of order~$1$ in~$s$. Furthermore, the linearisation at~$0$ is elliptic for $\chi = 0$. Hence there is a $\CC^{1, \alpha}$\dash-neighbourhood $\tilde{U}_1 \subseteq \CC^{1, \alpha}(U) \oplus \CC^{1, \alpha}(V)$ of $(0, 0)$ such that
  \begin{equation}
    (\D \tilde{F}_{1, \alpha})_{(s, \chi)} \vert_{\CC^{1, \alpha}(\normal[M]{X})} \colon \CC^{1, \alpha}(\normal[M]{X}) \to \CC^{0, \alpha}(E) \label{eq:F-tilde-chi-linearisation}
  \end{equation}
  is an elliptic differential operator of order~$1$ for all $(s, \chi) \in \tilde{U}_1$. In particular, if $(s, \chi) \in \tilde{U}_1$ satisfies~\eqref{eq:F-tilde-chi} and $\chi \in \CC^{k, \alpha}(V)$ (for some $k \ge 1$), then $s \in \CC^{k + 1, \alpha}(\normal[M]{X})$ by Elliptic Regularity \cite[Theorem~6.8.1]{Mor66} since $\tau_{\Theta(\chi)} \in \CC^{k, \alpha}(\altforms^4 M \otimes \altforms^2 M)$. Furthermore, the coefficients of~\eqref{eq:F-tilde-chi-linearisation} are in~$\CC^{k - 1, \alpha}$. The first-order coefficients are even in~$\CC^{k, \alpha}$ since $\tau_{\Theta(\chi)}$ is multilinear (and hence smooth) in each fibre and $\D (\exp_s) \in \CC^{k, \alpha}(T^\ast X \otimes (\exp_s)^\ast T M)$. So the formal adjoint of~\eqref{eq:F-tilde-chi-linearisation} has coefficients in~$\CC^{k - 1, \alpha}$, and hence the $L^2$\dash-orthogonal complement of the image of~\eqref{eq:F-tilde-chi-linearisation} is a (finite-dimensional) subspace of~$\CC^{k, \alpha}(E)$.
  
  Note that the map $\chi \mapsto \tau_{\Theta(\chi)}$ is a bundle morphism which is smooth in each fibre. Furthermore, the calculations in \lemmaref{lemma:linearisation-F-tilde} can be done fibre-wise. So there is a $\CC^{1, \alpha}$\dash-neighbourhood $\tilde{U}_2 \subseteq \CC^{1, \alpha}(U) \oplus \CC^{1, \alpha}(V)$ of $(0, 0)$ such that
  \begin{equation*}
    (\D \tilde{F}_{1, \alpha})_{(s, \chi)} \vert_{\CC^{k, \alpha}(\altforms^4_1 M \oplus \altforms^4_7 M \oplus \altforms^4_{35} M)} \colon \CC^{k, \alpha}(\altforms^4_1 M \oplus \altforms^4_7 M \oplus \altforms^4_{35} M) \to \CC^{k, \alpha}(E)
  \end{equation*}
  is surjective for all $(s, \chi) \in \tilde{U}_2$ and $k \ge 1$.
  
  For $k \ge 1$, let
  \begin{equation*}
    \tilde{F}_{k, \alpha} \defeq \tilde{F}_{1, \alpha} \vert_{\CC^{1, \alpha}(U) \oplus \CC^{k, \alpha}(V)} \colon \CC^{1, \alpha}(U) \oplus \CC^{k, \alpha}(V) \to \CC^{0, \alpha}(E) \, \text{.}
  \end{equation*}
  Then $\tilde{F}_{k, \alpha}$ if of class~$\CC^k$ by \propositionref{prop:c-ell}. Let $\tilde{U} \defeq \tilde{U}_1 \cap \tilde{U}_2$. Then the above argumentation shows that if $(s, \chi) \in \tilde{U} \cap (\CC^{1, \alpha}(U) \oplus \CC^{k, \alpha}(V))$ satisfies~\eqref{eq:F-tilde-chi}, then
  \begin{equation*}
    (\D \tilde{F}_{k, \alpha})_{(s, \chi)} \colon \CC^{1, \alpha}(\normal[M]{X}) \oplus \CC^{k, \alpha}(\altforms^4_1 M \oplus \altforms^4_7 M \oplus \altforms^4_{35} M) \to \CC^{0, \alpha}(E)
  \end{equation*}
  is surjective and
  \begin{equation*}
    (\D \tilde{F}_{k, \alpha})_{(s, \chi)} \vert_{\CC^{1, \alpha}(\normal[M]{X})} = (\D \tilde{F}_{1, \alpha})_{(s, \chi)} \vert_{\CC^{1, \alpha}(\normal[M]{X})} \colon \CC^{1, \alpha}(\normal[M]{X}) \to \CC^{0, \alpha}(E)
  \end{equation*}
  is Fredholm. Furthermore, the Fredholm index is the same for all of these operators as we may assume \wolog\ that $\tilde{U}$ is connected. So the Fredholm index is~$\ind D$ since $D = (\D \tilde{F})_{(0, 0)} \vert_{\sections(\normal[M]{X})}$.
  
  Let $\tilde{U}_3 \subseteq \CC^{1, \alpha}(U)$ and $\tilde{U}_4 \subseteq \CC^{1, \alpha}(V)$ be open neighbourhoods of~$0$ such that $\tilde{U}_3 \times \tilde{U}_4 \subseteq \tilde{U}$, let $U_{1, \alpha}$ be the set of all $\chi \in \tilde{U}_4$ such that the operator \eqref{eq:F-tilde-chi-linearisation} is surjective for all $s \in \tilde{U}_3$, and define $U_{k, \alpha} \defeq U_{1, \alpha} \cap \CC^{k, \alpha}(\altforms^4_1 M \oplus \altforms^4_7 M \oplus \altforms^4_{35} M)$. Let $k_0 \defeq \max \set{\ind D + 1, 1}$. Then \theoremref{thm:genericity-general} below implies that $U_{k, \alpha}$ is the intersection of countably many open dense subsets of $\CC^{k, \alpha}(\altforms^4_1 M \oplus \altforms^4_7 M \oplus \altforms^4_{35} M)$ for $k \ge k_0$. So if we define $U_\infty \defeq \bigcap_{k = k_0}^\infty U_{k, \alpha}$, then $U_\infty$ is the intersection of countably many open dense subsets of $\CC^\infty(\altforms^4_1 M \oplus \altforms^4_7 M \oplus \altforms^4_{35} M)$ by \lemmaref{lemma:residual-c-infty} below.
  
  So for every generic smooth $\Spin(7)$\dash-structure~$\Psi$ that is $\CC^{1, \alpha}$\dash-close to~$\Phi$, the moduli space of all smooth Cayley submanifolds of $(M, \Psi)$ that are $\CC^{1, \alpha}$\dash-close to~$X$ is either empty or a smooth manifold of dimension $\ind D$.
\end{proof}

Note that the proof also shows the following $\CC^{k, \alpha}$\dash-version, where we use the $\CC^{k, \alpha}$\dash-topology for the space of all $\Spin(7)$\dash-structures.

\begin{theorem} \label{thm:main-theorem-spin7-closed-c-k-alpha}
  Let $k \ge 1$, let $0 < \alpha < 1$, let $M$ be an $8$\dash-manifold of class~$\CC^{k + 1, \alpha}$ with a $\Spin(7)$-structure~$\Phi$ of class~$\CC^{k, \alpha}$, and let $X$ be a closed Cayley submanifold of~$M$ of class~$\CC^{k + 1, \alpha}$. Suppose that $k > \ind D$, where $D$ is defined in~\eqref{eq:def-D}.
  
  Then for every generic $\Spin(7)$\dash-structure~$\Psi$ of class~$\CC^{k, \alpha}$ that is $\CC^{1, \alpha}$\dash-close to~$\Phi$, the moduli space of all Cayley submanifolds of $(M, \Psi)$ of class~$\CC^{k + 1, \alpha}$ that are $\CC^{1, \alpha}$\dash-close to~$X$ is either empty or a $\CC^k$\dash-manifold of dimension $\ind D$.
\end{theorem}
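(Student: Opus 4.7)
The plan is to run the argument of \theoremref{thm:main-theorem-spin7-closed} verbatim, but without the final passage to~$\CC^\infty$. Since $M$ is of class~$\CC^{k+1, \alpha}$ and $\Phi$ of class~$\CC^{k, \alpha}$, Joyce's parametrisation~$\Theta$ still applies fibrewise and gives $\Theta(\chi) \in \CC^{k, \alpha}$ whenever $\chi \in \CC^{k, \alpha}(V)$; moreover, since $\tau$ is defined algebraically from~$\Phi$ by~\eqref{eq:def-tau}, we have $\tau_{\Theta(\chi)} \in \CC^{k, \alpha}(\altforms^4 M \otimes \altforms^2 M)$. I would then introduce the deformation map
\[
  \tilde{F}_{k, \alpha} \colon \CC^{1, \alpha}(U) \oplus \CC^{k, \alpha}(V) \to \CC^{0, \alpha}(E), \quad (s, \chi) \mapsto \pi_E((\exp_s)^\ast \tau_{\Theta(\chi)}),
\]
which is of class~$\CC^k$ by \propositionref{prop:c-ell}.

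Next I would carry over three ingredients from the smooth proof unchanged: (i)~the principal symbol of $(\D \tilde{F}_{k, \alpha})_{(s, \chi)} \vert_{\CC^{1, \alpha}(\normal[M]{X})}$ is still given by cross product with a covector, so this partial linearisation is elliptic and Fredholm of index~$\ind D$ on a sufficiently small neighbourhood $\tilde{U}$ of~$(0, 0)$; (ii)~\lemmaref{lemma:linearisation-F-tilde} remains valid since its computation is fibrewise, yielding surjectivity of the full differential at~$(0, 0)$ already in the $\chi$-direction, hence on a sufficiently small neighbourhood of~$(0, 0)$; and (iii)~Elliptic Regularity \cite[Theorem~6.8.1]{Mor66} bootstraps any solution $(s, \chi) \in \tilde{U}$ with $\chi \in \CC^{k, \alpha}$ from $s \in \CC^{1, \alpha}$ to $s \in \CC^{k+1, \alpha}(\normal[M]{X})$, which is the best regularity available because the ambient metric is only~$\CC^{k, \alpha}$.

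Finally I would invoke \theoremref{thm:genericity-general} applied to the $\CC^k$ map $\tilde{F}_{k, \alpha}$, obtaining a residual subset $U_{k, \alpha} \subseteq \CC^{k, \alpha}(V)$ of~$\chi$ for which the partial linearisation in~$s$ is surjective at every zero. For each such~$\chi$, the Implicit Function Theorem for $\CC^k$ maps gives that $\tilde{F}_{k, \alpha}(\cdot, \chi)^{-1}(\set{0})$ is a $\CC^k$\dash-manifold of dimension~$\ind D$, and the Elliptic Regularity bootstrap upgrades its elements to Cayley submanifolds of class~$\CC^{k+1, \alpha}$. No intersection of residual sets over varying~$k$ is needed, so \lemmaref{lemma:residual-c-infty} plays no role here. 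The main and essentially only obstacle is verifying that the hypothesis $k > \ind D$ enters at exactly one point, namely as the condition under which the Sard--Smale theorem underlying \theoremref{thm:genericity-general} applies to a $\CC^k$ map whose $s$-partial linearisation is Fredholm of index~$\ind D$; everything else is a matter of routine bookkeeping transcribed from the smooth proof.
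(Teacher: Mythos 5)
Your proposal is correct and matches the paper's intent exactly: the paper gives no separate proof of this statement, merely remarking that the proof of \theoremref{thm:main-theorem-spin7-closed} ``also shows the $\CC^{k,\alpha}$-version,'' and what you describe is precisely that proof truncated before the passage to $\CC^\infty$. You correctly identify all the working parts — $\tilde{F}_{k,\alpha}$ is $\CC^k$ by \propositionref{prop:c-ell} since $\tau_{\Theta(\chi)} \in \CC^{k,\alpha}$; the $s$-partial linearisation is elliptic Fredholm of index $\ind D$; \lemmaref{lemma:linearisation-F-tilde} survives because its computation is algebraic and pointwise; Elliptic Regularity bootstraps $s$ to $\CC^{k+1,\alpha}$ — and, most importantly, you correctly locate where $k > \ind D$ is used: it is exactly the Sard--Smale hypothesis $\ell < k$ in \theoremref{thm:genericity-general} (this is the same inequality that, in the smooth proof, forces $k_0 = \max\set{\ind D + 1, 1}$), and \lemmaref{lemma:residual-c-infty} is indeed not needed since no intersection over $k$ occurs.
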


The following theorem and the lemma afterwards were used in the proof of \theoremref{thm:main-theorem-spin7-closed}.

\begin{theorem}[{cf.~\cite[Proposition~2.24]{Schw93}}] \label{thm:genericity-general}
  Let $X$, $Y$, and $Z$ be separable Banach spaces, let $U \subseteq X$ and $V \subseteq Y$ be open subsets, let $f \colon U \times V \to Z$ be a $\CC^k$\dash-map ($1 \le k \le \infty$), let $(x_0, y_0) \in U \times V$, and let $\ell \in \Z$ be such that $\ell < k$. Suppose that for all $(x, y) \in U \times V$ with $f(x, y) = f(x_0, y_0)$:
  \begin{compactenum}[(i)]
    \item $(\D f)_{(x, y)} \colon T_x X \times T_y Y \to T_{f(x, y)} Z$ is surjective and
    \item $(\D f)_{(x, y)} \vert_{T_x X} \colon T_x X \to T_{f(x, y)} Z$ is Fredholm with index~$\ell$.
  \end{compactenum}
  Then the set of all $y \in V$ such that the operator $(\D f)_{(x, y)} \vert_{T_x X} \colon T_x X \to T_{f(x, y)} Z$ is surjective for all $x \in U$ with $f(x, y) = f(x_0, y_0)$ is the intersection of countably many open dense subsets.
\end{theorem}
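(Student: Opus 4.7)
The plan is to recognise this as a parametric transversality result proved via the Sard-Smale theorem applied to the projection from the level set $W \defeq f^{-1}(f(x_0, y_0)) \cap (U \times V)$. Hypothesis~(ii) ensures that $(\D f)_{(x, y)} |_{T_x X}$ has closed range and finite-dimensional cokernel, and combined with hypothesis~(i) this shows that $\ker (\D f)_{(x, y)}$ admits a closed complement in $T_x X \times T_y Y$ throughout $W$. The Implicit Function Theorem then endows $W$ with the structure of a $\CC^k$ Banach submanifold of $U \times V$, whose model space at each point is a closed subspace of the separable Banach space $X \times Y$.

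Consider the projection $\pi \colon W \to V$, $(x, y) \mapsto y$, whose differential at $(x, y) \in W$ is the restriction of $(v, w) \mapsto w$ to $T_{(x, y)} W = \ker (\D f)_{(x, y)}$. A short diagram chase, using surjectivity of $(\D f)_{(x, y)}$ to represent any class in $Z / \im((\D f)_{(x, y)} |_{T_x X})$ through the $Y$\dash-direction, yields canonical isomorphisms
\begin{equation*}
  \ker (\D \pi)_{(x, y)} \cong \ker \bigl( (\D f)_{(x, y)} |_{T_x X} \bigr) \quad \text{and} \quad \coker (\D \pi)_{(x, y)} \cong \coker \bigl( (\D f)_{(x, y)} |_{T_x X} \bigr) \, \text{.}
\end{equation*}
Thus $\pi$ is a $\CC^k$ Fredholm map of index $\ell$, and $y \in V$ is a regular value of $\pi$ precisely when $(\D f)_{(x, y)} |_{T_x X}$ is surjective at every $x \in U$ with $f(x, y) = f(x_0, y_0)$.

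Since $k \ge 1$ and $\ell < k$, so that $k > \max(\ell, 0)$, the Sard-Smale theorem applies to $\pi$ and yields that the set of regular values of $\pi$ is residual in $V$. To sharpen ``residual'' to ``intersection of countably many open dense subsets'' I would exploit separability of $W$ to cover it by countably many open subsets $(W_n)_{n \in \N}$ on which $\pi |_{\overline{W_n}}$ is proper onto its image, which is possible because Fredholm maps are locally proper. The regular values of $\pi |_{W_n}$ are then open in $V$, since the critical set of $\pi |_{W_n}$ is closed and maps to a closed subset of $V$ by properness, and they are dense by Sard-Smale applied to $\pi |_{W_n}$. The regular values of $\pi$ coincide with the intersection of these sets over~$n$, yielding the stated description. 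The main obstacle is this final bookkeeping step --- upgrading the Sard-Smale conclusion to the countable-intersection form using local properness --- rather than the transversality argument itself; the characterisation of regular values of $\pi$ then translates directly into the conclusion of the theorem.
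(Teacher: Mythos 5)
The paper does not include a proof of this theorem; it only cites Schwarz's \emph{Morse Homology}, Proposition~2.24, which is proved by exactly the Sard--Smale argument you describe. Your proposal is correct and follows that same standard route: (i)+(ii) give that $\ker(\D f)_{(x,y)}$ splits (by first passing to a finite-dimensional subspace $Y_0 \subseteq Y$ on which $(\D f)_{(x,y)}$ together with $T_x X$ already surjects onto $Z$, so that $(\D f)_{(x,y)}\vert_{T_x X \oplus Y_0}$ is Fredholm and surjective, and a complement to its kernel gives a closed complement to $\ker(\D f)_{(x,y)}$ in $X \times Y$); the level set $W$ is then a $\CC^k$ Banach submanifold, the projection $\pi\colon W \to V$ is Fredholm of index $\ell$ with kernel and cokernel naturally identified with those of $(\D f)_{(x,y)}\vert_{T_x X}$, the condition $k > \max(\ell,0)$ allows Sard--Smale, and local properness of Fredholm maps together with separability upgrades ``residual'' to the stated countable intersection of open dense subsets. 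One small bookkeeping point: the open sets in the final step are of the form $V \setminus \pi(\mathrm{Crit}(\pi) \cap \overline{W_n})$, using the closure $\overline{W_n}$ on which $\pi$ is proper rather than $W_n$ itself; openness comes from properness there, while density comes from the fact that each such set contains the full (residual, hence dense) set of regular values of $\pi$. With that minor adjustment the argument is complete.
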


\begin{lemma} \label{lemma:residual-c-infty}
  Let $M$ be a smooth manifold, let $E$ be a smooth vector bundle over~$M$, let $k_0 \ge 1$, let $0 \le \alpha \le 1$, let $\mathcal{M}_{k_0} \subseteq \CC^{k_0, \alpha}(E)$, let $\mathcal{M}_k \defeq \mathcal{M}_{k_0} \cap \CC^{k, \alpha}(E)$ for $k \ge k_0 + 1$, and let $\mathcal{M}_\infty \defeq \mathcal{M}_{k_0} \cap \CC^\infty(E)$. Suppose that $\mathcal{M}_k$ is the intersection of countably many open dense subsets of~$\CC^{k, \alpha}(E)$ for all $k \ge k_0$. Then $\mathcal{M}_\infty$ is the intersection of countably many open dense subsets of~$\CC^\infty(E)$.
\end{lemma}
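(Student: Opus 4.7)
The plan is to realise $\mathcal{M}_\infty$ explicitly as a countable intersection of open dense subsets of $\CC^\infty(E)$. Using the hypothesis, we fix, for each $k \ge k_0$, a decomposition $\mathcal{M}_k = \bigcap_{n \in \N} U^{(k)}_n$ with each $U^{(k)}_n$ open and dense in $\CC^{k, \alpha}(E)$, and set $W_{k, n} \defeq U^{(k)}_n \cap \CC^\infty(E)$. Each $W_{k, n}$ is open in $\CC^\infty(E)$ since the inclusion $\CC^\infty(E) \hookrightarrow \CC^{k, \alpha}(E)$ is continuous, and the identity $\bigcap_{k \ge k_0} \CC^{k, \alpha}(E) = \CC^\infty(E)$ gives $\bigcap_{k, n} W_{k, n} = \mathcal{M}_\infty$. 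The whole problem thus reduces to proving that each $W_{k, n}$ is dense in $\CC^\infty(E)$.

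For this density we fix $s \in \CC^\infty(E)$ together with a basic $\CC^\infty$\dash-neighbourhood of~$s$, which we may take of the form $V = \set{s' \in \CC^\infty(E) : \norm{s' - s}_{\CC^m(K)} < \eps}$ for some compact $K \subseteq M$, integer $m \ge k + 1$, and $\eps > 0$. The crucial move is to invoke the hypothesis at the \emph{higher} level~$m$ rather than at~$k$. By Baire in the Fr\'echet space $\CC^{m, \alpha}(E)$, the set $\mathcal{M}_m$ is dense; moreover the $\CC^m$\dash-ball around~$s$ is $\CC^{m, \alpha}$\dash-open because $\norm{\cdot}_{\CC^m(K)} \le \norm{\cdot}_{\CC^{m, \alpha}(K)}$. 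Hence we can pick $\sigma \in \mathcal{M}_m$ with $\norm{\sigma - s}_{\CC^m(K)} < \eps / 2$; since $\mathcal{M}_m \subseteq \mathcal{M}_k \subseteq U^{(k)}_n$, this $\sigma$ automatically lies in $U^{(k)}_n$, and it has enough regularity ($\sigma \in \CC^{m, \alpha}(E)$) to be mollified.

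To replace $\sigma$ by a smooth section we use standard convolution mollifiers $T_\eta$ on coordinate charts, glued by a partition of unity. The classical fact $T_\eta \sigma \to \sigma$ in $\CC^m$ on compact sets as $\eta \to 0$, together with the embedding $\CC^m \hookrightarrow \CC^{k + 1} \hookrightarrow \CC^{k, \alpha}$ (giving $\norm{\cdot}_{\CC^{k, \alpha}(L)} \le C \norm{\cdot}_{\CC^{k + 1}(L)} \le C \norm{\cdot}_{\CC^m(L)}$ on any compact $L$), also yields $T_\eta \sigma \to \sigma$ in $\CC^{k, \alpha}$. Openness of~$U^{(k)}_n$ in $\CC^{k, \alpha}(E)$ then places $T_\eta \sigma$ in $U^{(k)}_n$ for all sufficiently small $\eta$, while the $\CC^m$\dash-convergence places $T_\eta \sigma$ in~$V$, so $W_{k, n} \cap V \ne \emptyset$. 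An application of the Baire category theorem in the Fr\'echet space $\CC^\infty(E)$ to the countable family $\{W_{k, n}\}_{k \ge k_0,\, n \in \N}$ then completes the argument.

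The hard part is this smoothing step, which must simultaneously approximate $\sigma$ closely enough in the strictly higher norm $\CC^m$ to stay inside~$V$ and closely enough in $\CC^{k, \alpha}$ not to escape the $\CC^{k, \alpha}$\dash-open set~$U^{(k)}_n$. The choice $m \ge k + 1$ is precisely what reconciles the two via the norm inequality above, and this is also why the hypothesis is really needed at \emph{every} level $k \ge k_0$ rather than just at $k_0$: a weaker hypothesis would not provide a $\sigma$ of sufficient regularity for the mollification to converge in $\CC^{k, \alpha}$.
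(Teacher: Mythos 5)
Your proof is correct and follows the same overall strategy as the paper's: decompose $\mathcal{M}_\infty$ as $\bigcap_{k, n} \bigl(U^{(k)}_n \cap \CC^\infty(E)\bigr)$ and show that each $U^{(k)}_n \cap \CC^\infty(E)$ is open and dense in $\CC^\infty(E)$. Openness is handled identically in both. Where your argument differs is in the density step: the paper records that $U^{(k)}_n \cap \CC^{k', \alpha}(E)$ is dense in $\CC^{k', \alpha}(E)$ for every $k' \ge k$ and then simply asserts the density of $U^{(k)}_n \cap \CC^\infty(E)$ in $\CC^\infty(E)$ without further elaboration. You actually justify this passage, and the justification is not trivial: since $\CC^\infty(E)$ is \emph{not} dense in $\CC^{m, \alpha}(E)$ when $\alpha > 0$, a section $\sigma$ produced by density at the finite level $m$ cannot simply be assumed smooth. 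Your mollification step --- invoking the hypothesis one regularity level higher (at $m \ge k + 1$) and tracking both the $\CC^m$\dash-seminorm (to stay inside the prescribed $\CC^\infty$\dash-neighbourhood) and the $\CC^{k, \alpha}$\dash-norm (to stay inside the $\CC^{k, \alpha}$\dash-open set $U^{(k)}_n$, via $\CC^m \hookrightarrow \CC^{k + 1} \hookrightarrow \CC^{k, \alpha}$) --- is precisely what makes that passage rigorous, so you are supplying a detail that the paper leaves implicit. One minor redundancy: the closing appeal to the Baire Category Theorem in $\CC^\infty(E)$ adds nothing, since once $\mathcal{M}_\infty$ is exhibited as a countable intersection of open dense subsets of $\CC^\infty(E)$ the lemma's conclusion is already reached; Baire would only be needed to deduce further that $\mathcal{M}_\infty$ is itself dense, which the statement does not claim.
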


\begin{proof}
  For each $k \ge k_0$, there are countable many open dense subsets $(U_{k, n})_{n \ge 1}$ of $\CC^{k, \alpha}(E)$ such that
  \begin{equation*}
    \mathcal{M}_k = \bigcap_{n = 1}^\infty U_{k, n} \, \text{.}
  \end{equation*}
  Let $k \ge k_0$ and $n \ge 1$. Then $U_{k, n} \cap \CC^{k^\prime, \alpha}(E)$ is open in $\CC^{k^\prime, \alpha}(E)$ for all $k^\prime \ge k$. Furthermore, $\mathcal{M}_{k^\prime} \subseteq U_{k, n} \cap \CC^{k^\prime, \alpha}(E)$ for all $k^\prime \ge k$, and hence $U_{k, n} \cap \CC^{k^\prime, \alpha}(E)$ is dense in $\CC^{k^\prime, \alpha}(E)$ for all $k^\prime \ge k$ since $\mathcal{M}_{k^\prime}$ is dense in $\CC^{k^\prime, \alpha}(E)$ by the Baire Category Theorem. So $U_{k, n} \cap \CC^\infty(E)$ is open and dense in $\CC^\infty(E)$.
  
  Therefore, $\mathcal{M}_\infty$ is the intersection of countably many open dense subsets of $\CC^\infty(E)$ since
  \begin{equation*}
    \mathcal{M}_\infty = \bigcap_{k = 1}^\infty \mathcal{M}_k = \bigcap_{k = 1}^\infty \bigcap_{n = 1}^\infty U_{k, n} \, \text{.} \qedhere
  \end{equation*}
\end{proof}

\subsection{Remark about Torsion-Free Spin(7)-Structures}
\label{subsec:remark-torsion-free-closed}

If $M$ is a closed $8$\dash-manifold with a torsion-free $\Spin(7)$\dash-structure~$\Phi$, then the tangent space to the space of all torsion-free $\Spin(7)$\dash-structures on~$M$ at~$\Phi$ can be identified with \cite[Theorem~10.7.1]{Joy00}
\begin{equation}
  \set{\Lie_v \Phi \colon v \in \sections(T M)} \oplus (\mathcal{H}^4_1(M) \oplus \mathcal{H}^4_7(M) \oplus \mathcal{H}^4_{35}(M)) \, \text{,}
\end{equation}
where $\mathcal{H}^4_i(M)$ is the space of all harmonic forms in $\forms^4_i(M)$. If $\chi = \Lie_v \Phi$, then $(\D \tilde{F})_{(0, 0)}(0, \chi)$ lies in the image of~$D$ by \lemmaref{lemma:linearisation-F-tilde-Lie} below. So it does not contribute to the surjectivity of $(\D \tilde{F})_{(0, 0)}$. But the dimension of $\coker D$ is not in general less than $b^4_1 + b^4_7 + b^4_{35}$ (see below). That is why we consider the larger space of all $\Spin(7)$\dash-structures on~$M$.

\begin{lemma} \label{lemma:linearisation-F-tilde-Lie}
  Let $M$ be an $8$\dash-manifold with a $\Spin(7)$\dash-structure~$\Phi$, let $X$ be a closed Cayley submanifold, and let $v \in \sections(T M)$. Then
  \begin{equation}
    (\D \tilde{F})_{(0, 0)}(0, \Lie_v \Phi) = D (v \vert_X)^\perp \, \text{,} \label{eq:linearisation-F-tilde-Lie}
  \end{equation}
  where $(v \vert_X)^\perp$ is the pointwise orthogonal projection of~$v \vert_X$ onto~$\normal[M]{X}$.
\end{lemma}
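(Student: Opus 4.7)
The plan is to exploit the diffeomorphism equivariance of the construction $\Phi \mapsto \tau_\Phi$. Let $\phi_t$ denote the (local) flow of~$v$. Because $\tau_\Phi$ is built intrinsically from~$\Phi$ and its induced metric via $\Spin(7)$\dash-equivariant operations, naturality under diffeomorphisms gives $\tau_{\phi_t^\ast \Phi} = \phi_t^\ast \tau_\Phi$. Choosing $\chi_t \in \sections(V)$ so that $\Theta(\chi_t) = \phi_t^\ast \Phi$ yields $\chi_0 = 0$ and $\dot{\chi}_0 = \Lie_v \Phi$, and setting $s = 0$ in the definition of~$\tilde{F}$ gives
\begin{equation*}
  \tilde{F}(0, \chi_t) = \pi_E((\phi_t^\ast \tau_\Phi) \vert_X) \, \text{.}
\end{equation*}
Differentiating at $t = 0$ therefore produces $(\D \tilde{F})_{(0, 0)}(0, \Lie_v \Phi) = \pi_E((\Lie_v \tau_\Phi) \vert_X)$.

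I then decompose $v \vert_X = v^\top + v^\perp$ with $v^\top \in \sections(T X)$ and $v^\perp = (v \vert_X)^\perp$, and pick extensions $\tilde{v}^\top, \tilde{v}^\perp$ to vector fields on~$M$, arranging $\tilde{v}^\top$ to be tangent to~$X$ along~$X$ (for example via a tubular neighbourhood). A short key observation is that $(\Lie_u \tau_\Phi) \vert_X = 0$ whenever $u \vert_X = 0$: indeed, the tensor-Lie-derivative formula gives $(\Lie_u \tau_\Phi)(W_1, \dotsc, W_6) \vert_x = \sum_l \tau_\Phi(W_1, \dotsc, \partial_{W_l} u, \dotsc, W_6) \vert_x$ at $x \in X$, and for $W_1, \dotsc, W_4 \in T_x X$ the terms with $l \le 4$ vanish because tangential derivatives of~$u$ along~$X$ are zero, while the $l = 5, 6$ terms vanish by the Cayley condition $\tau_\Phi \vert_X = 0$ (whose two $\altforms^2$\dash-slots are unconstrained). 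This yields the extension-independent splitting $(\Lie_v \tau_\Phi) \vert_X = (\Lie_{\tilde{v}^\top} \tau_\Phi) \vert_X + (\Lie_{\tilde{v}^\perp} \tau_\Phi) \vert_X$.

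The tangential piece vanishes: the flow of~$\tilde{v}^\top$ preserves~$X$, so $((\phi_t^{\tilde{v}^\top})^\ast \tau_\Phi) \vert_X \equiv 0$ in~$t$ by the Cayley condition, and differentiating at $t = 0$ gives $(\Lie_{\tilde{v}^\top} \tau_\Phi) \vert_X = 0$. The normal piece is precisely $(\D F)_0(v^\perp)$ from the proof of \theoremref{thm:explicit-formula}, which was computed there to equal $\vol_X \otimes (0, D v^\perp) \in \forms^4(X, \altforms^2_- X \oplus E)$. Projecting by~$\pi_E$ annihilates the $\altforms^2_- X$\dash-component and leaves $\vol_X \otimes D (v \vert_X)^\perp$, which under the identification $\forms^4(X, E) \cong \sections(E)$ is the desired formula~\eqref{eq:linearisation-F-tilde-Lie}.

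The main subtlety to verify is the naturality identity $\tau_{\phi_t^\ast \Phi} = \phi_t^\ast \tau_\Phi$. Since $\tau$ is nominally valued in the $\Phi$\dash-dependent bundle $\altforms^2_7 M$, some care is needed to interpret both sides as plain $\altforms^2 T^\ast M$\dash-valued forms; the identity then follows from the fact that the metric and the $2$\dash- and $3$\dash-fold cross products entering~\eqref{eq:def-tau} are all diffeomorphism-equivariant once $\Phi$ is correspondingly pulled back, which is immediate from the $\Spin(7)$\dash-equivariance of their defining formulas.
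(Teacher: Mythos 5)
Your proof is correct and follows essentially the same strategy as the paper's: both start from the naturality identity $\tau_{(\varphi_v^t)^\ast\Phi} = (\varphi_v^t)^\ast\tau_\Phi$, differentiate to reduce to $\pi_E((\Lie_v\tau_\Phi)\vert_X)$, and then invoke the computation in the proof of \theoremref{thm:explicit-formula}. Where the paper compresses the final step into ``the result follows\,\ldots\,since $\tau_\Phi\vert_X = 0$'', you spell out the two facts that make this work --- that $(\Lie_u\tau_\Phi)\vert_X = 0$ for $u\vert_X = 0$ (so only $v\vert_X$ matters) and that the tangential part contributes nothing because its flow preserves~$X$ --- which is exactly the content the paper's remark leaves implicit.
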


\begin{proof}
  Let $\varphi_v^t$ denote the flow generated by~$v$, and let $\Phi_t \defeq (\varphi_v^t)^\ast \Phi$. Then $\tau_{\Phi_t} = (\varphi_v^t)^\ast \tau_\Phi$. Hence
  \begin{equation*}
    \frac{\D}{\D t} \tau_{\Phi_t} \bigg\vert_{t = 0} = \frac{\D}{\D t} (\varphi_v^t)^\ast \tau_\Phi \bigg\vert_{t = 0} = \Lie_v \tau_\Phi \, \text{.}
  \end{equation*}
  So the result follows from the proof of \theoremref{thm:explicit-formula} since $\tau_\Phi \vert_X = 0$.
\end{proof}

In \cite[last paragraph of Section~4.4]{Gay14}, Gayet constructs a $7$\dash-manifold~$\tilde{M}$ with a torsion-free $G_2$\dash-structure such that given $n \in \N$, there is a closed associative submanifold~$\tilde{X}$ for which the dimension of the cokernel of the associated Dirac operator has dimension at least~$n$. So if we define $M \defeq S^1 \times \tilde{M}$ and $X \defeq S^1 \times \tilde{X}$, then the dimension of $\coker D$ will be at least~$n$.

\section{Compact Cayley Submanifolds with Boundary}
\label{sec:cayley-boundary}

In this section we present the deformation theory of compact, connected Cayley submanifolds with non-empty boundary. We first show that the moduli space embeds into the solution space of a second-order elliptic boundary problem with index~$0$ in \sectionref{subsec:cayley-boundary-deformations}. Then in \sectionref{subsec:varying-spin7-boundary} we prove that for a generic $\Spin(7)$\dash-structure, the moduli space is a finite set. In \sectionref{subsec:varying-scaffold} we show that also for a generic deformation of the scaffold (the submanifold containing the boundary of the Cayley submanifold), the moduli space is a finite set. We further show that \theoremref{thm:main-spin7-boundary} about the moduli space for a generic $\Spin(7)$\dash-structure remains true if we restrict to the smaller class of all torsion-free $\Spin(7)$-structures in \sectionref{subsec:remark-torsion-free-boundary}. We finish this section with a corollary in \sectionref{subsec:existence-cayley-nearby} about the existence of Cayley submanifolds with boundary for nearby $\Spin(7)$-structures and scaffolds if we start in a generic situation.

\subsection{Deformations of Compact Cayley Submanifolds with Boundary}
\label{subsec:cayley-boundary-deformations}

The next proposition is the first step for the proofs of \hyperref[thm:main-spin7-boundary]{Theorems~\ref*{thm:main-spin7-boundary}} and \ref{thm:main-scaffold}.

\begin{definition}
  Let $M$ be an $8$\dash-manifold with a $\Spin(7)$-structure~$\Phi$, let $X$ be a Cayley submanifold of~$M$ with non-empty boundary, let $W$ be a submanifold of~$M$ with $\partial X \subseteq W$, and let $u \in \sections(\normal[X]{\partial X})$ be a unit normal vector field of~$\partial X$ in~$X$. Then $X$ and $W$ \emph{meet orthogonally} if $u \in \sections(\normal[M]{W} \vert_{\partial X})$.
\end{definition}

\begin{proposition}[\propositionref{prop:main-boundary}] \label{prop:main-proposition-boundary}
  Let $M$ be an $8$\dash-manifold with a $\Spin(7)$-structure~$\Phi$, let $X$ be a compact, connected Cayley submanifold of~$M$ with non-empty boundary, and let $W$ be a submanifold of~$M$ with $\partial X \subseteq W$ such that $X$ and $W$ meet orthogonally.
  
  Then the moduli space of all local deformations of~$X$ as a Cayley submanifold of~$M$ with boundary on~$W$ and meeting~$W$ orthogonally can be embedded into the solution space of the boundary problem~\eqref{eq:boundary-second} below, which is a second-order elliptic boundary problem with index~$0$.
\end{proposition}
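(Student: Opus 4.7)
The plan is to follow the strategy used by Kovalev and Lotay \cite{KL09} in the coassociative case: reformulate the first-order Cayley condition together with the two boundary conditions---which do not form an elliptic first-order problem---as a second-order elliptic boundary problem of Dirichlet/Neumann type on the rank-4 bundle $\normal[M]{X}$, obtained by precomposing with a suitable first-order operator.

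First, I would parametrise nearby submanifolds by $s \in \sections(U)$ with $U \subseteq \normal[M]{X}$ a tubular neighbourhood, writing $X_s = \exp_s(X)$, so that the Cayley condition becomes $F(s) \defeq \pi_E((\exp_s)^\ast \tau) = 0$ exactly as in the proof of \propositionref{prop:closed-surjective-smooth}. Next, along $\partial X$, orthogonality of $X$ and $W$ yields an $L^2$\dash-orthogonal splitting $T M \vert_{\partial X} = T \partial X \oplus \langle u \rangle \oplus V \oplus V'$, where $V \defeq T W \vert_{\partial X} \cap (T X \vert_{\partial X})^\perp \subseteq \normal[M]{X} \vert_{\partial X}$ (rank $\dim W - 3$) and $V' \defeq V^\perp \subseteq \normal[M]{X} \vert_{\partial X}$ (rank $7 - \dim W$), so $\normal[M]{X} \vert_{\partial X} = V \oplus V'$. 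The nearest-point projection onto~$W$ converts the condition $\exp_s(\partial X) \subseteq W$ into a nonlinear equation on $s \vert_{\partial X}$ whose linearisation is $\pi_{V'}(s \vert_{\partial X}) = 0$, a Dirichlet-type condition of rank $7 - \dim W$; the orthogonality $X_s \perp W$ at $\partial X_s$ becomes a condition whose principal part is $\pi_V(\nabla_u^\perp s) = 0$, a Neumann-type condition of rank $\dim W - 3$. These total $(7 - \dim W) + (\dim W - 3) = 4 = \mathrm{rank}(\normal[M]{X})$ scalar conditions on the boundary.

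Then I would define the second-order boundary problem \eqref{eq:boundary-second} by applying the formal adjoint $D^\ast$ to the interior Cayley equation and keeping both boundary conditions:
\begin{equation*}
D^\ast F(s) = 0 \text{ in } X, \quad \pi_{V'}(s \vert_{\partial X}) + B_1(s \vert_{\partial X}) = 0, \quad \pi_V(\nabla_u^\perp s) + B_2(s, \nabla^\perp s) \vert_{\partial X} = 0,
\end{equation*}
where $B_1, B_2$ are the higher-order nonlinear remainders. Tautologically, every Cayley deformation of~$X$ meeting~$W$ orthogonally satisfies this system, giving the required embedding of moduli spaces. The linearisation of the interior equation at $s = 0$ is $D^\ast D$, which is of Laplace type with scalar principal symbol $\abs{\xi}^2 \, \mathrm{Id}$ (since $D$ is of Dirac type, from \theoremref{thm:explicit-formula}). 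The linearised boundary conditions are mixed Dirichlet on $V'$ and Neumann on the complementary $V$; since the splitting $V \oplus V'$ is parallel in the symbol calculus along the conormal direction, the Lopatinski--Shapiro condition reduces to the standard decoupled check for the Dirichlet and Neumann problems on the half-line model. The index of a Laplace-type operator with such complementary Dirichlet/Neumann conditions is zero by standard boundary-problem theory (e.g.\ via a continuous deformation to the flat model and an integration-by-parts computation of $\ker$ and $\coker$).

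The main obstacle is the verification of the Lopatinski--Shapiro condition for the mixed boundary problem on a bundle where the splitting $V \oplus V'$ is defined only on $\partial X$: one has to confirm that the first-order lower-order contributions in $B_2$ coming from extrinsic geometry of $W$ in $M$ and of $\partial X$ in $X$ do not spoil the principal symbol computation, and that the Dirichlet/Neumann split is genuinely transversal, i.e.\ that $V$ and $V'$ are complementary subbundles of $\normal[M]{X}\vert_{\partial X}$ of the expected ranks---this uses precisely the orthogonal intersection hypothesis. Once the symbol computation is reduced to a model half-space and diagonalised along $V \oplus V'$, ellipticity and vanishing of the index follow, and the embedding statement for the moduli space is immediate.
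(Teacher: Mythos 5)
Your decomposition $\normal[M]{X}\vert_{\partial X} = V \oplus V'$ is exactly the paper's $\normal[W]{\partial X} \oplus K$, and the Dirichlet-type boundary condition $\pi_{V'}(s\vert_{\partial X}) = 0$ coincides with the paper's $\pi_K(s\vert_{\partial X}) = 0$; that part is correct. The genuine gap is the claim that the orthogonality condition ``$X_s \perp W$ at $\partial X_s$'' has principal part $\pi_V(\nabla_u^\perp s)$. This is false. The linearisation of the orthogonality condition alone (the map $H$ built from $\gamma(a,b,c) = \pi_W(a\times b\times c)$) is, by \lemmaref{lemma:linearisation-B},
\[
(\D H)_0(s) = -\pi_\nu\bigl(\nabla_s u + P s\bigr) \quad \text{for } s \in \sections(\normal[W]{\partial X}),
\]
where $P$ (defined in~\eqref{eq:def-P}) contains only \emph{tangential} derivatives $\nabla_{e_i}^\perp s$ along $\partial X$, and $\nabla_s u$ is zeroth order in $s$. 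There is no $\nabla_u s$. Using this as the second boundary condition together with $D^\ast F(s) = 0$ does not give an elliptic boundary problem, because a tangential first-order boundary operator cannot complement the Dirichlet condition in the half-line symbol computation.

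The missing idea is the modification of the boundary operator that the paper makes when passing from~\eqref{eq:boundary-first} to~\eqref{eq:boundary-second}: one adds the boundary trace of the Cayley equation itself, converted to $\normal[W]{\partial X}$-valued data via $\rho^{-1}(\cdot) = (u\times)^{-1}$ and $\pi_\nu$, to the orthogonality condition, defining $B(s) = \pi_\nu(\rho^{-1}(F(s)\vert_{\partial X})) + H(s\vert_{\partial X})$. Because of the identity~\eqref{eq:relation-D-P}, $\rho^{-1}((Ds)\vert_{\partial X}) = \nabla_u s + P(s\vert_{\partial X})$, this addition injects the normal derivative $\nabla_u s$ and cancels the tangential $P(\pi_\nu(s))$ term, yielding the linearisation of \lemmaref{lemma:linearisation-B}:
\[
(\D B)_0(s) = \pi_\nu\bigl(\nabla_u s\vert_{\partial X} - \nabla_{\pi_\nu(s\vert_{\partial X})} u + P(\pi_K(s\vert_{\partial X}))\bigr),
\]
whose principal part on the kernel of the Dirichlet condition $\pi_K(s\vert_{\partial X}) = 0$ is the Neumann operator $\pi_\nu(\nabla_u s)$. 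Only with this modified boundary operator does the Lopatinski--Shapiro check go through (\lemmaref{lemma:boundary-second-elliptic}). Note also that this modification is what makes the embedding statement correct and one-sided: solutions of~\eqref{eq:boundary-first} satisfy $F(s)\vert_{\partial X} = 0$ and hence $B(s) = H(s\vert_{\partial X}) = 0$, so they solve~\eqref{eq:boundary-second}, but not conversely. Your ``tautological'' embedding claim only works once this is spelled out. Finally, the index-$0$ assertion is not merely ``standard'': the paper's \lemmaref{lemma:boundary-second-index} requires Green's formula for Dirac-type operators and a non-trivial self-adjointness argument for the symbol of $P$ (showing $P - P^\ast$ is order zero), which should not be skipped.
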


In particular, the ``Zariski tangent space'' (the kernel of the linearisation of the deformation map) is finite-dimensional and Elliptic Regularity applies (i.e., all solutions are smooth). Note that the dimension of~$W$ is at least~$3$ since $\partial X \subseteq W$ and $\partial X$ is $3$\dash-dimensional. Furthermore, the dimension of~$W$ is at most~$7$ since $M$ is $8$\dash-dimensional and $X$ and $W$ meet orthogonally. In fact, if $W$ were $8$\dash-dimensional, then there would be essentially no constraint on the boundary, which would lead to an infinite-dimensional moduli space. For a discussion of the dimensions~$3$ and~$7$, see \sectionref{subsec:remarks-dimension-scaffold}.

\begin{proof}
  In the \hyperref[subsec:metric]{next section} we will modify the metric~$g$ to a metric~$\hat{g}$ with $\hat{g}_x = g_x$ for all $x \in \partial X$ such that $W$ is totally geodesic with respect to~$\hat{g}$. Then there is an open tubular neighbourhood $\hat{U} \subseteq \normalhat[M]{X}$ of the $0$\dash-section such that local deformations of~$X$ with boundary on~$W$ are parametrised by sections $\hat{s} \in \sections(\hat{U})$ with $\hat{s} \vert_{\partial X} \in \sections(T W \vert_{\partial X})$ (\propositionref{prop:adapted-tubular-neighbourhood}), where $\normalhat[M]{X}$ is the normal bundle with respect to the metric~$\hat{g}$. Note that $\normalhat[M]{X} \cong (T M \vert_X) / T X \cong \normal[M]{X}$. Let $U \subseteq \normal[M]{X}$ be the image of~$\hat{U}$ under this isomorphism.
  
  For a normal vector field $s \in \sections(U)$, let $\hat{s} \in \sections(\hat{U})$ be the corresponding vector field under the isomorphism $\normal[M]{X} \cong \normalhat[M]{X}$, and define $\widehat{\exp}_s \colon X \to M$, $x \mapsto \widehat{\exp}_x(\hat{s}(x))$, where $\widehat{\exp} \colon \hat{U} \to M$ is the exponential map of the metric~$\hat{g}$. Let
  \begin{equation}
    \pi_E \colon \altforms^2_7 M \vert_X \to E \label{eq:def-pi-E}
  \end{equation}
  denote the orthogonal projection, where the vector bundle~$E$ of rank~$4$ over~$X$ is defined as in~\eqref{eq:def-E}, and let
  \begin{equation}
    F \colon \sections(U) \to \forms^4(X, E) \cong \sections(E) \, \text{,} \quad s \mapsto \pi_E((\widehat{\exp}_s)^\ast(\tau)) \, \text{,} \label{eq:def-F}
  \end{equation}
  where $\tau \in \forms^4(M, \altforms^2_7 M)$ is defined as in~\eqref{eq:def-tau}. Then
  \begin{equation}
    (\D F)_0(s) = D s \, \text{,} \label{eq:linearisation-F}
  \end{equation}
  where $D \colon \sections(\normal[M]{X}) \to \sections(E)$ is defined in~\eqref{eq:def-D}. This follows from the proof of \theoremref{thm:explicit-formula} using that $\tau \vert_X = 0$ (so the result does not depend on the choice of metric for the exponential map).
  
  Let $K$ be the subbundle of~$\normal[M]{X} \vert_{\partial X}$ consisting of all vectors that are orthogonal to~$T W \vert_{\partial X}$, and let
  \begin{equation}
    \pi_K \colon \normal[M]{X} \vert_{\partial X} \to K \label{eq:def-pi-K}
  \end{equation}
  denote the orthogonal projection. So if $s \in \sections(\normal[M]{X})$, then $s \vert_{\partial X} \in \sections(T W \vert_{\partial X})$ if and only if $\pi_K(s \vert_{\partial X}) = 0$.
  
  Let
  \begin{equation}
    \pi_W \colon T M \vert_W \to T W \label{eq:def-pi-W}
  \end{equation}
  and
  \begin{equation}
    \pi_\nu \colon T M \vert_{\partial X} \to \normal[W]{\partial X} \label{eq:def-pi-nu}
  \end{equation}
  denote the orthogonal projections, let $U_{\partial X} \defeq U \cap \normal[M]{X} \vert_{\partial X}$, and let
  \begin{equation}
    H \colon \sections(U_{\partial X}) \to \forms^3(\partial X, \normal[W]{\partial X}) \cong \sections(\normal[W]{\partial X}) \, \text{,} \quad s \mapsto \pi_\nu((\widehat{\exp}_{\pi_\nu(s)})^\ast(\gamma)) \, \text{,} \label{eq:def-H}
  \end{equation}
  where $\gamma \in \forms^3(W, T W)$ is defined by
  \begin{equation}
    \gamma(a, b, c) \defeq \pi_W(a \times b \times c) \label{eq:def-gamma}
  \end{equation}
  for $a, b, c \in T W$. So if $s \in \sections(U)$ defines a Cayley submanifold with boundary on~$W$ (i.e.,~$F(s) = 0$ and $\pi_K(s \vert_{\partial X}) = 0$), then this Cayley submanifold meets $W$ orthogonally if and only if $H(s \vert_{\partial X}) = 0$. This follows because if $(a, b, c)$ is a local orthonormal frame of~$\partial X$, then $a \times b \times c$ is a unit normal vector field of~$\partial X$ in~$X$ since $X$ is Cayley. The projection onto~$\normal[W]{\partial X}$ is enough since the cross product of three vectors is orthogonal to these vectors.
  
  So the moduli space of all local deformations of~$X$ as a Cayley submanifold of~$M$ with boundary on~$W$ and meeting~$W$ orthogonally can be identified with the moduli space of all solutions near the $0$\dash-section of the boundary problem
  \begin{equation}
    \begin{cases}
      \phantom{\pi_K(s \vert_{\partial X}) = 0} \mathllap{F(s) = 0} &\text{in~$X$,} \\
      \pi_K(s \vert_{\partial X}) = 0 &\text{on~$\partial X$,} \\
      \phantom{\pi_K(s \vert_{\partial X}) = 0} \mathllap{H(s \vert_{\partial X}) = 0} &\text{on~$\partial X$.}
    \end{cases} \label{eq:boundary-first}
  \end{equation}
  
  Let $D^\ast \colon \sections(E) \to \sections(\normal[M]{X})$ be the formal adjoint of~$D$, and let
  \begin{equation}
    G \colon \sections(U) \to \sections(\normal[M]{X}) \, \text{,} \quad s \mapsto D^\ast(F(s)) \, \text{.} \label{eq:def-G}
  \end{equation}
  Then
  \begin{equation}
    (\D G)_0(s) = D^\ast D s \label{eq:linearisation-G}
  \end{equation}
  by \eqref{eq:linearisation-F} since $D^\ast$ is linear. Let $u \in \sections(\normal[X]{\partial X})$ be the inward-pointing unit normal vector field of~$\partial X$ in~$X$, and let
  \begin{equation}
    \rho \colon \normal[M]{X} \vert_{\partial X} \to E \vert_{\partial X} \, \text{,} \quad s \mapsto u \times s \, \text{.} \label{eq:def-rho}
  \end{equation}
  Then $\rho$ is an isomorphism of vector bundles. Let
  \begin{equation}
    B \colon \sections(U) \to \sections(\normal[W]{\partial X}) \, \text{,} \quad s \mapsto \pi_\nu(\rho^{-1}(F(s) \vert_{\partial X})) + H(s \vert_{\partial X}) \, \text{.} \label{eq:def-B}
  \end{equation}
  So solutions of \eqref{eq:boundary-first} are also solutions of
  \begin{equation}
    \begin{cases}
      \phantom{\pi_K(s \vert_{\partial X}) = 0} \mathllap{G(s) = 0} &\text{in~$X$,} \\
      \pi_K(s \vert_{\partial X}) = 0 &\text{on~$\partial X$,} \\
      \phantom{\pi_K(s \vert_{\partial X}) = 0} \mathllap{B(s) = 0} &\text{on~$\partial X$.}
    \end{cases} \label{eq:boundary-second}
  \end{equation}
  We will now prove that the boundary problem \eqref{eq:boundary-second} is an elliptic boundary problem with index~$0$.
  
  Let $P \colon \sections(\normal[M]{X} \vert_{\partial X}) \to \sections(\normal[M]{X} \vert_{\partial X})$,
  \begin{equation}
    P s \defeq \sum_{i = 2}^4 u \times e_i \times \nabla_{e_i}^\perp s - \sum_{i = 5}^8 (\nabla_s \Phi)(e_i, e_2, e_3, e_4) e_i \, \text{,} \label{eq:def-P}
  \end{equation}
  where $(e_i)_{i = 2, 3, 4}$ is any positive (i.e., $u = e_2 \times e_3 \times e_4$) local orthonormal frame of~$\partial X$, $(e_i)_{i = 5, \dotsc, 8}$ is any local orthonormal frame of~$\normal[M]{X} \vert_{\partial X}$, and $\nabla^\perp$ is the induced connection on~$\normal[M]{X}$. So if $s \in \sections(\normal[M]{X})$, then
  \begin{equation}
    \rho^{-1}((D s) \vert_{\partial X}) = \nabla_u s \vert_{\partial X} + P(s \vert_{\partial X}) \label{eq:relation-D-P}
  \end{equation}
  by \eqref{eq:def-D} since
  \begin{align*}
    g(\rho^{-1}(e_i \times \nabla_{e_i}^\perp s), e_j) &= h(e_i \times \nabla_{e_i}^\perp s, u \times e_j) = - \Phi(e_i, \nabla_{e_i}^\perp s, u, e_j) \\
    &= g(u \times e_i \times \nabla_{e_i}^\perp s, e_j)
  \end{align*}
  for $i = 2, 3, 4$, $j = 5, \dotsc, 8$ by \eqref{eq:inner-cross-2}, where $h$ is the metric on~$\altforms^2_7 M$.
  
  \begin{lemma} \label{lemma:linearisation-B}
    Let $s \in \sections(\normal[M]{X})$. Then
    \begin{equation}
      (\D B)_0(s) = \pi_\nu(\nabla_u s \vert_{\partial X} - \nabla_{\pi_\nu(s \vert_{\partial X})} u + P(\pi_K(s \vert_{\partial X}))) \, \text{.} \label{eq:linearisation-B}
    \end{equation}
  \end{lemma}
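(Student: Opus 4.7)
The plan is to split $(\D B)_0(s) = (\D B_1)_0(s) + (\D H)_0(s\vert_{\partial X})$ by linearity, where I write $B_1(s) \defeq \pi_\nu(\rho^{-1}(F(s)\vert_{\partial X}))$, and to compute each summand separately. For $B_1$, combining $(\D F)_0 = D$ from~\eqref{eq:linearisation-F} with the identity~\eqref{eq:relation-D-P} immediately yields
\[
  (\D B_1)_0(s) = \pi_\nu(\nabla_u s\vert_{\partial X}) + \pi_\nu(P(s\vert_{\partial X})) \, \text{.}
\]

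For $H$, write $s_0 \defeq s\vert_{\partial X}$ and $v \defeq \pi_\nu(s_0) \in \sections(\normal[W]{\partial X})$. Since $W$ is totally geodesic with respect to~$\hat g$, the map $\widehat{\exp}_{tv}$ sends $\partial X$ into~$W$, and differentiating $t \mapsto (\widehat{\exp}_{tv})^\ast \gamma$ at~$t = 0$ gives a variational expression which I unpack via the Cartan formula, interpreting the vector-valued Lie derivative of~$\gamma$ through an auxiliary connection on~$TW$. The key observation is that $\gamma\vert_{\partial X} = 0$: for any positive orthonormal frame $(a,b,c)$ of~$T(\partial X)$, the Cayley condition on~$X$ forces $a \times b \times c = u$, the orthogonality of~$X$ and~$W$ forces $u \perp W$, and hence $\gamma(a,b,c) = \pi_W(u) = 0$ along~$\partial X$. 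Consequently the choice of extension of~$v$ and of connection on~$TW$ is immaterial after applying~$\pi_\nu$, and only a single normal-derivative term survives.

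To compute this normal-derivative term I would fix a $\Spin(7)$\dash-frame $(e_1, \dotsc, e_8)$ at a point of~$\partial X$ with $e_1 = u$, $(e_2, e_3, e_4)$ a positive frame of~$\partial X$, and $(e_5, \dotsc, e_8)$ a frame of~$\normal[M]{X}\vert_{\partial X}$, expand $a \times b \times c$ via~\eqref{eq:def-cross-3}, and apply the Leibniz rule. Separating the derivative of~$\Phi$ from the derivatives of the frame vectors produces an $(\nabla_v \Phi)$\dash-type contribution together with a $\nabla_v u$\dash-type contribution; the resulting calculation is entirely parallel to the one in the proof of~\theoremref{thm:explicit-formula}, with~$u$ playing the role there of~$e_1$ and~$v$ of the deformation~$s$. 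Tracking signs (with care, since $\gamma$ involves $\pi_W$ rather than the Cayley calibration~$\Phi$ directly) gives
\[
  (\D H)_0(s_0) = - \pi_\nu(\nabla_v u) - \pi_\nu(P(v)) \, \text{.}
\]

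Combining the two contributions and using the orthogonal decomposition $s_0 = v + \pi_K(s_0)$ from the splitting $\normal[M]{X}\vert_{\partial X} = \normal[W]{\partial X} \oplus K$, the terms $\pi_\nu(P(v))$ cancel and one obtains
\[
  (\D B)_0(s) = \pi_\nu\bigl(\nabla_u s\vert_{\partial X} - \nabla_v u + P(\pi_K(s_0))\bigr) \, \text{,}
\]
which is the claimed formula. The main obstacle is the explicit derivation of~$(\D H)_0(s_0)$ in the previous paragraph: because $\gamma$ is $TW$-valued and $v$ lives only along~$\partial X$, the variational formula has to be set up with some care, and the most delicate point is to identify the order-zero $(\nabla_V \Phi)$\dash-term that exactly matches the $- \sum_{i = 5}^8 (\nabla_v \Phi)(e_i, e_2, e_3, e_4) \, e_i$ summand of~$P(v)$ in~\eqref{eq:def-P} and so ensures the advertised cancellation.
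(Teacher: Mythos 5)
Your proposal is correct and follows essentially the same route as the paper: split $B$ into the $\rho^{-1}F$ part and the $H$ part, dispatch the former via \eqref{eq:linearisation-F} and \eqref{eq:relation-D-P}, compute $(\D H)_0$ on $\sections(\normal[W]{\partial X})$ by the Lie derivative of $\gamma$ using the vanishing $\gamma\vert_{\partial X}=0$ (equivalently, $(e_i\interior\Phi)\vert_{\partial X}=0$) and the calculation already done in the proof of Theorem~\ref{thm:explicit-formula}, then recombine via $s_0=\pi_\nu(s_0)+\pi_K(s_0)$ so that the $\pi_\nu(P(\pi_\nu(s_0)))$ terms cancel. You also correctly identify the sign in $(\D H)_0(v)=-\pi_\nu(\nabla_v u+Pv)$ and the fact that the $TW$-connection ambiguity is killed by $\gamma\vert_{\partial X}=0$, which are exactly the delicate points the paper's proof resolves.
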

  
  \begin{proof}
    First let $s \in \sections(\normal[W]{\partial X})$. Then
    \begin{equation}
      (\D H)_0(s) = \frac{\D}{\D t} H(t s) \bigg\vert_{t = 0} = \frac{\D}{\D t} \pi_\nu((\widehat{\exp}_{t s})^\ast(\gamma)) \bigg\vert_{t = 0} = \pi_\nu((\Lie_s \gamma) \vert_{\partial X}) \, \text{.}
    \end{equation}
    Let $k$ be the dimension of~$W$, and let $(e_1, \dotsc, e_8)$ be a local orthonormal frame of~$M$ such that $(e_1, \dotsc, e_4)$ is a positive (i.e., $e_1 = e_2 \times e_3 \times e_4$) frame of~$X$ with $e_1 = u$ and $(e_2, \dotsc, e_{k + 1})$ is a frame of~$W$. Then
    \begin{equation}
      \gamma = \sum_{i = 2}^{k + 1} (e_i \interior \Phi) \vert_W \otimes e_i \, \text{.}
    \end{equation}
    So
    \begin{align*}
      (\Lie_s \gamma) \vert_{\partial X} &= \sum_{i = 2}^{k + 1} (\Lie_s (e_i \interior \Phi)) \vert_{\partial X} \otimes e_i + \sum_{i = 2}^{k + 1} (e_i \interior \Phi) \vert_{\partial X} \otimes \pi_W(\nabla_s e_i) \\
      &= \sum_{i = 2}^{k + 1} (\Lie_s (e_i \interior \Phi)) \vert_{\partial X} \otimes e_i
    \end{align*}
    since $(e_i \interior \Phi) \vert_{\partial X} = 0$ as $\Phi(e_i, e_2, e_3, e_4) = g(e_i, e_2 \times e_3 \times e_4) = g(e_i, e_1) = 0$ for $i = 2, \dotsc, 8$. We have $\Lie_s (e_i \interior \Phi) = \Lie_s e_i \interior \Phi + e_i \interior \Lie_s \Phi$. So
    \begin{align*}
      (\Lie_s(e_i \interior \Phi))(e_2, e_3, e_4) &= \Phi(\Lie_s e_i, e_2, e_3, e_4) + (\Lie_s \Phi)(e_i, e_2, e_3, e_4) \\
      &= \Phi(\Lie_s e_i, e_2, e_3, e_4) + (\nabla_s \Phi)(e_i, e_2, e_3, e_4) \\
      &\phantom{{}={}} {}+ \Phi(\nabla_{e_i} s, e_2, e_3, e_4) + \Phi(e_i, \nabla_{e_2} s, e_3, e_4) \\
      &\phantom{{}={}} {}+ \Phi(e_i, e_2, \nabla_{e_3} s, e_4) + \Phi(e_i, e_2, e_3, \nabla_{e_4} s) \, \text{.}
    \end{align*}
    Now
    \begin{align*}
      \Phi(\Lie_s e_i, e_2, e_3, e_4) + \Phi(\nabla_{e_i} s, e_2, e_3, e_4) &= \Phi(\nabla_s e_i, e_2, e_3, e_4) \\
      &= g(\nabla_s e_i, e_2 \times e_3 \times e_4) \\
      &= g(\nabla_s e_i, e_1) = - g(\nabla_s u, e_i)
    \end{align*}
    since $u = e_1 = e_2 \times e_3 \times e_4$.
    
    Recall from the proof of \theoremref{thm:explicit-formula} that
    \begin{align*}
      &\sum_{i = 5}^8 (\Phi(e_i, \nabla_{e_2} s, e_3, e_4) + \Phi(e_i, e_2, \nabla_{e_3} s, e_4) + \Phi(e_i, e_2, e_3, \nabla_{e_4} s)) (e_1 \times e_i) \\
      &= - \sum_{i = 2}^4 \sum_{j = 5}^8 g(\nabla_{e_i} s, e_j) (e_i \times e_j) = - \sum_{i = 2}^4 e_i \times \nabla_{e_i}^\perp s \, \text{.}
    \end{align*}
    So
    \begin{align*}
      &\pi_\nu((\Lie_s \gamma)(e_2, e_3, e_4)) \\
      &= - \pi_\nu(\nabla_s u) - \sum_{i = 2}^4 \pi_\nu(\rho^{-1}(e_i \times \nabla_{e_i}^\perp s)) + \sum_{i = 5}^{k + 1} (\nabla_s \Phi)(e_i, e_2, e_3, e_4) e_i \\
      &= - \pi_\nu(\nabla_s u + P s) \, \text{.}
    \end{align*}
    For general $s \in \sections(\normal[M]{X})$, we have $s \vert_{\partial X} = \pi_\nu(s \vert_{\partial X}) + \pi_K(s \vert_{\partial X})$. Hence
    \begin{align*}
      (\D B)_0(s) &= \pi_\nu(\rho^{-1}((D s) \vert_{\partial X})) + \pi_\nu((\Lie_{\pi_\nu(s \vert_{\partial X})} \gamma)(e_2, e_3, e_4)) \\
      &= \pi_\nu(\nabla_u s \vert_{\partial X} + P(s \vert_{\partial X}) - \nabla_{\pi_\nu(s \vert_{\partial X})} u - P(\pi_\nu(s \vert_{\partial X}))) \\
      &= \pi_\nu(\nabla_u s \vert_{\partial X} - \nabla_{\pi_\nu(s \vert_{\partial X})} u + P(\pi_K(s \vert_{\partial X})))
    \end{align*}
    by \eqref{eq:linearisation-F}, \eqref{eq:def-B}, and \eqref{eq:relation-D-P}.
  \end{proof}
  
  \begin{lemma} \label{lemma:boundary-second-elliptic}
    The linearisation of the boundary problem~\eqref{eq:boundary-second} at the $0$\dash-section is elliptic.
  \end{lemma}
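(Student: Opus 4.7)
The plan is to verify that~\eqref{eq:boundary-second} satisfies the Lopatinsky--Shapiro complementing boundary condition in the Douglis--Nirenberg framework for mixed-order elliptic boundary problems, which together with interior ellipticity of~$D^\ast D$ will give the claim. Interior ellipticity is immediate: since $\sigma_D(x, \xi) s = \xi \times s$ is a bijection for $\xi \ne 0$, the principal symbol of $D^\ast D$ is $\abs{\xi}^2 \Id$. The boundary conditions form a system of orders $(0, 1)$ on the rank-$4$ bundle $\normal[M]{X}$: the order-zero operator $\pi_K$ takes values in the rank-$(7 - k)$ bundle~$K$ and the order-one operator $(\D B)_0$ takes values in the rank-$(k - 3)$ bundle $\normal[W]{\partial X}$, where $k \defeq \dim W$, and $(7 - k) + (k - 3) = 4$ matches the required number of boundary conditions.

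To verify the Lopatinsky--Shapiro condition, I would fix a boundary point $x \in \partial X$ and a non-zero tangential covector $\eta \in T^\ast_x \partial X$, and pass to the frozen-coefficient ODE in the inward normal coordinate~$t$. Because the principal symbol of $D^\ast D$ at $\eta + \tau u^\flat$ equals $(\abs{\eta}^2 + \tau^2) \Id$, the bounded-at-infinity solutions of $(- \partial_t^2 + \abs{\eta}^2) s(t) = 0$ form a $4$\dash-dimensional space parametrised by $s_0 \in (\normal[M]{X})_x$, namely $s(t) = \E^{- \abs{\eta} t} s_0$. The task is to show that if $s_0$ satisfies the homogeneous principal-symbol boundary conditions, then $s_0 = 0$.

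The first boundary condition at $t = 0$ reads $\pi_K(s_0) = 0$, so $s_0 \in (\normal[W]{\partial X})_x$. For the second, I would read off the order-one part of $(\D B)_0$ from \lemmaref{lemma:linearisation-B}: of the three summands, only $\pi_\nu \circ \nabla_u$ and $\pi_\nu \circ P \circ \pi_K$ are of order one, while $- \pi_\nu(\nabla_{\pi_\nu(s \vert_{\partial X})} u)$ is of order zero and drops out of the symbol computation. Reading off from~\eqref{eq:def-P}, the principal symbol of~$P$ in direction~$\eta$ is $\sigma_P(\eta) s = \sum_{i = 2}^4 \eta_i \, u \times e_i \times s$. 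Evaluating on $s(t) = \E^{- \abs{\eta} t} s_0$, the normal-derivative term contributes $- \abs{\eta} \pi_\nu(s_0)$, and the tangential $P$\dash-term produces $\pi_\nu(\sigma_P(\eta) \pi_K(s_0))$, which vanishes because $\pi_K(s_0) = 0$ by the first condition. Hence the second principal-symbol condition reduces to $- \abs{\eta} \pi_\nu(s_0) = 0$, forcing $\pi_\nu(s_0) = 0$ (as $\abs{\eta} > 0$), and combined with the first condition this yields $s_0 = 0$, as required.

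The main obstacle I anticipate is the book-keeping: one has to identify carefully which terms of $(\D B)_0$ contribute at principal order, observe that the order-zero first condition kills the tangential $P$\dash-term appearing in the symbol of the order-one second condition, and check that the combined principal-symbol map on the space of bounded ODE solutions is the block map $k + n \mapsto (k, - \abs{\eta} n + \pi_\nu(\sigma_P(\eta) k))$ with $k \in K$ and $n \in \normal[W]{\partial X}$, whose injectivity is then immediate. No intricate analytic step is needed beyond the standard Lopatinsky--Shapiro formalism.
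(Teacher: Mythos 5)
Your proof is correct and takes essentially the same approach as the paper: both pass to the Lopatinsky--Shapiro condition, identify the bounded solutions of the frozen ODE as $\E^{-\abs{\eta} t} s_0$, and exploit the block-triangular structure of the boundary symbol, using the order-zero condition $\pi_K(s_0) = 0$ to kill the tangential $P$-term so that the normal derivative forces $\pi_\nu(s_0) = 0$. The paper packages this as invertibility of an explicit upper-triangular matrix, whereas you argue injectivity directly, but the content is the same.
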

  
  \begin{proof}
    Let $k$ be the dimension of~$W$, and let $(e_1, \dotsc, e_8)$ be a local orthonormal frame of~$M$ such that $(e_1, \dotsc, e_4)$ is a positive (i.e., $e_1 = e_2 \times e_3 \times e_4$) frame of~$X$ with $e_1 = u$ and $(e_2, \dotsc, e_{k + 1})$ is a frame of~$W$. Let $s \in \sections(\normal[M]{X})$, and write $s = s_5 e_5 + \dotsc + s_8 e_8$. Then $\pi_K(s \vert_{\partial X}) = \sum_{i = k + 2}^8 s_j e_j$. Hence
    \begin{equation}
      (\D B)_0(s) = \sum_{i = 5}^{k + 1} (e_1 . s_i) e_i + \sum_{i = 2}^4 \sum_{j = k + 2}^8 (e_i . s_j) \pi_\nu(e_1 \times e_i \times e_j) + \text{l.o.t.}
    \end{equation}
    by \eqref{eq:def-P} and \eqref{eq:linearisation-B}, where ``l.o.t.'' stands for lower order terms (i.e., an operator of order~$0$ in~$s$).
    
    So the symbol $\sigma_\partial(x, \xi)$ (for $x \in \partial X$, $\xi \in T^\ast_x X$) of the boundary operator $B \oplus \pi_K$ is given by
    \begin{equation}
      \left(
      \begin{array}{@{}ccc|ccc@{}}
        \I \xi_1 & 0      & 0        &   &        &                     \\[-0.75ex]
        0        & \ddots & 0        &   & A      &                     \\[-0.75ex]
        0        & 0      & \I \xi_1 &   &        & \vphantom{\ddots}   \\[0.75ex]
        \hline
        0        & 0      & 0        & 1 & 0      & 0 \vphantom{\ddots} \\[-0.75ex]
        0        & \ddots & 0        & 0 & \ddots & 0                   \\[-0.75ex]
        0        & 0      & 0        & 0 & 0      & 1 \vphantom{\ddots}
      \end{array}
      \right)
    \end{equation}
    for some matrix~$A = A(x, \xi)$ in the frame $(e_5, \dotsc, e_8)$ of $\normal[M]{X} \vert_{\partial X} = \normal[W]{\partial X} \oplus K$, where $\xi = \xi_1 e^1 + \dotsb + \xi_4 e^4$ (here $(e^1, \dotsc, e^4)$ is the dual coframe of $(e_1, \dotsc, e_4)$).
    
    The symbol $\sigma_G(x, \xi)$ of the operator $D^\ast D$ is given by
    \begin{equation*}
      \sigma_G(x, \xi) = - \abs{\xi}^2 \Id_{(\normal[M]{X})_x}
    \end{equation*}
    for~$x \in X$, $\xi \in T_x^\ast X$ \cite[Lemma~3.3]{BW93} since $D + D^\ast \colon \sections(\normal[M]{X} \oplus E) \to \sections(\normal[M]{X} \oplus E)$ is an operator of Dirac type. So $\sigma_G(x, \xi - \I \partial_t e^1) f(t) = 0$ means $\partial_t^2 f(t) = \abs{\xi}^2 f(t)$ for~$x \in \partial X$, $\xi \in T_x^\ast \partial X \subseteq T_x^\ast X$. The solutions are given by $f(t) = \E^{- \abs{\xi} t} a + \E^{\abs{\xi} t} b$ for~$a, b \in (\normal[M]{X})_x$. So the set of solutions which are bounded on~$\R_+$ is
    \begin{equation*}
      M_{x, \xi}^+ \defeq \set{e^{- \abs{\xi} t} a \colon a \in (\normal[M]{X})_x} \, \text{.}
    \end{equation*}
    Hence $f^\prime(0) = - \abs{\xi} f(0)$ for~$f \in M_{x, \xi}^+$. Therefore,
    \begin{equation}
      \sigma_\partial(x, \xi - \I \partial_t e^1) f(0) =
      \left(
      \begin{array}{@{}ccc|ccc@{}}
        \I \abs{\xi} & 0      & 0            &   &        &                     \\[-0.75ex]
        0            & \ddots & 0            &   & A      &                     \\[-0.75ex]
        0            & 0      & \I \abs{\xi} &   &        & \vphantom{\ddots}   \\[0.75ex]
        \hline
        0            & 0      & 0            & 1 & 0      & 0 \vphantom{\ddots} \\[-0.75ex]
        0            & \ddots & 0            & 0 & \ddots & 0                   \\[-0.75ex]
        0            & 0      & 0            & 0 & 0      & 1 \vphantom{\ddots}
      \end{array}
      \right)
      f(0) \, \text{.}
    \end{equation}
    This matrix is invertible for~$\xi \ne 0$. So
    \begin{equation*}
      M_{x, \xi}^+ \to (\normal[W]{\partial X} \oplus K)_x \, \text{,} \quad f \mapsto \sigma_\partial(x, \xi - \I \partial_t e^1) f(0)
    \end{equation*}
    is bijective since $M_{x, \xi}^+ \to (\normal[M]{X})_x$, $f \mapsto f(0)$ is bijective. Hence the linearisation of the boundary problem \eqref{eq:boundary-second} at the $0$\dash-section is elliptic ($D^\ast D$ is clearly an elliptic operator).
  \end{proof}
  
  \begin{lemma} \label{lemma:boundary-second-index}
    The linearisation of the boundary problem~\eqref{eq:boundary-second} at the $0$\dash-section has index~$0$.
  \end{lemma}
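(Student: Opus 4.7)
The plan is to show the index equals zero by deforming the boundary problem, through a family of elliptic boundary problems, to a formally self-adjoint one, and then invoking the vanishing of the Fredholm index for such problems. As recorded in the proof of \lemmaref{lemma:boundary-second-elliptic}, the interior operator $D^\ast D$ is of Laplace type with scalar principal symbol $\abs{\xi}^2 \Id_{\normal[M]{X}}$. Under the orthogonal splitting $\normal[M]{X} \vert_{\partial X} = \normal[W]{\partial X} \oplus K$, the boundary operator decomposes cleanly by order: the $\pi_K$ component is a zeroth-order Dirichlet-type condition on $K$, while by \lemmaref{lemma:linearisation-B} the $B$ component has first-order principal part $\pi_\nu(\nabla_u {} \cdot {})$, a Neumann-type condition on $\normal[W]{\partial X}$.

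Next, I would construct a homotopy $(L_t, B_t, \pi_K)_{t \in [0, 1]}$ where $L_t$ scales the subprincipal terms of $D^\ast D$ by $t$ and $B_t$ scales the zeroth-order terms of $(\D B)_0$ by $t$. Since all principal symbols are unchanged under this scaling, the ellipticity verification of \lemmaref{lemma:boundary-second-elliptic} applies uniformly, giving a continuous family of elliptic boundary problems. By homotopy invariance of the Fredholm index for elliptic boundary problems, the index of \eqref{eq:boundary-second} equals the index of the problem at $t = 0$.

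At $t = 0$ the interior operator reduces to a connection Laplacian (plus at worst a symmetric endomorphism from a Weitzenb\"ock-type decomposition $D^\ast D = \nabla^\ast \nabla + R$), while the boundary conditions become precisely pure Dirichlet on $K$ together with pure Neumann $\pi_\nu(\nabla_u s \vert_{\partial X}) = 0$ on $\normal[W]{\partial X}$. The orthogonality of the boundary splitting then makes the Green's boundary form $\int_{\partial X} (\langle \nabla_u s, t \rangle - \langle s, \nabla_u t \rangle)$ split across the two summands and vanish on any pair $(s, t)$ simultaneously obeying the boundary conditions; hence the $t = 0$ problem is formally self-adjoint, and its index is zero. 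The main obstacle is ensuring this manifest self-adjointness after the reduction --- namely that no skew first-order contribution survives in $B_0$ and that $R$ is genuinely symmetric. Any residual skew zeroth-order piece can be removed by a further zeroth-order modification of $B_0$, which affects neither the principal symbol nor the Fredholm index.
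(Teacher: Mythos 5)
Your approach --- homotopy to a manifally self-adjoint endpoint problem, then invoking index invariance --- is a genuinely different strategy from the paper's, which instead computes the $L^2$-adjoint boundary problem directly via Green's formula for Dirac-type operators and shows the cokernel is isomorphic to the kernel (after an order-$0$ symmetrisation of $B$ by $\tfrac{1}{2}\pi_\nu(P-P^*)$). The idea is sound, but your execution has a gap in the description of the boundary operator. By \eqref{eq:linearisation-B} the first-order part of $(\D B)_0$ is not just the normal derivative $\pi_\nu(\nabla_u \cdot)$: it also contains the tangential coupling term $\pi_\nu(P(\pi_K(s\vert_{\partial X})))$, which involves first-order tangential derivatives of the $K$-components. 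This is precisely the off-diagonal block $A$ in the symbol matrix exhibited in the proof of \lemmaref{lemma:boundary-second-elliptic}. It is \emph{not} a zeroth-order term, so your proposed homotopy (scaling zeroth-order terms by $t$) leaves it intact, and the $t=0$ problem is not the decoupled Dirichlet $\oplus$ Neumann problem you describe --- it still carries the coupling, and the ``clean split'' of the Green boundary form you invoke does not apply as stated.

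The gap is fixable: either (a) also homotope the off-diagonal block $A \rightsquigarrow 0$ in the boundary symbol, noting that the upper-triangular structure keeps the Lopatinski--Shapiro matrix invertible throughout, so the family remains elliptic; or (b) compute the formal adjoint of the $t=0$ problem \emph{with} the coupling and observe that the adjoint boundary conditions on $t$ are $\pi_K(t\vert_{\partial X}) = 0$, $\pi_\nu(\nabla_u t\vert_{\partial X}) = 0$, which are equivalent to the original conditions on $s$ since $\pi_K(s\vert_{\partial X}) = 0$ annihilates $P(\pi_K(s\vert_{\partial X}))$. Either patch yields the stated index $0$. Without one of them, the step ``the $t=0$ problem is precisely pure Dirichlet plus pure Neumann'' is false as written, and the remainder of the argument does not go through.
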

  
  \begin{proof}
    We first show that the operator $P$ defined in \eqref{eq:def-P} has self-adjoint symbol. Let $s, t \in \sections(\normal[M]{X} \vert_{\partial X})$, let $(e_i)_{i = 2, 3, 4}$ be any positive (i.e., $u = e_2 \times e_3 \times e_4$) local orthonormal frame of~$\partial X$, and let $(e_i)_{i = 5, \dotsc, 8}$ be any local orthonormal frame of~$\normal[M]{X} \vert_{\partial X}$. Then
    \begin{align*}
      g(P s, t) &= \sum_{i = 2}^4 g(u \times e_i \times \nabla_{e_i} s, t) - \sum_{i = 5}^8 (\nabla_s \Phi)(e_i, e_2, e_3, e_4) g(e_i, t) \\
      &= \sum_{i = 2}^4 \Phi(t, u, e_i, \nabla_{e_i} s) - (\nabla_s \Phi)(t, e_2, e_3, e_4)
    \end{align*}
    by \eqref{eq:def-P}. We have
    \begin{align*}
      \Phi(t, u, e_i, \nabla_{e_i} s) &= e_i . (\Phi(t, u, e_i, s)) - (\nabla_{e_i} \Phi)(t, u, e_i, s) - \Phi(\nabla_{e_i} t, u, e_i, s) \\
      &\phantom{{}={}} {}- \Phi(t, \nabla_{e_i} u, e_i, s) - \Phi(t, u, \nabla_{e_i} e_i, s)
    \end{align*}
    and
    \begin{align*}
      \updelta_{\partial X} (\Phi(t, u, {}\cdot{}, s)) &= {}- \sum_{i = 2}^4 e_i \interior \nabla_{e_i}(\Phi(t, u, {}\cdot{}, s)) \\
      &= {}- \sum_{i = 2}^4 (e_i .(\Phi(t, u, e_i, s)) - \Phi(t, u, \nabla_{e_i} e_i, s)) \, \text{.}
    \end{align*}
    Hence
    \begin{align*}
      g(P s, t) - g(s, P t) &= {}- \updelta_{\partial X} (\Phi(t, u, {}\cdot{}, s)) \\
      &\phantom{{}={}} {}- \sum_{i = 2}^4 ((\nabla_{e_i} \Phi)(t, u, e_i, s) + \Phi(t, \nabla_{e_i} u, e_i, s)) \\
      &\phantom{{}={}} {}+ (\nabla_t \Phi)(s, e_2, e_3, e_4) - (\nabla_s \Phi)(t, e_2, e_3, e_4) \, \text{.}
    \end{align*}
    Let $P^\ast \colon \sections(\normal[M]{X} \vert_{\partial X}) \to \sections(\normal[M]{X} \vert_{\partial X})$ be the formal adjoint of~$P$. Since
    \begin{equation*}
      \int_{\partial X} \updelta_{\partial X} (\Phi(t, u, {}\cdot{}, s)) \vol_{\partial X} = {}- \int_{\partial X} \D_{\partial X}(\mathord\ast_{\partial X}(\Phi(t, u, {}\cdot{}, s))) = 0
    \end{equation*}
    by Stokes' Theorem, we therefore get
    \begin{equation}
      \begin{split}
        P s - P^\ast s &= {}- \sum_{i = 2}^4 \sum_{j = 5}^8 ((\nabla_{e_i} \Phi)(e_j, u, e_i, s) + \Phi(e_j, \nabla_{e_i} u, e_i, s)) e_j \\
        &\phantom{{}={}} {}+ \sum_{j = 5}^8 ((\nabla_{e_j} \Phi)(s, e_2, e_3, e_4) - (\nabla_s \Phi)(e_j, e_2, e_3, e_4)) e_j \, \text{,}
      \end{split}
    \end{equation}
    which is an operator of order~$0$ in~$s$.
    
    We will use \emph{Green's formula} \cite[Proposition~3.4\,(b)]{BW93}: If $\tilde{D} \colon \sections(\mathbb{S}) \to \sections(\mathbb{S})$ is an operator of Dirac type, then
    \begin{equation}
      \inner{\tilde{D} s, t}_{L^2(\mathbb{S})} - \inner{s, \tilde{D}^\ast t}_{L^2(\mathbb{S})} = - \inner{u \cdot s, t}_{L^2(\mathbb{S} \vert_{\partial X})} \quad \text{for all~$s, t \in \sections(\mathbb{S})$.} \label{eq:greens-formula}
    \end{equation}
    In particular, if $s, t \in \sections(\normal[M]{X})$, then
    \begin{align*}
      &\inner{D^\ast D s, t}_{L^2(\normal[M]{X})} \\
      &= \inner{D s, D t}_{L^2(E)} + \inner{D s, u \times t}_{L^2(E \vert_{\partial X})} \\
      &= \inner{s, D^\ast D t}_{L^2(\normal[M]{X})} + \inner{D s, u \times t}_{L^2(E \vert_{\partial X})} - \inner{u \times s, D t}_{L^2(E \vert_{\partial X})} \\
      &= \inner{s, D^\ast D t}_{L^2(\normal[M]{X})} + \inner{\nabla_u s, t}_{L^2(\normal[M]{X} \vert_{\partial X})} + \inner{P(s \vert_{\partial X}), t}_{L^2(\normal[M]{X} \vert_{\partial X})} \\
      &\phantom{{}={}} {}- \inner{s, \nabla_u t}_{L^2(\normal[M]{X} \vert_{\partial X})} - \inner{s, P(t \vert_{\partial X})}_{L^2(\normal[M]{X} \vert_{\partial X})} \\
      &= \inner{D^\ast D t, s}_{L^2(\normal[M]{X})} + \inner{t, \nabla_u s}_{L^2(\normal[M]{X} \vert_{\partial X})} \\
      &\phantom{{}={}} {}- \inner{\nabla_u t + (P - P^\ast)(t \vert_{\partial X}), s}_{L^2(\normal[M]{X} \vert_{\partial X})} \, \text{.}
    \end{align*}
    If $r \in \sections(\normal[W]{\partial X})$, then
    \begin{align*}
      g(\nabla_u s - \nabla_{\pi_\nu(s \vert_{\partial X})} u, r) &= g(\nabla_u s, r) + g(u, \nabla_{\pi_\nu(s \vert_{\partial X})} r) \\
      &= g(\nabla_u s, r) + g(u, \nabla_r \pi_\nu(s \vert_{\partial X})) \\
      &= g(r, \nabla_u s) - g(\nabla_r u, \pi_\nu(s \vert_{\partial X}))
    \end{align*}
    since $g(u, [r, \pi_\nu(s \vert_{\partial X})]) = 0$ as $r, \pi_\nu(s \vert_{\partial X}) \in \sections(T W \vert_{\partial X})$ and $u \in \sections(\normal[M]{W} \vert_{\partial X})$. Hence
    \begin{align*}
      &\inner{D^\ast D s, t}_{L^2(\normal[M]{X})} + \inner{\pi_K(s \vert_{\partial X}), k}_{L^2(K)} \\
      &\phantom{{}={}} {}+ \inner{(\D B)_0(s) + \tfrac{1}{2} \pi_\nu((P - P^\ast)(s \vert_{\partial X})), r}_{L^2(\normal[W]{\partial X})} \\
      &= \inner{D^\ast D s, t}_{L^2(\normal[M]{X})} + \inner{\pi_K(s \vert_{\partial X}), k}_{L^2(\normal[M]{X} \vert_{\partial X})} \\
      &\phantom{{}={}} {}+ \inner{\nabla_u s \vert_{\partial X} - \nabla_{\pi_\nu(s \vert_{\partial X})} u + P(\pi_K(s \vert_{\partial X})) + \tfrac{1}{2} (P - P^\ast)(s \vert_{\partial X}), r}_{L^2(\normal[M]{X} \vert_{\partial X})} \\
      &= \inner{D^\ast D t, s}_{L^2(\normal[M]{X})} + \inner{t, \nabla_u s}_{L^2(\normal[M]{X} \vert_{\partial X})} \\
      &\phantom{{}={}} {}- \inner{\nabla_u t + (P - P^\ast)(t \vert_{\partial X}), s}_{L^2(\normal[M]{X} \vert_{\partial X})} + \inner{k, \pi_K(s \vert_{\partial X})}_{L^2(\normal[M]{X} \vert_{\partial X})} \\
      &\phantom{{}={}} {}+ \inner{r, \nabla_u s}_{L^2(\normal[M]{X} \vert_{\partial X})} - \inner{\nabla_r u, \pi_\nu(s \vert_{\partial X})}_{L^2(\normal[M]{X} \vert_{\partial X})} \\
      &\phantom{{}={}} {}+ \inner{P^\ast r, \pi_K(s \vert_{\partial X})}_{L^2(\normal[M]{X} \vert_{\partial X})} - \inner{\tfrac{1}{2} (P - P^\ast)(r), s \vert_{\partial X}}_{L^2(\normal[M]{X} \vert_{\partial X})} \\
      &= \inner{D^\ast D t, s}_{L^2(\normal[M]{X})} + \inner{t + r, \nabla_u s}_{L^2(\normal[M]{X} \vert_{\partial X})} \\
      &\phantom{{}={}} {}- \inner{\nabla_u t + \nabla_r u + (P - P^\ast)(t \vert_{\partial X}) + \tfrac{1}{2} (P - P^\ast)(r), \pi_\nu(s \vert_{\partial X})}_{L^2(\normal[M]{X} \vert_{\partial X})} \\
      &\phantom{{}={}} {}+ \inner{k - \nabla_u t - (P - P^\ast)(t \vert_{\partial X}) + P^\ast r - \tfrac{1}{2} (P - P^\ast)(r), \pi_K(s \vert_{\partial X})}_{L^2(\normal[M]{X} \vert_{\partial X})}
    \end{align*}
    for all $s, t \in \sections(\normal[M]{X})$, $k \in \sections(K)$, $r \in \sections(\normal[W]{\partial X})$. So if $(t, k, r) \in \sections(\normal[M]{X}) \oplus \sections(K) \oplus \sections(\normal[W]{\partial X})$ is $L^2$\dash-orthogonal to the image of the operator
    \begin{equation}
      \begin{split}
        \sections(\normal[M]{X}) &\to \sections(\normal[M]{X}) \oplus \sections(K) \oplus \sections(\normal[W]{\partial X}) \, \text{,} \\
        s &\mapsto (D^\ast D s, \pi_K(s \vert_{\partial X}), (\D B)_0(s) + \tfrac{1}{2} \pi_\nu((P - P^\ast)(s \vert_{\partial X}))) \, \text{,}
      \end{split} \label{eq:def-operator-index-0}
    \end{equation}
    then $\inner{D^\ast D t, s}_{L^2(\normal[M]{X})} = 0$ for all $s \in \sections(\normal[M]{X})$ with compact support in the interior of~$X$. Hence $D^\ast D t = 0$ in~$X$. Using $s \in \sections(\normal[M]{X})$ with $s \vert_{\partial X} = 0$ (so that $\nabla_u s \vert_{\partial X} \in \sections(\normal[M]{X} \vert_{\partial X})$ is arbitrary), we get $t \vert_{\partial X} + r = 0$. In particular, $\pi_K(t \vert_{\partial X}) = 0$. Using $s \in \sections(\normal[M]{X})$ with $\pi_K(s \vert_{\partial X}) = 0$, we get
    \begin{equation*}
      \pi_\nu(\nabla_u t \vert_{\partial X} - \nabla_{\pi_\nu(t \vert_{\partial X})} u + \tfrac{1}{2} (P - P^\ast)(t \vert_{\partial X})) = 0 \, \text{.}
    \end{equation*}
    Furthermore,
    \begin{equation*}
      k = \pi_K(\nabla_u t \vert_{\partial X} + \tfrac{1}{2} (P + P^\ast)(t \vert_{\partial X})) \, \text{.}
    \end{equation*}
    So the cokernel of the operator \eqref{eq:def-operator-index-0} is isomorphic to the kernel, and hence the operator \eqref{eq:def-operator-index-0} has index~$0$. Since $P - P^\ast$ is an operator of order~$0$ and the index of an elliptic boundary problem depends only on the symbols of the operators \cite[Theorem~20.1.8]{Hor85}, this shows that the index of the linearisation of the boundary problem \eqref{eq:boundary-second} at the $0$\dash-section has index~$0$.
  \end{proof}
  
  This finishes the proof of \propositionref{prop:main-proposition-boundary}.
\end{proof}

\subsection{Adapted Tubular Neighbourhood}
\label{subsec:metric}

Let $M$ be an $8$\dash-manifold with a $\Spin(7)$-structure~$\Phi$, let $X$ be a compact, connected Cayley submanifold of~$M$ with non-empty boundary, and let $W$ be a submanifold of~$M$ with $\partial X \subseteq W$ such that $X$ and $W$ meet orthogonally.

We want to parametrise submanifolds near~$X$ with boundary on~$W$ by normal vector fields using the exponential map. In general, we cannot use the exponential map of the metric~$g = g(\Phi)$ since it does not preserve~$W$ (i.e.,~$W$ is not totally geodesic with respect to~$g$). Here we construct a metric~$\hat{g}$ on~$M$ such that $W$ is totally geodesic with respect to~$\hat{g}$. Such a construction was previously used in~\cite{But03} and later in~\cite{KL09} and~\cite{Gay14}.

Since we are only interested in submanifolds near~$X$ and $\partial X$ is compact, we can shrink~$W$, if necessary, so that we can apply the Tubular Neighbourhood Theorem to~$W$.

\begin{lemma}[{cf.~\cite[Proposition~6]{But03}}] \label{lemma:metric}
  There are a tubular neighbourhood~$U$ of~$W$ in~$M$ and a metric~$\hat{g}$ on~$M$ such that $W$ is totally geodesic with respect to~$\hat{g}$ and such that $\hat{g}$ equals~$g$ outside~$U$ and $\hat{g}_x = g_x$ for all~$x \in W$.
\end{lemma}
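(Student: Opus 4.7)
The plan is to transport a canonical ``Sasaki-type'' metric from the normal bundle $\normal[M]{W}$ back to~$M$ via the normal exponential map of~$g$, and then interpolate with~$g$ using a cutoff function. Since $\partial X$ is compact, the statement is local near a compact piece of~$W$, so we may shrink~$W$ and apply the Tubular Neighbourhood Theorem to obtain a neighbourhood $V_1 \subseteq \normal[M]{W}$ of the zero section and an open tubular neighbourhood $U_1 \subseteq M$ of~$W$ such that the normal exponential map $\exp^\perp \colon V_1 \to U_1$ of~$g$ is a diffeomorphism.

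On~$\normal[M]{W}$, construct a metric~$\tilde{g}$ as follows. The Levi-Civita connection of~$g$ restricts to a metric connection $\nabla^\perp$ on the subbundle $\normal[M]{W} \subseteq T M \vert_W$, which provides a horizontal/vertical splitting $T_{(x, v)}(\normal[M]{W}) \cong T_x W \oplus (\normal[M]{W})_x$ at each point $(x, v) \in \normal[M]{W}$. Declare this splitting orthogonal for~$\tilde{g}_{(x, v)}$ and put $g_x \vert_{T_x W}$ on the horizontal factor and $g_x \vert_{(\normal[M]{W})_x}$ on the vertical factor. The fibrewise reflection
\begin{equation*}
  \sigma \colon \normal[M]{W} \to \normal[M]{W} \, \text{,} \quad (x, v) \mapsto (x, -v)
\end{equation*}
preserves the horizontal distribution (because $\nabla^\perp$ is linear) and, in the chosen splitting, acts as the identity on horizontal vectors and as~$-1$ on vertical vectors. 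Since~$\tilde{g}$ was defined to be independent of~$v$ in each factor, $\sigma$ is an isometry of~$\tilde{g}$, and its fixed point set is precisely the zero section. Fixed point sets of isometries are totally geodesic, so $W \subseteq (\normal[M]{W}, \tilde{g})$ is totally geodesic.

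Transporting via the diffeomorphism~$\exp^\perp$, set $g_1 \defeq ((\exp^\perp)^{-1})^\ast \tilde{g}$ on~$U_1$. Then $W$ is totally geodesic in~$(U_1, g_1)$. Moreover, since $(\D \exp^\perp)_{(x, 0)}$ is the natural isomorphism $T_x W \oplus (\normal[M]{W})_x \to T_x M$, we have $g_1 \vert_x = g_x$ for every~$x \in W$. Finally, pick nested tubular neighbourhoods $W \subseteq U_0 \subseteq \overline{U_0} \subseteq U \subseteq \overline{U} \subseteq U_1$ (possible as the relevant portion of~$W$ is compact) and a smooth cutoff $\chi \colon M \to [0, 1]$ with $\chi \equiv 1$ on~$U_0$ and $\supp(\chi) \subseteq U$. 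Put $\hat{g} \defeq \chi\, g_1 + (1 - \chi)\, g$ on~$U_1$ and $\hat{g} \defeq g$ on $M \setminus U$. As a pointwise convex combination of Riemannian metrics, $\hat{g}$ is a smooth Riemannian metric on~$M$; by construction $\hat{g} = g$ outside~$U$, $\hat{g} = g_1$ on~$U_0$, and $\hat{g}_x = g_x$ at every $x \in W$. Since being totally geodesic is a local property near~$W$ and $\hat{g} = g_1$ on~$U_0$, $W$ is totally geodesic with respect to~$\hat{g}$.

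The only non-routine step is the totally geodesic property of the zero section in $(\normal[M]{W}, \tilde{g})$, which the reflection argument handles cleanly; the remainder is standard tubular-neighbourhood and partition-of-unity bookkeeping. One minor subtlety to watch is that the cutoff must equal~$1$ in a full neighbourhood of~$W$ (not merely at~$W$), because the totally geodesic condition is a first-order condition on the metric along normal directions, not merely a pointwise one along~$W$.
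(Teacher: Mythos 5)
Your proof is correct and follows essentially the same approach as the paper: pull back a ``connection-sum'' metric from the normal bundle via the normal exponential map and interpolate with~$g$ using a cutoff. The reflection-isometry argument is a clean justification of the totally geodesic claim that the paper states without proof, and your explicit use of the horizontal distribution from~$\nabla^\perp$ makes precise how $\tilde{g} = \pi^\ast(g\vert_W) + g_\nu$ is to be read as a non-degenerate metric.
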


\begin{proof}
  By the Tubular Neighbourhood Theorem, there is an open subset~$U^\prime \subseteq \normal[M]{W}$ containing the $0$\dash-section and an open subset~$U \subseteq M$ containing~$W$ such that the exponential map $\exp \vert_{U^\prime} \colon U^\prime \to U$ is a diffeomorphism. Let $\pi \colon \normal[M]{W} \to W$ denote the projection, and let $g_\nu$ denote the metric on the fibres of~$\normal[M]{W}$. Then $\tilde{g} \defeq \pi^\ast(g \vert_W) + g_\nu$ defines a metric on~$U^\prime$ such that the $0$\dash-section is totally geodesic with respect to~$\tilde{g}$. Let $\chi \colon M \to [0, 1]$ be a smooth function such that $\chi \equiv 0$ outside~$U$ and $\chi \equiv 1$ in some tubular neighbourhood of~$W$ contained in~$U$. Define
  \begin{equation*}
    \hat{g} \defeq \chi \cdot ((\exp \vert_{U^\prime})^{-1})^\ast(\tilde{g}) + (1 - \chi) \cdot g \, \text{.}
  \end{equation*}
  Then $\hat{g}$ satisfies all the conditions.
\end{proof}

As a consequence of the Tubular Neighbourhood Theorem and \lemmaref{lemma:metric}, we obtain the following version of the Tubular Neighbourhood Theorem, which is adapted to local deformations of~$X$ with boundary on~$W$. Let $\normalhat[M]{X}$ be the normal bundle with respect to the metric~$\hat{g}$, and let $\widehat{\exp}$ be the exponential map of the metric~$\hat{g}$.

\begin{proposition} \label{prop:adapted-tubular-neighbourhood}
  Let $M$ be an $8$\dash-manifold with a $\Spin(7)$-structure, let $X$ be a compact, connected Cayley submanifold of~$M$ with non-empty boundary, and let $W$ be a submanifold of~$M$ with $\partial X \subseteq W$ such that $X$ and $W$ meet orthogonally.
  
  Then there are an open neighbourhood~$U \subseteq \normalhat[M]{X}$ of the $0$\dash-section and an $8$\dash-dimensional submanifold~$T$ of~$M$ with boundary and containing~$X$ such that $\widehat{\exp} \vert_U \colon U \to T$ is a diffeomorphism and such that if $s \in \sections(U)$ with $s \vert_{\partial X} \in \sections(T W \vert_{\partial X})$, then $\widehat{\exp}_x(s(x)) \in W$ for all~$x \in \partial X$.
\end{proposition}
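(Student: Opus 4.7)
The plan is to combine the standard tubular neighbourhood theorem, applied to the compact manifold-with-boundary $X$ using the modified metric $\hat{g}$, with the totally geodesic property of $W$ provided by \lemmaref{lemma:metric}.

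First I would invoke the tubular neighbourhood theorem for $X$ with respect to $\hat{g}$: the differential of $\widehat{\exp}$ at the zero section is essentially the identity, so it is a local diffeomorphism near every point of the zero section. Because $X$ is compact (with compact boundary), a standard covering and shrinking argument yields an open neighbourhood $U_0 \subseteq \normalhat[M]{X}$ of the zero section such that $\widehat{\exp}\vert_{U_0}$ is a diffeomorphism onto an open subset $T \subseteq M$. Since $\normalhat[M]{X}$, viewed as an $8$\dash-dimensional manifold with boundary $\normalhat[M]{X}\vert_{\partial X}$, pushes forward to $T$ under this diffeomorphism, $T$ is an $8$\dash-dimensional submanifold of $M$ with boundary and contains $X$ (as the image of the zero section).

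Next I would bring in the totally geodesic property. Since $\hat{g}_x = g_x$ for every $x \in W$ by \lemmaref{lemma:metric}, and since $X$ and $W$ meet orthogonally with respect to $g$, the same is true with respect to $\hat{g}$ on $\partial X$. In particular, at each $x \in \partial X$ we have the orthogonal decomposition $T_x W = T_x \partial X \oplus (T_x W \cap \normalhat[M]{X}_x)$, so the condition $s\vert_{\partial X} \in \sections(TW\vert_{\partial X})$ forces $s(x) \in T_x W \cap \normalhat[M]{X}_x$ for all $x \in \partial X$. Because $W$ is totally geodesic with respect to~$\hat{g}$, for every $x \in W$ there is an open neighbourhood of $0$ in $T_x W$ on which $\widehat{\exp}_x$ maps into~$W$. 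Using compactness of $\partial X$ and continuity of $\widehat{\exp}$, I can shrink $U_0$ to an open neighbourhood $U \subseteq U_0$ of the zero section such that for every $x \in \partial X$ and every $v \in U_x \cap T_x W$ we have $\widehat{\exp}_x(v) \in W$. Setting this $U$ (and redefining $T := \widehat{\exp}(U)$) gives the proposition: if $s \in \sections(U)$ with $s\vert_{\partial X} \in \sections(TW\vert_{\partial X})$, then for $x \in \partial X$ the vector $s(x)$ lies in $U_x \cap T_x W$, hence $\widehat{\exp}_x(s(x)) \in W$ by construction.

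The only mildly delicate step is arranging $U$ to be small enough simultaneously for the global diffeomorphism property of $\widehat{\exp}$ and for the geodesic-stays-in-$W$ property to hold uniformly. Both are routine compactness arguments, so the proof is essentially careful bookkeeping rather than a substantive technical obstacle; the real work has already been done in constructing $\hat{g}$ in \lemmaref{lemma:metric}.
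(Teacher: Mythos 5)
Your argument is correct and follows exactly the route the paper intends (the paper simply announces the result ``as a consequence of the Tubular Neighbourhood Theorem and Lemma~4.3'' without spelling out the details): establish the tubular neighbourhood diffeomorphism for the compact manifold-with-boundary $X$ in the $\hat{g}$-metric, check that a section of $\normalhat[M]{X}$ with boundary values in $TW$ automatically takes values in $T_xW\cap\normalhat[M]{X}_x$ (using that $\hat{g}=g$ along $W$ and that $X$ meets $W$ orthogonally), and then apply the totally-geodesic property of $W$ together with compactness of $\partial X$ to shrink $U$ so that these geodesics stay in $W$. One small slip in phrasing: $T$ is not an open subset of $M$ but rather an embedded $8$-dimensional submanifold with boundary, as you correctly state in the very next sentence; it is the diffeomorphic image of an open neighbourhood in the manifold-with-boundary $\normalhat[M]{X}$, so it inherits the boundary.
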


So $\CC^1$\dash-close submanifolds with boundary on~$W$ are parametrised by appropriate sections of the normal bundle with small $\CC^1$\dash-norm using the exponential map~$\widehat{\exp}$ of the metric~$\hat{g}$.

\subsection{Remarks about the Dimension of the Scaffold}
\label{subsec:remarks-dimension-scaffold}

If $\dim W = 3$, then $\normal[W]{\partial X}$ has rank~$0$ and $K = \normal[M]{X} \vert_{\partial X}$. Hence the linearisation of~\eqref{eq:boundary-second} at the $0$\dash-section becomes
\begin{equation}
  \begin{cases}
    D^\ast D s = 0 &\text{in~$X$,} \\
    \phantom{D^\ast D s = 0} \mathllap{s \vert_{\partial X} = 0} &\text{on~$\partial X$.} \\
  \end{cases} \label{eq:boundary-second-dim-3}
\end{equation}
If $s \in \sections(\normal[M]{X})$ satisfies~\eqref{eq:boundary-second-dim-3}, then
\begin{align*}
  0 &= \inner{D^\ast D s, s}_{L^2(\normal[M]{X})} = \inner{D s, D s}_{L^2(E)} + \inner{(D s) \vert_{\partial X}, u \times (s \vert_{\partial X})}_{L^2(E \vert_{\partial X})} \\
  &= \norm{D s}_{L^2(E)}^2
\end{align*}
by~\eqref{eq:greens-formula}. Hence $D s = 0$. So $s = 0$ by the Unique Continuation Property \cite[Corollary~8.3 and Remark~12.2]{BW93}. This shows that $X$ is rigid as a Cayley submanifold of~$M$ with boundary on~$W$ and meeting~$W$ orthogonally.

If $\dim W = 7$, then $K$ has rank~$0$, and hence \eqref{eq:boundary-first} becomes
\begin{equation}
  \begin{cases}
    \phantom{H(s \vert_{\partial X}) = 0} \mathllap{F(s) = 0} &\text{in~$X$,} \\
    H(s \vert_{\partial X}) = 0 &\text{on~$\partial X$.}
  \end{cases} \label{eq:boundary-first-dim-7}
\end{equation}
Suppose that $W$ is orientable, and let $\varphi \defeq \mathord\ast_W (\Phi \vert_W)$. Then $\varphi$ is a $G_2$\dash-structure on~$W$. Since $g(\gamma, t) = t \interior (\Phi \vert_W)$ for all $t \in \sections(T W)$, we have $H(s \vert_{\partial X}) = 0$ if and only if $\gamma \vert_{\partial X_s} = 0$ if and only if $(t \interior \Phi) \vert_{\partial X_s} = 0$ for all $t \in \sections(T W)$ if and only if $\varphi \vert_{\partial X_s} = \pm \vol_{\partial X_s}$, where $s \in \sections(\normal[M]{X})$ and $X_s \defeq \widehat{\exp}_s(X)$. So $s \in \sections(\normal[M]{X})$ satisfies~\eqref{eq:boundary-first-dim-7} if and only if $X_s$ is a Cayley submanifold of~$M$ and $\partial X_s$ is an associative submanifold of $(W, \varphi)$.

The proof of \cite[Theorem~1.2]{Gay14} implies that associative submanifolds of~$W$ are rigid for a generic $G_2$\dash-structure. An argument like in the case $\dim W = 3$ shows that this implies that also Cayley submanifolds of~$M$ with boundary on~$W$ and meeting~$W$ orthogonally are rigid for a generic $\Spin(7)$\dash-structure (here we use the fact that the map sending a $\Spin(7)$-structure to its restriction as a $G_2$\dash-structure on~$W$ is continuous and has the property that the preimage of a dense set is again dense). In the \hyperref[subsec:varying-spin7-boundary]{next section} we will show that this is also true for the other possible dimensions of~$W$, and in \sectionref{subsec:varying-scaffold} we will see that Cayley submanifolds are also rigid for a generic deformation of~$W$.

\subsection{Varying the Spin(7)-Structure}
\label{subsec:varying-spin7-boundary}

In \sectionref{subsec:varying-spin7} we showed that for a generic $\Spin(7)$\dash-structure, closed Cayley submanifolds form a smooth moduli space (see \hyperref[thm:main-theorem-spin7-closed]{Theorems~\ref*{thm:main-theorem-spin7-closed}} and \ref{thm:main-theorem-spin7-closed-c-k-alpha}). Here we prove this for compact, connected Cayley submanifolds with non-empty boundary. Note that we use a second-order operator compared to a first-order operator in the closed case. We use the $\CC^{k, \alpha}$\dash-topology for the space of all $\Spin(7)$\dash-structures.

\begin{theorem}[\theoremref{thm:main-spin7-boundary}] \label{thm:main-theorem-spin7-boundary}
  Let $k \ge 2$, let $0 < \alpha < 1$, let $M$ be an $8$\dash-manifold of class~$\CC^{k + 2, \alpha}$ with a $\Spin(7)$-structure~$\Phi$ of class~$\CC^{k + 2, \alpha}$, let $X$ be a compact, connected Cayley submanifold of~$M$ of class~$\CC^{k + 2, \alpha}$ with non-empty boundary, and let $W$ be a submanifold of~$M$ of class~$\CC^{k + 2, \alpha}$ with $\partial X \subseteq W$ such that $X$ and $W$ meet orthogonally.
  
  Then for every generic $\Spin(7)$\dash-structure~$\Psi$ of class~$\CC^{k, \alpha}$ that is $\CC^{k, \alpha}$\dash-close to~$\Phi$, the moduli space of all Cayley submanifolds of $(M, \Psi)$ of class~$\CC^{k + 1, \alpha}$ that are $\CC^{2, \alpha}$\dash-close to~$X$ with boundary on~$W$ and meeting~$W$ orthogonally (with respect to the metric induced by~$\Psi$) is a finite set (possibly empty).
\end{theorem}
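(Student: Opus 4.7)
The plan is to adapt the argument of \theoremref{thm:main-theorem-spin7-closed} to the second-order boundary problem~\eqref{eq:boundary-second}. First, I would parametrise nearby $\Spin(7)$-structures as $\Psi = \Theta(\chi)$ with $\chi \in \sections(V)$ and $V \subseteq \altforms^4_1 M \oplus \altforms^4_7 M \oplus \altforms^4_{35} M$ via Joyce's $\Theta$ as in the proof of \theoremref{thm:main-theorem-spin7-closed}, choosing the adapted metric $\hat{g}$ (\lemmaref{lemma:metric}) and tubular neighbourhood (\propositionref{prop:adapted-tubular-neighbourhood}) continuously in~$\chi$. Then I would define parametrised versions $\tilde{F}(s, \chi)$, $\tilde{H}(s, \chi)$, $\tilde{G}(s, \chi) \defeq D^\ast \tilde{F}(s, \chi)$, and $\tilde{B}(s, \chi)$ of~\eqref{eq:def-F}, \eqref{eq:def-H}, \eqref{eq:def-G}, \eqref{eq:def-B} using the cross products and $\tau$ of $\Theta(\chi)$, and consider the combined map $\tilde{\mathcal{P}}(s, \chi) \defeq (\tilde{G}(s, \chi), \pi_K(s \vert_{\partial X}), \tilde{B}(s, \chi))$. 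The moduli space of the theorem embeds into the zero set of $\tilde{\mathcal{P}}({}\cdot{}, \chi)$ for the corresponding $\chi$ by \propositionref{prop:main-proposition-boundary}.

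By \propositionref{prop:c-ell}, $\tilde{\mathcal{P}}$ extends to a $\CC^k$\dash-map on a $\CC^{2,\alpha}(U) \oplus \CC^{k,\alpha}(V)$\dash-neighbourhood of $(0, 0)$. At every solution near $(0, 0)$ the $s$\dash-linearisation is a second-order elliptic boundary problem of index~$0$ by \propositionref{prop:main-proposition-boundary} and continuity of its symbol, and any $\CC^{2,\alpha}$\dash-solution is of class $\CC^{k+1,\alpha}$ by elliptic regularity \cite[Theorem~6.8.1]{Mor66}.

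The main step, and the one I expect to be most delicate, is to show that the combined linearisation of $\tilde{\mathcal{P}}$ with respect to both $s$ and $\chi$ is surjective at every solution. By \lemmaref{lemma:linearisation-F-tilde}, variations $\chi = h(\tau_{\Theta(\chi_0)}, e)$ for $e \in \sections(\altforms^2_7 M)$ contribute $-\pi_E(e \vert_X)$ to $\tilde{F}$, and hence $-D^\ast \pi_E(e \vert_X)$ to $\tilde{G}$, while a fibrewise computation analogous to \lemmaref{lemma:linearisation-B} produces the boundary contributions of $\chi$-variations in $\tilde{B}$; further freedom comes from variations in $\altforms^4_1 M \oplus \altforms^4_{35} M$. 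Pairing against an arbitrary cokernel element $(t, k, r)$, described as in \lemmaref{lemma:boundary-second-index}, and integrating by parts via Green's formula~\eqref{eq:greens-formula}, the interior pairing reduces to $-\inner{e \vert_X, D t}_{L^2(E)}$ up to boundary terms, so arbitrariness of $e$ forces $D t \equiv 0$ on $X$. The boundary contributions of $\chi$-variations should then force $\pi_\nu(t \vert_{\partial X}) = 0$; combined with $\pi_K(t \vert_{\partial X}) = 0$ (built into the cokernel description) this yields $t \vert_{\partial X} = 0$, whence $t = 0$ by the Unique Continuation Property of the Dirac-type operator $D$ \cite[Corollary~8.3 and Remark~12.2]{BW93}, and then $k = 0$ and $r = 0$. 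Verifying that the boundary $\chi$-variations suffice to kill $\pi_\nu(t \vert_{\partial X})$ is the technical heart of the argument and will require a careful direct computation mirroring that of \lemmaref{lemma:linearisation-B}.

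With transversality in hand, \theoremref{thm:genericity-general} produces a residual set of $\chi \in \CC^{k,\alpha}(V)$ for which the $s$\dash-linearisation is an isomorphism at every solution, so that every solution is isolated by the Implicit Function Theorem. Any $\CC^{2,\alpha}$\dash-bounded family of solutions is $\CC^{k+1,\alpha}$\dash-bounded by the elliptic regularity in Step~2, hence precompact in $\CC^{2,\alpha}$ by Arzel\`a--Ascoli; thus the zero set of $\tilde{\mathcal{P}}({}\cdot{}, \chi)$ near~$0$ is compact and discrete, therefore finite. Since the moduli space of the theorem embeds into this zero set by \propositionref{prop:main-proposition-boundary}, it is finite as well.
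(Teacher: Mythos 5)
Your overall scaffolding matches the paper: parametrise $\Spin(7)$-structures via Joyce's map $\Theta$; build the parametrised second-order boundary problem $(\tilde{G}, \pi_K, \tilde{B})$; show it is $\CC^{k}$ in suitable H\"older spaces via \propositionref{prop:c-ell}; check transversality, invoke \theoremref{thm:genericity-general}, and finish with elliptic regularity and compactness. Your compactness argument (Arzel\`a--Ascoli after regularity bootstrap) is in substance the paper's \lemmaref{lemma:compactness}. One superfluous point: the adapted metric $\hat g$ and tubular neighbourhood depend only on $W$, not on $\chi$, so there is no need to vary them with the $\Spin(7)$-structure — the paper keeps them fixed and lets the $\chi$-dependence enter only through $\tau_{\Theta(\chi)}$ and $\gamma_{\Theta(\chi)}$.

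The step you flag as "the technical heart" is where your plan would stall, and it is also where your route genuinely diverges from the paper's. You expect the $\chi$-variation of the boundary operator $\tilde{B}$ to be nonzero and rich enough to force $\pi_\nu(t\vert_{\partial X}) = 0$ for any cokernel element $(t,k,r)$. But the paper shows precisely the opposite: by \lemmaref{lemma:linearisation-F-tilde} and \lemmaref{lemma:linearisation-H-tilde} the two contributions to $\tilde{B}$ cancel, so $(\D\tilde{B})_{(0,0)}(0,\chi) = 0$ identically for $\chi = h(\tau_\Phi, e)$ (this is exactly \eqref{eq:linearisation-B-tilde}). There is no boundary contribution from $\chi$-variations for you to exploit; your argument, as written, would compute that contribution, find it vanishes, and be stuck.

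Two remarks. First, the intended conclusion does in fact follow, but not by the mechanism you describe: $\chi$-orthogonality of a cokernel element $(t,k,r)$ says $\inner{D^\ast\pi_E(e\vert_X), t}_{L^2} = 0$ for all $e$, and since $D^\ast$ is surjective by \cite[Theorem~9.1]{BW93} this forces $t = 0$ outright — you do not even need the detour through $Dt = 0$, Green's formula, and unique continuation. Second, the paper avoids the cokernel-is-trivial strategy altogether. Its \lemmaref{lemma:spin7-boundary-surjective} is a direct construction: given a target $(t,k,r)$, first pick $s$ hitting the boundary data (using surjectivity of $s \mapsto (s\vert_{\partial X}, \nabla_u s\vert_{\partial X})$), then pick $\chi$ to correct the interior equation (using surjectivity of $D^\ast$), and the vanishing \eqref{eq:linearisation-B-tilde} guarantees the $\chi$-correction does not disturb the boundary conditions already arranged by $s$. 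That cancellation is not a technicality but the organising fact of the whole transversality argument, so you should derive it explicitly rather than hope the boundary variation works in your favour.
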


As we will see in the proof, we may restrict the class of $\Spin(7)$\dash-structures~$\Psi$ to those inducing the same metric as~$\Phi$.

\begin{proof}
  Recall from the proof of \theoremref{thm:main-theorem-spin7-closed} that there are an open tubular neighbourhood $V \subseteq \altforms^4_1 M \oplus \altforms^4_7 M \oplus \altforms^4_{35} M$ of the $0$\dash-section and a smooth bundle morphism $\Theta \colon V \to \altforms^4 M$ which parametrises $\Spin(7)$\dash-structures on~$M$ that are $\CC^0$\dash-close to~$\Phi$. Let
  \begin{equation}
    \pi_E \colon \altforms^2 M \vert_X \to E
  \end{equation}
  denote the orthogonal projection, where the vector bundle~$E$ of rank~$4$ over~$X$ is defined as in~\eqref{eq:def-E}. Furthermore, let $U \subseteq \normal[M]{X}$ and $\widehat{\exp}$ be defined as in the proof of \propositionref{prop:main-proposition-boundary}, and let
  \begin{equation}
    \tilde{F} \colon \sections(U) \oplus \sections(V) \to \sections(E) \, \text{,} \quad (s, \chi) \mapsto \pi_E((\widehat{\exp}_s)^\ast(\tau_{\Theta(\chi)})) \, \text{,} \label{eq:def-F-tilde}
  \end{equation}
  where $\tau_{\Theta(\chi)} \in \forms^4(M, \altforms^2 M)$ is defined as in \eqref{eq:def-tau} with respect to the $\Spin(7)$-structure $\Theta(\chi)$. Then
  \begin{equation}
    (\D \tilde{F})_{(0, 0)}(0, \chi) = - \pi_E(e \vert_X) \label{eq:linearisation-F-tilde}
  \end{equation}
  for $e \in \sections(\altforms^2_7 M)$ and $\chi = h(\tau_\Phi, e)$ by \lemmaref{lemma:linearisation-F-tilde}, where $h$ is the metric on~$\altforms^2_7 M$. Let $D^\ast \colon \sections(E) \to \sections(\normal[M]{X})$ be the formal adjoint of~$D$, where $D \colon \sections(\normal[M]{X}) \to \sections(E)$ is defined in~\eqref{eq:def-D}, and let
  \begin{equation}
    \tilde{G} \colon \sections(U) \oplus \sections(V) \to \sections(\normal[M]{X}) \, \text{,} \quad (s, \chi) \mapsto D^\ast(\tilde{F}(s, \chi)) \, \text{.} \label{eq:def-G-tilde}
  \end{equation}
  Then
  \begin{equation}
    (\D \tilde{G})_{(0, 0)}(0, \chi) = - D^\ast(\pi_E(e \vert_X)) \label{eq:linearisation-G-tilde}
  \end{equation}
  for $e \in \sections(\altforms^2_7 M)$ and $\chi = h(\tau_\Phi, e)$ since $D^\ast$ is linear. Let $U_{\partial X} \subseteq \normal[M]{X} \vert_{\partial X}$ be defined as in the proof of \propositionref{prop:main-proposition-boundary}, and let
  \begin{equation}
    \tilde{H} \colon \sections(U_{\partial X}) \oplus \sections(V) \to \sections(\normal[W]{\partial X}) \, \text{,} \quad (s, \chi) \mapsto \pi_\nu((\widehat{\exp}_{\pi_\nu(s)})^\ast(\gamma_{\Theta(\chi)})) \, \text{,} \label{eq:def-H-tilde}
  \end{equation}
  where $\pi_\nu$ is defined in~\eqref{eq:def-pi-nu} and $\gamma_{\Theta(\chi)} \in \forms^3(W, T W)$ is defined as in \eqref{eq:def-gamma} with respect to the $\Spin(7)$-structure $\Theta(\chi)$.
  
  \begin{lemma} \label{lemma:linearisation-H-tilde}
    Let $e \in \sections(\altforms^2_7 M)$, and let $\chi = h(\tau_\Phi, e)$, where $h$ is the metric on~$\altforms^2_7 M$ (note that $\chi \in \forms^4_7(M)$ by \eqref{eq:inner-tau} and \eqref{eq:forms-4-7}). Then
    \begin{equation}
      (\D \tilde{H})_{(0, 0)}(0, \chi) = \pi_\nu(\rho^{-1}(\pi_E(e \vert_{\partial X}))) \, \text{,} \label{eq:linearisation-H-tilde}
    \end{equation}
    where $\rho$ is defined in \eqref{eq:def-rho}.
  \end{lemma}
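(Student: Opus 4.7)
The plan is to reduce the computation to a variation in the direction~$\chi$ alone. Since we set $s=0$ in $\tilde H(0,\chi) = \pi_\nu((\widehat{\exp}_{0})^\ast(\gamma_{\Theta(\chi)})) = \pi_\nu(\gamma_{\Theta(\chi)} \vert_{\partial X})$, and since $\chi = h(\tau_\Phi,e) \in \forms^4_7(M)$ arises from an infinitesimal deformation of~$\Phi$ that preserves the induced metric (by the same result of Karigiannis used in \lemmaref{lemma:linearisation-F-tilde}), the induced objects $g$, $T W$, and $\pi_\nu$ are unchanged to first order along a path $\Phi_t = \Theta(\chi_t)$ with $\Phi_0 = \Phi$ and $\frac{\D}{\D t} \chi_t \big\vert_{t=0} = \chi$. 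Hence only the cross product in the definition of $\gamma$ varies, and the claim reduces to computing $\pi_\nu$ of $\frac{\D}{\D t}\big\vert_{t=0}(e_2 \times_{\Phi_t} e_3 \times_{\Phi_t} e_4)$, for any positive local orthonormal frame~$(e_2,e_3,e_4)$ of~$\partial X$ (so that $u = e_2 \times e_3 \times e_4$).

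Next, I would test this derivative against an arbitrary $r \in \sections(\normal[W]{\partial X})$. Starting from the defining identity $g(a \times_\Psi b \times_\Psi c, d) = \Psi(d,a,b,c)$, which follows directly from~\eqref{eq:def-cross-3} and the antisymmetry of~$\Psi$, differentiation in~$t$ yields
\begin{equation*}
  g\bigl(\tfrac{\D}{\D t}\big\vert_{t=0}(e_2 \times_{\Phi_t} e_3 \times_{\Phi_t} e_4), r\bigr) = \chi(r, e_2, e_3, e_4) \, \text{.}
\end{equation*}
I would then rewrite $\chi(r, e_2, e_3, e_4) = h(\tau_\Phi(r, e_2, e_3, e_4), e)$ and use the definition~\eqref{eq:def-tau} of~$\tau$: since $r \in \normal[W]{\partial X}$ is orthogonal to $T \partial X$, the three terms with inner products $g(r,e_i)$ vanish, leaving $\tau_\Phi(r, e_2, e_3, e_4) = - r \times u = u \times r$ (by antisymmetry of the 2-fold cross product).

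Finally, I would invoke the definition $\rho(r) = u \times r$ from~\eqref{eq:def-rho}: since $u \in T X \vert_{\partial X}$ and $r \in \normal[M]{X} \vert_{\partial X}$ (the last containment using that $X$ and $W$ meet orthogonally, so $\normal[W]{\partial X} \subseteq \normal[M]{X} \vert_{\partial X}$), the value $u \times r$ lies in $E \vert_{\partial X}$ by~\eqref{eq:cross-2-restriction}, and $\rho$ is fibrewise an isometry onto $E \vert_{\partial X}$ by~\eqref{eq:inner-cross-2}. Therefore
\begin{equation*}
  \chi(r, e_2, e_3, e_4) = h(u \times r, e) = h(\rho(r), \pi_E(e \vert_{\partial X})) = g(r, \rho^{-1}(\pi_E(e \vert_{\partial X}))) \, \text{,}
\end{equation*}
and since $r \in \normal[W]{\partial X}$, the right-hand side equals $g(r, \pi_\nu(\rho^{-1}(\pi_E(e \vert_{\partial X}))))$. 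As this holds for every such~$r$, the asserted identity follows.

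The routine part is unpacking the definitions; the only place that needs care is the sign convention in~\eqref{eq:def-cross-3} versus the antisymmetry of~$\Phi$, which determines the sign of the identity $g(a \times b \times c, d) = \Phi(d, a, b, c)$ and hence controls whether the final answer is $+\pi_\nu(\rho^{-1}(\pi_E(e \vert_{\partial X})))$ or its negative. I do not expect any analytic difficulties: everything reduces to pointwise linear algebra on $\partial X$, completely analogous to (and slightly simpler than) the computation in \lemmaref{lemma:linearisation-F-tilde}.
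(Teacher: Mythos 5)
Your proof is correct and follows essentially the same route as the paper's: choose a Karigiannis path $\Phi_t$ with fixed induced metric, differentiate the $3$-fold cross product/$\gamma$-form at $t=0$, and use the definition of~$\tau$ together with the orthogonality $r \perp T\partial X$ to reduce to $\rho$ and $\pi_E$. The only cosmetic difference is that you test against an arbitrary $r \in \sections(\normal[W]{\partial X})$, whereas the paper expands $\gamma$ in an adapted orthonormal frame and reads off the coefficients; both are the same pointwise linear-algebra computation, and your sign check matches the paper's.
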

  
  \begin{proof}
    Since $\chi \in \forms^4_7(M)$, there is a path $(\Phi_t)_{t \in (- \eps, \eps)}$ of $\Spin(7)$\dash-structures on~$M$ with $\Phi_0 = \Phi$ and $\frac{\D}{\D t} \Phi_t \big\vert_{t = 0} = \chi$ such that the metric induced by~$\Phi_t$ is the same metric as the metric induced by~$\Phi$ for $t \in (- \eps, \eps)$ \cite[Proposition~5.3.1]{Kar05}. Write $\Phi_t = \Theta(\chi_t)$ with $\frac{\D}{\D t} \chi_t \big\vert_{t = 0} = \chi$.
    
    Let $k$ be the dimension of~$W$, and let $(e_1, \dotsc, e_8)$ be a local orthonormal frame of~$M$ such that $(e_1, \dotsc, e_4)$ is a positive (i.e., $e_1 = e_2 \times e_3 \times e_4$) frame of~$X$ with $e_1 = u$ (where $u \in \sections(\normal[X]{\partial X})$ is the inward-pointing unit normal vector field of~$\partial X$ in~$X$) and $(e_2, \dotsc, e_{k + 1})$ is a frame of~$W$. Then
    \begin{align*}
      (\D \tilde{H})_{(0, 0)}(0, \chi) &= \frac{\D}{\D t} \tilde{H}(0, \chi_t) \biggr\vert_{t = 0} = \frac{\D}{\D t} \pi_\nu(\gamma_{\Theta(\chi_t)}) \biggr\vert_{t = 0} \\
      &= \frac{\D}{\D t} \sum_{i = 2}^{k + 1} (e_i \interior \Phi_t) \vert_{\partial X} \otimes \pi_\nu(e_i) \biggr\vert_{t = 0} \\
      &= \sum_{i = 2}^{k + 1} (e_i \interior \chi) \vert_{\partial X} \otimes \pi_\nu(e_i) \\
      &= \sum_{i = 5}^{k + 1} \chi(e_i, e_2, e_3, e_4) \vol_{\partial X} \otimes e_i \\
      &= \sum_{i = 5}^{k + 1} h(\tau(e_i, e_2, e_3, e_4), e) \vol_{\partial X} \otimes e_i \\
      &= - \sum_{i = 5}^{k + 1} h(e_i \times (e_2 \times e_3 \times e_4), e) \vol_{\partial X} \otimes e_i \\
      &= \sum_{i = 5}^{k + 1} h(e, e_1 \times e_i) \vol_{\partial X} \otimes e_i \\
      &= \pi_\nu(\rho^{-1}(\pi_E(e \vert_{\partial X})))
    \end{align*}
    by \eqref{eq:def-tau}.
  \end{proof}
  
  Let
  \begin{equation}
    \begin{split}
      \tilde{B} \colon \sections(U) \oplus \sections(V) &\to \sections(\normal[W]{\partial X}) \, \text{,} \\
      (s, \chi) &\mapsto \pi_\nu(\rho^{-1}(\tilde{F}(s, \chi) \vert_{\partial X})) + \tilde{H}(s \vert_{\partial X}, \chi) \, \text{.} 
    \end{split} \label{eq:def-B-tilde}
  \end{equation}
  Then
  \begin{equation}
    (\D \tilde{B})_{(0, 0)}(0, \chi) = \pi_\nu(\rho^{-1}(- \pi_E(e \vert_{\partial X}))) + \pi_\nu(\rho^{-1}(\pi_E(e \vert_{\partial X}))) = 0 \label{eq:linearisation-B-tilde}
  \end{equation}
  for $e \in \sections(\altforms^2_7 M)$ and $\chi = h(\tau_\Phi, e)$ by \eqref{eq:linearisation-F-tilde} and \eqref{eq:linearisation-H-tilde}, where $h$ is the metric on~$\altforms^2_7 M$.
  
  \begin{lemma} \label{lemma:spin7-boundary-surjective}
    The map
    \begin{equation}
      \begin{gathered}
        \sections(\normal[M]{X}) \oplus \sections(\altforms^4_1 M \oplus \altforms^4_7 M \oplus \altforms^4_{35} M) \to \sections(\normal[M]{X}) \oplus \sections(K) \oplus \sections(\normal[W]{\partial X}) \, \text{,} \\
        (s, \chi) \mapsto ((\D \tilde{G})_{(0, 0)}(s, \chi), \pi_K(s \vert_{\partial X}), (\D \tilde{B})_{(0, 0)}(s, \chi))
      \end{gathered}
    \end{equation}
    is surjective, where the vector bundle~$K$ over~$\partial X$ is defined as in the proof of \propositionref{prop:main-proposition-boundary} and $\pi_K$ is defined in~\eqref{eq:def-pi-K}.
  \end{lemma}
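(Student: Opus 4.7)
My plan is a duality argument. Since the $s$-only part of the operator (set $\chi = 0$) coincides with the Fredholm operator of index~$0$ from \lemmaref{lemma:boundary-second-index}, its image is closed and of finite codimension in the codomain. Hence the combined map is surjective if and only if every $(t, k, r)$ in the codomain that is $L^2$-orthogonal to the full image vanishes. I will assume such a triple and deduce $t = 0$, then $k = r = 0$, in three steps, using only variations of $\chi$ in the $\forms^4_7$-direction (consistent with the remark that one may restrict to $\Spin(7)$-structures inducing the same metric).

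The first step is to kill $t$. For $e \in \sections(\altforms^2_7 M)$ and $\chi = h(\tau_\Phi, e) \in \forms^4_7(M)$, \hyperref[lemma:linearisation-F-tilde]{Lemmas~\ref*{lemma:linearisation-F-tilde}} and~\ref{lemma:linearisation-H-tilde} give that the image of $(0, \chi)$ under the full map is $(-D^\ast \pi_E(e\vert_X), 0, 0)$. Applying Green's formula~\eqref{eq:greens-formula} to the Dirac-type operator $D \oplus D^\ast$ on $\sections(\normal[M]{X} \oplus E)$, the orthogonality condition rewrites as
\begin{equation*}
  \inner{\pi_E(e\vert_X), D t}_{L^2(E)} + \inner{\rho(t\vert_{\partial X}), \pi_E(e\vert_{\partial X})}_{L^2(E\vert_{\partial X})} = 0
\end{equation*}
for every $e$. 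Letting $e$ first be supported in the interior of $X$ (and using fibre-wise surjectivity of $\pi_E$ onto $E$) forces $D t = 0$ on $X$; then varying the boundary values of $e$ freely and invoking that $\rho$ is a fibre-wise isomorphism forces $t\vert_{\partial X} = 0$.

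At this point, since $D$ is of Dirac type, $X$ is connected, and $\partial X$ is non-empty, the Unique Continuation Property \cite[Corollary~8.3 and Remark~12.2]{BW93} yields $t = 0$, exactly as in the rigidity argument of \sectionref{subsec:remarks-dimension-scaffold}. Substituting $t = 0$ into the orthogonality condition against the $s$-variations, the interior contribution coming from $(\D \tilde G)_{(0,0)}$ drops out and the remaining boundary pairing reads, by~\eqref{eq:linearisation-B},
\begin{equation*}
  \inner{\pi_K(s\vert_{\partial X}), k}_{L^2(K)} + \inner{\pi_\nu(\nabla_u s\vert_{\partial X} - \nabla_{\pi_\nu(s\vert_{\partial X})} u + P(\pi_K(s\vert_{\partial X}))), r}_{L^2(\normal[W]{\partial X})} = 0
\end{equation*}
for all $s \in \sections(\normal[M]{X})$. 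Choosing $s$ with $s\vert_{\partial X} = 0$ and $\nabla_u s\vert_{\partial X}$ an arbitrary section of $\normal[W]{\partial X}$ kills the $k$-term and forces $r = 0$; then choosing $s$ with $\pi_K(s\vert_{\partial X})$ arbitrary forces $k = 0$.

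The step I expect to be most delicate is the Green's-formula identity producing the two contributions $\inner{\pi_E(e\vert_X), Dt}$ and $\inner{\rho(t\vert_{\partial X}), \pi_E(e\vert_{\partial X})}$ with the correct sign conventions: this requires identifying the Clifford action of the inward unit normal $u$ on $\normal[M]{X}\vert_{\partial X}$ with the isomorphism $\rho$ of~\eqref{eq:def-rho}, which in turn relies on the relation~\eqref{eq:relation-D-P} between $D$ and $P$ near the boundary. Everything else is a routine choice of test sections $s$ to isolate individual components of $(k, r)$.
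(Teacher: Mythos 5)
Your proof is correct but takes a genuinely different route from the paper's. The paper's argument is direct and constructive: given $(t,k,r)$, it first chooses $s$ with $\pi_K(s\vert_{\partial X}) = k$, $\pi_\nu(s\vert_{\partial X}) = 0$, and $\nabla_u s\vert_{\partial X} = r - Pk$ (which is possible because the joint Cauchy data map $s \mapsto (s\vert_{\partial X}, \nabla_u s\vert_{\partial X})$ is surjective), which already produces the correct second and third components; it then corrects the first component by invoking the surjectivity of $D^\ast \colon \sections(E) \to \sections(\normal[M]{X})$ on a compact manifold with boundary (\cite[Theorem~9.1]{BW93}) together with fibre-wise surjectivity of $\pi_E$ to find $e$ with $D^\ast(\pi_E(e\vert_X)) = D^\ast D s - t$. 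Your argument instead proceeds by duality: you use that the $s$-only part is Fredholm of index~$0$ (so the full image is closed of finite codimension), pair an orthogonal element $(t,k,r)$ against the $\chi$-variations via Green's formula, and conclude $Dt = 0$ with $t\vert_{\partial X} = 0$, hence $t = 0$ by the Unique Continuation Property; then $s$-variations with prescribed Cauchy data kill $r$ and $k$. Both proofs restrict to variations $\chi = h(\tau_\Phi, e) \in \forms^4_7(M)$, consistent with the remark about metric-preserving deformations. The two arguments are closely related at a deeper level — surjectivity of $D^\ast$ on a compact manifold with boundary (without boundary conditions) is itself a consequence of the UCP for the adjoint $D$ — so your proof essentially unfolds the citation into an explicit Green's-formula-plus-UCP calculation, whereas the paper's proof is shorter by treating that surjectivity as a black box. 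Your flagged concern about sign conventions is well founded but resolves correctly: with the paper's conventions, Clifford multiplication by $u$ acts on $\normal[M]{X} \oplus E$ as $\begin{pmatrix} 0 & -\rho^\ast \\ \rho & 0 \end{pmatrix}$, which is exactly what makes Green's formula for $D^\ast$ produce the term $\inner{\rho(t\vert_{\partial X}), \pi_E(e\vert_{\partial X})}$ with the sign you wrote (compare the paper's own computation in the proof of \lemmaref{lemma:boundary-second-index}, where $-\inner{u \cdot Ds, t} = \inner{Ds, u\times t}$).
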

  
  \begin{proof}
    We have seen that this map is given by
    \begin{equation}
      \begin{split}
        (s, \chi) &\mapsto (D^\ast D s - D^\ast(\pi_E(e \vert_X)), \pi_K(s \vert_{\partial X}), \\
        &\phantom{{}\mapsto (} \pi_\nu(\nabla_u s \vert_{\partial X} - \nabla_{\pi_\nu(s \vert_{\partial X})} u + P(\pi_K(s \vert_{\partial X}))))
      \end{split}
    \end{equation}
    for $e \in \sections(\altforms^2_7 M)$ such that $\chi = h(\tau_\Phi, e)$ (where $h$ is the metric on~$\altforms^2_7 M$) by \eqref{eq:linearisation-G}, \eqref{eq:linearisation-B}, \eqref{eq:linearisation-G-tilde}, and \eqref{eq:linearisation-B-tilde} since $\pi_K$ is linear, where $P$ is defined in~\eqref{eq:def-P}.
    
    Let $(t, k, r) \in \sections(\normal[M]{X}) \oplus \sections(K) \oplus \sections(\normal[W]{\partial X})$. Then there is some $s \in \sections(\normal[M]{X})$ such that
    \begin{equation*}
      \pi_K(s \vert_{\partial X}) = k \, \text{,} \quad \pi_\nu(s \vert_{\partial X}) = 0 \, \text{,} \quad \text{and} \quad \nabla_u s \vert_{\partial X} = r - P k
    \end{equation*}
    since
    \begin{equation*}
      \sections(\normal[M]{X}) \to \sections(\normal[M]{X} \vert_{\partial X}) \oplus \sections(\normal[M]{X} \vert_{\partial X}) \, \text{,} \quad s \mapsto (s \vert_{\partial X}, \nabla_u s \vert_{\partial X})
    \end{equation*}
    is surjective. Furthermore, the operator $D^\ast \colon \sections(E) \to \sections(\normal[M]{X})$ is surjective by \cite[Theorem~9.1]{BW93}. So there is some $e \in \sections(\altforms^2_7 M)$ such that
    \begin{equation*}
      D^\ast(\pi_E(e \vert_X)) = D^\ast D s - t \, \text{.}
    \end{equation*}
    Hence
    \begin{equation*}
      (\D \tilde{G})_{(0, 0)}(s, \chi) = t \, \text{,} \quad \pi_K(s \vert_{\partial X}) = k \, \text{,} \quad \text{and} \quad (\D \tilde{B})_{(0, 0)}(s, \chi) = r
    \end{equation*}
    for $\chi = h(\tau_\Phi, e)$.
  \end{proof}
  
  \begin{lemma} \label{lemma:compactness}
    Let $0 < \beta < \alpha$. Then the moduli space of all solutions in $\CC^{2, \alpha}(\normal[M]{X})$ of the boundary problem~\eqref{eq:boundary-second} that are $\CC^{2, \alpha}$\dash-close to~$0$ is compact in $\CC^{2, \beta}(\normal[M]{X})$.
  \end{lemma}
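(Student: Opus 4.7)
The plan is to combine the compact embedding of H\"older spaces on~$X$ with continuity of the nonlinear operators $G$, $\pi_K$, and~$B$ defining the boundary problem~\eqref{eq:boundary-second}. Let $\mathcal{M}$ denote the moduli space in question; after shrinking the $\CC^{2, \alpha}$\dash-neighbourhood of the zero section if necessary, I may assume $\mathcal{M}$ is contained in a closed $\CC^{2, \alpha}$\dash-ball $\overline{B_r}$ of sections whose images lie in the tubular neighbourhood $U \subseteq \normal[M]{X}$ on which the deformation map is defined.

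First I would establish precompactness. For $0 < \beta < \alpha$, the inclusion $\CC^{2, \alpha}(\normal[M]{X}) \hookrightarrow \CC^{2, \beta}(\normal[M]{X})$ is compact, by an Arzel\`a--Ascoli argument applied to the second derivatives on the compact manifold-with-boundary~$X$. Hence the bounded set $\mathcal{M} \subseteq \overline{B_r}$ is precompact in $\CC^{2, \beta}(\normal[M]{X})$.

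Next I would establish closedness of $\mathcal{M}$ in $\CC^{2, \beta}$. Given $s_n \in \mathcal{M}$ with $s_n \to s$ in $\CC^{2, \beta}$, I would invoke \corollaryref{cor:smooth} (with $\beta$ in place of~$\alpha$) to see that $G$, $B$, and the boundary restriction $s \mapsto \pi_K(s \vert_{\partial X})$ extend to smooth, hence continuous, maps between the appropriate $\CC^{k, \beta}$ spaces. Passing to the limit in $G(s_n) = 0$, $\pi_K(s_n \vert_{\partial X}) = 0$, and $B(s_n) = 0$ yields the same equations for~$s$. To see that $s$ itself lies in $\CC^{2, \alpha}$ with $\lVert s \rVert_{\CC^{2, \alpha}} \le r$, I would use lower semi-continuity of the $\CC^{2, \alpha}$ H\"older norm under $\CC^{2, \beta}$ convergence: each individual H\"older difference quotient of second derivatives is a continuous functional on $\CC^{2, \beta}$, and the supremum defining the norm satisfies
\[
  \lVert s \rVert_{\CC^{2, \alpha}} \le \liminf_{n \to \infty} \lVert s_n \rVert_{\CC^{2, \alpha}} \le r \, \text{,}
\]
so $s \in \mathcal{M}$.

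No genuine obstacle arises; the mild subtlety lies in choosing a \emph{closed} $\CC^{2, \alpha}$\dash-ball (rather than an open neighbourhood) to define~$\mathcal{M}$, which together with the lower semi-continuity of the stronger norm ensures that $\CC^{2, \beta}$\dash-limits of elements of~$\mathcal{M}$ remain in~$\mathcal{M}$. All other ingredients---the compact H\"older embedding, the smoothness statement of \corollaryref{cor:smooth}, and the second-order ellipticity established in \propositionref{prop:main-proposition-boundary}---are already in place.
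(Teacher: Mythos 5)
Your proof is correct and follows essentially the same route as the paper: precompactness from the compact Hölder embedding $\CC^{2,\alpha}\hookrightarrow\CC^{2,\beta}$ on the compact manifold-with-boundary~$X$, and closedness from (a) lower semi-continuity of the $\CC^{2,\alpha}$-norm along $\CC^{2,\beta}$-convergent sequences with uniformly bounded $\CC^{2,\alpha}$-norm, together with (b) continuity of $G$, $\pi_K$, and $B$ with respect to the $\CC^{2,\beta}$-topology. Your explicit remark about fixing a closed $\CC^{2,\alpha}$-ball is exactly the (implicit) convention the paper uses when it writes $\norm{s_n}_{\CC^{2,\alpha}}\le R$ and concludes $\norm{s}_{\CC^{2,\alpha}}\le R$.
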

  
  \begin{proof}
    Let $\mathcal{M}_\alpha \subseteq \CC^{2, \alpha}(\normal[M]{X})$ be the moduli space of all solutions of the boundary problem~\eqref{eq:boundary-second} that are $\CC^{2, \alpha}$\dash-close to~$0$. Since the embedding operator $\CC^{2, \alpha}(\normal[M]{X}) \to \CC^{2, \beta}(\normal[M]{X})$ is compact (\cite[Proposition~1 in Section~2.3.2.6]{RS82}), the closure of $\mathcal{M}_\alpha$ in $\CC^{2, \beta}(\normal[M]{X})$ is compact. Furthermore, if $R > 0$ and $s_n \in \mathcal{M}_\alpha$ satisfies $\norm{s_n}_{\CC^{2, \alpha}} \le R$ for all~$n$ and $s_n$ converges in $\CC^{2, \beta}(\normal[M]{X})$ to some $s \in \CC^{2, \beta}(\normal[M]{X})$, then $s \in \CC^{2, \alpha}(\normal[M]{X})$ with $\norm{s}_{\CC^{2, \alpha}} \le R$. Note that $s$ also satisfies \eqref{eq:boundary-second} since $G$, $\pi_K$, and $B$ are also continuous with respect to $\CC^{2, \beta}(\normal[M]{X})$. Hence $s \in \mathcal{M}_\alpha$. So $\mathcal{M}_\alpha$ is a compact subset of $\CC^{2, \beta}(\normal[M]{X})$.
  \end{proof}
  
  The coefficients of~$D$ are in~$\CC^{k + 1, \alpha}$ by~\eqref{eq:def-D} since $\Phi \in \CC^{k + 2, \alpha}(\altforms^4 M)$. The first-order coefficients of~$D$ are even in~$\CC^{k + 2, \alpha}$. Hence the coefficients of~$D^\ast$ are in~$\CC^{k + 1, \alpha}$. So the coefficients of~$D^\ast D$ are in~$\CC^{k, \alpha}$. Also the coefficients of~$P$ and $P^\ast$ are in~$\CC^{k + 1, \alpha}$ by~\eqref{eq:relation-D-P}. Further, the coefficients of~$\pi_K$ and $\pi_\nu$ are in~$\CC^{k + 2, \alpha}$.
  
  Now $\tilde{G}$ extends to a map
  \begin{equation}
    \tilde{G}_{k, \alpha} \colon \CC^{2, \alpha}(U) \oplus \CC^{k, \alpha}(V) \to \CC^{0, \alpha}(\normal[M]{X})
  \end{equation}
  of class~$\CC^{k - 1}$ by \propositionref{prop:c-ell} since $D^\ast \colon \CC^{1, \alpha}(E) \to \CC^{0, \alpha}(\normal[M]{X})$ is a linear first-order differential operator. Furthermore, $\tilde{B}$ extends to a map
  \begin{equation}
    \tilde{B}_{k, \alpha} \colon \CC^{2, \alpha}(U) \oplus \CC^{k, \alpha}(V) \to \CC^{1, \alpha}(\normal[W]{\partial X})
  \end{equation}
  of class~$\CC^{k - 1}$.
  
  The proof of \lemmaref{lemma:spin7-boundary-surjective} shows that the map
  \begin{equation*}
    \begin{split}
      &\CC^{k + 1, \alpha}(\normal[M]{X}) \oplus \CC^{k, \alpha}(\altforms^4_1 M \oplus \altforms^4_7 M \oplus \altforms^4_{35} M) \\
      &\to \CC^{k - 1, \alpha}(\normal[M]{X}) \oplus \CC^{k + 1, \alpha}(K) \oplus \CC^{k, \alpha}(\normal[W]{\partial X}) \, \text{,} \\
      (s, \chi) &\mapsto ((\D \tilde{G}_{k, \alpha})_{(0, 0)}(s, \chi), \pi_K(s \vert_{\partial X}), (\D \tilde{B}_{k, \alpha})_{(0, 0)}(s, \chi))
    \end{split}
  \end{equation*}
  is surjective since the maps
  \begin{equation*}
    \CC^{k + 1, \alpha}(\normal[M]{X}) \to \CC^{k + 1, \alpha}(\normal[M]{X} \vert_{\partial X}) \oplus \CC^{k, \alpha}(\normal[M]{X} \vert_{\partial X}) \, \text{,} \quad s \mapsto (s \vert_{\partial X}, \nabla_u s \vert_{\partial X})
  \end{equation*}
  and $D^\ast \colon \CC^{k, \alpha}(E) \to \CC^{k - 1, \alpha}(\normal[M]{X})$ are surjective.
  
  The $L^2$\dash-orthogonal complement of the image of the linearisation of~\eqref{eq:boundary-second} at the $0$\dash-section is given by all $(t, k, r) \in \CC^{2, \alpha}(\normal[M]{X}) \oplus \CC^{1, \alpha}(K) \oplus \CC^{2, \alpha}(\normal[W]{\partial X})$ such that (compare the proof of \lemmaref{lemma:boundary-second-index})
  \begin{equation}
    \begin{cases}
      \phantom{\pi_\nu(\nabla_u t \vert_{\partial X} - \nabla_{\pi_\nu(t \vert_{\partial X})} u + (P - P^\ast)(t \vert_{\partial X})) = 0} \mathllap{D^\ast D t = 0} &\text{in $X$,} \\
      \phantom{\pi_\nu(\nabla_u t \vert_{\partial X} - \nabla_{\pi_\nu(t \vert_{\partial X})} u + (P - P^\ast)(t \vert_{\partial X})) = 0} \mathllap{\pi_K(t \vert_{\partial X}) = 0} &\text{on $\partial X$,} \\
      \pi_\nu(\nabla_u t \vert_{\partial X} - \nabla_{\pi_\nu(t \vert_{\partial X})} u + (P - P^\ast)(t \vert_{\partial X})) = 0 &\text{on $\partial X$}
    \end{cases} \label{eq:orthogonal-complement-t}
  \end{equation}
  with
  \begin{equation}
    k = \pi_K(\nabla_u t \vert_{\partial X} + P(t \vert_{\partial X})) \quad \text{and} \quad r = t \vert_{\partial X} \, \text{.} \label{eq:orthogonal-complement-k-r}
  \end{equation}
  So $t \in \CC^{k + 2, \alpha}(\normal[M]{X})$ by Elliptic Regularity. Hence $k \in \CC^{k + 1, \alpha}(K)$ and $r \in \CC^{k + 2, \alpha}(\normal[W]{\partial X})$. Thus the map
  \begin{equation*}
    \begin{split}
      &\CC^{2, \alpha}(\normal[M]{X}) \oplus \CC^{k, \alpha}(\altforms^4_1 M \oplus \altforms^4_7 M \oplus \altforms^4_{35} M) \\
      &\to \CC^{0, \alpha}(\normal[M]{X}) \oplus \CC^{2, \alpha}(K) \oplus \CC^{1, \alpha}(\normal[W]{\partial X}) \, \text{,} \\
      (s, \chi) &\mapsto ((\D \tilde{G}_{k, \alpha})_{(0, 0)}(s, \chi), \pi_K(s \vert_{\partial X}), (\D \tilde{B}_{k, \alpha})_{(0, 0)}(s, \chi))
    \end{split}
  \end{equation*}
  is surjective. Hence there is a ($\CC^{2, \alpha} \oplus \CC^{k, \alpha}$)\dash-neighbourhood $\tilde{U}_1 \subseteq \CC^{2, \alpha}(U) \oplus \CC^{k, \alpha}(V)$ of $(0, 0)$ such that $((\D \tilde{G}_{k, \alpha})_{(s, \chi)}, \pi_K, (\D \tilde{B}_{k, \alpha})_{(s, \chi)})$ is surjective for all $(s, \chi) \in \tilde{U}_1$.
  
  For fixed $\chi \in \CC^{k, \alpha}(V)$, the equation
  \begin{equation}
    \tilde{G}_{k, \alpha}(s, \chi) = D^\ast(\pi_E((\widehat{\exp}_s)^\ast(\tau_{\Theta(\chi)}))) = 0
  \end{equation}
  is a nonlinear partial differential equation of order~$2$ in~$s$ and the equation
  \begin{equation}
    \tilde{B}_{k, \alpha}(s, \chi) = \pi_\nu(\rho^{-1}(\pi_E((\widehat{\exp}_s)^\ast(\tau_{\Theta(\chi)}))) + (\widehat{\exp}_{\pi_\nu(s \vert_{\partial X})})^\ast(\gamma_{\Theta(\chi)})) = 0
  \end{equation}
  is a nonlinear partial differential equation of order~$1$ in~$s$ on the boundary. Since the linearisation of
  \begin{equation}
    \begin{cases}
      \phantom{\tilde{B}_{k, \alpha}(s, \chi) = 0} \mathllap{\tilde{G}_{k, \alpha}(s, \chi) = 0} &\text{in~$X$,} \\
      \phantom{\tilde{B}_{k, \alpha}(s, \chi) = 0} \mathllap{\pi_K(s \vert_{\partial X}) = 0} &\text{on~$\partial X$,} \\
      \tilde{B}_{k, \alpha}(s, \chi) = 0 &\text{on~$\partial X$}
    \end{cases} \label{eq:boundary-second-chi}
  \end{equation}
  at~$0$ is elliptic for $\chi = 0$, there is a ($\CC^{2, \alpha} \oplus \CC^{k, \alpha}$)\dash-neighbourhood $\tilde{U}_2 \subseteq \CC^{2, \alpha}(U) \oplus \CC^{k, \alpha}(V)$ of $(0, 0)$ such that
  \begin{equation*}
    \begin{cases}
      \phantom{(\D \tilde{B}_{k, \alpha})_{(s, \chi)}(s^\prime, 0) = 0} \mathllap{(\D \tilde{G}_{k, \alpha})_{(s, \chi)}(s^\prime, 0) = 0} &\text{in~$X$,} \\
      \phantom{(\D \tilde{B}_{k, \alpha})_{(s, \chi)}(s^\prime, 0) = 0} \mathllap{\pi_K(s^\prime \vert_{\partial X}) = 0} &\text{on~$\partial X$,} \\
      (\D \tilde{B}_{k, \alpha})_{(s, \chi)}(s^\prime, 0) = 0 &\text{on~$\partial X$}
    \end{cases}
  \end{equation*}
  is an elliptic boundary problem in $s^\prime \in \CC^{2, \alpha}(\normal[M]{X})$ for all $(s, \chi) \in \tilde{U}_2$. In particular, if $(s, \chi) \in \tilde{U}_2$ satisfies~\eqref{eq:boundary-second-chi}, then $s \in \CC^{k + 1, \alpha}(\normal[M]{X})$ by Elliptic Regularity \cite[Theorem~6.8.2]{Mor66} since $\tau_{\Theta(\chi)} \in \CC^{k, \alpha}(\altforms^4 M \otimes \altforms^2 M)$ and $\gamma_{\Theta(\chi)} \in \CC^{k, \alpha}(\altforms^3 W \otimes T W)$.
  
  Let $\tilde{U} \defeq \tilde{U}_1 \cap \tilde{U}_2$. Then the above argumentation shows that if $(s, \chi) \in \tilde{U}$ satisfies~\eqref{eq:boundary-second-chi}, then
  \begin{equation*}
    \begin{split}
      ((\D \tilde{G}_{k, \alpha})_{(s, \chi)}, \pi_K, (\D \tilde{B}_{k, \alpha})_{(s, \chi)}) \colon &\CC^{2, \alpha}(\normal[M]{X}) \oplus \CC^{k, \alpha}(\altforms^4_1 M \oplus \altforms^4_7 M \oplus \altforms^4_{35} M) \\
      &\to \CC^{0, \alpha}(\normal[M]{X}) \oplus \CC^{2, \alpha}(K) \oplus \CC^{1, \alpha}(\normal[W]{\partial X})
    \end{split}
  \end{equation*}
  is surjective and
  \begin{equation*}
    \begin{split}
      &((\D \tilde{G}_{k, \alpha})_{(s, \chi)}, \pi_K, (\D \tilde{B}_{k, \alpha})_{(s, \chi)}) \vert_{\CC^{2, \alpha}(\normal[M]{X})} \colon \\
      &\CC^{2, \alpha}(\normal[M]{X}) \to \CC^{0, \alpha}(\normal[M]{X}) \oplus \CC^{2, \alpha}(K) \oplus \CC^{1, \alpha}(\normal[W]{\partial X})
    \end{split}
  \end{equation*}
  is Fredholm. Furthermore, the Fredholm index is the same for all of these operators as we may assume \wolog\ that $\tilde{U}$ is connected. So the Fredholm index is~$0$ by \lemmaref{lemma:boundary-second-index}.
  
  So \theoremref{thm:genericity-general} implies that for every generic $\chi \in \CC^{k, \alpha}(V)$ that is $\CC^{k, \alpha}$\dash-close to~$0$, the moduli space of all solutions $s \in \CC^{k + 1, \alpha}(\normal[M]{X})$ of the boundary problem \eqref{eq:boundary-second-chi} that are $\CC^{2, \alpha}$\dash-close to~$0$ is either empty or a $\CC^{k - 1}$\dash-manifold of dimension~$0$ (i.e., a discrete set). This discrete set is finite by compactness (\lemmaref{lemma:compactness}). The result follows since the boundary problem \eqref{eq:boundary-first} implies \eqref{eq:boundary-second} and a subset of a finite set is again a finite set.
\end{proof}

\subsection{Varying the Scaffold}
\label{subsec:varying-scaffold}

In the \hyperref[subsec:varying-spin7-boundary]{last section} we proved that for a generic $\Spin(7)$\dash-structure, compact, connected Cayley submanifolds with non-empty boundary are rigid (see \theoremref{thm:main-theorem-spin7-boundary}). Here we show that also for a generic deformation of the scaffold (the submanifold containing the boundary of the Cayley submanifold), compact, connected Cayley submanifolds with non-empty boundary are rigid. Note that there are small differences to \theoremref{thm:main-theorem-spin7-boundary} in terms of the regularity of the objects involved. We use the $\CC^{k, \alpha}$\dash-topology for the space of all local deformations of~$W$.

\begin{theorem}[\theoremref{thm:main-scaffold}] \label{thm:main-theorem-scaffold}
  Let $k \ge 2$, let $0 < \alpha < 1$, let $M$ be an $8$\dash-manifold of class~$\CC^{k + 1, \alpha}$ with a $\Spin(7)$-structure~$\Phi$ of class~$\CC^{k + 1, \alpha}$, let $X$ be a compact, connected Cayley submanifold of~$M$ of class~$\CC^{k + 1, \alpha}$ with non-empty boundary, and let $W$ be a submanifold of~$M$ of class~$\CC^{k + 1, \alpha}$ with $\partial X \subseteq W$ such that $X$ and $W$ meet orthogonally.
  
  Then for every generic local deformation~$W^\prime$ of~$W$ of class~$\CC^{k, \alpha}$ that is $\CC^{k, \alpha}$\dash-close to~$W$, the moduli space of all Cayley submanifolds of~$M$ of class~$\CC^{k, \alpha}$ that are $\CC^{2, \alpha}$\dash-close to~$X$ with boundary on~$W^\prime$ and meeting~$W^\prime$ orthogonally is a finite set (possibly empty).
\end{theorem}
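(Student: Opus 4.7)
The strategy is to reduce the scaffold deformation to a parametric family of $\Spin(7)$\dash-variations and then to run the argument from the proof of \theoremref{thm:main-theorem-spin7-boundary}. First, using the adapted metric~$\hat g$ from \lemmaref{lemma:metric} and the Tubular Neighbourhood Theorem for~$W$, I would parametrise local deformations $W^\prime$ of~$W$ by sections $w$ of an open neighbourhood $V_W$ of the $0$\dash-section in~$\normalhat[M]{W}$ via $W^\prime = \widehat{\exp}^W(w)$, and construct a smooth family of $\CC^{k, \alpha}$\dash-diffeomorphisms $\Psi_w \colon M \to M$ with $\Psi_0 = \id$ and $\Psi_w(W) = W^\prime$, equal to the identity outside a tubular neighbourhood of~$W$. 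Pulling back by $\Psi_w$ identifies Cayley submanifolds of~$(M, \Phi)$ close to~$X$ with boundary on~$W^\prime$ meeting~$W^\prime$ orthogonally with Cayley submanifolds of~$(M, \Psi_w^\ast \Phi)$ close to~$X$ with boundary on the \emph{fixed} scaffold~$W$ meeting~$W$ orthogonally with respect to~$g(\Psi_w^\ast \Phi)$. This brings the problem into the parametric set-up of \theoremref{thm:main-theorem-spin7-boundary}, with the perturbation $\chi = \Psi_w^\ast \Phi - \Phi$ now restricted to Lie derivatives $\Lie_V \Phi$, where $V$ is the vector field on~$M$ generating~$\Psi_{tw}$ and satisfying $V|_W = w$.

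Next, in direct analogy with the proof of \theoremref{thm:main-theorem-spin7-boundary}, I would define $\tilde F(s, w) \defeq \pi_E((\widehat{\exp}_s)^\ast \tau_{\Psi_w^\ast \Phi})$, $\tilde G(s, w) \defeq D^\ast \tilde F(s, w)$, $\tilde H(s, w)$ from $\gamma_{\Psi_w^\ast \Phi}$, and $\tilde B(s, w) \defeq \pi_\nu(\rho^{-1}(\tilde F(s, w)|_{\partial X})) + \tilde H(s|_{\partial X}, w)$. The scaffold-deformed moduli space embeds into the solution space of
\begin{equation*}
  \tilde G(s, w) = 0 \text{ in $X$,} \quad \pi_K(s|_{\partial X}) = 0 \text{ on $\partial X$,} \quad \tilde B(s, w) = 0 \text{ on $\partial X$,}
\end{equation*}
which at $(0, 0)$ linearises in~$s$ to the second-order elliptic, Fredholm-index-$0$ operator of \hyperref[lemma:boundary-second-elliptic]{Lemmas~\ref*{lemma:boundary-second-elliptic}} and~\ref{lemma:boundary-second-index}; both properties persist in a small neighbourhood. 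Smoothness of $\tilde G$ and $\tilde B$ between H\"older spaces follows from \propositionref{prop:c-ell} since $w \mapsto \tau_{\Psi_w^\ast \Phi}$ and $w \mapsto \gamma_{\Psi_w^\ast \Phi}$ are smooth in each fibre. The central ingredient is then the surjectivity at every small solution~$(s, w)$ of the combined linearisation
\begin{equation*}
  (s^\prime, w^\prime) \mapsto \bigl((\D \tilde G)_{(s, w)}(s^\prime, w^\prime), \pi_K(s^\prime|_{\partial X}), (\D \tilde B)_{(s, w)}(s^\prime, w^\prime)\bigr) \, \text{.}
\end{equation*}

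To verify this, I would compute at $(0, 0)$: by \lemmaref{lemma:linearisation-F-tilde-Lie}, $(\D \tilde F)_{(0, 0)}(0, \Lie_V \Phi) = D((V|_X)^\perp)$, and a parallel calculation using naturality of~$\gamma$ under pullback shows that varying~$w^\prime$ contributes to~$\tilde B$ in exactly the same way as a normal deformation by $(V|_X)^\perp$ would. Setting $\sigma \defeq s^\prime + (V|_X)^\perp$, the linearisation becomes
\begin{equation*}
  \bigl(D^\ast D \sigma, \, \pi_K(\sigma|_{\partial X}) - (V|_X)^\perp|_{\partial X}, \, (\D B)_0(\sigma)\bigr) \, \text{.}
\end{equation*}
Since $(V|_X)^\perp|_{\partial X}$ equals the $K$\dash-component of $w^\prime|_{\partial X}$ (the $u$\dash-component being tangent to~$X$) and may be prescribed freely in $\sections(K)$, while $\sigma$ is still free in $\sections(\normal[M]{X})$ via the choice of $s^\prime$, the task reduces to solving $D^\ast D \sigma = t$ in~$X$ together with $(\D B)_0(\sigma) = r$ on~$\partial X$ for any targets $(t, r)$ and then taking $(V|_X)^\perp|_{\partial X} = \pi_K(\sigma|_{\partial X}) - k$. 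Solvability of this underdetermined system (the $\pi_K$ constraint from~\eqref{eq:boundary-second} having been dropped, leaving only $\dim W - 3 < 4$ boundary conditions on a system requiring~$4$) follows from a Dirichlet-type argument: use the $7 - \dim W$ free directions in $\pi_K(\sigma|_{\partial X})$ and in $\nabla_u \sigma|_{\partial X}$ to meet $(\D B)_0(\sigma) = r$, then solve the resulting well-posed Dirichlet problem for $D^\ast D \sigma = t$, invoking the unique-continuation property for~$D$ (as used in \sectionref{subsec:remarks-dimension-scaffold}) to ensure trivial kernel. The remaining steps---\theoremref{thm:genericity-general}, the compactness argument of \lemmaref{lemma:compactness}, and elliptic regularity---go through verbatim. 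The main obstacle is precisely this surjectivity step: unlike in \theoremref{thm:main-theorem-spin7-boundary}, where $\chi$ ranges freely in $\sections(\altforms^4_1 M \oplus \altforms^4_7 M \oplus \altforms^4_{35} M)$, the accessible perturbations are now constrained to be Lie derivatives along vector fields supported near~$W$, and one must exploit the freedom in extending $w^\prime$ off~$W$ to produce enough flexibility in $(V|_X)^\perp$ to reduce to the underdetermined boundary problem above.
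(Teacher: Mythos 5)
Your reduction of the scaffold deformation to a parametric family of $\Spin(7)$\dash-structures via the diffeomorphism pullback $\Psi_w^\ast\Phi$ is a plausible reorganisation, and at the linear level it is even compatible with the paper's proof (the paper's \hyperref[subsec:remark-torsion-free-boundary]{Section~\ref*{subsec:remark-torsion-free-boundary}} computes the same Lie\dash-derivative linearisation and uses it to establish the torsion-free variant). However, there is a concrete error in your key identity, and a structural gap that the paper closes with its extension operator $\sigma$ but which your sketch leaves open.

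The error is the claim that the variation of $\tilde B$ in $w^\prime$ ``contributes in exactly the same way as a normal deformation by $(V\vert_X)^\perp$ would'', i.e.\ that $(\D\tilde B)_{(0,0)}(0,\Lie_V\Phi) = (\D B)_0((V\vert_X)^\perp)$. This is false. The correct formula, computed in the lemma inside \hyperref[subsec:remark-torsion-free-boundary]{Section~\ref*{subsec:remark-torsion-free-boundary}}, is
\begin{equation*}
  (\D\tilde B)_{(0,0)}(0,\Lie_V\Phi) = \pi_\nu(\nabla_u V\vert_{\partial X}) + \pi_\nu\bigl((g(\nabla(V\vert_W),u))^\sharp\bigr) \, \text{,}
\end{equation*}
whereas (since $(V\vert_X)^\perp\vert_{\partial X}\in\sections(K)$ so $\pi_\nu((V\vert_X)^\perp\vert_{\partial X})=0$)
\begin{equation*}
  (\D B)_0((V\vert_X)^\perp) = \pi_\nu\bigl(\nabla_u (V\vert_X)^\perp\vert_{\partial X} + P((V\vert_X)^\perp\vert_{\partial X})\bigr) \, \text{.}
\end{equation*}
The second expression only sees the normal derivative of the $\normal[M]{X}$\dash-component of $V$, while the first contains the genuinely new term $\pi_\nu((g(\nabla(V\vert_W),u))^\sharp)$ built from derivatives of $V$ in the \emph{$W$\dash-tangent} directions. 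These are not equal, so the substitution $\sigma = s^\prime + (V\vert_X)^\perp$ does not collapse the combined linearisation to the form you write.

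The structural gap is that once the correct linearisation is used, surjectivity requires an extension of $w^\prime$ off $W$ with very specific properties, namely the paper's \lemmaref{lemma:extension}: $\nabla_r(\sigma(t))\vert_{\partial X}=0$ for all $r\in\sections(\normal[M]{W}\vert_{\partial X})$ (so the $\pi_\nu(\nabla_u V\vert_{\partial X})$ term drops out), and $\sigma(t)\vert_X=0$ whenever $t\vert_{\partial X}=0$ (so the interior equation $D^\ast D s$ and the $\pi_K$\dash-condition are untouched by the scaffold deformation). With that extension, the paper first solves the index-$0$, kernel-free Dirichlet problem for $D^\ast D$ to hit the first two targets, then chooses $t$ vanishing on $\partial X$ with prescribed normal derivative along $W$ to hit the third via $\pi_\nu((g(\nabla t,u))^\sharp)$. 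You correctly identify that ``one must exploit the freedom in extending $w^\prime$ off $W$'', but this is exactly the step that needs to be carried out, and the ``underdetermined boundary problem'' you invoke is not a well-posed replacement for it: $(\D B)_0(\sigma)=r$ is a Robin-type, not Dirichlet, condition, and its solvability does not follow from the uniqueness of the Dirichlet problem alone. The extension operator $\sigma$ and the clean derivative formula $\pi_\nu((g(\nabla t,u))^\sharp)$ are the essential technical content missing from your sketch.
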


As we will see in the proof, we may restrict the deformations of~$W$ to submanifolds~$W^\prime$ with $\partial X \subseteq W^\prime$.

\begin{proof}
  We start with the following lemma.
  
  \begin{lemma} \label{lemma:extension}
    There is a linear operator $\sigma \colon \sections(\normal[M]{W}) \to \sections(T M)$ such that for all $t \in \sections(\normal[M]{W})$:
    \begin{compactenum}[(i)]
      \item $\sigma(t) \vert_W = t$,
      \item \label{lemma:extension:nabla} $\nabla_r(\sigma(t)) \vert_{\partial X} = 0$ for all $r \in \sections(\normal[M]{W} \vert_{\partial X})$, and
      \item \label{lemma:extension:partial-X-0} if $t \vert_{\partial X} = 0$, then $\sigma(t) \vert_X = 0$.
    \end{compactenum}
    Furthermore, $\sigma$ extends to a bounded linear operator
    \begin{equation}
      \sigma_{k, \alpha} \colon \CC^{k, \alpha}(\normal[M]{W}) \to \CC^{k, \alpha}(T M)
    \end{equation}
    for all $k \ge 0$, $0 \le \alpha \le 1$.
  \end{lemma}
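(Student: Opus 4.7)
The strategy is to define $\sigma(t) \defeq \chi \cdot (P \circ t \circ \pi)$ as a cutoff times the parallel transport of~$t$ pulled back along a suitable smooth projection $\pi \colon U \to W$ defined on a tubular neighbourhood~$U$ of~$W$; property~\eqref{lemma:extension:partial-X-0} will then be automatic once we arrange $\pi(X \cap U) \subseteq \partial X$. First, using compactness of~$\partial X$ and transversality of~$X$ and~$W$, I would shrink~$W$ so that $X \cap W = \partial X$. Then $X \setminus N$ is compact and disjoint from~$W$ for any open collar neighbourhood~$N$ of~$\partial X$ in~$X$, so one fixes such an~$N$ (small enough that good coordinates exist on it) and chooses a tubular neighbourhood~$U$ of~$W$ small enough that $X \cap U \subseteq N$. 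Everything then reduces to constructing a smooth submersion $\pi \colon U \to W$ satisfying $\pi|_W = \id_W$ and $\pi(X \cap U) \subseteq \partial X$.

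To construct~$\pi$, I would work locally near each $p \in \partial X$. Choose coordinates $(x, y, z)$ centred at~$p$ so that $W = \{z = 0\}$, $\partial X = \{y = 0, z = 0\}$, and the inward unit $\partial X$\dash-normal $u \in \sections(\normal[X]{\partial X})$ points in the $z^1$\dash-direction at~$p$; this is possible precisely because the orthogonality $u \in \sections(\normal[M]{W} \vert_{\partial X})$ lets us place~$u$ inside the normal\dash-to\dash-$W$ subspace. Writing $X$ near~$p$ as the image of $(x, s) \mapsto (x, y(x, s), z(x, s))$ with $y(x, 0) = 0$, $z(x, 0) = 0$ and $\partial_s (y, z)(x, 0) = (0, 1, 0, \dotsc, 0)$, I would perform a coordinate change of the form $y \mapsto y - Y(x, z^1)$ and $z^i \mapsto z^i - Z^i(x, z^1)$ for $i \ge 2$ (chosen to fix~$W$ pointwise, using $y(x, 0) = 0$ and $z^i(x, 0) = 0$, and to straighten~$X$) bringing $X$ into the normal form $\{y = 0,\ z^2 = \dotsb = z^{8 - k} = 0,\ z^1 \ge 0\}$, where $k = \dim W$. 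In these coordinates the local map $\pi_p(x, y, z) \defeq (x, y)$ sends $X$ into~$\partial X$. I would then use a finite cover of~$\partial X$ by such charts, a subordinate partition of unity, and the $\hat g$\dash-tubular projection away from~$\partial X$ (with~$\hat g$ as in \lemmaref{lemma:metric}), glueing via the Riemannian centre of mass in~$W$; since all local projections map $X \cap U \cap V_\alpha$ into the common submanifold~$\partial X$ of~$W$, the centre of mass inherits this property.

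Given such~$\pi$, define $\sigma(t)(v) \defeq \chi(v) \, P_v(t(\pi(v)))$ for $v \in U$ and $\sigma(t)(v) \defeq 0$ otherwise, where $P_v$ denotes $\hat g$\dash-parallel transport along the $\hat g$\dash-geodesic from~$\pi(v)$ to~$v$ and $\chi \colon M \to [0, 1]$ is a smooth cutoff supported in~$U$ and identically~$1$ on a neighbourhood of~$W$. Linearity of~$\sigma$ in~$t$ is clear. Property~(i) is immediate: on~$W$ we have $\pi = \id$, $P_v = \id$, $\chi \equiv 1$. For~\eqref{lemma:extension:nabla}, setting $\tilde t(v) \defeq P_v(t(\pi(v)))$ and writing $\sigma(t) = \chi \tilde t$, at $x \in \partial X$ and $r \in \normal[M]{W} \vert_x$ we have $\nabla_r \sigma(t) = (\nabla_r \chi) \, \tilde t + \chi \, \nabla_r \tilde t$; the first term vanishes since $\chi \equiv 1$ near~$W$, and the second vanishes because $d\pi_x(r) = 0$ (the local normal\dash-form projection annihilates $\normal[M]{W}$ on~$W$, and this persists under Riemannian averaging) combined with $\nabla$\dash-parallelism of~$P_v$ along the normal geodesic. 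Property~\eqref{lemma:extension:partial-X-0} follows directly from $\pi(X \cap U) \subseteq \partial X$: for $v \in X \cap U$ one has $\pi(v) \in \partial X$, so $t(\pi(v)) = 0$ and thus $\sigma(t)(v) = 0$. Since $\chi$, $\pi$ and $v \mapsto P_v$ are all smooth and $\sigma$ involves only pointwise evaluation and smooth multiplication/composition, it extends boundedly to $\sigma_{k, \alpha} \colon \CC^{k, \alpha}(\normal[M]{W}) \to \CC^{k, \alpha}(T M)$ for every $k \ge 0$ and $0 \le \alpha \le 1$.

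The main technical obstacle is the construction of~$\pi$ with $\pi(X \cap U) \subseteq \partial X$; the naive $\hat g$\dash-tubular projection sends $X \cap U$ only into an $O(s^2)$\dash-neighbourhood of~$\partial X$ in~$W$, not exactly into~$\partial X$. The local straightening of~$X$ by the coordinate change $y \mapsto y - Y(x, z^1)$ removes this $O(s^2)$ discrepancy, and is available precisely because $\partial_s y(x, 0) = 0$, which is the analytic consequence of the orthogonality assumption $u \in \sections(\normal[M]{W} \vert_{\partial X})$.
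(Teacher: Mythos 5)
Your overall plan --- build a smooth projection $\pi$ of a tubular neighbourhood $U$ of~$W$ onto $W$ with $\pi|_W = \id$ and $\pi(X \cap U) \subseteq \partial X$, extend $t$ by parallel transport along the normal fibres, and cut off --- is the same as the paper's. The paper, however, obtains the projection with far less work: it introduces a \emph{second} auxiliary metric $\tilde g$ on $M$ (distinct from the $\hat g$ of \lemmaref{lemma:metric}), chosen so that $X$ is $\tilde g$-totally geodesic and $\tilde g_x = g_x$ for $x \in \partial X$, and then just takes the $\tilde g$-tubular-neighbourhood projection onto $W$. Because $X$ is $\tilde g$-totally geodesic and, by the orthogonality hypothesis together with $\tilde g|_{\partial X} = g|_{\partial X}$, the inward $\partial X$-normal $u$ is $\tilde g$-normal to $W$ along $\partial X$, the collar $\set{\widetilde{\exp}_p(s u_p) : p \in \partial X,\ 0 \le s < \eps}$ lies in $X$ and has $\tilde g$-tubular projection $p$; hence $\pi(X \cap U') \subseteq \partial X$ falls out automatically, with no local normal-form coordinates and no gluing at all.

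Your construction of $\pi$ has a genuine gap precisely in the gluing step. You assemble the local normal-form projections $\pi_p$ by a Riemannian centre of mass in $W$ and assert that since each $\pi_p$ sends $X$ into $\partial X$, so does the averaged map. That implication is false in general: the $W$-centre of mass of points all lying in $\partial X$ need not lie in $\partial X$ unless $\partial X$ is totally geodesic in $W$, which is not assumed (already in $W = \R^2$ flat with $\partial X$ a circle, the average leaves the circle). To repair this you would need a gluing adapted to $\partial X$ --- e.g.\ an exponential average for a connection on $W$ making $\partial X$ totally geodesic --- at which point you are essentially reinventing the paper's $\tilde g$-trick one dimension down. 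There is also a smaller fixable issue with condition~(ii): you take $P_v$ to be $\hat g$-parallel transport, whereas the condition involves $\nabla$, the Levi-Civita connection of~$g$; since $\hat g = g$ on $W$ only to zeroth order, $\nabla$ and $\hat\nabla$ differ along $W$ in normal directions, so $\nabla_r\bigl(P_{\widehat{\exp}_x(sr)}(t_x)\bigr)\big|_{s = 0}$ picks up a nonzero contorsion term. Using $\nabla$-parallel transport (along the $\hat g$- or $\tilde g$-geodesics) instead removes this problem.
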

  
  \begin{proof}
    Similarly to the proof of \lemmaref{lemma:metric}, there is a metric $\tilde{g}$ on~$M$ such that $X$ is totally geodesic with respect to~$\tilde{g}$ and such that $\tilde{g}_x = g_x$ for all~$x \in \partial X$. By the Tubular Neighbourhood Theorem, there are an open tubular neighbourhood $\tilde{U} \subseteq \normaltilde[M]{W}$ of the $0$\dash-section and a neighbourhood $U^\prime$ of~$W$ in~$M$ such that the exponential map $\widetilde{\exp} \vert_{\tilde{U}} \colon \tilde{U} \to U^\prime$ of the metric~$\tilde{g}$ is a diffeomorphism.
    
    For $(x, v) \in \tilde{U}$, let
    \begin{equation*}
      \pi_{x, v} \colon T_x M \to T_{\scriptwidetilde{\exp}_x(v)} M
    \end{equation*}
    denote the parallel transport along the curve $[0, 1] \to M$, $t \mapsto \widetilde{\exp}_x(t v)$. Define $\tilde{\sigma} \colon \sections(\normaltilde[M]{W}) \to \sections(T M \vert_{U^\prime})$ by
    \begin{equation*}
      \tilde{\sigma}(t)_{\scriptwidetilde{\exp}_x(v)} \defeq \pi_{x, v}(t_x)
    \end{equation*}
    for $t \in \sections(\normaltilde[M]{W})$ and $(x, v) \in \tilde{U}$. Let $\chi \colon M \to [0, 1]$ be a smooth function such that $\chi \equiv 0$ outside~$U^\prime$ and $\chi \equiv 1$ in some tubular neighbourhood of~$W$ contained in~$U^\prime$. For $t \in \sections(\normal[M]{W})$, let $\tilde{t} \in \sections(\normaltilde[M]{W})$ be the corresponding vector field under the isomorphism $\normal[M]{W} \cong (T M \vert_W) / T W \cong \normaltilde[M]{W}$, and define
    \begin{equation*}
      \sigma(t) \defeq \chi \cdot \tilde{\sigma}(\tilde{t}) \, \text{.}
    \end{equation*}
    Then $\sigma$ satisfies all the conditions.
  \end{proof}
  
  Note that if $t \in \sections(\normal[M]{W})$ has small $\CC^1$\dash-norm, then $\widehat{\exp}_{\sigma(t)} \colon M \to M, x \mapsto \widehat{\exp}_x((\sigma(t))_x)$ is a diffeomorphism, where $\widehat{\exp}$ is defined as in the proof of \propositionref{prop:main-proposition-boundary}. So if $W^\prime = \widehat{\exp}_t(W)$ for some $t \in \sections(\normal[M]{W})$ with small $\CC^1$\dash-norm, then the submanifolds of~$M$ that are $\CC^1$\dash-close to~$X$ with boundary on~$W^\prime$ are of the form $\widehat{\exp}_{\sigma(t)}(\widehat{\exp}_s(X))$ for some $s \in \sections(\normal[M]{X})$ with small $\CC^1$\dash-norm such that $s \vert_{\partial X} \in \sections(T W \vert_{\partial X})$.
  
  Let $V^\prime \subseteq \normal[M]{W}$ be an open tubular neighbourhood of the $0$\dash-section such that the exponential map $\widehat{\exp} \vert_{V^\prime} \colon V^\prime \to \widehat{\exp}(V^\prime)$ is a diffeomorphism, let $U \subseteq \normal[M]{X}$ be defined as in the proof of \propositionref{prop:main-proposition-boundary}, and let
  \begin{equation}
    \begin{split}
      \hat{F} \colon \sections(U) \oplus \sections(V^\prime) &\to \forms^4(X, E) \cong \sections(E) \, \text{,} \\
      (s, t) &\mapsto \pi_E((\widehat{\exp}_s)^\ast((\widehat{\exp}_{\sigma(t)})^\ast(\tau))) \, \text{,}
    \end{split} \label{eq:def-F-hat}
  \end{equation}
  where the vector bundle~$E$ of rank~$4$ over~$X$ is defined as in~\eqref{eq:def-E}, $\pi_E$ is defined in \eqref{eq:def-pi-E}, and $\tau \in \forms^4(M, \altforms^2_7 M)$ is defined as in~\eqref{eq:def-tau}. Then
  \begin{equation}
    (\D \hat{F})_{(0, 0)}(0, t) = D (\sigma(t) \vert_X)^\perp \, \text{,} \label{eq:linearisation-F-hat}
  \end{equation}
  where $D \colon \sections(\normal[M]{X}) \to \sections(E)$ is defined in~\eqref{eq:def-D} and $(\sigma(t) \vert_X)^\perp$ is the pointwise orthogonal projection of~$\sigma(t) \vert_X$ onto $\normal[M]{X}$. This follows from the proof of \theoremref{thm:explicit-formula} using that $\tau \vert_X = 0$. Let
  \begin{equation}
    \hat{G} \colon \sections(U) \oplus \sections(V^\prime) \to \sections(\normal[M]{X}) \, \text{,} \quad (s, t) \mapsto D^\ast(\hat{F}(s, t)) \, \text{,} \label{eq:def-G-hat}
  \end{equation}
  where $D^\ast \colon \sections(E) \to \sections(\normal[M]{X})$ is the formal adjoint of~$D$. Then
  \begin{equation}
    (\D \hat{G})_{(0, 0)}(0, t) = D^\ast D (\sigma(t) \vert_X)^\perp \label{eq:linearisation-G-hat}
  \end{equation}
  since $D^\ast$ is linear.
  
  Similarly to \lemmaref{lemma:extension}, there is also a linear operator
  \begin{equation}
    \hat{\sigma} \colon \sections(\normal[W]{\partial X}) \to \sections(T W) \label{eq:def-sigma-hat}
  \end{equation}
  such that $\hat{\sigma}(r) \vert_{\partial X} = r$ for all $r \in \sections(\normal[W]{\partial X})$. Furthermore, $\hat{\sigma}$ extends to a bounded linear operator $\hat{\sigma}_{k, \alpha} \colon \CC^{k, \alpha}(\normal[W]{\partial X}) \to \CC^{k, \alpha}(T W)$ for all $k \ge 0$, $0 \le \alpha \le 1$. Note that if $r \in \sections(\normal[W]{\partial X})$ has small $\CC^1$\dash-norm, then $\widehat{\exp}_{\hat{\sigma}(r)} \colon W \to W, x \mapsto \widehat{\exp}_x((\hat{\sigma}(r))_x)$ is a diffeomorphism.
  
  Define $\hat{\gamma} \in \forms^3(M, T M)$ and $\mu \in \forms^1(M, T M)$ by
  \begin{equation}
    \hat{\gamma}(a, b, c) \defeq a \times b \times c \quad \text{and} \quad \mu(a) \defeq a \label{eq:def-gamma-hat-mu}
  \end{equation}
  for $a, b, c \in \sections(T M)$, let $U_{\partial X} \subseteq \normal[M]{X} \vert_{\partial X}$ be defined as in the proof of \propositionref{prop:main-proposition-boundary}, let
  \begin{equation}
    \begin{split}
      \hat{H}_1 \colon \sections(U_{\partial X}) \oplus \sections(V^\prime) &\to \forms^3(\partial X, T M \vert_{\partial X}) \cong \sections(T M \vert_{\partial X}) \, \text{,} \\
      (s, t) &\mapsto (\widehat{\exp}_{\pi_\nu(s)})^\ast((\widehat{\exp}_{\sigma(t)})^\ast(\hat{\gamma})) \, \text{,}
    \end{split} \label{eq:def-H-hat-1}
  \end{equation}
  where $\pi_\nu$ is defined in \eqref{eq:def-pi-nu}, let
  \begin{equation}
    \begin{split}
      \hat{H}_2 \colon \sections(U_{\partial X}) \oplus \sections(V^\prime) &\to \forms^1(W, T M \vert_W) \, \text{,} \\
      (s, t) &\mapsto (\widehat{\exp}_{\hat{\sigma}(\pi_\nu(s))})^\ast((\widehat{\exp}_{\sigma(t)})^\ast(\mu)) \, \text{,}
    \end{split} \label{eq:def-H-hat-2}
  \end{equation}
  and let
  \begin{equation}
    \begin{split}
      \hat{H} \colon \sections(U_{\partial X}) \oplus \sections(V^\prime) &\to \sections(\normal[W]{\partial X}) \, \text{,} \\
      (s, t) &\mapsto \pi_\nu((g(\hat{H}_1(s, t), \hat{H}_2(s, t)))^\sharp) \, \text{.}
    \end{split} \label{eq:def-H-hat}
  \end{equation}
  Here we view $\hat{H}_2(s, t)$ as an element of $\sections(T^\ast W \vert_{\partial X} \otimes T M \vert_{\partial X})$ and use the inner product with $\hat{H}_1(s, t)$ on the $T M \vert_{\partial X}$\dash-factor to get an element in $\sections(T^\ast W \vert_{\partial X})$, which we identify with an element of $\sections(T W \vert_{\partial X})$ using the metric isomorphism~$\sharp$. Then
  \begin{equation}
    (\D \hat{H})_{(0, 0)}(s, 0) = (\D H)_0(s) \, \text{.} \label{eq:linearisation-H-hat}
  \end{equation}
  This follows similarly to the proofs of \hyperref[lemma:linearisation-B]{Lemmas~\ref*{lemma:linearisation-B}} and \ref{lemma:linearisation-B-hat} below. Let
  \begin{equation}
    \begin{split}
      \hat{B} \colon \sections(U) \oplus \sections(V^\prime) &\to \sections(\normal[W]{\partial X}) \, \text{,} \\
      (s, t) &\mapsto \pi_\nu(\rho^{-1}(\hat{F}(s, t) \vert_{\partial X})) + \hat{H}(s \vert_{\partial X}, t) \, \text{.}
    \end{split} \label{eq:def-B-hat}
  \end{equation}
  
  \begin{lemma} \label{lemma:linearisation-B-hat}
    Let $t \in \sections(\normal[M]{W})$. Then
    \begin{equation}
      (\D \hat{B})_{(0, 0)}(0, t) = \pi_\nu((g(\nabla t, u))^\sharp) \, \text{,} \label{eq:linearisation-B-hat}
    \end{equation}
    where $u \in \sections(\normal[X]{\partial X})$ is the inward-pointing unit normal vector field of~$\partial X$ in~$X$. Here we view $\nabla t$ as an element of $\sections(T^\ast W \vert_{\partial X} \otimes T M \vert_{\partial X})$ and use the inner product with $u$ on the $T M \vert_{\partial X}$\dash-factor to get an element in $\sections(T^\ast W \vert_{\partial X})$, which we identify with an element of $\sections(T W \vert_{\partial X})$ using the metric isomorphism~$\sharp$.
  \end{lemma}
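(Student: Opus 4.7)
The plan is to differentiate the two summands in the definition \eqref{eq:def-B-hat} of $\hat{B}$ separately at $(0,0)$ in the direction $(0, t)$. Two features of $\sigma$ from \lemmaref{lemma:extension} drive the computation: property (ii) gives $\nabla_u \sigma(t)\vert_{\partial X} = 0$, since $u \in \sections(\normal[M]{W}\vert_{\partial X})$ by the orthogonality $X \perp W$; and property (i), namely $\sigma(t)\vert_W = t$, implies $\nabla_v \sigma(t)\vert_{\partial X} = \nabla_v t$ for $v \in TW\vert_{\partial X}$. The latter identity is the source of the $\nabla t$ on the right-hand side of the claim.

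For the first summand $\pi_\nu(\rho^{-1}(\hat{F}(s, t)\vert_{\partial X}))$, apply \eqref{eq:linearisation-F-hat} to obtain $(\D \hat{F})_{(0, 0)}(0, t) = D(\sigma(t)\vert_X)^\perp$, then use \eqref{eq:relation-D-P} to rewrite $\rho^{-1}$ of its restriction to $\partial X$ as $\nabla^\perp_u (\sigma(t)\vert_X)^\perp\vert_{\partial X} + P((\sigma(t)\vert_{\partial X})^\perp)$. Since $t\vert_{\partial X} \in \normal[M]{W}\vert_{\partial X} = \langle u\rangle \oplus K$, one has $(\sigma(t)\vert_{\partial X})^\perp = \pi_K(t\vert_{\partial X})$, while property (ii) makes the $\nabla_u$-contribution depend only on the tangential correction coming from the decomposition of $\sigma(t)\vert_X$ into tangent and normal parts along $X$.

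For the second summand $\hat{H}(s\vert_{\partial X}, t)$, differentiate the definition \eqref{eq:def-H-hat}. At $(0, 0)$, $\hat{H}_1(0, 0) = u$ under the identification of $\forms^3(\partial X, TM\vert_{\partial X})$ with $\sections(TM\vert_{\partial X})$ via $\vol_{\partial X}$, and $\hat{H}_2(0, 0)$ is the canonical inclusion $TW\vert_{\partial X} \hookrightarrow TM\vert_{\partial X}$; by the orthogonality $X \perp W$, $g(u, v) = 0$ for $v \in TW\vert_{\partial X}$. The Leibniz rule therefore splits the derivative into a \emph{``$\partial_t \hat{H}_1$ paired with $\mu$''} piece and a \emph{``$u$ paired with $\partial_t \hat{H}_2$''} piece. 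Using the parallel-transport identification of \propositionref{prop:c-ell}, $\partial_t \hat{H}_2(0, t)(v)\vert_{t = 0} = \nabla_v \sigma(t)\vert_{\partial X} = \nabla_v t$ for $v \in TW\vert_{\partial X}$, and so the second piece is exactly $\pi_\nu((g(\nabla t, u))^\sharp)$. The first piece is computed by expanding $\Lie_{\sigma(t)} \hat{\gamma}$ as in the proof of \lemmaref{lemma:linearisation-B}, after which property (ii) kills all contributions coming from $\nabla_r \sigma(t)$ with $r \in \normal[M]{W}\vert_{\partial X}$.

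The main obstacle is the bookkeeping that verifies the remaining cancellation. The $P$-term and the tangential second-fundamental-form correction from the first summand should cancel exactly against the corresponding terms produced by the $\Lie_{\sigma(t)} \hat{\gamma}$ expansion in the first piece of the second summand, leaving only the clean contribution $\pi_\nu((g(\nabla t, u))^\sharp)$ arising from $\hat{H}_2$. This mirrors the cancellation \eqref{eq:linearisation-B-tilde} observed in the proof of \theoremref{thm:main-theorem-spin7-boundary}, but is now driven by the combination of the orthogonality assumption and the defining properties of $\sigma$ rather than by the freedom to vary the $\Spin(7)$-structure. A careful frame-by-frame expansion in the adapted frame used in \lemmaref{lemma:linearisation-B} (with $e_1 = u$, $(e_2, e_3, e_4)$ framing $\partial X$, and $(e_2, \dotsc, e_{k + 1})$ framing $W$) should make every term explicit and exhibit the matching pairs.
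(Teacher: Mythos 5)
Your plan coincides with the paper's proof: differentiate $\hat B$ via the Leibniz rule on $\hat H = \pi_\nu((g(\hat H_1, \hat H_2))^\sharp)$ (using $\hat H_1(0,0) = u$ and $\hat H_2(0,0) = \mathrm{id}$), identify the $\partial_t\hat H_2$-contribution as $\pi_\nu((g(\nabla t, u))^\sharp)$, and show the $\partial_t\hat H_1$-contribution cancels the $\pi_\nu(\rho^{-1}((\D\hat F)_{(0,0)}(0,t)\vert_{\partial X}))$-term coming from the first summand of $\hat B$. The cancellation you flag as the remaining bookkeeping does come out exactly as you anticipate: the $\Lie_{\sigma(t)}\hat\gamma$ expansion from \lemmaref{lemma:linearisation-B}, together with \eqref{eq:relation-D-P} and property (ii) of $\sigma$ (which gives $\nabla_u\sigma(t)\vert_{\partial X} = 0$ since $u \in \sections(\normal[M]{W}\vert_{\partial X})$), reduces $\pi_\nu((\D\hat H_1)_{(0,0)}(0,t))$ to $-\pi_\nu(\rho^{-1}((D(\sigma(t)\vert_X)^\perp)\vert_{\partial X}))$ with no leftover terms.
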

  
  \begin{proof}
    Let $k$ be the dimension of~$W$, let $(e_1, \dotsc, e_8)$ be a local orthonormal frame of~$M$ such that $(e_1, \dotsc, e_4)$ is a positive (i.e., $e_1 = e_2 \times e_3 \times e_4$) frame of~$X$ with $e_1 = u$ and $(e_2, \dotsc, e_{k + 1})$ is a frame of~$W$, and let $(e^1, \dotsc, e^8)$ be the dual coframe. Then
    \begin{equation}
      \hat{\gamma} = \sum_{i = 1}^8 (e_i \interior \Phi) \otimes e_i \quad \text{and} \quad \mu = \sum_{i = 1}^8 e^i \otimes e_i \, \text{.}
    \end{equation}
    Now
    \begin{align*}
      &(\D \hat{H})_{(0, 0)}(0, t) \\
      &= \pi_\nu((g((\D \hat{H}_1)_{(0, 0)}(0, t), \hat{H}_2(0, 0)))^\sharp) + \pi_\nu((g(\hat{H}_1(0, 0), (\D \hat{H}_2)_{(0, 0)}(0, t)))^\sharp) \\
      &= \pi_\nu((\D \hat{H}_1)_{(0, 0)}(0, t)) + \pi_\nu((g((\D \hat{H}_2)_{(0, 0)}(0, t), u))^\sharp)
    \end{align*}
    since $(g(v, \hat{H}_2(0, 0)))^\sharp = v$ for $v \in \sections(T M \vert_W)$ and $\hat{H}_1(0, 0) = u$ as $(v \interior \Phi) \vert_{\partial X} = g(v, u) \vol_{\partial X}$ for $v \in \sections(T M)$. We have
    \begin{align*}
      (\D \hat{H}_1)_{(0, 0)}(0, t) &= \frac{\D}{\D r} \hat{H}_1(0, r t) \bigg\vert_{r = 0} = \frac{\D}{\D r} (\widehat{\exp}_{\sigma(r t)})^\ast(\hat{\gamma}) \bigg\vert_{r = 0} = (\Lie_{\sigma(t)} \hat{\gamma}) \vert_{\partial X} \\
      &= \sum_{i = 1}^8 (\Lie_{\sigma(t)} (e_i \interior \Phi)) \vert_{\partial X} \otimes e_i + \sum_{i = 1}^8 (e_i \interior \Phi) \vert_{\partial X} \otimes \nabla_{\sigma(t)} e_i \\
      &= \vol_{\partial X} \otimes \Biggl(\sum_{i = 1}^8 (\Lie_{\sigma(t)} (e_i \interior \Phi))(e_2, e_3, e_4) e_i + \nabla_{\sigma(t)} e_1\Biggr)
    \end{align*}
    since $\Phi(e_i, e_2, e_3, e_4) = g(e_i, e_2 \times e_3 \times e_4) = g(e_i, e_1)$ for $i = 1, \dotsc, 8$. So similarly to the proof of \lemmaref{lemma:linearisation-B}, we get
    \begin{align*}
      &\pi_\nu((\D \hat{H}_1)_{(0, 0)}(0, t)) \\
      &= - \pi_\nu(P(\sigma(t) \vert_{\partial X})^\perp) + \sum_{i = 5}^{k + 1} g(\nabla_{\sigma(t)} e_i, e_1) e_i + \nabla_{\sigma(t)} e_1 \\
      &= - \pi_\nu(\rho^{-1}((D(\sigma(t) \vert_X)^\perp) \vert_{\partial X})) + \sum_{i = 5}^{k + 1} (g(\nabla_{\sigma(t)} e_i, e_1) + g(e_i, \nabla_{\sigma(t)} e_1)) e_i \\
      &= - (\D \hat{F})_{(0, 0)}(0, t) \vert_{\partial X}
    \end{align*}
    since $(\nabla_{e_1} \sigma(t)) \vert_{\partial X} = 0$ by condition~\eqref{lemma:extension:nabla} in \lemmaref{lemma:extension}. Further,
    \begin{align*}
      (\D \hat{H}_2)_{(0, 0)}(0, t) &= \frac{\D}{\D r} \hat{H}_2(0, r t) \bigg\vert_{r = 0} = \frac{\D}{\D r} (\widehat{\exp}_{\sigma(t)})^\ast(\mu) \bigg\vert_{r = 0} = \Lie_{\sigma(t)} \mu \\
      &= \sum_{i = 1}^8 \Lie_{\sigma(t)} e^i \otimes e_i + \sum_{i = 1}^8 e^i \otimes \nabla_{\sigma(t)} e_i \\
      &= - \sum_{i, j = 1}^8 g(\Lie_{\sigma(t)} e_i, e_j) e^i \otimes e_j + \sum_{i, j = 1}^8 g(\nabla_{\sigma(t)} e_i, e_j) e^i \otimes e_j \\
      &= \sum_{i, j = 1}^8 g(\nabla_{e_i} \sigma(t), e_j) e^i \otimes e_j \\
      &= \sum_{i = 1}^8 e^i \otimes \nabla_{e_i} \sigma(t) \\
      &= \nabla \sigma(t) \, \text{.}
    \end{align*}
    So together we get
    \begin{equation*}
      (\D \hat{B})_{(0, 0)}(0, t) = (\D \hat{H})_{(0, 0)}(0, t) + (\D \hat{F})_{(0, 0)}(0, t) = \pi_\nu((g(\nabla t, u))^\sharp) \, \text{.} \qedhere
    \end{equation*}
  \end{proof}
  
  \begin{lemma} \label{lemma:scaffold-surjective}
    The map
    \begin{equation}
      \begin{split}
        \sections(\normal[M]{X}) \oplus \sections(\normal[M]{W}) &\to \sections(\normal[M]{X}) \oplus \sections(K) \oplus \sections(\normal[W]{\partial X}) \, \text{,} \\
        (s, t) &\mapsto ((\D \hat{G})_{(0, 0)}(s, t), \pi_K(s \vert_{\partial X}), (\D \hat{B})_{(0, 0)}(s, t))
      \end{split}
    \end{equation}
    is surjective, where the vector bundle~$K$ over~$\partial X$ is defined as in the proof of \propositionref{prop:main-proposition-boundary} and $\pi_K$ is defined in~\eqref{eq:def-pi-K}.
  \end{lemma}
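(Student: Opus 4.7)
The plan is to argue by duality: if $(a, k, r) \in \sections(\normal[M]{X}) \oplus \sections(K) \oplus \sections(\normal[W]{\partial X})$ is $L^2$-orthogonal to the image of the map, I aim to show $(a, k, r) = 0$. Testing first against the $s$-variations (setting $t = 0$ and varying $s$), the integration-by-parts argument already carried out in the proof of \lemmaref{lemma:boundary-second-index} yields that $a$ satisfies $D^\ast D a = 0$ in~$X$ and $\pi_K(a|_{\partial X}) = 0$, identifies $r$ with $\pm\, a|_{\partial X}$, and produces a boundary PDE of the form $\pi_\nu(\nabla_u a|_{\partial X} + \text{(zero-th order in } a|_{\partial X}\text{)}) = 0$.

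The new input comes from testing against the $t$-variations (setting $s = 0$), which gives
\begin{equation*}
  \langle a, D^\ast D(\sigma(t)|_X)^\perp\rangle_{L^2(X)} + \langle r, \pi_\nu((g(\nabla t, u))^\sharp)\rangle_{L^2(\partial X)} = 0
\end{equation*}
for every $t \in \sections(\normal[M]{W})$. I would exploit this in two stages. First, restrict to $t$ with $t|_{\partial X} = 0$: property~(iii) of \lemmaref{lemma:extension} forces $\sigma(t)|_X = 0$, killing the interior pairing, while the normal-to-$\partial X$-within-$W$ jet of~$t$ remains completely free, and a short computation shows that as this jet varies $\pi_\nu((g(\nabla t, u))^\sharp)$ sweeps out all of $\sections(\normal[W]{\partial X})$. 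Hence $r = 0$, and consequently $a|_{\partial X} = 0$.

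Second, restrict to $t$ with $t|_{\partial X} \in \sections(K)$, i.e.\ $g(t, u)|_{\partial X} = 0$. Because $X$ and~$W$ meet orthogonally, the tangent-to-$X$ component of $\sigma(t)|_{\partial X} = t|_{\partial X}$ is $g(t, u)|_{\partial X}\, u = 0$; combined with property~(ii) of \lemmaref{lemma:extension} and the Gauss formula, this yields $\nabla^\perp_u(\sigma(t)|_X)^\perp|_{\partial X} = 0$. Applying the Green-type identity from the proof of \lemmaref{lemma:boundary-second-index} to move $D^\ast D$ off $v \defeq (\sigma(t)|_X)^\perp$ onto~$a$, and using $D^\ast D a = 0$, $a|_{\partial X} = 0$ (so $Pa|_{\partial X}$ and $(P - P^\ast)(a|_{\partial X})$ both vanish, as $P$ is a tangential first-order operator and $P - P^\ast$ has order~$0$), $\nabla^\perp_u v|_{\partial X} = 0$, and $r = 0$, the pairing collapses to $\mp\,\langle \nabla_u a|_{\partial X}, \pi_K(t|_{\partial X})\rangle_{L^2(\partial X)}$. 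As $\pi_K(t|_{\partial X})$ ranges over all of $\sections(K)$, one obtains $\pi_K(\nabla_u a|_{\partial X}) = 0$; combined with the Step~1 boundary PDE (which reduces to $\pi_\nu(\nabla_u a|_{\partial X}) = 0$ once $a|_{\partial X} = 0$), this gives $\nabla_u a|_{\partial X} = 0$.

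Finally, $D^\ast D a = 0$ in~$X$ together with $a|_{\partial X} = 0$ gives $\|D a\|_{L^2(E)}^2 = 0$ via Green's formula~\eqref{eq:greens-formula}, so $D a = 0$; the Unique Continuation Property for operators of Dirac type (as already used in \sectionref{subsec:remarks-dimension-scaffold}) then forces $a \equiv 0$, whence $k = 0$ and $r = 0$. The main technical obstacle will be the second stage: carefully verifying that the choice $t|_{\partial X} \in \sections(K)$ really does annihilate $\nabla^\perp_u(\sigma(t)|_X)^\perp$ on~$\partial X$ requires combining the orthogonality of $X$ and~$W$, property~(ii) of \lemmaref{lemma:extension}, and the Gauss formula relating the ambient derivative of a tangent vector field along~$X$ to the intrinsic covariant derivative plus the second fundamental form.
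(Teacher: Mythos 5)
Your argument is correct in substance but takes a genuinely different---and longer---route than the paper's, which is a direct construction: given $(t', k, r)$, first solve $D^\ast D s = t'$ with $s|_{\partial X}$ prescribed so that $\pi_K(s|_{\partial X}) = k$ (possible because the Dirichlet problem for $D^\ast D$ has index~$0$ and trivial kernel by Unique Continuation, so $s \mapsto (D^\ast D s, s|_{\partial X})$ is onto), then pick $t$ with $t|_{\partial X} = 0$ and $\pi_\nu((g(\nabla t, u))^\sharp)$ equal to the residual error $r - (\D B)_0(s|_{\partial X})$; property~(iii) of \lemmaref{lemma:extension} forces $\sigma(t)|_X = 0$, so this second choice leaves the first two components undisturbed. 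Your $L^2$-duality argument reaches the same facts from the opposite side, but two remarks are in order. First, showing the $L^2$-orthogonal complement of the image is trivial only gives dense image; to promote this to surjectivity you must also invoke closed range, which here is inherited from the Fredholm property of the $s$-restriction established in \lemmaref{lemma:boundary-second-index}---a point you use implicitly but should state. Second, and more concretely, your entire second stage is redundant: once the first stage gives $r = 0$ and hence $a|_{\partial X} = 0$, you already have $D^\ast D a = 0$ and $a|_{\partial X} = 0$, which by Green's formula~\eqref{eq:greens-formula} give $D a = 0$ and then $a \equiv 0$ by Unique Continuation, from which $k = \pi_K(\nabla^\perp_u a|_{\partial X}) = 0$ and $r = -a|_{\partial X} = 0$ follow automatically. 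The delicate Gauss-formula verification that $\nabla^\perp_u(\sigma(t)|_X)^\perp|_{\partial X} = 0$ for $t|_{\partial X} \in \sections(K)$, while correct, is therefore never needed; the paper's construction avoids it entirely and also transparently separates what $s$ achieves (the $\sections(\normal[M]{X})$ and $\sections(K)$ components) from what $t$ achieves (the $\sections(\normal[W]{\partial X})$ component), which is precisely the structure exploited when the lemma is re-used in the Banach-space argument that follows.
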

  
  \begin{proof}
    We have seen that this map is given by
    \begin{equation}
      \begin{split}
        (s, t) &\mapsto (D^\ast D s + D^\ast D (\sigma(t) \vert_X)^\perp, \pi_K(s \vert_{\partial X}), \\
        &\phantom{{}\mapsto (} (\D B)_0(s \vert_{\partial X}) + \pi_\nu((g(\nabla t, u))^\sharp))
      \end{split}
    \end{equation}
    by \eqref{eq:linearisation-G}, \eqref{eq:linearisation-G-hat}, \eqref{eq:linearisation-H-hat}, and \eqref{eq:linearisation-B-hat} since $\pi_K$ is linear.
    
    Let $(t^\prime, k, r) \in \sections(\normal[M]{X}) \oplus \sections(K) \oplus \sections(\normal[W]{\partial X})$. Then there is some $s \in \sections(\normal[M]{X})$ such that
    \begin{equation*}
      D^\ast D s = t^\prime \quad \text{and} \quad \pi_K(s \vert_{\partial X}) = k
    \end{equation*}
    since
    \begin{equation*}
      \sections(\normal[M]{X}) \to \sections(\normal[M]{X}) \oplus \sections(\normal[M]{X} \vert_{\partial X}) \, \text{,} \quad s \mapsto (D^\ast D s, s \vert_{\partial X})
    \end{equation*}
    is surjective as the boundary problem
    \begin{equation*}
      \begin{cases}
        D^\ast D s = 0 &\text{in~$X$,} \\
        \phantom{D^\ast D s = 0} \mathllap{s \vert_{\partial X} = 0} &\text{on~$\partial X$} \\
      \end{cases}
    \end{equation*}
    has index~$0$ (compare the proof of \lemmaref{lemma:boundary-second-index}) and $0$\dash-dimensional kernel by the Unique Continuation Property \cite[Corollary~8.3 and Remark~12.2]{BW93} (compare the discussion of $\dim W = 3$ in \sectionref{subsec:remarks-dimension-scaffold}). Furthermore, there is some $t \in \sections(\normal[M]{W})$ such that
    \begin{equation*}
      t \vert_{\partial X} = 0 \quad \text{and} \quad \pi_\nu((g(\nabla t, u))^\sharp) = r - (\D B)_0(s \vert_{\partial X})
    \end{equation*}
    since
    \begin{equation*}
      \sections(\normal[M]{W}) \to \sections(\normal[M]{W} \vert_{\partial X}) \oplus \sections(\normal[W]{\partial X}) \, \text{,} \quad t \mapsto (t \vert_{\partial X}, \pi_\nu((g(\nabla t, u))^\sharp))
    \end{equation*}
    is surjective. Note that $\sigma(t) \vert_X = 0$ by condition~\eqref{lemma:extension:partial-X-0} in \lemmaref{lemma:extension}. So
    \begin{equation*}
      (\D \hat{G})_{(0, 0)}(s, t) = t^\prime \, \text{,} \quad \pi_K(s \vert_{\partial X}) = k \, \text{,} \quad \text{and} \quad (\D \hat{B})_{(0, 0)}(s, t) = r \, \text{.} \qedhere
    \end{equation*}
  \end{proof}
  
  The coefficients of~$D$ are in~$\CC^{k, \alpha}$ by~\eqref{eq:def-D} since $\Phi \in \CC^{k + 1, \alpha}(\altforms^4 M)$. The first-order coefficients of~$D$ are even in~$\CC^{k + 1, \alpha}$. Hence the coefficients of~$D^\ast$ are in~$\CC^{k, \alpha}$. So the coefficients of~$D^\ast D$ are in~$\CC^{k - 1, \alpha}$. Also the coefficients of~$P$ and $P^\ast$ are in~$\CC^{k, \alpha}$ by~\eqref{eq:relation-D-P}. Further, the coefficients of~$\pi_K$ and $\pi_\nu$ are in~$\CC^{k + 1, \alpha}$.
  
  Now $\hat{G}$ extends to a map
  \begin{equation}
    \hat{G}_{k, \alpha} \colon \CC^{2, \alpha}(U) \oplus \CC^{k, \alpha}(V^\prime) \to \CC^{0, \alpha}(\normal[M]{X})
  \end{equation}
  of class~$\CC^k$ by \propositionref{prop:c-ell} since $\Phi \in \CC^{k + 1, \alpha}(\altforms^4 M)$ and since $D^\ast \colon \CC^{1, \alpha}(E) \to \CC^{0, \alpha}(\normal[M]{X})$ is a linear first-order differential operator. Furthermore, $\hat{B}$ extends to a map
  \begin{equation}
    \hat{B}_{k, \alpha} \colon \CC^{2, \alpha}(U) \oplus \CC^{k, \alpha}(V^\prime) \to \CC^{1, \alpha}(\normal[W]{\partial X})
  \end{equation}
  of class~$\CC^k$.
  
  The proof of \lemmaref{lemma:scaffold-surjective} shows that the map
  \begin{equation*}
    \begin{split}
      \CC^{k, \alpha}(\normal[M]{X}) \oplus \CC^{k, \alpha}(\normal[M]{W}) &\to \CC^{k - 2, \alpha}(\normal[M]{X}) \oplus \CC^{k, \alpha}(K) \oplus \CC^{k - 1, \alpha}(\normal[W]{\partial X}) \, \text{,} \\
      (s, t) &\mapsto ((\D \hat{G}_{k, \alpha})_{(0, 0)}(s, t), \pi_K(s \vert_{\partial X}), (\D \hat{B}_{k, \alpha})_{(0, 0)}(s, t))
    \end{split}
  \end{equation*}
  is surjective since the maps
  \begin{equation*}
    \CC^{k, \alpha}(\normal[M]{X}) \to \CC^{k - 2, \alpha}(\normal[M]{X}) \oplus \CC^{k, \alpha}(\normal[M]{X} \vert_{\partial X}) \, \text{,} \quad s \mapsto (D^\ast D s, s \vert_{\partial X})
  \end{equation*}
  and
  \begin{equation*}
    \CC^{k, \alpha}(\normal[M]{W}) \to \CC^{k, \alpha}(\normal[M]{W} \vert_{\partial X}) \oplus \CC^{k - 1, \alpha}(\normal[W]{\partial X}) \, \text{,} \quad t \mapsto (t \vert_{\partial X}, \pi_\nu((g(\nabla t, u))^\sharp))
  \end{equation*}
  are surjective.
  
  If $(t^\prime, k, r) \in \CC^{2, \alpha}(\normal[M]{X}) \oplus \CC^{1, \alpha}(K) \oplus \CC^{2, \alpha}(\normal[W]{\partial X})$ is in the $L^2$\dash-orthogonal complement of the image of the linearisation of~\eqref{eq:boundary-second} at the $0$\dash-section, then $t^\prime \in \CC^{k + 1, \alpha}(\normal[M]{X})$, $k \in \CC^{k, \alpha}(K)$, and $r \in \CC^{k + 1, \alpha}(\normal[W]{\partial X})$. This follows from the proof of \theoremref{thm:main-theorem-spin7-boundary}. Thus the map
  \begin{equation*}
    \begin{split}
      \CC^{2, \alpha}(\normal[M]{X}) \oplus \CC^{k, \alpha}(\normal[M]{W}) &\to \CC^{0, \alpha}(\normal[M]{X}) \oplus \CC^{2, \alpha}(K) \oplus \CC^{1, \alpha}(\normal[W]{\partial X}) \, \text{,} \\
      (s, t^\prime) &\mapsto ((\D \hat{G}_{k, \alpha})_{(0, 0)}(s, t^\prime), \pi_K(s \vert_{\partial X}), (\D \hat{B}_{k, \alpha})_{(0, 0)}(s, t^\prime))
    \end{split}
  \end{equation*}
  is surjective. Hence there is a ($\CC^{2, \alpha} \oplus \CC^{k, \alpha}$)\dash-neighbourhood $\tilde{U}_1 \subseteq \CC^{2, \alpha}(U) \oplus \CC^{k, \alpha}(V^\prime)$ of $(0, 0)$ such that $((\D \hat{G}_{k, \alpha})_{(s, t)}, \pi_K, (\D \hat{B}_{k, \alpha})_{(s, t)})$ is surjective for all $(s, t) \in \tilde{U}_1$.
  
  For fixed $t \in \CC^{k, \alpha}(V^\prime)$, the equation
  \begin{equation}
    \hat{G}_{k, \alpha}(s, t) = D^\ast(\pi_E((\widehat{\exp}_s)^\ast((\widehat{\exp}_{\sigma(t)})^\ast(\tau)))) = 0
  \end{equation}
  is a nonlinear partial differential equation of order~$2$ in~$s$ and the equation
  \begin{equation}
    \begin{split}
      &\hat{B}_{k, \alpha}(s, t) \\
      &= \pi_\nu(\rho^{-1}(\pi_E((\widehat{\exp}_s)^\ast((\widehat{\exp}_{\sigma(t)})^\ast(\tau))))) \\
      &\phantom{{}={}} {}+ \pi_\nu((g((\widehat{\exp}_{\pi_\nu(s)})^\ast((\widehat{\exp}_{\sigma(t)})^\ast(\hat{\gamma})), (\widehat{\exp}_{\hat{\sigma}(\pi_\nu(s))})^\ast((\widehat{\exp}_{\sigma(t)})^\ast(\mu))))^\sharp) \\
      &= 0
    \end{split}
  \end{equation}
  is a nonlinear partial differential equation of order~$1$ in~$s$ on the boundary. Since the linearisation of
  \begin{equation}
    \begin{cases}
      \phantom{\pi_K(s \vert_{\partial X}) = 0} \mathllap{\hat{G}_{k, \alpha}(s, t) = 0} &\text{in~$X$,} \\
      \pi_K(s \vert_{\partial X}) = 0 &\text{on~$\partial X$,} \\
      \phantom{\pi_K(s \vert_{\partial X}) = 0} \mathllap{\hat{B}_{k, \alpha}(s, t) = 0} &\text{on~$\partial X$}
    \end{cases} \label{eq:boundary-second-t}
  \end{equation}
  at~$0$ is elliptic for $t = 0$, there is a ($\CC^{2, \alpha} \oplus \CC^{k, \alpha}$)\dash-neighbourhood $\tilde{U}_2 \subseteq \CC^{2, \alpha}(U) \oplus \CC^{k, \alpha}(V^\prime)$ of $(0, 0)$ such that
  \begin{equation*}
    \begin{cases}
      \phantom{(\D \hat{B}_{k, \alpha})_{(s, t)}(s^\prime, 0) = 0} \mathllap{(\D \hat{G}_{k, \alpha})_{(s, t)}(s^\prime, 0) = 0} &\text{in~$X$,} \\
      \phantom{(\D \hat{B}_{k, \alpha})_{(s, t)}(s^\prime, 0) = 0} \mathllap{\pi_K(s^\prime \vert_{\partial X}) = 0} &\text{on~$\partial X$,} \\
      (\D \hat{B}_{k, \alpha})_{(s, t)}(s^\prime, 0) = 0 &\text{on~$\partial X$}
    \end{cases}
  \end{equation*}
  is an elliptic boundary problem in $s^\prime \in \CC^{2, \alpha}(\normal[M]{X})$ for all $(s, t) \in \tilde{U}_2$. In particular, if $(s, t) \in \tilde{U}_2$ satisfies~\eqref{eq:boundary-second-t}, then $s \in \CC^{k, \alpha}(\normal[M]{X})$ by Elliptic Regularity \cite[Theorem~6.8.2]{Mor66} since $(\widehat{\exp}_{\sigma(t)})^\ast(\tau) \in \CC^{k - 1, \alpha}(\altforms^4 M \otimes \altforms^2_7 M)$, $(\widehat{\exp}_{\sigma(t)})^\ast(\hat{\gamma}) \in \CC^{k - 1, \alpha}(\altforms^3 M \otimes T M)$, and $(\widehat{\exp}_{\sigma(t)})^\ast(\mu) \in \CC^{k - 1, \alpha}(T^\ast M \otimes T M)$.
  
  Let $\tilde{U} \defeq \tilde{U}_1 \cap \tilde{U}_2$. Then the above argumentation shows that if $(s, t) \in \tilde{U}$ satisfies~\eqref{eq:boundary-second-t}, then
  \begin{equation*}
    \begin{split}
      ((\D \hat{G}_{k, \alpha})_{(s, t)}, \pi_K, (\D \hat{B}_{k, \alpha})_{(s, t)}) \colon &\CC^{2, \alpha}(\normal[M]{X}) \oplus \CC^{k, \alpha}(\normal[M]{W}) \\
      &\to \CC^{0, \alpha}(\normal[M]{X}) \oplus \CC^{2, \alpha}(K) \oplus \CC^{1, \alpha}(\normal[W]{\partial X})
    \end{split}
  \end{equation*}
  is surjective and
  \begin{equation*}
    \begin{split}
      &((\D \hat{G}_{k, \alpha})_{(s, t)}, \pi_K, (\D \hat{B}_{k, \alpha})_{(s, t)}) \vert_{\CC^{2, \alpha}(\normal[M]{X})} \colon \\
      &\CC^{2, \alpha}(\normal[M]{X}) \to \CC^{0, \alpha}(\normal[M]{X}) \oplus \CC^{2, \alpha}(K) \oplus \CC^{1, \alpha}(\normal[W]{\partial X})
    \end{split}
  \end{equation*}
  is Fredholm. Furthermore, the Fredholm index is the same for all of these operators as we may assume \wolog\ that $\tilde{U}$ is connected. So the Fredholm index is~$0$ by \lemmaref{lemma:boundary-second-index}.
  
  So \theoremref{thm:genericity-general} implies that for every generic $t \in \CC^{k, \alpha}(V^\prime)$ that is $\CC^{k, \alpha}$\dash-close to~$0$, the moduli space of all solutions $s \in \CC^{k, \alpha}(\normal[M]{X})$ of the boundary problem \eqref{eq:boundary-second-t} that are $\CC^{2, \alpha}$\dash-close to~$0$ is either empty or a $\CC^k$\dash-manifold of dimension~$0$ (i.e., a discrete set). This discrete set is finite by compactness (\lemmaref{lemma:compactness}). The result follows since the boundary problem \eqref{eq:boundary-first} implies \eqref{eq:boundary-second} and a subset of a finite set is again a finite set.
\end{proof}

\subsection{Remark about Torsion-Free Spin(7)-Structures}
\label{subsec:remark-torsion-free-boundary}

In contrast to the case of closed Cayley submanifolds (see \sectionref{subsec:remark-torsion-free-closed}), we get the following genericity statement for torsion-free $\Spin(7)$-structures (rather than $\Spin(7)$-structures that are not necessarily torsion-free in \theoremref{thm:main-theorem-spin7-boundary}) in the case of a compact, connected Cayley submanifold with non-empty boundary. Note that there are small differences to \theoremref{thm:main-theorem-spin7-boundary} in terms of the regularity of the objects involved. We use the $\CC^{k, \alpha}$\dash-topology for the space of all torsion-free $\Spin(7)$\dash-structures.

\begin{theorem}
  Let $k \ge 2$, let $0 < \alpha < 1$, let $M$ be an $8$\dash-manifold of class~$\CC^{k + 1, \alpha}$ with a torsion-free $\Spin(7)$-structure~$\Phi$ of class~$\CC^{k + 1, \alpha}$, let $X$ be a compact, connected Cayley submanifold of~$M$ of class~$\CC^{k + 1, \alpha}$ with non-empty boundary, and let $W$ be a submanifold of~$M$ of class~$\CC^{k + 1, \alpha}$ with $\partial X \subseteq W$ such that $X$ and $W$ meet orthogonally.
  
  Then for every generic torsion-free $\Spin(7)$\dash-structure~$\Psi$ of class~$\CC^{k, \alpha}$ that is $\CC^{k, \alpha}$\dash-close to~$\Phi$, the moduli space of all Cayley submanifolds of $(M, \Psi)$ of class~$\CC^{k, \alpha}$ that are $\CC^{2, \alpha}$\dash-close to~$X$ with boundary on~$W$ and meeting~$W$ orthogonally (with respect to the metric induced by~$\Psi$) is a finite set (possibly empty).
\end{theorem}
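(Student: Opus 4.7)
The plan is to adapt the proof of Theorem~\ref{thm:main-theorem-spin7-boundary} verbatim, with the parameter space $V$ replaced by the subspace of torsion-free $\Spin(7)$-deformations. The only step that requires genuine modification is the surjectivity argument (the torsion-free analogue of \lemmaref{lemma:spin7-boundary-surjective}), since restricting to torsion-free perturbations shrinks the space of available $\chi$.

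First I would parametrise torsion-free $\Spin(7)$-structures near~$\Phi$ by the Banach submanifold
\begin{equation*}
  V^{\mathrm{tf}} \defeq \set{\chi \in V \colon \D \Theta(\chi) = 0} \subseteq V \, \text{,}
\end{equation*}
whose tangent space at~$0$ is the space of closed forms in $\sections(\altforms^4_1 M \oplus \altforms^4_7 M \oplus \altforms^4_{35} M)$. The maps $\tilde{G}$ and $\tilde{B}$ and their H\"older-space extensions from \sectionref{subsec:varying-spin7-boundary} carry over unchanged once $V$ is replaced by $V^{\mathrm{tf}}$, as does the Fredholm and index-$0$ conclusion for the slice operator at fixed~$\chi$.

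The core new step is to show that
\begin{equation*}
  (s, \chi) \mapsto \bigl((\D\tilde{G})_{(0, 0)}(s, \chi),\ \pi_K(s \vert_{\partial X}),\ (\D\tilde{B})_{(0, 0)}(s, \chi)\bigr)
\end{equation*}
is surjective when $\chi$ is constrained to lie in the tangent space of $V^{\mathrm{tf}}$. Inspecting the proof of \lemmaref{lemma:linearisation-F-tilde} one sees that only the component of type $\altforms^4_7$ enters $(\D\tilde{F})_{(0,0)}(0, \chi)$; more precisely, the components in $\altforms^4_1$ and $\altforms^4_{35}$ contribute nothing pointwise along~$X$ because $e^1 \wedge (e_i \interior \chi) \vert_X$ vanishes for such $\chi$ and $i = 5, \dotsc, 8$ (using $\chi \vert_V \le \Phi \vert_V$ infinitesimally). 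Hence the effective parameter is an $e \in \sections(\altforms^2_7 M)$ with $\chi = h(\tau_\Phi, e)$, subject to the closedness constraint $\D(h(\tau_\Phi, e)) = 0$. One then follows the proof of \lemmaref{lemma:spin7-boundary-surjective}: given a datum $(t^\prime, k, r)$ we first solve the Dirichlet-type problem for $s \in \sections(\normal[M]{X})$ to realise $\pi_K(s \vert_{\partial X})$ and part of $t^\prime$, and then surject onto the remainder using $e$. Any nonzero element of the adjoint cokernel \eqref{eq:orthogonal-complement-t} is nonzero on an open subset by the Unique Continuation Property \cite[Corollary~8.3 and Remark~12.2]{BW93}, so the required $e$ can be constructed with small support; the closedness of $h(\tau_\Phi, e)$ is then arranged by adding a compactly-supported correction in $\altforms^4_1 \oplus \altforms^4_{35}$ which vanishes to first order along~$X$ and hence does not disturb the pairing.

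The main obstacle will be this simultaneous realisation of prescribed boundary values and closedness of the perturbation, which is where the proof becomes genuinely harder than in the non-torsion-free case of Theorem~\ref{thm:main-theorem-spin7-boundary}. Once the surjectivity is established, the remaining steps are mechanical: $\tilde{G}$ and $\tilde{B}$ are $\CC^{k - 1}$ by \propositionref{prop:c-ell}, Elliptic Regularity \cite[Theorem~6.8.2]{Mor66} promotes $\CC^{2, \alpha}$-solutions of the boundary problem to $\CC^{k, \alpha}$-solutions, \theoremref{thm:genericity-general} yields that for generic torsion-free $\Psi$ the kernel of the slice operator is trivial at every solution (giving a discrete moduli space), and \lemmaref{lemma:compactness} upgrades this to finiteness.
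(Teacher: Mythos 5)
Your proposal has a genuine gap at exactly the place you flag as the ``main obstacle'': the simultaneous realisation of a prescribed effect on the linearised boundary operator and closedness of the perturbation. The step where you ``arrange'' closedness by ``adding a compactly-supported correction in $\altforms^4_1 \oplus \altforms^4_{35}$ which vanishes to first order along~$X$'' is not an argument. To correct $h(\tau_\Phi, e)$ to a closed form while preserving compact support you would need $\D(h(\tau_\Phi, e))$ to lie in the image of $\D$ restricted to compactly-supported sections of $\altforms^4_1 \oplus \altforms^4_{35}$ (satisfying an additional first-order vanishing constraint along $X$); nothing in the proposal shows this, and there is no reason to expect it. More fundamentally, the ansatz of producing torsion-free deformations with small support is at odds with the closedness constraint being global in nature, and it is also unclear that $V^{\mathrm{tf}}$ as you define it is a Banach submanifold of $\CC^{k,\alpha}(V)$, which you implicitly assume when applying \theoremref{thm:genericity-general}.

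The paper takes a different and cleaner route that sidesteps the issue entirely. It observes that the diffeomorphism pullbacks $\Lie_v\Phi$ for $v\in\sections(TM)$ are automatically tangent to the space of torsion-free $\Spin(7)$-structures (no closedness constraint needs to be imposed), and it computes their effect on the linearised boundary problem: $(\D\tilde{G})_{(0,0)}(0,\Lie_v\Phi)=D^\ast D\,(v\vert_X)^\perp$ by \lemmaref{lemma:linearisation-F-tilde-Lie}, and a new lemma (replacing \lemmaref{lemma:linearisation-H-tilde} and \eqref{eq:linearisation-B-tilde}) shows
\begin{equation*}
  (\D\tilde{B})_{(0,0)}(0,\Lie_v\Phi)=\pi_\nu(\nabla_u v\vert_{\partial X})+\pi_\nu\bigl((g(\nabla(v\vert_W),u))^\sharp\bigr)\,\text{.}
\end{equation*}
This is precisely the same formal structure as in the scaffold-varying case (with $v$ playing the role of $\sigma(t)$), so the surjectivity follows verbatim from the proof of \lemmaref{lemma:scaffold-surjective}. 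In particular, the source of surjectivity in the torsion-free case is the boundary term coming from $\Lie_v\Phi$, not any pointwise construction of $\forms^4_7$-type closed forms; this is also why the torsion-free genericity statement holds for compact Cayley submanifolds with boundary but fails for closed ones (\sectionref{subsec:remark-torsion-free-closed}). You should replace the ``small support plus correction'' step with this $\Lie_v\Phi$ argument.
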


\begin{proof}
  The following lemma replaces \lemmaref{lemma:linearisation-H-tilde} and \hyperref[eq:linearisation-B-tilde]{Equation~(\ref*{eq:linearisation-B-tilde})} in the proof of \theoremref{thm:main-theorem-spin7-boundary}.
  
  \begin{lemma}
    Let $M$ be an $8$\dash-manifold with a $\Spin(7)$\dash-structure~$\Phi$, let $X$ be a compact Cayley submanifold with boundary, and let $v \in \sections(T M)$. Then
    \begin{equation}
      (\D \tilde{B})_{(0, 0)}(0, \Lie_v \Phi) = \pi_\nu(\nabla_u v \vert_{\partial X}) + \pi_\nu((g(\nabla (v \vert_W), u))^\sharp) \, \text{,}
    \end{equation}
    where $\pi_\nu$ is defined in \eqref{eq:def-pi-nu} and $u \in \sections(\normal[X]{\partial X})$ is the inward-pointing unit normal vector field of~$\partial X$ in~$X$. Here we view $\nabla (v \vert_W)$ as an element of $\sections(T^\ast W \vert_{\partial X} \otimes T M \vert_{\partial X})$ and use the inner product with $u$ on the $T M \vert_{\partial X}$\dash-factor to get an element in $\sections(T^\ast W \vert_{\partial X})$, which we identify with an element of $\sections(T W \vert_{\partial X})$ using the metric isomorphism~$\sharp$.
  \end{lemma}
  
  \begin{proof}
    In the proof of \lemmaref{lemma:linearisation-H-tilde}, if we let the orthonormal frame $(e_1(t), \dotsc, \allowbreak e_8(t))$ depend on~$t$ such that $(e_1(t), \dotsc, e_4(t))$ is a frame of~$X$, $(e_2(t), \dotsc, e_4(t))$ is a frame of~$\partial X$, and $(e_2(t), \dotsc, e_{k + 1}(t))$ is a frame of~$W$, then
    \begin{align*}
      (\D \tilde{H})_{(0, 0)}(0, \chi) &= \frac{\D}{\D t} \sum_{i = 2}^{k + 1} (e_i(t) \interior \Phi_t) \vert_{\partial X} \otimes \pi_\nu(e_i(t)) \biggr\vert_{t = 0} \\
      &= \sum_{i = 2}^{k + 1} ((e_i)^\prime \interior \Phi) \vert_{\partial X} \otimes \pi_\nu(e_i) + \sum_{i = 2}^{k + 1} (e_i \interior \chi) \vert_{\partial X} \otimes \pi_\nu(e_i) \\
      &\phantom{{}={}} {}+ \sum_{i = 2}^{k + 1} (e_i \interior \Phi) \vert_{\partial X} \otimes \frac{\D}{\D t} \pi_\nu(e_i(t)) \biggr\vert_{t = 0} \\
      &= \sum_{i = 5}^{k + 1} (e_i \interior \chi) \vert_{\partial X} \otimes e_i
    \end{align*}
    since
    \begin{align*}
      ((e_i)^\prime \interior \Phi) \vert_{\partial X} &= g((e_i)^\prime, e_1) \vol_{\partial X} = 0 \quad \text{and} \\
      (e_i \interior \Phi) \vert_{\partial X} &= g(e_i, e_1) \vol_{\partial X} = 0
    \end{align*}
    for $i = 5, \dotsc, k + 1$ as $(e_i)^\prime$ is tangent to~$W$ because $e_i(t)$ is tangent to~$W$ for all~$t$. So
    \begin{align*}
      (\D \tilde{H})_{(0, 0)}(0, \Lie_v \Phi) &= \sum_{i = 5}^{k + 1} (e_i \interior \Lie_v \Phi) \vert_{\partial X} \otimes e_i \\
      &= \sum_{i = 5}^{k + 1} (\Lie_v (e_i \interior \Phi) - \Lie_v e_i \interior \Phi) \vert_{\partial X} \otimes e_i \, \text{.}
    \end{align*}
    The proof of \lemmaref{lemma:linearisation-B} shows that
    \begin{align*}
      \sum_{i = 5}^{k + 1} (\Lie_v (e_i \interior \Phi)) \vert_{\partial X} \otimes e_i &= - \pi_\nu(\rho^{-1}((D (v \vert_X)^\perp) \vert_{\partial X})) + \pi_\nu(\nabla_u v \vert_{\partial X}) \\
      &\phantom{{}={}} {}+ \sum_{i = 5}^{k + 1} g(\nabla_v e_i, e_1) \vol_{\partial X} \otimes e_i \, \text{.}
    \end{align*}
    Hence
    \begin{align*}
      (\D \tilde{B})_{(0, 0)}(0, \Lie_v \Phi) &= \pi_\nu(\rho^{-1}(((\D \tilde{F})_{(0, 0)}(0, \Lie_v \Phi)) \vert_{\partial X})) + (\D \tilde{H})_{(0, 0)}(0, \Lie_v \Phi) \\
      &= \pi_\nu(\nabla_u v \vert_{\partial X}) + \sum_{i = 5}^{k + 1} (g(\nabla_v e_i, e_1) - g(\Lie_v e_i, e_1)) \vol_{\partial X} \otimes e_i \\
      &= \pi_\nu(\nabla_u v \vert_{\partial X}) + \sum_{i = 5}^{k + 1} g(\nabla_{e_i} v, e_1) \vol_{\partial X} \otimes e_i \\
      &= \pi_\nu(\nabla_u v \vert_{\partial X}) + \pi_\nu((g(\nabla (v \vert_W), u))^\sharp)
    \end{align*}
    by \eqref{eq:linearisation-F-tilde-Lie}.
  \end{proof}
  
  So if $\mathcal{X}$ denotes the space of all torsion-free $\Spin(7)$-structures on~$M$, then
  \begin{equation*}
    \begin{split}
      \sections(\normal[M]{X}) \oplus \sections(T_\Phi \mathcal{X}) &\to \sections(\normal[M]{X}) \oplus \sections(K) \oplus \sections(\normal[W]{\partial X}) \\
      (s, \chi) &\mapsto ((\D \tilde{G})_{(0, 0)}(s, \chi), \pi_K(s \vert_{\partial X}), (\D \tilde{B})_{(0, 0)}(s, \chi))
    \end{split}
  \end{equation*}
  is surjective by the proof of \lemmaref{lemma:scaffold-surjective}, and hence
  \begin{equation*}
    \begin{split}
      \CC^{2, \alpha}(\normal[M]{X}) \oplus \CC^{k, \alpha}(T_\Phi \mathcal{X}) &\to \CC^{0, \alpha}(\normal[M]{X}) \oplus \CC^{2, \alpha}(K) \oplus \CC^{1, \alpha}(\normal[W]{\partial X}) \, \text{,} \\
      (s, \chi) &\mapsto ((\D \tilde{G}_{k, \alpha})_{(0, 0)}(s, \chi), \pi_K(s \vert_{\partial X}), (\D \tilde{B}_{k, \alpha})_{(0, 0)}(s, \chi))
    \end{split}
  \end{equation*}
  is surjective by similar arguments as in the proof of \theoremref{thm:main-theorem-scaffold}. The rest of the proof is analogous to the proof of \theoremref{thm:main-theorem-spin7-boundary}.
\end{proof}

\begin{remark}
  A similar result holds in the case of associative submanifolds inside a $G_2$\dash-manifold (this can be proved by the methods of \cite{Gay14} although Gayet did not prove it): \emph{Let $M$ be a $7$\dash-manifold with a torsion-free $G_2$\dash-structure~$\varphi$, let $X$ be a compact, connected associative submanifold of~$M$ with non-empty boundary, and let $W$ be a coassociative submanifold of~$M$ with $\partial X \subseteq W$.}
  
  \emph{Then for every generic torsion-free $G_2$\dash-structure~$\psi$ that is close to~$\varphi$, the moduli space of all associative submanifolds of $(M, \psi)$ that are close to~$X$ with boundary on~$W$ is either empty or a smooth manifold of dimension equal to the index.}
\end{remark}

\subsection{Existence of Cayley Submanifolds for Nearby Spin(7)-Structures and Scaffolds}
\label{subsec:existence-cayley-nearby}

Here is an application of the deformation theory we have developed.

\begin{corollary} \label{cor:existence-cayley-nearby}
  Let $M$ be an $8$\dash-manifold with a $\Spin(7)$-structure~$\Phi$, let $X$ be a compact, connected Cayley submanifold of~$M$ with non-empty boundary, let $W$ be a submanifold of~$M$ with $\partial X \subseteq W$ such that $X$ and $W$ meet orthogonally, and let $0 < \alpha < 1$. Suppose that the kernel of the linearisation of the boundary problem~\eqref{eq:boundary-second} at the $0$\dash-section is $0$\dash-dimensional (so we are in a generic situation).
  
  Then for every $\Spin(7)$\dash-structure~$\Psi$ that is $\CC^{2, \alpha}$\dash-close to~$\Phi$ and for every local deformation~$W^\prime$ of~$W$ that is $\CC^{2, \alpha}$\dash-close to~$W$, there exists a unique Cayley submanifold of $(M, \Psi)$ that is $\CC^{2, \alpha}$\dash-close to~$X$ with boundary on~$W^\prime$ and meeting~$W^\prime$ orthogonally (with respect to the metric induced by~$\Psi$).
\end{corollary}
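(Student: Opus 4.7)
The plan is to apply the Implicit Function Theorem to a combined parametric version of the second-order elliptic boundary problem~\eqref{eq:boundary-second}. I would assemble the constructions of \hyperref[subsec:varying-spin7-boundary]{Sections~\ref*{subsec:varying-spin7-boundary}} and~\ref{subsec:varying-scaffold} into a single map
\[
  \mathcal{F}\colon \CC^{2,\alpha}(U) \oplus \CC^{2,\alpha}(V) \oplus \CC^{2,\alpha}(V^\prime) \to \CC^{0,\alpha}(\normal[M]{X}) \oplus \CC^{2,\alpha}(K) \oplus \CC^{1,\alpha}(\normal[W]{\partial X})
\]
sending $(s, \chi, t)$ to the triple of left-hand sides of~\eqref{eq:boundary-second} formed with the $\Spin(7)$\dash-structure $\Theta(\chi)$ in place of~$\Phi$ and the scaffold $W^\prime = \widehat{\exp}_{\sigma(t)}(W)$ in place of~$W$ in the definitions of~$G$, $\pi_K$, and~$B$. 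By \propositionref{prop:c-ell}, $\mathcal{F}$ is at least of class~$\CC^1$ on a neighbourhood of the origin.

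The crucial observation is that $(\D \mathcal{F})_{(0,0,0)}$ restricted to the $s$\dash-slot is precisely the linearisation of~\eqref{eq:boundary-second} at the zero section, which by \hyperref[lemma:boundary-second-elliptic]{Lemmas~\ref*{lemma:boundary-second-elliptic}} and~\ref{lemma:boundary-second-index} is elliptic and Fredholm of index~$0$. The standing hypothesis that its kernel is trivial makes it a Banach space isomorphism, so the Implicit Function Theorem produces a unique $s(\chi, t) \in \CC^{2,\alpha}(U)$ near~$0$ with $\mathcal{F}(s(\chi, t), \chi, t) = 0$ for every $(\chi, t)$ close to~$(0,0)$. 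Elliptic Regularity \cite[Theorem~6.8.2]{Mor66} then promotes $s(\chi, t)$ to match the regularity inherited from $\Psi$ and~$W^\prime$, exactly as in the proofs of \hyperref[thm:main-theorem-spin7-boundary]{Theorems~\ref*{thm:main-theorem-spin7-boundary}} and~\ref{thm:main-theorem-scaffold}. Uniqueness in the corollary then follows because any Cayley deformation with boundary on~$W^\prime$ meeting~$W^\prime$ orthogonally automatically solves~\eqref{eq:boundary-second}, so must coincide with $s(\chi, t)$ by IFT uniqueness.

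The hard part will be the promotion from the second-order problem~\eqref{eq:boundary-second} back to the first-order system~\eqref{eq:boundary-first}, that is, showing that the residual $\tilde{F}(s(\chi, t), \chi, t) \in \CC^{1,\alpha}(E)$ itself vanishes, not merely its image under~$D^\ast$. To establish this I would combine the interior identity $D^\ast \tilde{F} = 0$ with the boundary relation $\pi_\nu(\rho^{-1}(\tilde{F}\vert_{\partial X})) = -\tilde{H}(s\vert_{\partial X})$ extracted from the vanishing of~$\hat{B}$, and test against $s(\chi, t)$ via Green's formula~\eqref{eq:greens-formula}, in the spirit of the $\dim W = 3$ argument in \sectionref{subsec:remarks-dimension-scaffold}. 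The $\pi_K(s\vert_{\partial X}) = 0$ condition should kill the $K$\dash-component of the resulting boundary integral, the $\pi_\nu$\dash-component should combine with~$\tilde{H}$ to produce a quadratic remainder controlled by $\norm{(\chi, t)}_{\CC^{2,\alpha}}$, and the Unique Continuation Property for the Dirac-type operator $D + D^\ast$ \cite[Corollary~8.3 and Remark~12.2]{BW93} should then force $\tilde{F}(s(\chi, t), \chi, t) = 0$ after shrinking the neighbourhood of~$(\chi, t)$ if necessary.
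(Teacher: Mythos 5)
Your core argument is the paper's own proof: you combine the parametric constructions of \sectionref{subsec:varying-spin7-boundary} and \sectionref{subsec:varying-scaffold} into a single map in $(s, \chi, t)$, note that its partial linearisation in the $s$\dash-slot at the origin is the linearisation of~\eqref{eq:boundary-second}, which is elliptic of index~$0$ (\lemmaref{lemma:boundary-second-elliptic} and \lemmaref{lemma:boundary-second-index}) and injective by hypothesis, hence an isomorphism between exactly the H\"older spaces you name, and then apply \propositionref{prop:c-ell}, the Implicit Function Theorem and Elliptic Regularity; uniqueness follows, just as you say, because every Cayley deformation with boundary on~$W^\prime$ meeting~$W^\prime$ orthogonally solves the parametric version of~\eqref{eq:boundary-second} and therefore coincides with the IFT solution. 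Up to notation (the paper's $\bar{F}, \bar{G}, \bar{H}, \bar{B}$ and the solution map~$L(\chi, t)$), this is exactly how the paper argues.

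The difference lies in your final paragraph. The paper's proof stops at the Implicit Function Theorem statement plus regularity; it does not carry out any promotion from~\eqref{eq:boundary-second} back to the first-order system~\eqref{eq:boundary-first}, so relative to the paper's own proof you have omitted nothing --- in fact you have correctly isolated the delicate point that the paper leaves untreated. But be aware that the mechanism you sketch for that step would not close it: a boundary contribution that is merely ``a quadratic remainder controlled by $\norm{(\chi, t)}_{\CC^{2, \alpha}}$'' can at best make $\tilde{F}(s(\chi,t),\chi,t)$ small, never force it to vanish, and the Unique Continuation Property applies to solutions of a Dirac-type equation with vanishing Cauchy data (or vanishing on an open set), which you do not have here: the boundary condition in~\eqref{eq:boundary-second} only couples the $\pi_\nu \circ \rho^{-1}$\dash-component of $\tilde{F} \vert_{\partial X}$ to $-\tilde{H}(s \vert_{\partial X})$, and $\tilde{H}$ is not known to vanish. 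So treat that promotion as a genuinely open step rather than something Green's formula and \cite[Corollary~8.3]{BW93} will deliver; the argument the paper actually records is the one in your first two paragraphs.
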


Note that only the initial $\Spin(7)$-structure~$\Phi$ and scaffold~$W$ are assumed to be generic, not the deformations $\Psi$ and~$W^\prime$ (for which the corollary states that they are in fact generic).

\begin{proof}
  Define
  \begin{align*}
    \bar{F} \colon \sections(U) \oplus \sections(V) \oplus \sections(V^\prime) &\to \sections(E) \, \text{,} \\
    (s, \chi, t) &\mapsto \pi_E((\widehat{\exp}_s)^\ast((\widehat{\exp}_{\sigma(t)})^\ast(\tau_{\Theta(\chi)}))) \, \text{,} \\
    \bar{G} \colon \sections(U) \oplus \sections(V) \oplus \sections(V^\prime) &\to \sections(\normal[M]{X}) \, \text{,} \\
    (s, \chi, t) &\mapsto D^\ast(\bar{F}(s, \chi, t)) \, \text{,} \\
    \bar{H}_1 \colon \sections(U_{\partial X}) \oplus \sections(V) \oplus \sections(V^\prime) &\to \forms^3(\partial X, T M \vert_{\partial X}) \cong \sections(T M \vert_{\partial X}) \, \text{,} \\
    (s, \chi, t) &\mapsto (\widehat{\exp}_{\pi_\nu(s)})^\ast((\widehat{\exp}_{\sigma(t)})^\ast(\bar{\gamma}_{\Theta(\chi)})) \, \text{,} \\
    \bar{H}_2 \colon \sections(U_{\partial X}) \oplus \sections(V^\prime) &\to \forms^1(W, T M \vert_W) \, \text{,} \\
    (s, t) &\mapsto (\widehat{\exp}_{\hat{\sigma}(\pi_\nu(s))})^\ast((\widehat{\exp}_{\sigma(t)})^\ast(\mu)) \, \text{,} \\
    \bar{H} \colon \sections(U_{\partial X}) \oplus \sections(V) \oplus \sections(V^\prime) &\to \sections(\normal[W]{\partial X}) \, \text{,} \\
    (s, \chi, t) &\mapsto \pi_\nu((g_{\Theta(\chi)}(\bar{H}_1(s, \chi, t), \bar{H}_2(s, t)))^\sharp) \, \text{,} \\
    \bar{B} \colon \sections(U) \oplus \sections(V) \oplus \sections(V^\prime) &\to \sections(\normal[W]{\partial X}) \, \text{,} \\
    (s, \chi, t) &\mapsto \pi_\nu(\rho^{-1}(\bar{F}(s, \chi, t) \vert_{\partial X})) + \bar{H}(s \vert_{\partial X}, \chi, t)
  \end{align*}
  as in the proofs of \hyperref[thm:main-theorem-spin7-boundary]{Theorems~\ref*{thm:main-theorem-spin7-boundary}} and~\ref{thm:main-theorem-scaffold}. Then
  \begin{equation*}
    \begin{split}
      \sections(\normal[M]{X}) &\to \sections(\normal[M]{X}) \oplus \sections(K) \oplus \sections(\normal[W]{\partial X}) \, \text{,} \\
      s &\mapsto ((\D \bar{G})_{(0, 0, 0)}(s, 0, 0), \pi_K(s \vert_{\partial X}), (\D \bar{B})_{(0, 0, 0)}(s, 0, 0))
    \end{split}
  \end{equation*}
  is bijective by assumption (it is the linearisation of the boundary problem~\eqref{eq:boundary-second} at the $0$\dash-section). Now $\bar{G}$ and $\bar{B}$ extend to maps
  \begin{align*}
    \bar{G}_{2, \alpha} \colon \CC^{2, \alpha}(U) \oplus \CC^{2, \alpha}(V) \oplus \CC^{2, \alpha}(V^\prime) &\to \CC^{0, \alpha}(\normal[M]{X}) \quad \text{and} \\
    \bar{B}_{2, \alpha} \colon \CC^{2, \alpha}(U) \oplus \CC^{2, \alpha}(V) \oplus \CC^{2, \alpha}(V^\prime) &\to \CC^{1, \alpha}(\normal[W]{\partial X})
  \end{align*}
  of class~$\CC^1$ by \propositionref{prop:c-ell}. Since the linearisation of the boundary problem~\eqref{eq:boundary-second} at the $0$\dash-section is an elliptic boundary problem (\lemmaref{lemma:boundary-second-elliptic}), also
  \begin{equation*}
    \begin{split}
      \CC^{2, \alpha}(\normal[M]{X}) &\to \CC^{0, \alpha}(\normal[M]{X}) \oplus \CC^{2, \alpha}(K) \oplus \CC^{1, \alpha}(\normal[W]{\partial X}) \, \text{,} \\
      s &\mapsto ((\D \bar{G}_{2, \alpha})_{(0, 0, 0)}(s, 0, 0), \pi_K(s \vert_{\partial X}), (\D \bar{B}_{2, \alpha})_{(0, 0, 0)}(s, 0, 0))
    \end{split}
  \end{equation*}
  is bijective. So the Implicit Function Theorem implies that there is a $\CC^1$\dash-map
  \begin{equation*}
    L \colon \CC^{2, \alpha}(V) \oplus \CC^{2, \alpha}(V^\prime) \to \CC^{2, \alpha}(U)
  \end{equation*}
  such that
  \begin{equation*}
    (\bar{G}_{2, \alpha}(s, \chi, t), \pi_K(s \vert_{\partial X}), \bar{B}_{2, \alpha}(s, \chi, t)) = (0, 0, 0) \quad \text{if and only if} \quad s = L(\chi, t)
  \end{equation*}
  for all $(s, \chi, t) \in \CC^{2, \alpha}(U) \oplus \CC^{2, \alpha}(V) \oplus \CC^{2, \alpha}(V^\prime)$ that are $\CC^{2, \alpha}$\dash-close to~$(0, 0, 0)$. Note that $s$ is smooth if both $\chi$ and $t$ are smooth by Elliptic Regularity.
\end{proof}

\section{Examples}
\label{sec:examples}

In this section we present and discuss some examples for the deformation theory of compact Cayley submanifolds with boundary. We first show versions of the volume minimising property for this class of submanifolds in \sectionref{subsec:volume-minimising}. In \sectionref{subsec:example-holonomy-su2-su2} we discuss an example with a smooth $k$\dash-dimensional moduli space (for $0 \le k \le 4$) inside a manifold with holonomy $\SU(2) \times \SU(2)$. Then we construct a rigid Cayley submanifold of a manifold with holonomy $\Spin(7)$ in \sectionref{subsec:examples-bryant-salamon}. We further relate the deformation theory of compact Cayley submanifolds with boundary to the deformation theories of compact special Lagrangian, coassociative, and associative submanifolds with boundary in \hyperref[subsec:special-lagrangian-boundary]{Sections~\ref*{subsec:special-lagrangian-boundary}}--\ref{subsec:associative-boundary}. Throughout this section, any $\Spin(7)$\dash-structure will be assumed to be torsion-free.

\subsection{Volume Minimising Property}
\label{subsec:volume-minimising}

Harvey and Lawson \cite{HL82} showed that closed calibrated submanifolds are volume-minimising in their homology class. This applies, in particular, to Cayley submanifolds of a $\Spin(7)$\dash-manifold. Note that the condition that the $\Spin(7)$\dash-structure is torsion-free is crucial for this. Gayet \cite{Gay14} proved that a compact associative submanifold of a $G_2$\dash-manifold with boundary in a coassociative submanifold is volume-minimising in its relative homology class. Here we show versions of the volume minimising property for Cayley submanifolds with boundary. There are different versions for different dimensions of the scaffold~$W$: \propositionref{prop:minimal-boundary-dim-4} for $\dim W = 4$, \propositionref{prop:minimal-boundary-dim-5} for $\dim W = 5$, and \propositionref{prop:minimal-boundary-dim-6} for $\dim W = 6$.

With the same arguments as in \cite{Gay14}, one can prove the following proposition.

\begin{proposition} \label{prop:minimal-boundary-dim-4}
  Let $M$ be an $8$\dash-manifold with a torsion-free $\Spin(7)$\dash-structure~$\Phi$, let $X$ be a compact Cayley submanifold of~$M$ with boundary, let $W$ be a $4$\dash-dimensional submanifold of~$M$ with $\partial X \subseteq W$ such that $\Phi \vert_W = 0$ (which implies that $X$ and $W$ meet orthogonally), and let $Y$ be a compact, oriented $4$\dash-dimensional submanifold of~$M$ with $\partial Y \subseteq W$ which lies in the same relative homology class as $X$ in $H_4(M, W)$.
  
  Then the volume of~$Y$ is greater than or equal to the volume of~$X$, and equality holds if and only if $Y$ is Cayley.
\end{proposition}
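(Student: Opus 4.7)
The plan is to carry out the standard calibration/Stokes argument, adapted to the relative setting exactly as Gayet does in~\cite{Gay14} for associative submanifolds with coassociative boundary. The two ingredients are (i)~$\Phi$ is a calibration on~$M$, and (ii)~$X$ and $Y$ are homologous rel.\ a $4$\dash-submanifold $W$ on which $\Phi$ vanishes.

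First I would observe that because $\Phi$ is torsion-free we have $\D \Phi = 0$, and combined with the pointwise comparison $\Phi_x \vert_V \le \vol_V$ recorded in \sectionref{sec:preliminaries} this makes $\Phi$ a calibration on~$M$. Consequently
\begin{equation*}
  \int_X \Phi = \vol(X) \qquad \text{and} \qquad \int_Y \Phi \le \vol(Y) \, \text{,}
\end{equation*}
with equality in the second inequality if and only if $\Phi \vert_{T_y Y} = \vol_{T_y Y}$ pointwise, that is, if and only if $Y$ is Cayley (for some orientation, which must then agree with the given one).

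Next I would exploit the relative homology hypothesis: since $[X] = [Y] \in H_4(M, W)$ one can choose a (smooth singular or integral) $5$\dash-chain~$Z$ in~$M$ and a $4$\dash-chain~$C$ in~$W$ with $\partial Z = Y - X + C$. Applying Stokes' theorem yields
\begin{equation*}
  \int_Y \Phi - \int_X \Phi = \int_Z \D \Phi - \int_C \Phi = 0 \, \text{,}
\end{equation*}
where the first term vanishes by $\D \Phi = 0$ and the second by the hypothesis $\Phi \vert_W = 0$. Combining the two displays gives $\vol(X) \le \vol(Y)$, and equality forces $\int_Y \Phi = \vol(Y)$, which as noted is the Cayley condition on~$Y$.

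The only delicate step is this chain-level Stokes computation: one needs representatives of the class $[Y] - [X]$ regular enough for $\Phi$ to be integrated over them and for Stokes to apply. The cleanest route is to work in integral currents, where the decomposition $\partial Z = Y - X + C$ with $\supp C \subseteq W$ exists by the very definition of relative homology and Stokes is automatic; alternatively, one smooths a singular chain representative. Beyond this book-keeping I do not expect any real obstacle, as the remaining content is pointwise linear algebra (the calibration comparison $\Phi_x \vert_V \le \vol_V$) together with one use of $\D \Phi = 0$. Note that neither the orthogonality of $X$ and $W$ nor any hypothesis on $\partial Y$ beyond $\partial Y \subseteq W$ is used — only $\Phi \vert_W = 0$, which is what kills the boundary contribution.
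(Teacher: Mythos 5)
Your argument is correct and is essentially identical to the paper's proof: both use the decomposition of $X - Y$ as $\partial S$ plus a chain in $W$, kill the $\partial S$ contribution via $\D\Phi = 0$ and the $W$-chain contribution via $\Phi\vert_W = 0$, then invoke the calibration inequality. The added remark about regularity of chain representatives (smoothing or working with integral currents) is a sensible observation the paper leaves implicit, but it does not change the mathematical content.
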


The condition $\Phi \vert_W = 0$ forces $W$ to have dimension at most~$4$ since every $5$\dash-dimensional subspace of $(\R^8, \Phi_0)$ contains a (unique) Cayley subspace.

\begin{proof}
  There are a $5$\dash-chain~$S$ in~$M$ and a $4$\dash-chain~$Z$ in~$W$ such that $X - Y = \partial S + Z$. So
  \begin{equation*}
    \int_X \Phi - \int_Y \Phi = \int_{\partial S} \Phi + \int_Z \Phi = \int_S \D \Phi + \int_Z \Phi = 0
  \end{equation*}
  by Stokes' Theorem since $\D \Phi = 0$ as $\Phi$ is torsion-free and $\Phi \vert_Z = 0$ as $\Phi \vert_W = 0$ and $Z \subseteq W$. Hence
  \begin{equation*}
    \vol(X) = \int_X \vol_X = \int_X \Phi = \int_Y \Phi \le \int_Y \vol_Y = \vol(Y)
  \end{equation*}
  since $X$ is Cayley, and equality holds if and only if $\Phi \vert_Y = \vol_Y$, that is, if and only if $Y$ is Cayley.
\end{proof}

As a preparation for dimension~$5$, we have the following lemma.

\begin{lemma} \label{lemma:cayley-orthogonal-dim-5}
  Let $M$ be an $8$\dash-manifold with a $\Spin(7)$-structure~$\Phi$, let $X$ be a Cayley submanifold of~$M$ with boundary, let $W$ be an oriented $5$\dash-dimensional submanifold of~$M$ with $\partial X \subseteq W$, and let $n \defeq (\mathord\ast_W (\Phi \vert_W))^\sharp$. Then $X$ and $W$ meet orthogonally if and only if $n \vert_{\partial X} \in \sections(T \partial X)$.
\end{lemma}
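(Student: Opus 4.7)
My plan is to reduce the statement to a pointwise computation of $n$ in a carefully adapted orthonormal basis of $T_p W$. Fix $p \in \partial X$, choose an orthonormal basis $(e_2, e_3, e_4)$ of $T_p \partial X$, and set $u \defeq e_2 \times e_3 \times e_4$; since $X$ is Cayley, $(u, e_2, e_3, e_4)$ is a positive orthonormal basis of $T_p X$ and $u$ is a unit normal of $\partial X$ in $X$. Because $T_p W$ has dimension~$5$ and contains $T_p \partial X$, I can write $T_p W = T_p \partial X \oplus V$ with $V$ a $2$-plane in $(T_p \partial X)^\perp$; I would then pick an orthonormal basis $(f_1, f_2)$ of~$V$ so that $(e_2, e_3, e_4, f_1, f_2)$ induces the given orientation of~$W$, and relabel this basis as $(v_1, \dotsc, v_5)$.

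Next, expanding $\Phi\vert_W$ in the dual coframe and applying the Hodge star on~$W$ (using $\ast_W(v^{i_1 i_2 i_3 i_4}) = (-1)^{k+1} v^k$, where $k$ is the omitted index), I obtain
\begin{equation*}
  n = \sum_{k = 1}^{5} (-1)^{k+1} \Phi(v_1, \dotsc, \hat{v}_k, \dotsc, v_5)\, v_k \, \text{.}
\end{equation*}
Using $\Phi(a, b, c, d) = g(a \times b \times c, d)$ together with $u = e_2 \times e_3 \times e_4$, the coefficients of $f_1$ and $f_2$ in this expansion become $-g(u, f_2)$ and $g(u, f_1)$ respectively, so the $V$-component of $n$ at~$p$ is
\begin{equation*}
  - g(u, f_2)\, f_1 + g(u, f_1)\, f_2 \, \text{.}
\end{equation*}

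This vanishes if and only if $u \perp V$. Since $u$ is automatically orthogonal to $T_p \partial X$, the condition $u \perp V$ is equivalent to $u \perp T_p W$, which by definition means that $X$ and $W$ meet orthogonally at~$p$. Letting $p$ vary over~$\partial X$ then proves both directions of the equivalence. The only mildly delicate point I anticipate is the sign bookkeeping for $\ast_W$ in the middle step; everything else is linear algebra with the cross product.
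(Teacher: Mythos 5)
Your proof is correct and takes a genuinely different route from the paper. The paper argues indirectly: it invokes the fact that the orthogonal complement of $n_x$ in $T_x W$ is a Cayley $4$\dash-plane, uses this to show that the $2$\dash-plane $Z \subset T_xW$ (orthogonal to $a,b,n_x$) is invariant under the orthogonal map $v \mapsto a \times b \times v$, and then traces through both implications via this invariance. You instead expand $n$ directly in an adapted positive orthonormal basis $(e_2,e_3,e_4,f_1,f_2)$ of $T_pW$ via $\ast_W$, identify the $V$\dash-component of $n$ as (up to sign) $g(u,f_2)\,f_1 - g(u,f_1)\,f_2$, and observe that this vanishes exactly when $u \perp V$, i.e. $u \perp T_pW$. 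Your approach is more elementary and transparent: it avoids the (true but nonobvious) classification fact that every $5$\dash-plane carries a distinguished Cayley hyperplane orthogonal to $n$, and replaces it with a one-line pointwise linear-algebra computation. The only wrinkle is that with the paper's conventions the correct identity is $\Phi(a,b,c,d) = g(a,\,b\times c\times d)$, so $g(a \times b \times c,\, d) = -\Phi(a,b,c,d)$; your expression for the $V$\dash-component therefore acquires an overall sign. This has no effect on the argument, since the equivalence ``$V$\dash-component of $n$ vanishes $\iff g(u,f_1) = g(u,f_2) = 0 \iff u \perp T_pW$'' is sign-independent.
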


\begin{proof}
  Let $u \in \sections(\normal[X]{\partial X})$ be the inward-pointing unit normal vector field of~$\partial X$ in~$X$, let $x \in \partial X$, let $a, b \in T_x \partial X$ be orthonormal and orthogonal to~$n_x$, let $V$ be the subspace of~$T_x M$ consisting of all vectors that are orthogonal to~$a$ and~$b$, and let $Z$ be the subspace of $V \cap T_x W$ consisting of all vectors that are orthogonal to~$n_x$ (note that $Z$ is $2$\dash-dimensional). Then $V \to V$, $v \mapsto a \times b \times v$ is orthogonal. Furthermore, $Z$~is invariant under this map since the orthogonal complement of~$n_x$ in $T_x W$ is a Cayley subspace. Hence $u_x \in Z^\perp$ if and only if $a \times b \times u_x \in Z^\perp$.
  
  Now suppose that $u_x$ is orthogonal to $T_x W$. Then $u_x \in Z^\perp$ since $Z \subseteq T_x W$. So $a \times b \times u_x \in Z^\perp$. But $a \times b \times u_x \in T_x \partial X \subseteq T_x W$ since $X$ is Cayley. Hence $a \times b \times u_x = \pm n_x$ since $a \times b \times u_x \perp a, b$. So $n_x \in T_x \partial X$.
  
  Conversely, suppose that $n_x \in T_x \partial X$. Then $n_x = \pm a \times b \times u_x$ since $X$ is Cayley. So $u_x \in Z^\perp$ since $n_x \in Z^\perp$. But also $u_x \perp n_x$ since $u_x = \mp a \times b \times n_x$. Hence $u_x$ is orthogonal to $T_x W$ since $u_x \perp a, b$.
\end{proof}

\begin{proposition} \label{prop:minimal-boundary-dim-5}
  Let $M$ be an $8$\dash-manifold with a torsion-free $\Spin(7)$\dash-structure~$\Phi$, let $X$ be a compact Cayley submanifold of~$M$ with boundary, let $W$ be an oriented $5$\dash-dimensional submanifold of~$M$ with $\partial X \subseteq W$ such that $X$ and $W$ meet orthogonally, and let $n \defeq (\mathord\ast_W (\Phi \vert_W))^\sharp$ (note that $n \vert_{\partial X} \in \sections(T \partial X)$ by \lemmaref{lemma:cayley-orthogonal-dim-5}). Suppose that $H^4(W) = 0$ and that $n$ is parallel with respect to the Levi-Civita connection of~$W$. Furthermore, let $Y$ be a compact, oriented $4$\dash-dimensional submanifold of~$M$ with $\partial Y \subseteq W$ which lies in the same relative homology class as $X$ in $H_4(M, W)$ such that $\partial Y$ is a local deformation of~$\partial X$ with $n \vert_{\partial Y} \in \sections(T \partial Y)$.
  
  Then the volume of~$Y$ is greater than or equal to the volume of~$X$, and equality holds if and only if $Y$ is Cayley.
\end{proposition}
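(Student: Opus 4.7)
The plan is to adapt the calibration argument of \propositionref{prop:minimal-boundary-dim-4} to this setting, where $\Phi \vert_W$ no longer vanishes. Writing $X - Y = \partial S + Z$ for some $5$\dash-chain~$S$ in~$M$ and $4$\dash-chain~$Z$ in~$W$, and using $\D \Phi = 0$, Stokes' Theorem gives $\int_X \Phi - \int_Y \Phi = \int_Z \Phi \vert_W$. Combined with the pointwise inequality $\Phi \vert_Y \le \vol_Y$ (with equality if and only if $Y$ is Cayley), it suffices to prove $\int_Z \Phi \vert_W = 0$, which will then yield $\vol(X) = \int_X \Phi = \int_Y \Phi \le \vol(Y)$ with the correct equality case.

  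The key observation is that $n \interior \Phi \vert_W = 0$: indeed, since $\Phi \vert_W = \mathord\ast_W n^\flat$, one has $n \interior \Phi \vert_W = \mathord\ast_W(n^\flat \wedge n^\flat) = 0$. Moreover $\D(\Phi \vert_W) = 0$, so the hypothesis $H^4(W) = 0$ gives $\Phi \vert_W = \D \beta$ for some $\beta \in \forms^3(W)$; consequently $\int_{Z^\prime} \Phi \vert_W = \int_{\partial Z^\prime} \beta$ depends only on~$\partial Z^\prime$ for any $4$\dash-chain~$Z^\prime$ in~$W$. It therefore suffices to exhibit a single such~$Z^\prime$ with $\partial Z^\prime = \partial X - \partial Y$ for which $\int_{Z^\prime} \Phi \vert_W = 0$.

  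I will construct~$Z^\prime$ as the $4$\dash-chain swept out by a homotopy through $n$\dash-invariant slices, using the parallelism of~$n$. The flow~$\phi_s^n$ of~$n$ consists of isometries of~$W$, and since $n$ is tangent to~$\partial X$ by \lemmaref{lemma:cayley-orthogonal-dim-5} and to~$\partial Y$ by hypothesis, $\phi_s^n$ preserves both boundary components. Writing $\partial Y = \exp_{\partial X}(v)$ for a small section $v \in \sections(\normal[W]{\partial X})$ (possible because $\partial Y$ is a local deformation of~$\partial X$), the equivariance of the exponential map under the isometry~$\phi_s^n$, together with uniqueness of the normal\dash-exponential parametrisation, forces $v$ to be $\phi_s^n$-equivariant. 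Hence each intermediate slice $\partial X_t \defeq \exp_{\partial X}(t v)$ for $t \in [0, 1]$ is $\phi_s^n$-invariant and in particular has~$n$ tangent. Taking $Z^\prime$ to be the image of $(p, t) \mapsto \exp_p(t v(p))$ with orientation chosen so that $\partial Z^\prime = \partial X - \partial Y$, the tangent space to~$Z^\prime$ contains~$n$ at every point, and for any local frame $(n, e_1, e_2, e_3)$ of~$T Z^\prime$ one has
  \begin{equation*}
    (\Phi \vert_W)(n, e_1, e_2, e_3) = (n \interior \Phi \vert_W)(e_1, e_2, e_3) = 0 \, \text{,}
  \end{equation*}
  so the top\dash-degree form $\Phi \vert_W$ restricts to zero on~$Z^\prime$ and $\int_{Z^\prime} \Phi \vert_W = 0$. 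The main obstacle will be to spell out rigorously the $\phi_s^n$-equivariance of~$v$ and to verify that the slices~$\partial X_t$ remain embedded $3$\dash-submanifolds throughout $t \in [0, 1]$; both should follow from smallness of~$v$ together with the uniqueness properties of the tubular neighbourhood of~$\partial X$ in~$W$.
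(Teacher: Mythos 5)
Your plan follows the same route as the paper's proof: Stokes' Theorem reduces the claim to $\int_Z \Phi = 0$; the hypothesis $H^4(W) = 0$ lets you replace $Z$ by any $4$\dash-chain with the same boundary; and you construct a sweep of $\partial X$ to $\partial Y$ through $n$\dash-tangent slices so that $\Phi \vert_W$ (which kills $n$) restricts to zero. Your implementation differs in small but pleasant ways from the paper's: you make explicit the algebraic identity $n \interior (\Phi \vert_W) = \mathord\ast_W(n^\flat \wedge n^\flat) = 0$ that underlies the vanishing, and where the paper propagates the $n$\dash-tangency of the intermediate slices via parallel transport and the vanishing of $R(\cdot, \cdot) n$, you propose to use the isometry flow $\phi_s^n$ of the parallel (hence Killing) field $n$ and the equivariance of normal exponential coordinates. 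The $\phi_s^n$\dash-equivariance step you defer is indeed the only technical gap, and it can be completed exactly as you suggest: $\phi_s^n$ preserves both $\partial X$ and $\partial Y$ and commutes with the normal exponential, so by the uniqueness of the section parametrising $\partial Y$ in the tubular neighbourhood of $\partial X$ in $W$ one gets $\D\phi_s^n(v(p)) = v(\phi_s^n(p))$, and hence each slice $\exp(tv)(\partial X)$ is $\phi_s^n$\dash-invariant. This is equivalent to (and arguably cleaner than) the paper's parallel-transport computation.
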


Later, we will apply this proposition and \propositionref{prop:minimal-boundary-dim-6} below in the situation that $Y$ is a local deformation of~$X$. In that case, we can restrict $W$ to an open tubular neighbourhood~$W^\prime$ of~$\partial X$ in~$W$. Then $H^4(W^\prime) = 0$ as $\partial X$ is a deformation retract of~$W^\prime$ and $\partial X$ is $3$\dash-dimensional.

\begin{remark}
  The orientability of~$W$ is not necessary but the vector field~$n$ may not be well-defined if $W$ is not orientable. However, all conditions involving~$n$ are essentially local, and locally $W$ is orientable.
\end{remark}

\begin{proof}
  There are a $5$\dash-chain~$S$ in~$M$ and a $4$\dash-chain~$Z$ in~$W$ such that $X - Y = \partial S + Z$. So
  \begin{equation*}
    \int_X \Phi - \int_Y \Phi = \int_{\partial S} \Phi + \int_Z \Phi = \int_S \D \Phi + \int_Z \Phi = \int_Z \Phi
  \end{equation*}
  by Stokes' Theorem since $\D \Phi = 0$ as $\Phi$ is torsion-free. We have $\Phi \vert_W = \D \Psi$ for some $\Psi \in \forms^3(W)$ since $H^4(W) = 0$. Hence if $Z^\prime$ is another $4$\dash-chain in~$W$ such that $\partial Z^\prime = \partial Z = \partial X - \partial Y$, then
  \begin{equation*}
    \int_{Z^\prime} \Phi = \int_{Z^\prime} \D \Psi = \int_{\partial Z^\prime} \Psi = \int_{\partial Z} \Psi = \int_Z \D \Psi = \int_Z \Phi
  \end{equation*}
  by Stokes' Theorem.
  
  Write $\partial Y = (\exp_s)(\partial X)$ for some $s \in \sections(\normal[W]{\partial X})$ with small $\norm{s}_{\CC^0}$, where $\exp$ is the exponential map of~$W$. Let $x \in \partial X$, let $\eps > 0$ be small, and let
  \begin{equation*}
    f \colon [0, 1] \times [0, \eps] \to W \, \text{,} \quad (t, r) \mapsto \exp_{\exp_x(t s_x)}(r n(\exp_x(t s_x))) \, \text{.}
  \end{equation*}
  Note that $f(0, r) \in \partial X$ and $f(1, r) \in \partial Y$ for all $r \in [0, \eps]$ since $n \vert_{\partial X} \in \sections(T \partial X)$, $n \vert_{\partial Y} \in \sections(T \partial Y)$, and $n$ is parallel. Furthermore, $R(a, b) n = 0$ for all $a, b \in \sections(T W)$ since $n$ is parallel, where $R$ denotes the curvature tensor of~$W$. Hence
  \begin{equation*}
    f(t, r) = \exp_{\exp_x(r n_x)}(t \tilde{s}(r))
  \end{equation*}
  for all $t \in [0, 1]$, $r \in [0, \eps]$ since $n$ is parallel, where $\tilde{s}$ is the parallel transport of~$s_x$ along the curve $[0, \eps] \to \partial X$, $r \mapsto f(0, r)$. Note that $\tilde{s}(\eps)$ is normal to~$\partial X$ as a similar argument to above shows that $T_{f(0, \eps)} \partial X$ is given by the parallel transport of $T_x \partial X$ along the curve $[0, \eps] \to \partial X$, $r \mapsto f(0, r)$. Hence $s(f(0, \eps)) = \tilde{s}(\eps)$. This shows that $n(\exp_x(t s_x)) \in T_{\exp_x(t s_x)} (\exp_{t s})(\partial X)$ for all $t \in [0, 1]$. In particular, $\partial Y$ is isotopic to~$\partial X$ through submanifolds such that $n$ is tangent to them, and we may assume that $n$ is tangent to~$Z$. But this implies that $\Phi \vert_Z = 0$.
  
  Hence
  \begin{equation*}
    \vol(X) = \int_X \vol_X = \int_X \Phi = \int_Y \Phi \le \int_Y \vol_Y = \vol(Y)
  \end{equation*}
  since $X$ is Cayley, and equality holds if and only if $\Phi \vert_Y = \vol_Y$, that is, if and only if $Y$ is Cayley.
\end{proof}

As a preparation for dimension~$6$, we have the following two lemmas.

\begin{lemma}
  Let $M$ be an $8$\dash-manifold with a $\Spin(7)$-structure~$\Phi$, let $W$ be an oriented $6$\dash-dimensional submanifold of~$M$, and let $\omega \defeq - \mathord\ast_W (\Phi \vert_W)$. Then
  \begin{equation}
    \tfrac{1}{2} \omega^2 = - \Phi \vert_W \quad \text{and} \quad \tfrac{1}{6} \omega^3 = \vol_W \, \text{.} \label{eq:Phi-omega-dim-6}
  \end{equation}
\end{lemma}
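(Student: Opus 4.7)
The statement is pointwise, so the plan is to reduce to a computation at a single point $x \in W$ in a convenient basis. The key reduction is that $\Spin(7) \subseteq \SO(8)$ acts transitively on oriented orthonormal $2$-frames in $\R^8$: indeed, $\Spin(7)$ acts transitively on $S^7$ with stabiliser $G_2$, and $G_2$ in turn acts transitively on $S^6$ with stabiliser $\SU(3)$. Consequently $\Spin(7)$ acts transitively on oriented $2$-planes, and hence on oriented $6$\dash-dimensional subspaces of $\R^8$ (taking orthogonal complements and matching orientations against the ambient orientation).

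Using this, I would choose an identification $i_x \colon T_x M \to \R^8$ coming from a $\Spin(7)$\dash-frame (as in \eqref{eq:phi-spin7-frame}) with the additional property that $i_x(T_x W) = \lin(e_3, e_4, e_5, e_6, e_7, e_8)$ with its standard orientation. Then $\Phi_x$ takes the form~\eqref{eq:phi-spin7-frame}, and the three monomials in that formula whose indices lie entirely in $\{3, \dotsc, 8\}$ give
\begin{equation*}
  \Phi_x \vert_{T_x W} = -e^{3456} + e^{3478} + e^{5678} \, \text{.}
\end{equation*}

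Next, I would apply the Hodge star $\mathord\ast_W$ (with respect to the induced metric and the fixed orientation on $T_x W$) to each of these three $4$\dash-forms. Each sends $e^{ijkl}$ to $\pm e^{mn}$ where $\{i, j, k, l, m, n\} = \{3, \dotsc, 8\}$, and the signs are straightforward to compute. This yields an explicit expression for $\omega_x = - \mathord\ast_W (\Phi_x \vert_{T_x W})$ as a sum of three monomials of the shape $\pm e^{ij}$ with pairwise disjoint index sets.

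Finally I would compute $\omega_x^2$ and $\omega_x^3$ by direct wedge multiplication. Since $\omega_x$ is a sum of three $2$\dash-forms with disjoint supports, $\omega_x^2$ is a sum of six terms (using $e^{ij} \wedge e^{ij} = 0$) and $\omega_x^3$ is a single term proportional to $e^{345678} = \vol_W$. Comparing with $-\Phi_x \vert_{T_x W}$ reproduces the first identity, and the coefficient $6$ appears as $3!$ from the expansion of $\omega_x^3$, giving the second identity. The only obstacle is elementary sign bookkeeping for the Hodge star and for reorderings of wedge products; no conceptual difficulty arises once the frame has been standardised by the $\Spin(7)$\dash-action.
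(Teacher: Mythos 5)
Your proposal is correct and uses essentially the same method as the paper: choose an adapted $\Spin(7)$-frame at each point of $W$, read off $\Phi\vert_W$ from \eqref{eq:phi-spin7-frame}, apply $\mathord\ast_W$, and compute the wedge powers directly. The paper puts $W = \lin(e_1, \dotsc, e_6)$ rather than $\lin(e_3, \dotsc, e_8)$ and simply asserts the existence of such a frame, whereas you explicitly justify it via transitivity of $\Spin(7)$ on oriented $2$-planes (hence on oriented $6$-planes); this is a worthwhile addition, but does not change the underlying argument.
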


\begin{proof}
  Let $(e_1, \dotsc, e_8)$ be a local $\Spin(7)$\dash-frame such that $(e_1, \dotsc, e_6)$ is a positive frame of~$W$. Then
  \begin{equation*}
    \Phi \vert_W = e^{1234} + e^{1256} - e^{3456}
  \end{equation*}
  by \eqref{eq:phi-spin7-frame}. So
  \begin{equation*}
    \omega = - \mathord\ast_W (\Phi \vert_W) = e^{12} - e^{34} - e^{56} \, \text{.}
  \end{equation*}
  Hence
  \begin{equation*}
    \tfrac{1}{2} \omega^2 = - e^{1234} - e^{1256} + e^{3456} = - \Phi \vert_W
  \end{equation*}
  and
  \begin{equation*}
    \tfrac{1}{6} \omega^3 = e^{123456} = \vol_W \, \text{.} \qedhere
  \end{equation*}
\end{proof}

\begin{lemma} \label{lemma:cayley-orthogonal-lagrangian}
  Let $M$ be an $8$\dash-manifold with a $\Spin(7)$-structure~$\Phi$, let $X$ be a Cayley submanifold of~$M$ with boundary, let $W$ be an oriented $6$\dash-dimensional submanifold of~$M$ with $\partial X \subseteq W$, and let $\omega \defeq - \mathord\ast_W (\Phi \vert_W)$. Then $X$ and $W$ meet orthogonally if and only if $\omega \vert_{\partial X} = 0$.
\end{lemma}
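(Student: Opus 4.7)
The statement is pointwise at each $x \in \partial X$. Let $V_1 \defeq T_x \partial X$ and $V_2 \defeq V_1^\perp \cap T_x W$ (both 3-dimensional, with $T_x W = V_1 \oplus V_2$), and let $u \in T_x X$ be the inward-pointing unit normal of $\partial X$ in $X$, so that $u = \pm(a \times b \times c)$ for any orthonormal basis $(a, b, c)$ of $V_1$ since $T_x X$ is Cayley. The condition that $X$ and $W$ meet orthogonally at $x$ amounts to $u \perp V_2$. The plan is first to translate $\omega \vert_{V_1} = 0$ into a cross-product statement: choosing an orthonormal basis $(d, e, f)$ of $V_2$ with $(a, b, c, d, e, f)$ positively oriented on $T_x W$ and applying the complementary-basis formula for the Hodge star to $\omega = -\mathord\ast_W(\Phi\vert_W)$ gives
\begin{equation*}
  \omega(a, b) = -g(c, d \times e \times f), \quad \omega(a, c) = g(b, d \times e \times f), \quad \omega(b, c) = -g(a, d \times e \times f),
\end{equation*}
so $\omega \vert_{V_1} = 0$ is equivalent to $d \times e \times f \perp V_1$.

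For the forward direction, I would choose a $\Spin(7)$-frame $(e_1, \dotsc, e_8)$ with $(e_1, e_2, e_3) = (a, b, c)$, so that $e_4 = -e_1 \times e_2 \times e_3$ and $u = \pm e_4$. The orthogonality hypothesis $u \perp T_x W$ forces $V_2 \subseteq \lin(e_5, \dotsc, e_8) = \normal[M]{X}\vert_x$, and then the restriction of the $3$-fold cross product to the normal bundle, equation~\eqref{eq:cross-3-restriction}, yields $d \times e \times f \in \normal[M]{X}\vert_x$, which is orthogonal to $V_1 \subseteq T_x X$.

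For the converse, I would keep the same frame and suppose for contradiction that $\omega \vert_{V_1} = 0$ but $V_2 \not\perp e_4$. Then $V_2 \cap \lin(e_5, \dotsc, e_8)$ is exactly 2-dimensional; letting $(e, f)$ be an orthonormal basis of it and $d$ a unit vector in $V_2 \cap \lin(e, f)^\perp$, we have $d = \alpha e_4 + w$ with $w \in \lin(e_5, \dotsc, e_8)$ and $\alpha \neq 0$. Choosing $e_5 \defeq w/\abs{w}$ (or any unit vector in $\lin(e, f)^\perp \cap \normal[M]{X}\vert_x$ when $w = 0$) determines a $\Spin(7)$-frame in which $d = \alpha e_4 + \beta e_5$ and $e, f \in \lin(e_6, e_7, e_8)$. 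Expanding
\begin{equation*}
  d \times e \times f = \alpha (e_4 \times e \times f) + \beta (e_5 \times e \times f),
\end{equation*}
equation~\eqref{eq:cross-3-restriction} shows that $e_5 \times e \times f \in \normal[M]{X}\vert_x$ (three normal factors) while $e_4 \times e \times f \in T_x X$ (one tangent, two normal); projecting the hypothesis $d \times e \times f \perp V_1$ onto $V_1$ and using $\alpha \neq 0$ yields $e_4 \times e \times f \perp V_1$. Combined with $e_4 \times e \times f \perp e_4$ (antisymmetry of $\Phi$) and $e_4 \times e \times f \in T_x X$, this forces $e_4 \times e \times f = 0$, contradicting $\abs{e_4 \times e \times f} = \abs{e_4 \wedge e \wedge f} = 1$. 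The main technical point is the adapted $\Spin(7)$-frame construction in the converse (the rotation of $e_5$ inside $\normal[M]{X}\vert_x$ to align $d$ with $\lin(e_4, e_5)$); once this is in place, the contradiction is immediate from~\eqref{eq:cross-3-restriction}.
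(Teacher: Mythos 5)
Your proof is correct, and it takes a genuinely different route from the paper's, most visibly in the converse direction. The paper's converse is a short direct argument: it invokes the identity $\Phi\vert_W = -\tfrac12\omega\wedge\omega$ from the preceding lemma (equation~\eqref{eq:Phi-omega-dim-6}), writes $g(d,u) = \pm\Phi(d,a,b,c) = \mp\tfrac12(\omega\wedge\omega)(d,a,b,c)$, expands, and sees every term vanish because $\omega(a,b)=\omega(a,c)=\omega(b,c)=0$. You instead reformulate $\omega\vert_{V_1}=0$ as the cross-product condition $d\times e\times f\perp V_1$ via the complementary-basis description of $\mathord\ast_W$, and then run a by-contradiction argument: an adapted $\Spin(7)$-frame splits $d=\alpha e_4+\beta e_5$, the cross-product restrictions~\eqref{eq:cross-3-restriction} separate $\alpha(e_4\times e\times f)$ (tangent) from $\beta(e_5\times e\times f)$ (normal), and then $e_4\times e\times f$ being orthogonal to $V_1$ and to $e_4$ forces it to vanish, contradicting $\abs{e_4\wedge e\wedge f}=1$. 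Your forward direction is also framed via~\eqref{eq:cross-3-restriction} (three normal factors give a normal cross product), whereas the paper simply reads off $\omega=e^{26}+e^{37}+e^{48}$ from the $\Spin(7)$-frame formula~\eqref{eq:phi-spin7-frame}. What your approach buys is logical independence from the preceding lemma and a proof working entirely in terms of the cross-product typing; what it costs is length and a somewhat delicate adapted-frame construction in the converse. The paper's use of the K\"ahler-type identity $\Phi\vert_W=-\tfrac12\omega^2$ is the slicker argument and also makes the ``$\partial X$ is Lagrangian in $(W,\omega)$'' picture transparent, which is the point of the lemma in the surrounding text.

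Two small presentational points, neither a gap. First, when you say ``projecting the hypothesis $d\times e\times f\perp V_1$ onto $V_1$,'' what you mean is pairing with elements of $V_1$ and using that the normal summand $\beta(e_5\times e\times f)$ pairs to zero; worth stating explicitly. Second, the translation $\omega\vert_{V_1}=0\iff d\times e\times f\perp V_1$ implicitly uses that $d\times e\times f$ depends only on the oriented $3$-plane $V_2$ (not on the orthonormal basis), since $g(x,d\times e\times f)=\Phi(x,d,e,f)$ is determined by $d\wedge e\wedge f$; this is what lets you later re-choose $(d,e,f)$ adapted to the frame without affecting the hypothesis.
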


\begin{proof}
  Let $u \in \sections(\normal[X]{\partial X})$ be the interior unit normal vector field of~$\partial X$ in~$X$.
  
  First suppose that $X$ and $W$ meet orthogonally. Then there is a local $\Spin(7)$\dash-frame $(e_1, \dotsc, e_8)$ such that $e_1 = u$, $(e_2, e_3, e_4)$ is a frame of~$\partial X$, and $(e_2, e_6, e_3, e_7, e_4, e_8)$ is a positive frame of~$W$. Then
  \begin{equation*}
    \Phi \vert_W = e^{2367} + e^{2468} + e^{3478}
  \end{equation*}
  by \eqref{eq:phi-spin7-frame}. So
  \begin{equation*}
    \omega = - \mathord\ast_W (\Phi \vert_W) = e^{26} + e^{37} + e^{48} \, \text{.}
  \end{equation*}
  Hence $\omega \vert_{\partial X} = 0$.
  
  Conversely, suppose that $\omega \vert_{\partial X} = 0$. Let $(a, b, c)$ be a local orthonormal frame of~$\partial X$, and let $d \in \sections(T W)$. Then $u = \pm a \times b \times c$ and $\omega(a, b) = \omega(a, c) = \omega(b, c) = 0$. So
  \begin{align*}
    g(d, u) &= \pm g(d, a \times b \times c) = \pm \Phi(d, a, b, c) = \mp \tfrac{1}{2} (\omega \wedge \omega)(d, a, b, c) \\
    &= \mp \tfrac{1}{2} (\omega(d, a) \omega(b, c) - \omega(d, b) \omega(a, c) + \omega(d, c) \omega(a, b)) = 0
  \end{align*}
  by~\eqref{eq:Phi-omega-dim-6}. Hence $X$ and $W$ meet orthogonally.
\end{proof}

\begin{proposition} \label{prop:minimal-boundary-dim-6}
  Let $M$ be an $8$\dash-manifold with a torsion-free $\Spin(7)$\dash-structure~$\Phi$, let $X$ be a compact Cayley submanifold of~$M$ with boundary, let $W$ be an oriented $6$\dash-dimensional submanifold of~$M$ with $\partial X \subseteq W$ such that $X$ and $W$ meet orthogonally, and let $\omega \defeq - \mathord\ast_W (\Phi \vert_W)$. Suppose that $H^4(W) = 0$ and that $(W, \omega)$ is a symplectic manifold (note that $\partial X$ is a Lagrangian submanifold of $(W, \omega)$ by \lemmaref{lemma:cayley-orthogonal-lagrangian}). Furthermore, let $Y$ be a compact, oriented $4$\dash-dimensional submanifold of~$M$ with $\partial Y \subseteq W$ which lies in the same relative homology class as $X$ in $H_4(M, W)$ such that $\partial Y$ is a Lagrangian submanifold of $(W, \omega)$ which is a local deformation of~$\partial X$.
  
  Then the volume of~$Y$ is greater than or equal to the volume of~$X$, and equality holds if and only if $Y$ is Cayley.
\end{proposition}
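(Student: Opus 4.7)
The plan is to follow the template of Propositions~\ref{prop:minimal-boundary-dim-4} and~\ref{prop:minimal-boundary-dim-5}: write $X - Y = \partial S + Z$ for some $5$\dash-chain~$S$ in~$M$ and some $4$\dash-chain~$Z$ in~$W$, and use Stokes' Theorem together with $\D \Phi = 0$ to obtain
\begin{equation*}
  \int_X \Phi - \int_Y \Phi = \int_Z \Phi \, \text{.}
\end{equation*}
The task is then to show that $Z$ can be chosen so that $\Phi \vert_Z = 0$. Since $H^4(W) = 0$, we have $\Phi \vert_W = \D \Psi$ for some $\Psi \in \forms^3(W)$, and the same Stokes argument as in the proof of \propositionref{prop:minimal-boundary-dim-5} shows that $\int_Z \Phi$ depends only on $\partial Z = \partial X - \partial Y$. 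So we are free to replace $Z$ by any $4$\dash-chain in~$W$ with the same boundary.

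The key idea is to construct~$Z$ as the trace of an isotopy of Lagrangian submanifolds from~$\partial X$ to~$\partial Y$. Because $\partial Y$ is a local (i.e., $\CC^1$\dash-small) deformation of~$\partial X$, Weinstein's Lagrangian neighbourhood theorem identifies a tubular neighbourhood of~$\partial X$ in~$(W, \omega)$ symplectomorphically with a neighbourhood of the zero section in~$(T^\ast \partial X, \omega_{\mathrm{can}})$. Under this identification, $\partial Y$ corresponds to the graph of a closed $1$\dash-form~$\alpha$ on~$\partial X$ (closed because $\partial Y$ is Lagrangian). The linear path $\alpha_t \defeq t \alpha$ for $t \in [0, 1]$ gives a smooth isotopy through Lagrangian submanifolds interpolating between $\partial X$ (at $t = 0$) and $\partial Y$ (at $t = 1$). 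Let $Z$ be the image of this isotopy (viewed as the $4$\dash-chain $F_\ast([0, 1] \times \partial X)$ for the parametrising map~$F$), with orientation chosen so that $\partial Z = \partial X - \partial Y$.

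The crucial computation is that $\Phi \vert_Z = 0$. By the lemma preceding \propositionref{prop:minimal-boundary-dim-6}, $\Phi \vert_W = - \tfrac{1}{2} \omega \wedge \omega$, so it suffices to show $(\omega \wedge \omega) \vert_Z = 0$. Writing the trace of the isotopy in coordinates $(t, x) \in [0, 1] \times \partial X$, we decompose
\begin{equation*}
  F^\ast \omega = \D t \wedge \beta_t + \gamma_t \, \text{,}
\end{equation*}
where $\beta_t$ is a $t$\dash-dependent $1$\dash-form on~$\partial X$ and $\gamma_t$ is a $t$\dash-dependent $2$\dash-form on~$\partial X$. The Lagrangian condition on each slice $\set{t} \times \partial X$ forces $\gamma_t = 0$ for every~$t$. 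Therefore $F^\ast \omega = \D t \wedge \beta_t$, and squaring gives $(F^\ast \omega) \wedge (F^\ast \omega) = \D t \wedge \beta_t \wedge \D t \wedge \beta_t = 0$. Hence $\Phi \vert_Z = 0$ and $\int_Z \Phi = 0$. The inequality $\vol(X) = \int_X \Phi = \int_Y \Phi \le \int_Y \vol_Y = \vol(Y)$ then follows as in the previous propositions, with equality precisely when $\Phi \vert_Y = \vol_Y$, i.e.\ when $Y$ is Cayley.

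The main obstacle is the construction of the Lagrangian isotopy: one must verify that ``local deformation'' of~$\partial X$ combined with the Lagrangian condition really does produce a closed $1$\dash-form on~$\partial X$ via Weinstein's theorem, and that the resulting isotopy is smooth and has the correct boundary behaviour. Everything else is essentially formal manipulation with Stokes' Theorem and the algebraic identities $\Phi \vert_W = - \tfrac{1}{2} \omega^2$ and $(\D t \wedge \beta)^2 = 0$.
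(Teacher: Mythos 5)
Your proposal is correct and follows essentially the same route as the paper: both reduce to showing $\int_Z \Phi$ depends only on $\partial Z$, then replace $Z$ with the trace of a Lagrangian isotopy obtained from the Weinstein neighbourhood theorem, and conclude $\Phi \vert_Z = 0$ from $\Phi \vert_W = -\tfrac{1}{2}\omega^2$. The only cosmetic difference is that the paper phrases the vanishing as ``$T_x Z$ contains a Lagrangian subspace of $T_x W$'' while you make this explicit by computing $(F^\ast\omega)^2 = (\D t \wedge \beta_t)^2 = 0$; these are the same observation.
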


Note that $\D \omega = 0$ suffices for $(W, \omega)$ to be a symplectic manifold as $\omega$ is non-degenerate by~\eqref{eq:Phi-omega-dim-6}.

\begin{proof}
  There are a $5$\dash-chain~$S$ in~$M$ and a $4$\dash-chain~$Z$ in~$W$ such that $X - Y = \partial S + Z$. So
  \begin{equation*}
    \int_X \Phi - \int_Y \Phi = \int_{\partial S} \Phi + \int_Z \Phi = \int_S \D \Phi + \int_Z \Phi = \int_Z \Phi
  \end{equation*}
  by Stokes' Theorem since $\D \Phi = 0$ as $\Phi$ is torsion-free. We have $\Phi \vert_W = \D \Psi$ for some $\Psi \in \forms^3(W)$ since $H^4(W) = 0$. Hence if $Z^\prime$ is another $4$\dash-chain in~$W$ such that $\partial Z^\prime = \partial Z = \partial X - \partial Y$, then
  \begin{equation*}
    \int_{Z^\prime} \Phi = \int_{Z^\prime} \D \Psi = \int_{\partial Z^\prime} \Psi = \int_{\partial Z} \Psi = \int_Z \D \Psi = \int_Z \Phi
  \end{equation*}
  by Stokes' Theorem.
  
  Since $(W, \omega)$ is a symplectic manifold and $\partial X$ is a Lagrangian submanifold of $(W, \omega)$, there is an open neighbourhood of~$\partial X$ in~$W$ which is symplectomorphic to an open tubular neighbourhood of the $0$\dash-section in~$T^\ast \partial X$ (Weinstein Tubular Neighbourhood Theorem). Furthermore, local deformations of~$\partial X$ that are Lagrangian submanifolds of $(W, \omega)$ correspond to closed $1$\dash-forms (with small $\CC^0$\dash-norm) under this identification. In particular, $\partial Y$ is isotopic to~$\partial X$ through Lagrangian submanifolds, and we may assume that for every $x \in Z$, the tangent space~$T_x Z$ contains a Lagrangian subspace of~$T_x W$. This implies that $\Phi \vert_Z = 0$ since $\Phi \vert_W = - \frac{1}{2} \omega \wedge \omega$ by~\eqref{eq:Phi-omega-dim-6}.
  
  Hence
  \begin{equation*}
    \vol(X) = \int_X \vol_X = \int_X \Phi = \int_Y \Phi \le \int_Y \vol_Y = \vol(Y)
  \end{equation*}
  since $X$ is Cayley, and equality holds if and only if $\Phi \vert_Y = \vol_Y$, that is, if and only if $Y$ is Cayley.
\end{proof}

\begin{remark}
  There are also versions of the volume minimising property for other calibrations. For special Lagrangian submanifolds, see \lemmaref{lemma:special-Lagrangian-minimal-boundary} in \sectionref{subsec:special-lagrangian-boundary}. A version for coassociative submanifolds follows from \propositionref{prop:minimal-boundary-dim-6} and \propositionref{prop:cayley-coassociative} in \sectionref{subsec:coassociative-boundary}.
\end{remark}

\subsection{Holonomy Contained in SU(2) x SU(2)}
\label{subsec:example-holonomy-su2-su2}

Let $Z_1$ and $Z_2$ be two Riemannian $4$\dash-manifolds with holonomy contained in $\SU(2)$, and let $M \defeq Z_1 \times Z_2$ be their product (endowed with the product metric). Then the holonomy of~$M$ is contained in $\SU(2) \times \SU(2) \subseteq \SU(4) \subseteq \Spin(7)$.

Let $\tilde{X}$ be a compact, connected $4$\dash-dimensional submanifold of~$Z_1$ with non-empty boundary, let $Y$ be a $k$\dash-dimensional submanifold of~$Z_2$ ($0 \le k \le 4$), and let $p \in Y$. Define $X \defeq \tilde{X} \times \set{p}$ and $W \defeq \partial X \times Y$. Then $X$ is a Cayley submanifold of~$M$ since $X$ is a complex surface. Furthermore, $W$ is a $(k + 3)$\dash-dimensional submanifold of~$M$ with $\partial X \subseteq W$ such that $X$ and $W$ meet orthogonally.

\begin{lemma}
  Let $M$, $\tilde{X}$, $X$, $W$, and $k$ be as above, and let $Y$ be a local deformation of~$X$ such that $\partial Y \subseteq W$.
  
  Then $Y$ is a Cayley submanifold of~$M$ such that $Y$ and $W$ meet orthogonally if and only if $Y = \tilde{X} \times \set{q}$ for some $q \in Y$. So the moduli space of all local deformations of~$X$ as a Cayley submanifold of~$M$ with boundary on~$W$ and meeting~$W$ orthogonally is a smooth $k$\dash-dimensional manifold.
\end{lemma}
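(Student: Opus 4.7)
Since $\tilde X$ is codimension $0$ in $Z_1$, the normal bundle $\normal[M]{X}$ is canonically the trivial rank-$4$ bundle $X \times T_p Z_2$, so local deformations of $X$ correspond to maps $\phi \colon \tilde X \to Z_2$ close to the constant $\phi \equiv p$ via the graph $Y_\phi = \{(x, \phi(x)) : x \in \tilde X\}$. Writing $N$ for the $k$-dimensional submanifold of $Z_2$ (denoted $Y$ in the paragraph introducing the lemma), we have $W = \partial \tilde X \times N$, and the condition $\partial Y_\phi \subseteq W$ reads $\phi(\partial \tilde X) \subseteq N$. If $\phi \equiv q$ for some $q \in N$, then $Y_\phi = \tilde X \times \{q\}$ is a complex $4$\dash-submanifold of the K\"ahler $8$\dash-manifold $M$ (since the holonomy lies in $\SU(2) \times \SU(2) \subseteq \U(4)$), hence Cayley; its inward unit normal $(u(x), 0) \in T_x Z_1 \oplus T_q Z_2$ at a boundary point is orthogonal to $T_{(x,q)} W = T_x \partial \tilde X \oplus T_q N$, so $Y_\phi$ and $W$ meet orthogonally.

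\textbf{Reduction to the Zariski tangent space.} By \propositionref{prop:main-boundary}, the moduli space embeds into the solution space of an elliptic boundary problem of index~$0$. The smooth injection $N \to \mathrm{Moduli}$, $q \mapsto \tilde X \times \{q\}$, has differential at $p$ equal to the inclusion $T_p N \hookrightarrow T_X \mathrm{Moduli}$, so it suffices to show $T_X \mathrm{Moduli} = T_p N$ (identified with constant sections of $\normal[M]{X}$ taking values in $T_p N$); the Implicit Function Theorem combined with vanishing index then upgrades this to a local diffeomorphism and yields the smooth $k$\dash-dimensional structure. The Zariski tangent space consists of sections $s = s_N + s_K \in C^\infty(\tilde X, T_p N \oplus K)$, where $K$ is the orthogonal complement of $T_p N$ in $T_p Z_2$, satisfying $D s = 0$, $s_K\vert_{\partial \tilde X} = 0$, and the condition from \lemmaref{lemma:linearisation-B}. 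In the product setting the inward unit normal at $\partial X$ is $u = (u_{\tilde X}, 0)$, the induced connection $\nabla^\perp$ on $\normal[M]{X}$ is the flat product connection, and $\nabla_{s\vert_{\partial X}} u = 0$ because $s\vert_{\partial X}$ is vertical while $u$ is horizontal; hence the orthogonality condition collapses to $\nabla_u s_N\vert_{\partial \tilde X} = 0$.

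\textbf{Weitzenb\"ock and vanishing.} For the twisted Dirac-type operator $D$, the Weitzenb\"ock identity reads $D^\ast D = \nabla^\ast \nabla + \mathcal{R}$ with $\mathcal{R}$ an algebraic curvature endomorphism. In the product hyperk\"ahler setting, $\tilde X \subseteq Z_1$ is Ricci-flat and the twisting data in McLean's identification $\normal[M]{X} \cong \mathbb{S}_- \otimes_\HH F$ is parallel (equivalently, in a flat hyperk\"ahler frame $D$ reduces to the Cauchy--Riemann--Fueter operator and $D^\ast D$ acts as the componentwise scalar Laplacian on $s \colon \tilde X \to T_p Z_2$), so $\mathcal{R} = 0$ and $D s = 0$ gives $\nabla^\ast \nabla s = 0$. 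Integration by parts yields
\begin{equation*}
  \int_{\tilde X} \abs{\nabla s}^2 \vol_{\tilde X} = - \int_{\partial \tilde X} g(\nabla_u s, s) \vol_{\partial \tilde X}.
\end{equation*}
On $\partial \tilde X$, $s = s_N$ by $s_K\vert_\partial = 0$, and since the flat product connection preserves $T_p Z_2 = T_p N \oplus K$ the integrand splits as $g(\nabla_u s_N, s_N) + g(\nabla_u s_K, s_N)$: the first term vanishes by $\nabla_u s_N\vert_{\partial \tilde X} = 0$, the second because $\nabla_u s_K \in K \perp T_p N$. Hence $\nabla s \equiv 0$, so $s$ is constant, and $s_K\vert_{\partial \tilde X} = 0$ forces $s \in T_p N$, completing the identification of $T_X \mathrm{Moduli}$ with $T_p N$. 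The principal obstacle is verifying the vanishing of $\mathcal{R}$; this is most transparent via a direct local calculation in hyperk\"ahler normal coordinates, where the Bochner correction is manifestly zero and circumvents McLean's spinorial identification entirely.
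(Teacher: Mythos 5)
Your proof is correct and takes essentially the same approach as the paper: both establish that the kernel of the linearised boundary problem \eqref{eq:boundary-second} has dimension exactly $k$ by reducing $D^\ast D$ to the componentwise scalar Laplacian on the flat normal bundle and then combining the Dirichlet condition on the $K$-component with the Neumann condition on the $T_p N$-component. The only difference is stylistic: you phrase the reduction via a Weitzenb\"ock identity with vanishing curvature term (acknowledging that verifying this is cleanest by a direct local calculation), whereas the paper writes out $D$ and $D^\ast D$ explicitly as constant-coefficient matrices in a parallel orthonormal frame of $\normal[M]{X}$.
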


\begin{proof}
  The submanifolds $\tilde{X} \times \set{q}$ for $q \in Y$ are clearly Cayley submanifolds of~$M$ with boundary on~$W$ and meeting $W$ orthogonally. So the moduli space contains a smooth $k$\dash-dimensional manifold. We will now show that the dimension of the kernel of the linearisation of the boundary problem~\eqref{eq:boundary-second} at the $0$\dash-section is at most~$k$.
  
  First note that the normal bundle~$\normal[M]{X}$ is flat (with respect to the induced connection~$\nabla^\perp$). Let $(e_5, \dotsc, e_8)$ be a parallel orthonormal frame of~$\normal[M]{X}$ such that $e_5, \dotsc, e_{k + 4} \in \sections(T W \vert_{\partial X})$ and $e_{k + 5}, \dotsc, e_8 \in \sections(\normal[M]{W} \vert_{\partial X})$, and let
  \begin{equation*}
    s = s_1 e_5 + \dotsb + s_4 e_8 \in \sections(\normal[M]{X})
  \end{equation*}
  be in the kernel of the linearisation of~\eqref{eq:boundary-second} at the $0$\dash-section, where $s_1, \dotsc, s_4 \in \CC^\infty(X)$.
  
  Let $(e_1, \dotsc, e_4)$ be a local orthonormal frame of~$X$ such that $(e_1, \dotsc, e_8)$ is a $\Spin(7)$-frame (\wolog~we may assume that $(e_5, \dotsc, e_8)$ is positively oriented). We have $D^\ast D s = 0$ by \eqref{eq:linearisation-G}, where $D \colon \sections(\normal[M]{X}) \to \sections(E)$ is the Dirac operator defined in~\eqref{eq:def-D} and the vector bundle~$E$ of rank~$4$ over~$X$ is defined as in~\eqref{eq:def-E}. Since $\normal[M]{X}$ and $E$ are flat, this means
  \begin{align*}
    &
    \begin{pmatrix}
      \partial_1 &          -  \partial_2 &          -  \partial_3 &          -  \partial_4 \\
      \partial_2 & \phantom{-} \partial_1 & \phantom{-} \partial_4 &          -  \partial_3 \\
      \partial_3 &          -  \partial_4 & \phantom{-} \partial_1 & \phantom{-} \partial_2 \\
      \partial_4 & \phantom{-} \partial_3 &          -  \partial_2 & \phantom{-} \partial_1
    \end{pmatrix}
    \begin{pmatrix}
      \phantom{-} \partial_1 & \phantom{-} \partial_2 & \phantom{-} \partial_3 & \phantom{-} \partial_4 \\
              -  \partial_2 & \phantom{-} \partial_1 &          -  \partial_4 & \phantom{-} \partial_3 \\
              -  \partial_3 & \phantom{-} \partial_4 & \phantom{-} \partial_1 &          -  \partial_2 \\
              -  \partial_4 &          -  \partial_3 & \phantom{-} \partial_2 & \phantom{-} \partial_1
    \end{pmatrix}
    \begin{pmatrix}
      s_1 \\
      s_2 \\
      s_3 \\
      s_4
    \end{pmatrix}
    \\
    &\;\;=
    \begin{pmatrix}
      \Delta & 0      & 0      & 0      \\
      0      & \Delta & 0      & 0      \\
      0      & 0      & \Delta & 0      \\
      0      & 0      & 0      & \Delta
    \end{pmatrix}
    \begin{pmatrix}
      s_1 \\
      s_2 \\
      s_3 \\
      s_4
    \end{pmatrix}
    = 0
  \end{align*}
  in~$X$ by \eqref{eq:def-D} and \eqref{eq:cross-product-table} since $\nabla \Phi = 0$, where we used $(e_1 \times e_5, \dotsc, e_1 \times e_8)$ as a frame of~$E$. So $s_1, \dotsc, s_4$ are harmonic functions.
  
  We further have $\pi_K(s \vert_{\partial X}) = 0$, where $\pi_K$ is defined in~\eqref{eq:def-pi-K}. So $s_{k + 1}, \dotsc, \allowbreak s_4 = 0$ on~$\partial X$. Hence $s_{k + 1} = \dotsb = s_4 = 0$ in~$X$ since the solution of the Laplace equation with Dirichlet boundary condition is unique.
  
  Furthermore, $\pi_\nu(\nabla_u s \vert_{\partial X} - \nabla_{\pi_\nu(s \vert_{\partial X})} u) = 0$ by~\eqref{eq:linearisation-B} since $\pi_K(s \vert_{\partial X}) = 0$, where $\pi_\nu$ is defined in~\eqref{eq:def-pi-nu} and $u \in \sections(\normal[X]{\partial X})$ is the inward-pointing unit normal vector field of~$\partial X$ in~$X$. Note that $\pi_\nu(\nabla_{\pi_\nu(s \vert_{\partial X})} u) = 0$ since $M$ is endowed with the product metric and $u \in \sections(T Z_1 \vert_{\partial X})$, $\pi_\nu(s \vert_{\partial X}) \in \sections(T Z_2 \vert_{\partial X})$. So $\pi_\nu(\nabla_u s \vert_{\partial X}) = 0$. Hence $\partial_u s_1 = \dotsb = \partial_u s_k = 0$ on~$\partial X$, where $\partial_u$ is the (interior) normal derivative. So $s_1, \dotsc, s_k$ satisfy the Laplace equation with Neumann boundary condition, and hence are constant. Therefore, the dimension of the kernel of the linearisation of~\eqref{eq:boundary-second} at the $0$\dash-section is at most~$k$.
\end{proof}

\subsection{Bryant--Salamon Construction}
\label{subsec:examples-bryant-salamon}

Bryant and Salamon \cite[Theorem~2 of Section~4]{BS89} constructed a $\Spin(7)$-structure~$\Phi$ on the total space of the spin bundle~$\mathbb{S}_-$ over~$S^4$ (with the round metric) such that the resulting manifold is a complete Riemannian manifold with holonomy equal to~$\Spin(7)$ (in particular, $\nabla \Phi = 0$). As noted in \cite[Section~6]{McL98}, the $0$\dash-section of~$\mathbb{S}_-$ is a (rigid) closed Cayley submanifold of~$\mathbb{S}_-$ with respect to~$\Phi$. The metric has the form
\begin{equation*}
  g = f_s(r) \pi^\ast g_s + f_\nu(r) g_\nu \, \text{,}
\end{equation*}
where $g_s$ is the round metric on~$S^4$, $g_\nu$ is the flat metric on the fibres of $\mathbb{S}_-$ induced by~$g_s$, $r$ is its associated norm, $\pi \colon \mathbb{S}_- \to S^4$ is the natural projection, and $f_s, f_\nu \colon [0, \infty) \to \R$ are given by
\begin{equation*}
  f_s(r) = 5 (1 + r^2)^{\frac{3}{5}} \quad \text{and} \quad f_\nu(r) = 4 (1 + r^2)^{- \frac{2}{5}} \, \text{.}
\end{equation*}
The particular choice of $f_s$ and $f_\nu$ will not be important in the following. In particular, one can also use the solution $f_s(r) = - 5 (1 - r^2)^{\frac{3}{5}}$, $f_\nu(r) = 4 (1 - r^2)^{- \frac{2}{5}}$ (case~(i) in~\cite{BS89}), which is only defined on the open subset of~$\mathbb{S}_-$ where $r < 1$. The associated metric is not complete.

Let $M \defeq \mathbb{S}_-$, let $X$ be a compact $4$\dash-dimensional submanifold of~$S^4$ with boundary (where we view $S^4$ as the $0$\dash-section of~$\mathbb{S}_-$), and let $W$ be a subbundle of $\mathbb{S}_- \vert_{\partial X}$ of rank~$k$ ($0 \le k \le 4$). Then $X$ is a Cayley submanifold of~$M$ with boundary and $W$ is a $(k + 3)$\dash-dimensional submanifold of~$M$ with $\partial X \subseteq W$ such that $X$ and $W$ meet orthogonally.

\begin{lemma}
  Let $M$, $X$, and $W$ be as above. Then $X$ is rigid as a Cayley submanifold of~$M$ with boundary on~$W$ and meeting $W$ orthogonally.
\end{lemma}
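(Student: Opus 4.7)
By \propositionref{prop:main-proposition-boundary}, it suffices to prove that the kernel of the linearisation of~\eqref{eq:boundary-second} at the zero section is trivial. I plan to establish this via integration by parts, exploiting $\nabla \Phi = 0$ and the positive scalar curvature of the base~$S^4$.

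First I would make the bundle identifications explicit. Since $X$ is a codimension-zero piece of the zero section, $\normal[M]{X} \cong \mathbb{S}_- \vert_X$, and the fibrewise splitting $\mathbb{S}_- \vert_{\partial X} = W_{\partial X} \oplus W_{\partial X}^\perp$ yields $\normal[W]{\partial X} \cong W_{\partial X}$ and $K \cong W_{\partial X}^\perp$; under the Bryant--Salamon identification, the rank-$4$ bundle~$E$ of~\eqref{eq:def-E} corresponds to $\mathbb{S}_+ \vert_X$. Because $\nabla \Phi = 0$, the operator of~\eqref{eq:def-D} reduces to the twisted Dirac-type operator $D s = \sum_{i = 1}^4 e_i \times \nabla^\perp_{e_i} s$.

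Second, for $s$ in the kernel of the linearisation, Green's formula~\eqref{eq:greens-formula} combined with~\eqref{eq:relation-D-P} gives
\begin{equation*}
  0 = \inner{D^\ast D s, s}_{L^2(X)} = \norm{D s}_{L^2(X)}^2 + \inner{s, \nabla_u s + P(s \vert_{\partial X})}_{L^2(\partial X)} \, \text{.}
\end{equation*}
The condition $\pi_K(s \vert_{\partial X}) = 0$ lets me replace $s \vert_{\partial X}$ by $\pi_\nu(s \vert_{\partial X})$ in the boundary integral, and the linearised orthogonality condition $(\D B)_0(s) = 0$ then simplifies to $\pi_\nu(\nabla_u s) = \pi_\nu(\nabla_{\pi_\nu(s \vert_{\partial X})} u)$. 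Using that $W$ is a \emph{linear} (hence fibrewise parallel) subbundle of~$\mathbb{S}_-$ and that $u$ is tangent to~$S^4$, a direct computation of the shape-operator terms and of~$P$ should show that the boundary integral is non-negative, and hence $D s = 0$.

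Finally, to conclude $s \equiv 0$, I would appeal to a Weitzenb\"ock--Lichnerowicz formula: on the round~$S^4$, $D^\ast D = (\nabla^\perp)^\ast \nabla^\perp + \mathcal{R}$ with $\mathcal{R}$ containing the positive scalar-curvature term $\tfrac{R}{4} > 0$. A further integration by parts, together with the same mixed boundary conditions, should yield $\norm{\nabla^\perp s}_{L^2}^2 + \int_X \inner{\mathcal{R} s, s} \vol_X \le 0$, forcing $s \equiv 0$. The main obstacle will be bookkeeping the boundary contributions in both integrations by parts; the decisive geometric inputs are that~$S^4$ is totally geodesic in~$M$ (immediate from the warped-product form of the Bryant--Salamon metric and the $\SO(5)$\dash-symmetry) and that $W$ is a linear subbundle of the fibres of~$\mathbb{S}_-$, so that its second fundamental form in~$M$ respects the splitting $\normal[M]{X}\vert_{\partial X} = W_{\partial X} \oplus W_{\partial X}^\perp$.
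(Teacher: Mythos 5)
Your overall strategy -- reduce to the linearised boundary problem, invoke $\nabla\Phi=0$, integrate by parts, and finish with a Lichnerowicz--Weitzenb\"ock argument on the round~$S^4$ -- is the same as the paper's, and the key geometric input you identify (the total-geodesic / fibrewise-parallel structure forcing $\pi_\nu(\nabla_{\pi_\nu(s\vert_{\partial X})}u)=0$) is also the one the paper uses. The difference is that you propose a two-step integration by parts where the paper needs only one, and your extra step has a genuine gap.

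Concretely: your first IBP via Green's formula~\eqref{eq:greens-formula} together with~\eqref{eq:relation-D-P} produces the boundary term $\inner{s,\,\nabla_u s + P(s\vert_{\partial X})}_{L^2(\partial X)}$. The $\inner{s,\nabla_u s}$ part vanishes (same reason as in the paper: $\pi_K(s\vert_{\partial X})=0$ and $\pi_\nu(\nabla_u s\vert_{\partial X})=0$), but the $\inner{s,P(s\vert_{\partial X})}$ part does not vanish or have a sign for free. With $\nabla\Phi=0$ one has $P s = \sum_{i=2}^4 u\times e_i\times\nabla^\perp_{e_i}s$, a boundary Dirac-type operator; whether $\inner{s,Ps}$ is non-negative on sections valued in $\normal[W]{\partial X}=W_{\text{fibre}}$ depends on how the Clifford action $u\times e_i\times(\cdot)$ interacts with the chosen subbundle $W\subseteq\mathbb{S}_-\vert_{\partial X}$, and for a generic rank-$k$ subbundle ($k\ne 2$) it does not even preserve or swap the splitting $\normal[W]{\partial X}\oplus K$. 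So the step ``hence $Ds=0$'' is not justified. The paper sidesteps $P$ entirely: it applies the Lichnerowicz formula $D^\ast D s = (\nabla^\perp)^\ast\nabla^\perp s + 3s$ first, then integrates the rough Laplacian by parts, obtaining $0 = \norm{\nabla^\perp s}^2_{L^2} + \inner{\nabla_u s, s}_{L^2(\partial X)} + 3\norm{s}^2_{L^2}$; the single boundary term $\inner{\nabla_u s, s}$ vanishes pointwise because $\nabla_u s$ has no $\normal[W]{\partial X}$-component and $s$ has no $K$-component. That gives $s\equiv 0$ in one stroke, so your first step is both superfluous and unresolved.

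Two smaller points. First, your assertion that ``$W$ is fibrewise parallel'' is the right intuition but needs to be a statement about the Levi-Civita connection of the Bryant--Salamon metric, not just about linearity of the subbundle; the paper proves the needed fact (Lemma~\ref{lemma:bryant-salamon-connection-0}, $\nabla_s t\vert_X\in\sections(\normal[M]{X})$ for vertical $s,t$) by an explicit Christoffel-symbol computation, and one then uses metric compatibility to deduce $\pi_\nu(\nabla_{\pi_\nu(s\vert_{\partial X})}u)=0$. Second, be careful invoking ``the positive scalar-curvature term $\tfrac{R}{4}>0$'': the twisted Lichnerowicz formula has the form $D^\ast D=(\nabla^\perp)^\ast\nabla^\perp+\tfrac{R}{4}+\mathcal{R}^F$ with a twist-curvature endomorphism $\mathcal{R}^F$ of indefinite sign; what makes the argument work is the verified identity $D^\ast D=(\nabla^\perp)^\ast\nabla^\perp+3$ in this specific geometry, not positivity of $\tfrac{R}{4}$ alone.
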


Note that \corollaryref{cor:existence-cayley-nearby} implies that also for every local deformation $W^\prime$ of~$W$, there exists a (unique) Cayley submanifold of~$M$ near~$X$ with boundary on~$W^\prime$ and meeting $W^\prime$ orthogonally.

\begin{proof}
  We will show that the kernel of the linearisation of the boundary problem~\eqref{eq:boundary-second} at the $0$\dash-section contains only the $0$\dash-section. So let $s \in \sections(\normal[M]{X})$ be in the kernel of the linearisation of~\eqref{eq:boundary-second} at the $0$\dash-section. Then $D^\ast D s = 0$ by \eqref{eq:linearisation-G}, where $D \colon \sections(\normal[M]{X}) \to \sections(E)$ is the Dirac operator defined in~\eqref{eq:def-D} and the vector bundle~$E$ of rank~$4$ over~$X$ is defined as in~\eqref{eq:def-E}. Also $\pi_K(s \vert_{\partial X}) = 0$, where $\pi_K$ is defined in~\eqref{eq:def-pi-K}. Furthermore, $\pi_\nu(\nabla_u s \vert_{\partial X} - \nabla_{\pi_\nu(s \vert_{\partial X})} u) = 0$ by~\eqref{eq:linearisation-B} since $\pi_K(s \vert_{\partial X}) = 0$, where $\pi_\nu$ is defined in~\eqref{eq:def-pi-nu} and $u \in \sections(\normal[X]{\partial X})$ is the inward-pointing unit normal vector field of~$\partial X$ in~$X$. \lemmaref{lemma:bryant-salamon-connection-0} below implies that $\pi_\nu(\nabla_{\pi_\nu(s \vert_{\partial X})} u) = 0$. So $\pi_\nu(\nabla_u s \vert_{\partial X}) = 0$.
  
  We have
  \begin{equation*}
    D^\ast D s = (\nabla^\perp)^\ast \nabla^\perp s + 3 s
  \end{equation*}
  by the Lichnerowicz formula \cite[Theorem~II.8.8]{LM89}, where $\nabla^\perp$ is the induced connection on~$\normal[M]{X}$. Hence
  \begin{align*}
    0 &= \inner{D^\ast D s, s}_{L^2(\normal[M]{X})} \\
    &= \inner{(\nabla^\perp)^\ast \nabla^\perp s, s}_{L^2(\normal[M]{X})} + 3 \norm{s}^2_{L^2(\normal[M]{X})} \\
    &= \norm{\nabla^\perp s}^2_{L^2(T^\ast X \otimes \normal[M]{X})} + \inner{\nabla_u s, s}_{L^2(\normal[M]{X} \vert_{\partial X})} + 3 \norm{s}^2_{L^2(\normal[M]{X})} \\
    &= \norm{\nabla^\perp s}^2_{L^2(T^\ast X \otimes \normal[M]{X})} + 3 \norm{s}^2_{L^2(\normal[M]{X})}
  \end{align*}
  since $\nabla_u s$ and $s$ are pointwise orthogonal on~$\partial X$ as $\pi_\nu(\nabla_u s \vert_{\partial X}) = 0$ and $\pi_K(s \vert_{\partial X}) = 0$. So $s = 0$.
\end{proof}

\begin{lemma} \label{lemma:bryant-salamon-connection-0}
  Let $s, t \in \sections(T M)$ such that $(\D \pi)(s) = (\D \pi)(t) = 0$. Then $\nabla_s t \vert_X \in \sections(\normal[M]{X})$.
\end{lemma}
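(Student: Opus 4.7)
The approach is to exploit the bundle structure: I will show that the fibers of $\pi$ are totally geodesic in $M$, so that vertical-vertical covariant derivatives are vertical everywhere; since along the zero section the vertical subspace coincides with $\normal[M]{X}$, the lemma follows.

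First I would introduce adapted local coordinates near a point of $X$. Pick coordinates $x^i$ on $S^4$ and a local orthonormal frame $\sigma_a$ of $\mathbb{S}_-$, so that a point of $M$ is parametrised by $(x^i, y^a)$ with $y = y^a \sigma_a$; write $r^2 = \delta_{ab} y^a y^b$. Let $E_a = \partial/\partial y^a$ span the vertical distribution and $E_i = \partial/\partial x^i - \Gamma^a_{ib}(x) y^b \, \partial/\partial y^a$ be the horizontal lifts with respect to the spin connection $\nabla^{\mathbb{S}_-}$, whose Christoffel symbols $\Gamma^a_{ib}$ satisfy the antisymmetry $\Gamma^a_{ib} + \Gamma^b_{ia} = 0$ since the fibre inner product is preserved. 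The Bryant--Salamon metric reads
\begin{equation*}
  g = f_s(r) g_{ij}(x) \, dx^i \, dx^j + f_\nu(r) \delta_{ab} \, \eta^a \eta^b,
\end{equation*}
with $\eta^a$ the coframe dual to $E_a$, so the horizontal and vertical distributions are $g$-orthogonal.

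Two elementary computations drive the argument. First, a direct expansion gives $[E_i, E_a] = \Gamma^b_{ia}(x) E_b$ (purely vertical), and in particular $[E_a, E_b] = 0$. Second,
\begin{equation*}
  E_i(r^2) = 2 \delta_{ab} y^a E_i(y^b) = -2 \Gamma^b_{ic}(x) y^c \delta_{ab} y^a
\end{equation*}
vanishes identically by the antisymmetry of $\Gamma$; hence $r$, and with it $g(E_a, E_b) = f_\nu(r) \delta_{ab}$, is constant along horizontal lifts.

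The main step is the Koszul formula. For basis fields $E_a, E_b$ (vertical) and $E_i$ (horizontal), the orthogonality of the distributions kills four of the six Koszul terms, leaving
\begin{equation*}
  2 g(\nabla_{E_a} E_b, E_i) = - E_i \cdot g(E_a, E_b) - g([E_b, E_i], E_a) - g([E_a, E_i], E_b).
\end{equation*}
By the two observations above this collapses to $f_\nu(r) (\Gamma^a_{ib} + \Gamma^b_{ia}) = 0$. Thus $\nabla_{E_a} E_b$ is vertical at every point of $M$, and consequently so is $\nabla_s t = s^a E_a(t^b) E_b + s^a t^b \nabla_{E_a} E_b$ for arbitrary vertical fields $s = s^a E_a$ and $t = t^b E_b$. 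At a point of $X$ we have $y = 0$, so $E_i|_{y=0} = \partial/\partial x^i$ spans $T X$ while $E_a$ spans $\normal[M]{X}$; hence verticality of $\nabla_s t|_X$ is precisely the assertion $\nabla_s t|_X \in \sections(\normal[M]{X})$. There is no real obstacle; the only delicate point is making sure the antisymmetry of $\Gamma$ is invoked both in killing $E_i(r^2)$ and in the Koszul cancellation.
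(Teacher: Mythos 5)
Your proof is correct, and it takes a genuinely different (and in one respect stronger) route than the paper. The paper works in the raw coordinate frame $(\partial/\partial x_1,\dotsc,\partial/\partial x_8)$, writes out the cross-terms $g(\partial_{x_i},\partial_{x_j})$ for $i$ horizontal and $j$ vertical explicitly, and then computes the Christoffel symbols $\Gamma_{ij}^k$ (vertical $i,j$, horizontal $k$) directly, observing that each term vanishes at $r=0$ because the spin connection is metric-compatible. Your argument instead passes to the horizontal-lift frame $E_i$ adapted to the Riemannian submersion, so that the horizontal and vertical distributions are $g$-orthogonal from the outset and most Koszul terms vanish immediately; the remaining two cancellations both come from $\tilde\nabla$-compatibility of $g_\nu$ (once for $E_i(r^2)=0$, once for $\Gamma_{aib}+\Gamma_{bia}=0$). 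The payoff of your framing is that it establishes the stronger statement that $\nabla_{E_a}E_b$ is vertical at \emph{every} point, i.e.\ the fibres of $\mathbb{S}_-$ are totally geodesic for the Bryant--Salamon metric, whereas the paper only proves (and only needs) the vanishing at $r=0$. Both proofs rest on the same essential input — metric compatibility of the spin connection — so they are equally elementary, but yours is more structural and yields a global fact for free.
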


\begin{proof}
  Let $(x_1, \dotsc, x_4)$ be coordinates on~$S^4$, let $(x_5, \dotsc, x_8)$ be linear coordinates on the fibres of~$\mathbb{S}_-$, and let $\tilde{\nabla}$ be the spin connection on~$\mathbb{S}_-$. Then
  \begin{equation*}
    g\Bigl(\tfrac{\partial}{\partial x_i}, \tfrac{\partial}{\partial x_j}\Bigr) = f_\nu(r) \, g_\nu\Bigl(\tfrac{\partial}{\partial x_i}, \tfrac{\partial}{\partial x_j}\Bigr)
  \end{equation*}
  for $i, j = 5, \dotsc, 8$ and
  \begin{equation*}
    g\Bigl(\tfrac{\partial}{\partial x_i}, \tfrac{\partial}{\partial x_j}\Bigr) = f_\nu(r) \sum_{k = 5}^8 x_k \, g_\nu\Bigl(\tilde{\nabla}_{\!\frac{\partial}{\partial x_i}} \tfrac{\partial}{\partial x_k}, \tfrac{\partial}{\partial x_j}\Bigr)
  \end{equation*}
  for $i = 1, \dotsc, 4$ and $j = 5, \dotsc, 8$. Let
  \begin{equation*}
    \nabla_{\!\frac{\partial}{\partial x_i}} \tfrac{\partial}{\partial x_j} = \sum_{k = 1}^8 \Gamma_{i j}^k \tfrac{\partial}{\partial x_k} \quad \text{and} \quad g_{i j} = g\Bigl(\tfrac{\partial}{\partial x_i}, \tfrac{\partial}{\partial x_j}\Bigr)
  \end{equation*}
  for $i, j = 1, \dotsc, 8$. Then $\Gamma_{i j}^k = \tfrac{1}{2} (\partial_i g_{j k} + \partial_j g_{i k} - \partial_k g_{i j})$ for $i, j , k = 1, \dotsc, 8$. Hence
  \begin{align*}
    \Gamma_{i j}^k &= \frac{1}{2} f_\nu(r) \biggl(g_\nu\Bigl(\tilde{\nabla}_{\!\frac{\partial}{\partial x_k}} \tfrac{\partial}{\partial x_i}, \tfrac{\partial}{\partial x_j}\Bigr) + g_\nu\Bigl(\tilde{\nabla}_{\!\frac{\partial}{\partial x_k}} \tfrac{\partial}{\partial x_j}, \tfrac{\partial}{\partial x_i}\Bigr) - \partial_k \Bigl(g_\nu\Bigl(\tfrac{\partial}{\partial x_i}, \tfrac{\partial}{\partial x_j}\Bigr)\Bigr)\biggr) \\
    &\phantom{{}={}} {}+ \frac{1}{2} (\partial_i f_\nu)(r) \sum_{\ell = 5}^8 x_\ell \, g_\nu\Bigl(\tilde{\nabla}_{\!\frac{\partial}{\partial x_k}} \tfrac{\partial}{\partial x_\ell}, \tfrac{\partial}{\partial x_j}\Bigr) \\
    &\phantom{{}={}} {}+ \frac{1}{2} (\partial_j f_\nu)(r) \sum_{\ell = 5}^8 x_\ell \, g_\nu\Bigl(\tilde{\nabla}_{\!\frac{\partial}{\partial x_k}} \tfrac{\partial}{\partial x_\ell}, \tfrac{\partial}{\partial x_i}\Bigr)
  \end{align*}
  for $i, j = 5, \dotsc, 8$ and $k = 1, \dotsc, 4$. So $\Gamma_{i j}^k = 0$ for $i, j = 5, \dotsc, 8$ and $k = 1, \dotsc, 4$ at $r = 0$ since $g_\nu$ is compatible with~$\tilde{\nabla}$.
\end{proof}

\subsection{Cayley Deformations of Special Lagrangian Submanifolds}
\label{subsec:special-lagrangian-boundary}

Let $M$ be a Calabi--Yau $4$\dash-fold with Kähler form~$\omega$ and holomorphic volume form~$\Omega$, which we assume to be \emph{normalised}, that is, $\omega^4 = \frac{3}{2} \Omega \wedge \bar{\Omega}$. Then
\begin{equation*}
  \Phi \defeq - \frac{1}{2} \, \omega \wedge \omega + \Re \Omega
\end{equation*}
defines a $\Spin(7)$\dash-structure on~$M$ \cite[Proposition~1.32 in Chapter~IV]{HL82}. This $\Spin(7)$\dash-structure is torsion-free since $\omega$ and $\Omega$ are closed.

An orientable $4$\dash-dimensional submanifold~$X$ of~$M$ is called \emph{special Lagrangian} if $(\Re \Omega) \vert_X = \vol_X$ for some orientation of~$X$. This is equivalent to $\omega \vert_X = 0$, $(\Im \Omega) \vert_X = 0$ \cite[Corollary~1.11 in Chapter~III]{HL82}. So every special Lagrangian submanifold is Cayley, but not every Cayley submanifold is special Lagrangian (for example, complex $2$\dash-dimensional submanifolds are also Cayley).

\begin{proposition} \label{prop:cayley-special-Lagrangian}
  Let $M$ be a Calabi--Yau $4$\dash-fold, let $X$ be a compact special Lagrangian submanifold of~$M$ with boundary, let $W$ be a complex $3$\dash-dimensional submanifold of~$M$ such that $\partial X \subseteq W$ (which implies that $X$ and $W$ meet orthogonally by \lemmaref{lemma:lagrangian-complex-orthogonal} below), and let $Y$ be a local deformation of~$X$ with $\partial Y \subseteq W$.
  
  Then $Y$ is a Cayley submanifold of~$M$ such that $Y$ and $W$ meet orthogonally if and only if $Y$ is a special Lagrangian submanifold of~$M$. So the moduli space of all local deformations of~$X$ as a Cayley submanifold of~$M$ with boundary on~$W$ and meeting~$W$ orthogonally can be identified with the moduli space of all local deformations of~$X$ as a special Lagrangian submanifold of~$M$ with boundary on~$W$.
\end{proposition}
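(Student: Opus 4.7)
The plan is to verify the forward implication by a direct calculation, then to reduce the converse to showing that $Y$ is Lagrangian. If $Y$ is special Lagrangian, then $\omega \vert_Y = 0$ and $(\Re \Omega) \vert_Y = \vol_Y$, so $\Phi \vert_Y = - \tfrac{1}{2} \omega^2 \vert_Y + (\Re \Omega) \vert_Y = \vol_Y$, showing $Y$ is Cayley, and orthogonality with the complex submanifold~$W$ follows from \lemmaref{lemma:lagrangian-complex-orthogonal}. Conversely, once $\omega \vert_Y = 0$ is established, the Cayley identity gives $(\Re \Omega) \vert_Y = \vol_Y$, and because $\abs{\Omega \vert_Y} = 1$ on any Lagrangian, $(\Im \Omega) \vert_Y = 0$ follows automatically, so $Y$ is again special Lagrangian.

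The first ingredient is the pointwise fact that $\omega \in \sections(\altforms^2_7 M)$; indeed $\omega$ is (up to scale) the unique $\SU(4) \subseteq \Spin(7)$-invariant $2$-form on $\R^8$. Via the splitting~\eqref{eq:splitting-2-7} and the characterisation~\eqref{eq:def-E} of~$E$, this places $\omega \vert_Y$ (as a form on~$Y$) inside $\altforms^2_- Y$, that is, $\omega \vert_Y$ is anti-self-dual on the Cayley submanifold~$Y$. Consequently $\omega \vert_Y \wedge \omega \vert_Y = - \abs{\omega \vert_Y}^2 \vol_Y$, and the strategy is to show $\int_Y \omega^2 = 0$.

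Next I will prove that $\omega \vert_Y$ vanishes pointwise at every $p \in \partial Y$. Pick an orthonormal frame $(e_1, e_2, e_3, e_4)$ of~$T_p Y$ with $e_1 = u$ the inward unit normal of~$\partial Y$ in~$Y$ and $e_2, e_3, e_4 \in T_p \partial Y \subseteq T_p W$. Orthogonality gives $u \in (T_p W)^\perp$; $J$-invariance of~$T_p W$ implies $J u \in (T_p W)^\perp$, hence $J u \perp e_i$ for $i = 2, 3, 4$, yielding $\omega(u, e_i) = g(J u, e_i) = 0$. Expanding the anti-self-dual form $\omega \vert_{T_p Y}$ in the standard basis $e^{12} - e^{34}, e^{13} + e^{24}, e^{14} - e^{23}$ of $\altforms^2_- T_p^\ast Y$, these three vanishings force all three coefficients to be zero, so $\omega \vert_{T_p Y} = 0$.

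Finally, I will combine Stokes with a Weinstein tubular neighbourhood of~$\partial X$ in~$W$. As in \sectionref{subsec:volume-minimising}, $Y - X = \partial S + Z$ for some $5$-chain $S \subseteq M$ and $4$-chain $Z \subseteq W$; closedness of~$\omega^2$ and $\omega \vert_X = 0$ then give $\int_Y \omega^2 = \int_Z \omega^2$. Because $\partial X$ is a compact Lagrangian submanifold of the symplectic manifold $(W, \omega \vert_W)$, Weinstein's theorem identifies a neighbourhood of~$\partial X$ in~$W$ with one of the zero section in~$T^\ast \partial X$ carrying $\omega = - \D \lambda$ with $\lambda \vert_{\partial X} = 0$. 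For $Y$ sufficiently close to~$X$, $Z$ lies in this neighbourhood, and $\omega^2 = - \D(\lambda \wedge \omega)$ gives
\begin{equation*}
\int_Z \omega^2 = - \int_{\partial Y} \lambda \wedge \omega + \int_{\partial X} \lambda \wedge \omega = 0
\end{equation*}
using $\lambda \vert_{\partial X} = 0$ and the vanishing of~$\omega$ on~$\partial Y$ just proved. Thus $\int_Y \abs{\omega \vert_Y}^2 \vol_Y = 0$, so $\omega \vert_Y = 0$, completing the reduction. The main subtleties are recognising that $\omega \in \altforms^2_7 M$ (which is what pins down anti-self-duality on every Cayley deformation) and arranging the Weinstein model so that the boundary contribution vanishes; with these in hand, anti-self-duality and Stokes do the rest.
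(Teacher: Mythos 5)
Your proof is correct, but it takes a genuinely different route from the paper's. The paper proves the non-trivial (converse) direction by stacking two volume-minimising propositions: \propositionref{prop:minimal-boundary-dim-6} (which uses the Cayley calibration $\Phi$, Stokes, and Weinstein to conclude $\vol(Y)=\vol(X)$), followed by \lemmaref{lemma:special-Lagrangian-minimal-boundary} (the analogous statement for the calibration $\Re\Omega$), so that $\vol(Y)=\vol(X)$ forces $Y$ to be special Lagrangian. You instead establish the intermediate conclusion ``$Y$ is Lagrangian'' directly, via the pointwise linear-algebra fact that $\omega\in\sections(\altforms^2_7 M)$ (equivalently, $\mathord\ast(\omega\wedge\Phi)=-3\omega$), so that $\omega\vert_Y$ is anti-self-dual on any Cayley $Y$ and hence $\omega^2\vert_Y=-\abs{\omega\vert_Y}^2\vol_Y$ is pointwise non-positive; a single chain/Stokes/Weinstein computation then gives $\int_Y\omega^2=0$, killing $\omega\vert_Y$. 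Both arguments hinge on the same homological input ($Y-X=\partial S+Z$) and the Weinstein neighbourhood of the Lagrangian $\partial X$ in $W$, and on the boundary orthogonality to deduce $\omega\vert_{\partial Y}=0$, but the downstream mechanisms differ: the paper squeezes two calibration inequalities, while you exploit negative-definiteness of $\omega^2$ on Cayley $4$-planes. Your route is more self-contained and isolates why orthogonal Cayley deformations stay Lagrangian; the paper's is more modular, reusing \propositionref{prop:minimal-boundary-dim-6} and \lemmaref{lemma:special-Lagrangian-minimal-boundary} which are needed elsewhere anyway. One small presentational point: the remark that $\omega$ is the unique $\SU(4)$-invariant $2$-form only shows that $\omega$ is a fixed vector of the $\SU(4)$-action, not by itself that it lies in the $\altforms^2_7$ summand; the cleanest justification is the direct computation $\mathord\ast(\omega\wedge\Phi)=\mathord\ast\bigl(-\tfrac{1}{2}\omega^3\bigr)=-3\omega$ (using $\omega\wedge\Omega=0$), which identifies $\omega$ with an eigenvector for the eigenvalue $-3$ characterising $\altforms^2_7$.
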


The moduli space of all local deformations of~$X$ as a special Lagrangian submanifold of~$M$ with boundary on~$W$ is a smooth manifold of dimension~$b_1(X)$. This follows from Butscher's work \cite{But03}, as we will see after the proof. The following lemma is analogous to \propositionref{prop:minimal-boundary-dim-4}.

\begin{lemma} \label{lemma:special-Lagrangian-minimal-boundary}
  Let $M$ be a Calabi--Yau $n$\dash-fold, let $X$ be a compact special Lagrangian submanifold of~$M$ with boundary, let $W$ be a complex $(n - 1)$\dash-dimensional submanifold of~$M$ with $\partial X \subseteq W$, and let $Y$ be an oriented real $n$\dash-dimensional submanifold of~$M$ with $\partial Y \subseteq W$ which lies in the same relative homology class as $X$ in $H_n(M, W)$.
  
  Then the volume of~$Y$ is greater than or equal to the volume of~$X$, and equality holds if and only if $Y$ is special Lagrangian.
\end{lemma}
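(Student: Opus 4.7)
The plan is to follow the same template as the proof of Proposition~\ref{prop:minimal-boundary-dim-4}, using the calibration $\Re\Omega$ in place of $\Phi$ and verifying the analogous vanishing property on the scaffold. Recall that $\Re\Omega$ is a calibration on the Calabi--Yau manifold $M$, i.e. $(\Re\Omega)_x\vert_V \le \vol_V$ for every oriented real $n$-dimensional subspace $V\subseteq T_x M$, with equality exactly on the special Lagrangian planes; and $\D(\Re\Omega)=0$ since $\Omega$ is holomorphic.

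First I would establish the key vanishing $(\Re\Omega)\vert_W = 0$. Since $\Omega$ is a form of type $(n,0)$ on $M$ and $W\subseteq M$ is a complex submanifold of complex dimension $n-1$, the holomorphic tangent bundle $T^{1,0}W$ has complex rank $n-1$, so $\altforms^{n,0}T^\ast W = 0$. Pulling back the type decomposition under the inclusion, $\Omega\vert_W\in\sections(\altforms^{n,0}T^\ast W\otimes\mathbb{C})=0$, and hence $(\Re\Omega)\vert_W=0$.

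Next I would invoke the hypothesis that $[X]=[Y]\in H_n(M,W)$ to produce an $(n+1)$-chain $S$ in $M$ and an $n$-chain $Z$ in $W$ with $X-Y=\partial S + Z$. Applying Stokes' Theorem and using $\D\Re\Omega=0$ together with $(\Re\Omega)\vert_Z=0$ (which follows from the previous step since $Z\subseteq W$), I would conclude
\begin{equation*}
  \int_X \Re\Omega - \int_Y \Re\Omega = \int_S \D\Re\Omega + \int_Z \Re\Omega = 0.
\end{equation*}
Combining with the calibration inequality gives
\begin{equation*}
  \vol(X) = \int_X \vol_X = \int_X \Re\Omega = \int_Y \Re\Omega \le \int_Y \vol_Y = \vol(Y),
\end{equation*}
where the first equality uses that $X$ is special Lagrangian. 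Equality holds if and only if $(\Re\Omega)\vert_Y = \vol_Y$, i.e.\ if and only if $Y$ is special Lagrangian.

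There is no real obstacle here: the only non-trivial input is the type argument showing $(\Re\Omega)\vert_W=0$, which is a direct consequence of $W$ being complex of sub-maximal complex dimension. Once this is in hand, the proof is a verbatim copy of the calibration comparison argument of Proposition~\ref{prop:minimal-boundary-dim-4}.
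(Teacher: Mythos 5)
Your proposal matches the paper's proof exactly: the paper states that the argument is the same as Proposition~\ref{prop:minimal-boundary-dim-4} with $\Re\Omega$ in place of~$\Phi$, using that $(\Re\Omega)\vert_W = 0$ because $W$ is complex. You simply spell out the type-decomposition argument for this vanishing, which the paper leaves implicit.
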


\begin{proof}
  The proof is similar to the proof of \propositionref{prop:minimal-boundary-dim-4} using $\Re \Omega$ instead of $\Phi$, where $\Omega$ is the holomorphic volume form of~$M$. Here $(\Re \Omega) \vert_W = 0$ as $W$ is complex.
\end{proof}

As noted in \cite[Remark after Definition~1]{But03}, we have the following.

\begin{lemma} \label{lemma:lagrangian-complex-orthogonal}
  Let $M$ be a Kähler manifold, let $X$ be a Lagrangian submanifold of~$M$ with boundary, and let $W$ be a complex submanifold of~$M$ with complex codimension~$1$ such that $\partial X \subseteq W$. Then $X$ and $W$ meet orthogonally.
\end{lemma}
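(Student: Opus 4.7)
The plan is to prove orthogonality by decomposing $T W$ along $\partial X$ into two pieces, each of which is visibly orthogonal to the inward unit normal $u \in \sections(\normal[X]{\partial X})$ of $\partial X$ in $X$. Let $n \defeq \dim_\C M$, so that $M$ has real dimension $2n$, the Lagrangian $X$ has real dimension $n$, $\partial X$ has dimension $n - 1$, and $W$ has real dimension $2n - 2$.

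First I would use the complex structure $J$ on $M$ together with the K\"ahler identity $\omega({}\cdot{}, {}\cdot{}) = g(J{}\cdot{}, {}\cdot{})$ to observe that the Lagrangian condition $\omega \vert_X = 0$ is equivalent to $J(T X) \perp T X$, i.e.\ $J(T X) = \normal[M]{X}$. In particular, for any $v \in T \partial X \subseteq T X$, one has $J v \perp u$, since $u \in T X$.

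Next, because $W$ is a complex submanifold, $J(T W) = T W$. Combined with $T \partial X \subseteq T W$, this gives $J(T \partial X) \subseteq T W$. Since $J$ is an isometry and $T \partial X \perp J(T \partial X)$ (the latter sits inside $\normal[M]{X}$ while the former sits inside $T X$), the subspace $T \partial X \oplus J(T \partial X) \subseteq T W$ has dimension $2(n - 1) = 2n - 2$, so it equals $T W$ along $\partial X$ on dimensional grounds.

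Finally, $u \perp T \partial X$ by the definition of $u$, and $u \perp J(T \partial X)$ by the Lagrangian observation of the first step. Hence $u \perp T W$ along $\partial X$, which is precisely the statement that $X$ and $W$ meet orthogonally. I expect no serious obstacle here; the only thing to be careful about is the dimension count, which works precisely because $W$ has complex codimension one so that $J(T \partial X)$ provides exactly enough additional dimensions to fill out $T W$.
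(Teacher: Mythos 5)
Your proof is correct and takes essentially the same approach as the paper: both arguments identify $T W \vert_{\partial X}$ with $T\partial X \oplus J(T\partial X)$ (you by a dimension count, the paper by exhibiting an explicit orthonormal frame $(e_1, \dotsc, e_{n-1}, J e_1, \dotsc, J e_{n-1})$) and then observe that $u$ is orthogonal to both summands.
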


\begin{proof}
  Let $J \colon T M \to T M$ be the complex structure of~$M$, let $u \in \sections(\normal[X]{\partial X})$ be the inward-pointing unit normal vector field of~$\partial X$ in~$X$, and let $(e_1, \dotsc, e_{n - 1})$ be a local orthonormal frame of~$\partial X$ (where $n \defeq \dim_\C M$). Then $J u$, $J e_1$, \dots, $J e_{n - 1}$ are normal to~$X$ since $X$ is Lagrangian and $M$ is Kähler. Moreover, $J e_1$, \dots, $J e_{n - 1}$ are tangent to~$W$ since~$W$ is complex and $\partial X \subseteq W$. Furthermore, $(u, e_1, \dotsc, e_{n - 1}, J u, J e_1, \dotsc, J e_{n - 1})$ is an orthonormal frame of~$T M \vert_{\partial X}$ since $J$ is orthogonal. So $(e_1, \dotsc, e_{n - 1}, J e_1, \dotsc, J e_{n - 1})$ is an orthonormal frame of~$W$. Hence $X$ and $W$ meet orthogonally.
\end{proof}

\begin{proof}[\proofname\ (\propositionref{prop:cayley-special-Lagrangian})]
  If $Y$ is a special Lagrangian submanifold of~$M$, then $Y$ is clearly also a Cayley submanifold of~$M$. Furthermore, $Y$ and $W$ meet orthogonally by \lemmaref{lemma:lagrangian-complex-orthogonal}.
  
  Conversely, suppose that $Y$ is a Cayley submanifold of~$M$ such that $Y$ and $W$ meet orthogonally. Then $\vol(Y) = \vol(X)$ by \propositionref{prop:minimal-boundary-dim-6}. So $Y$ is special Lagrangian by \lemmaref{lemma:special-Lagrangian-minimal-boundary}.
\end{proof}

Butscher proved the following theorem about minimal Lagrangian deformations of compact special Lagrangian submanifolds with boundary.

\begin{theorem}[{\cite[Main Theorem]{But03}}] \label{thm:deformations-special-lagrangian}
  Let $M$ be a Calabi--Yau manifold with Kähler form~$\omega$, let $X$ be a compact special Lagrangian submanifold of~$M$ with boundary, let $u \in \sections(\normal[X]{\partial X})$ be the inward-pointing unit normal vector field of~$\partial X$ in~$X$, and let $W$ be a submanifold of~$M$ with real codimension~$2$ such that
  \begin{compactenum}[(i)]
    \item $(W, \omega \vert_W)$ is a symplectic manifold,
    \item $\partial X \subseteq W$,
    \item $u \in \sections((T W \vert_{\partial X})^\omega)$, and
    \item the bundle $(T W)^\omega$ is trivial.
  \end{compactenum}
  Here $S^\omega$ denotes the symplectic orthogonal complement of a subspace~$S$ of a symplectic vector space~$V$, defined as $S^\omega \defeq \{v \in V \colon \omega(v, s) = 0 \text{ } \forall \, s \in S\}$.
  
  Then the moduli space of all local deformations of~$X$ as a minimal Lagrangian submanifold with boundary on~$W$ is a finite-dimensional manifold which is parametrised over
  \begin{equation*}
    \mathcal{H}^1_N(X) \defeq \set{\eta \in \forms^1(X) \colon \D \eta = 0, \updelta \eta = 0, u \interior \eta \vert_{\partial X} = 0} \, \text{.}
  \end{equation*}
\end{theorem}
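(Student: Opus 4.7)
The plan is to reduce the deformation problem to an elliptic first-order boundary value problem on $1$-forms via the Weinstein Lagrangian neighbourhood theorem, and then apply the implicit function theorem. The key observation is that in a Calabi--Yau manifold, minimality of a Lagrangian submanifold is governed by its Lagrangian angle, so the problem is naturally posed on $1$-forms satisfying both a closedness condition (Lagrangian constraint) and a coclosedness condition (minimality constraint) together with an appropriate boundary condition coming from~$W$.

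First I would invoke the Weinstein Lagrangian neighbourhood theorem to identify a tubular neighbourhood of~$X$ in~$M$ symplectically with a neighbourhood of the zero section in~$T^\ast X$. Under this identification, Lagrangian submanifolds of~$M$ that are $\CC^1$\dash-close to~$X$ correspond bijectively to graphs~$X_\alpha$ of closed $1$-forms $\alpha \in \forms^1(X)$ with small $\CC^1$\dash-norm. Since $X$ is special Lagrangian, the Lagrangian angle $\theta_\alpha$ of~$X_\alpha$ (defined via $\Omega \vert_{X_\alpha} = \E^{\I \theta_\alpha} \vol_{X_\alpha}$) satisfies $\theta_0 = 0$, and $X_\alpha$ is minimal Lagrangian if and only if $\theta_\alpha$ is (locally) constant. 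A direct computation using $(\Im \Omega) \vert_X = 0$ shows that to first order $\theta_\alpha = \updelta \alpha + N(\alpha, \nabla \alpha)$ with $N$ a smooth nonlinearity vanishing to second order at the origin, so the linearised minimality condition on a closed $1$-form $\alpha$ is $\updelta \alpha = 0$.

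Next I would analyse the boundary condition $\partial X_\alpha \subseteq W$. The hypothesis that $(W, \omega \vert_W)$ is symplectic together with $u \in \sections((T W \vert_{\partial X})^\omega)$ implies that $\partial X$ is a Lagrangian submanifold of~$W$ and that $(T W)^\omega \vert_{\partial X}$ is a symplectic rank-$2$ sub-bundle of~$T M \vert_{\partial X}$ spanned by~$u$ and~$J u$ for any $\omega$\dash-compatible complex structure~$J$. Using the assumed global triviality of $(T W)^\omega$, the nonlinear condition $\partial X_\alpha \subseteq W$ can be rewritten as a first-order constraint on $\alpha \vert_{\partial X}$ whose linearisation is $u \interior \alpha \vert_{\partial X} = 0$. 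Combined with $\D \alpha = 0$ and $\updelta \alpha + N = 0$, this yields a nonlinear system whose linearisation at $\alpha = 0$ is the standard absolute boundary value problem for $\D + \updelta$ on $1$-forms, whose kernel is exactly $\mathcal{H}^1_N(X)$ by Hodge theory on manifolds with boundary.

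Finally I would set up the whole system in Hölder spaces~$\CC^{k, \beta}$, verify that the linearisation is Fredholm and essentially self-adjoint (so that the cokernel is also isomorphic to $\mathcal{H}^1_N(X)$), and apply the implicit function theorem followed by an elliptic regularity bootstrap to identify a neighbourhood of~$0$ in the solution space with a smooth manifold modelled on $\mathcal{H}^1_N(X)$. The hard part will be arranging the reduction so that both the involutive Lagrangian constraint $\D \alpha = 0$ and the nonlinear scaffold constraint $\partial X_\alpha \subseteq W$ are simultaneously compatible with a genuinely elliptic first-order system: the triviality of $(T W)^\omega$ is precisely the geometric hypothesis that allows one to parametrise admissible boundary variations by a single scalar function, and a secondary subtlety is to verify that solutions of the reduced minimal Lagrangian problem actually remain Lagrangian rather than merely minimal, which uses the fact that Lagrangianness is preserved by the symplectic deformation class in a Calabi--Yau target.
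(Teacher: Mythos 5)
The paper does not prove this statement: it is quoted verbatim from Butscher's Main Theorem in~\cite{But03} and used as a black box to deduce the corollary that follows, so there is no in-paper argument to compare against. Your outline nevertheless captures the strategy of Butscher's proof correctly, and your derivation of the linearised boundary condition is right: writing the normal deformation as $J\alpha^\sharp$, the two scalar conditions in $J\alpha^\sharp\perp\set{u, J u}$ reduce to the single condition $\alpha(u)=0$, because $g(J\alpha^\sharp, u)=\omega(\alpha^\sharp, u)=0$ holds automatically since $X$ is Lagrangian; the resulting absolute boundary problem for $\D+\updelta$ on $1$-forms has kernel exactly $\mathcal{H}^1_N(X)$.

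Two concerns, both concentrated at what you yourself flag as ``the hard part.'' First, the linearised minimal Lagrangian condition is $\D(\updelta\alpha)=0$, i.e.\ $\updelta\alpha$ constant, rather than $\updelta\alpha=0$; only after imposing $u\interior\alpha\vert_{\partial X}=0$ and integrating by parts does the constant collapse to zero, and the Fredholm setup must be arranged to account for this. Second, the implicit function theorem cannot be applied directly to ``closed $1$-forms subject to a nonlinear boundary constraint''---this is not a Banach chart. Butscher circumvents this by decomposing a closed $1$-form via Hodge theory with boundary as $\D f+\gamma$ with $\gamma\in\mathcal{H}^1_N(X)$, so that the governing equation becomes a second-order scalar Neumann problem for $f$ with $\gamma$ as the free parameter; working out this reduction and fitting the nonlinear scaffold constraint into it is the actual technical content of the theorem, and your sketch stops at identifying the issue. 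Finally, your closing worry that solutions might ``not remain Lagrangian'' is not a genuine one: in the Weinstein chart the parametrisation is by closed $1$-forms from the outset, so Lagrangianness is built in and never relaxed.
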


The space $\mathcal{H}^1_N(X)$ of harmonic $1$\dash-fields on~$X$ with Neumann boundary condition is isomorphic to~$H^1(X; \R)$ \cite[page~927]{CDGM06}. So it has dimension~$b_1(X)$. Butscher stated the theorem for minimal Lagrangian submanifolds but we have the following.

\begin{lemma} \label{lemma:special-lagrangian-deformation}
  Let $M$ be a Calabi--Yau $n$\dash-fold, let $X$ be a compact special Lagrangian submanifold of~$M$ with boundary, let $W$ be a complex $(n - 1)$\dash-dimensional submanifold of~$M$ such that $\partial X \subseteq W$, and let $Y$ be a local deformation of~$X$ with $\partial Y \subseteq W$. Suppose that $Y$ is a minimal Lagrangian submanifold. Then $Y$ is special Lagrangian.
\end{lemma}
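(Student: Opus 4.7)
The plan is to use the Lagrangian angle. For an oriented Lagrangian submanifold $Y$ of a Calabi--Yau $n$\dash-fold there is a smooth function $\E^{\I \theta_Y} \colon Y \to S^1 \subseteq \C$ characterised by $\Omega \vert_Y = \E^{\I \theta_Y} \vol_Y$; a classical result (see, e.g., \cite{HL82}) states that $Y$ is minimal if and only if $\theta_Y$ is locally constant, i.e.\ constant on each connected component. Since $X$ is special Lagrangian, $\E^{\I \theta_X} \equiv 1$; and since $Y$ is a local (hence $\CC^0$\dash-close) deformation of~$X$, the function $\E^{\I \theta_Y}$ is pointwise close to~$1$. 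Combined with the constancy coming from minimality, $\E^{\I \theta_Y}$ equals a single constant $c \in S^1$ close to~$1$ on each connected component of~$Y$.

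Next I would apply Stokes' theorem to the closed $n$\dash-form~$\Omega$. Since $W$ has complex dimension $n - 1$ while $\Omega$ is of type $(n, 0)$, the restriction $\Omega \vert_W$ vanishes identically (there are no non-trivial $(n, 0)$\dash-forms on a complex manifold of complex dimension less than~$n$). Because $Y$ is a local deformation of $X$ with $\partial Y \subseteq W$, the submanifold $Y$ lies in the same relative homology class as $X$ in $H_n(M, W)$, so (componentwise) there exist an $(n + 1)$\dash-chain~$S$ in~$M$ and an $n$\dash-chain~$Z$ in~$W$ with $X - Y = \partial S + Z$. Hence
\begin{equation*}
  \int_X \Omega - \int_Y \Omega = \int_S \D \Omega + \int_Z \Omega \vert_W = 0 \, \text{.}
\end{equation*}

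Combining the two ingredients yields $\vol(X) = c \cdot \vol(Y)$, and since the left-hand side and $\vol(Y)$ are positive real numbers while $\lvert c \rvert = 1$, this forces $c = 1$. Therefore $\Omega \vert_Y = \vol_Y$, which gives simultaneously $(\Re \Omega) \vert_Y = \vol_Y$ and $(\Im \Omega) \vert_Y = 0$; together with $\omega \vert_Y = 0$ (which is automatic since $Y$ is Lagrangian), we conclude that $Y$ is special Lagrangian. There is no substantial obstacle in the argument: the only technical points are to apply the Stokes computation componentwise so that the positivity argument isolates each constant $c$ separately, and to justify the existence of the chains~$S$ and~$Z$ from the fact that $Y$ is a local deformation of~$X$ rel~$W$ (for which one can use the isotopy through nearby submanifolds guaranteed by the tubular neighbourhood of~$X$ adapted to~$W$ constructed in \sectionref{subsec:metric}).
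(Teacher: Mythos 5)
Your proof is correct and rests on the same two ingredients the paper uses: the Harvey--Lawson result that a minimal Lagrangian submanifold is special Lagrangian with a locally constant phase angle, and a Stokes argument exploiting that the holomorphic volume form restricts to zero on the complex $(n-1)$\dash-dimensional scaffold~$W$. The paper packages this as two volume inequalities obtained from \lemmaref{lemma:special-Lagrangian-minimal-boundary} — first applied with $\Re \Omega$ to get $\vol(Y) \ge \vol(X)$, then applied (in an ``analogous'' form) with $\Re(\E^{\I\theta}\Omega)$ to get $\vol(X) \ge \vol(Y)$ — and then invokes the equality case of \lemmaref{lemma:special-Lagrangian-minimal-boundary} to conclude. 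You instead integrate the complex-valued form $\Omega$ once, obtaining $\vol(X) = c \cdot \vol(Y)$ with $\abs{c} = 1$, and deduce $c = 1$ from positivity of the volumes; this consolidates the two-inequality argument into a single computation. One minor remark: the observation that $c$ is close to $1$ because $Y$ is a local deformation of~$X$ is not actually needed — the positivity of $\vol(X)$ and $\vol(Y)$ together with $\abs{c} = 1$ already forces $c = 1$ on each component.
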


\begin{proof}
  We have $\vol(Y) \ge \vol(X)$ by \lemmaref{lemma:special-Lagrangian-minimal-boundary}. Furthermore, $Y$~is special Lagrangian with phase angle~$\theta$ for some~$\theta \in \R$ by \cite[Proposition~2.17 in Chapter~III]{HL82} (if $Y$ is not connected, then $\theta$ can be viewed as a locally constant function). So it is calibrated with respect to $\Re (\E^{\I \theta} \Omega)$, where $\Omega$ is the holomorphic volume form of~$M$. Therefore, $\vol(X) \ge \vol(Y)$ by a lemma analogous to \lemmaref{lemma:special-Lagrangian-minimal-boundary} for special Lagrangian submanifolds with phase angle~$\theta$. So $\vol(Y) = \vol(X)$, and hence $Y$ is special Lagrangian by \lemmaref{lemma:special-Lagrangian-minimal-boundary}.
\end{proof}

\begin{corollary}
  Let $M$ be a Calabi--Yau $n$\dash-fold, let $X$ be a compact special Lagrangian submanifold of~$M$ with boundary, and let $W$ be a complex $(n - 1)$-dimensional submanifold of~$M$ such that $\partial X \subseteq W$.
  
  Then the moduli space of all local deformations of~$X$ as a special Lagrangian submanifold of~$M$ with boundary on~$W$ is a smooth manifold of dimension~$b_1(X)$.
\end{corollary}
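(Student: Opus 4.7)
The plan is to identify the moduli space $\mathcal{M}_{\mathrm{sL}}$ of local special Lagrangian deformations of $X$ (with boundary on $W$) with the moduli space $\mathcal{M}_{\mathrm{mL}}$ of local minimal Lagrangian deformations of $X$ with boundary on $W$, and then to apply \theoremref{thm:deformations-special-lagrangian}. Since every special Lagrangian submanifold is automatically minimal and Lagrangian, the inclusion $\mathcal{M}_{\mathrm{sL}} \subseteq \mathcal{M}_{\mathrm{mL}}$ is immediate; the reverse inclusion is the content of \lemmaref{lemma:special-lagrangian-deformation}, which upgrades any minimal Lagrangian local deformation of $X$ with boundary in the complex submanifold $W$ to a special Lagrangian one. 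Hence $\mathcal{M}_{\mathrm{sL}} = \mathcal{M}_{\mathrm{mL}}$.

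Next I would verify the four hypotheses of \theoremref{thm:deformations-special-lagrangian}. Since $W$ is complex, $\omega \vert_W$ is the induced K\"ahler form, so $(W, \omega \vert_W)$ is symplectic; the containment $\partial X \subseteq W$ is assumed; and by \lemmaref{lemma:lagrangian-complex-orthogonal}, $X$ and $W$ meet orthogonally, so $u$ is Riemannian-orthogonal to $T W \vert_{\partial X}$. Because $T W$ is $J$-invariant, its symplectic orthogonal complement coincides with its Riemannian orthogonal complement, giving $u \in \sections((T W \vert_{\partial X})^\omega)$. For the triviality of $(T W)^\omega$, which is the real rank-$2$ normal bundle of $W$ in $M$, I would shrink $W$ to a suitably small tubular neighbourhood of $\partial X$ in $W$ (permissible since the problem is entirely local near $\partial X$), over which this bundle can be trivialised.

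With the hypotheses in place, \theoremref{thm:deformations-special-lagrangian} asserts that $\mathcal{M}_{\mathrm{mL}}$ is a smooth finite-dimensional manifold parametrised by
\begin{equation*}
  \mathcal{H}^1_N(X) = \set{\eta \in \forms^1(X) \colon \D \eta = 0, \; \updelta \eta = 0, \; u \interior \eta \vert_{\partial X} = 0} \, \text{.}
\end{equation*}
By the Hodge-theoretic isomorphism $\mathcal{H}^1_N(X) \cong H^1(X; \R)$ already recalled in the text, this space has dimension $b_1(X)$. Combined with $\mathcal{M}_{\mathrm{sL}} = \mathcal{M}_{\mathrm{mL}}$, this yields the dimension count.

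The main obstacle will be the triviality hypothesis on $(T W)^\omega$ in Butscher's theorem: globally this normal bundle need not be trivial. I would need either to argue that Butscher's proof only requires a trivialisation in a neighbourhood of $\partial X$ (so we are entitled to shrink $W$ as needed), and to verify that such shrinking does not lose any local deformation of~$X$ — which should be automatic since if $Y$ is $\CC^1$-close to $X$ then $\partial Y$ lies in any prescribed neighbourhood of~$\partial X$ in~$W$.
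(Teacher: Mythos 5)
Your approach matches the paper's almost exactly: identify the special Lagrangian moduli space with Butscher's minimal Lagrangian moduli space via \lemmaref{lemma:special-lagrangian-deformation}, verify the hypotheses of \theoremref{thm:deformations-special-lagrangian} (using \lemmaref{lemma:lagrangian-complex-orthogonal} for the orthogonality and the $J$-invariance of $TW$ to equate the Riemannian and symplectic orthogonal complements), and read off the dimension from $\mathcal{H}^1_N(X) \cong H^1(X;\R)$. Your observation that $(TW|_{\partial X})^\omega = (TW|_{\partial X})^\perp$ because $TW$ is $J$-invariant is correct.

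There is, however, one step in your handling of the triviality of $(TW)^\omega = \normal[M]{W}$ that is not quite complete. You propose to shrink $W$ to a tubular neighbourhood of $\partial X$ and assert that $\normal[M]{W}$ ``can be trivialised'' over this neighbourhood, but this is not automatic: a tubular neighbourhood of $\partial X$ in $W$ retracts onto $\partial X$, so the rank-$2$ bundle $\normal[M]{W}$ is trivial over the shrunken $W$ if and only if its restriction $\normal[M]{W}\vert_{\partial X}$ is trivial, and a rank-$2$ real bundle over a compact manifold need not be trivial in general (nonzero Euler class). The paper supplies the missing ingredient: the proof of \lemmaref{lemma:lagrangian-complex-orthogonal} exhibits $(u, Ju)$ as an explicit orthonormal frame of $\normal[M]{W}\vert_{\partial X}$, so that restriction is in fact trivial, and the shrinking argument then goes through. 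This is a small and easily repaired omission, since the frame you need is already visible in the proof of the lemma you are invoking, but as written the proposal does not justify why the bundle trivialises near $\partial X$.
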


\begin{proof}
  First note that the proof of \lemmaref{lemma:lagrangian-complex-orthogonal} also shows that $(u, J u)$ is an orthonormal frame of~$\normal[M]{W} \vert_{\partial X}$, where $u \in \sections(\normal[X]{\partial X})$ is the inward-pointing unit normal vector field of~$\partial X$ in~$X$. In particular, $u \in \sections((T W \vert_{\partial X})^\omega)$. Furthermore, $\normal[M]{W} \vert_{\partial X}$ is trivial. Since we are only interested in local deformations of~$X$ and $\partial X$ is compact, we therefore may assume that $\normal[M]{W}$ is trivial. So we can apply \theoremref{thm:deformations-special-lagrangian}, which gives the desired result by \lemmaref{lemma:special-lagrangian-deformation}.
\end{proof}

\subsection{Cayley Deformations of Coassociative Submanifolds}
\label{subsec:coassociative-boundary}

Let $\tilde{M}$ be a $7$\dash-manifold with a $G_2$\dash-structure~$\tilde{\varphi}$, let $\tilde{\psi}$ be the Hodge-dual of~$\tilde{\varphi}$, let $M \defeq \R \times \tilde{M}$, and let $t$ denote the coordinate on the $\R$\dash-factor. Then
\begin{equation*}
  \Phi \defeq \D t \wedge \tilde{\varphi} + \tilde{\psi}
\end{equation*}
defines a $\Spin(7)$\dash-structure on~$M$ \cite[Proposition~1.30 in Chapter~IV]{HL82}. The $\Spin(7)$\dash-structure~$\Phi$ is torsion-free if the $G_2$\dash-structure~$\tilde{\varphi}$ is torsion-free.

An orientable $4$\dash-dimensional submanifold~$\tilde{X}$ of~$\tilde{M}$ is called \emph{coassociative} if $\tilde{\psi} \vert_{\tilde{X}} = \vol_{\tilde{X}}$ for some orientation of~$\tilde{X}$. This is equivalent to $\tilde{\varphi} \vert_{\tilde{X}} = 0$ \cite[Corollary~1.20 in Chapter~IV]{HL82}. So $\tilde{X}$ is a coassociative submanifold of~$\tilde{M}$ if and only if $X \defeq \set{0} \times \tilde{X}$ is a Cayley submanifold of~$M$.

\begin{proposition} \label{prop:cayley-coassociative}
  Let $\tilde{M}$ be a $7$\dash-manifold with a torsion-free $G_2$\dash-structure~$\tilde{\varphi}$, let $\tilde{\psi}$ be the Hodge-dual of~$\tilde{\varphi}$, let $\tilde{W}$ be an oriented $6$\dash-dimensional submanifold of~$\tilde{M}$ such that $(\tilde{W}, \mathord\ast_{\tilde{W}}(\tilde{\psi} \vert_{\tilde{W}}))$ is a symplectic manifold, and let $\tilde{X}$ be a compact coassociative submanifold of~$\tilde{M}$ with $\partial \tilde{X} \subseteq \tilde{W}$ such that $\tilde{X}$ and $\tilde{W}$ meet orthogonally. Define $M \defeq \R \times \tilde{M}$, $W \defeq \set{0} \times \tilde{W}$, and $X \defeq \set{0} \times \tilde{X}$. Furthermore, let $Y$ be a local deformation of~$X$ such that $\partial Y \subseteq W$.
  
  Then $Y$ is a Cayley submanifold of~$M$ such that $Y$ and $W$ meet orthogonally if and only if $Y = \set{0} \times \tilde{Y}$ for some coassociative submanifold~$\tilde{Y}$ of~$\tilde{M}$ with $\partial \tilde{Y} \subseteq \tilde{W}$ such that $\tilde{Y}$ and $\tilde{W}$ meet orthogonally. So the moduli space of all local deformations of~$X$ as a Cayley submanifold of~$M$ with boundary on~$W$ and meeting~$W$ orthogonally can be identified with the moduli space of all local deformations of~$\tilde{X}$ as a coassociative submanifold of~$\tilde{M}$ with boundary on~$\tilde{W}$ and meeting~$\tilde{W}$ orthogonally.
\end{proposition}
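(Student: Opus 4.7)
The plan is to reduce the forward implication to showing that any Cayley deformation~$Y$ of~$X = \set{0} \times \tilde{X}$ with boundary on~$W = \set{0} \times \tilde{W}$ meeting~$W$ orthogonally must lie entirely inside $\set{0} \times \tilde{M}$; once this is established, the correspondence with coassociative deformations of~$\tilde{X}$ is immediate. The converse direction is also straightforward: if $Y = \set{0} \times \tilde{Y}$ for a coassociative submanifold~$\tilde{Y}$ of~$\tilde{M}$ with $\partial \tilde{Y} \subseteq \tilde{W}$ meeting~$\tilde{W}$ orthogonally, then $(\D t) \vert_Y = 0$ gives $\Phi \vert_Y = \tilde{\psi} \vert_{\tilde{Y}} = \vol_{\tilde{Y}} = \vol_Y$, so $Y$ is Cayley, and the product structure transfers the orthogonality of~$\tilde{Y}$ and~$\tilde{W}$ in~$\tilde{M}$ to that of~$Y$ and~$W$ in~$M$.

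For the forward direction, first I would observe that since $M = \R \times \tilde{M}$ is endowed with the product metric, the vector field~$\partial / \partial t$ is parallel, so the coordinate $t \colon M \to \R$ has vanishing Hessian $\nabla^2 t = 0$. Since~$\Phi$ is torsion-free it is a calibration, and hence the Cayley submanifold~$Y$ is minimal in~$M$. Combining the formula relating the intrinsic Laplacian of an ambient function restricted to~$Y$ with its ambient Hessian and the mean curvature vector of~$Y$,
\begin{equation*}
  \Delta_Y(t \vert_Y) = \sum_{i = 1}^4 (\nabla^2 t)(e_i, e_i) + g(H_Y, \nabla t) \, \text{,}
\end{equation*}
where $(e_1, \dotsc, e_4)$ is any local orthonormal frame of~$Y$ and $H_Y$ is its mean curvature vector, we obtain $\Delta_Y(t \vert_Y) = 0$. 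Thus $t \vert_Y$ is harmonic. Since $\partial Y \subseteq W \subseteq \set{t = 0}$, the maximum principle, applied to each connected component of~$Y$, forces $t \vert_Y \equiv 0$, so $Y \subseteq \set{0} \times \tilde{M}$.

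Now write $Y = \set{0} \times \tilde{Y}$ for a compact $4$\dash-dimensional submanifold~$\tilde{Y}$ of~$\tilde{M}$ with $\partial \tilde{Y} \subseteq \tilde{W}$. The identity $\Phi \vert_Y = \tilde{\psi} \vert_{\tilde{Y}}$, together with $\Phi \vert_Y = \vol_Y = \vol_{\tilde{Y}}$, yields $\tilde{\psi} \vert_{\tilde{Y}} = \vol_{\tilde{Y}}$, so~$\tilde{Y}$ is coassociative. Orthogonality of~$\tilde{Y}$ and~$\tilde{W}$ is also immediate: at any point of~$\partial \tilde{Y}$, the inward unit normal of~$\partial \tilde{Y}$ in~$\tilde{Y}$, viewed as a vector in~$T \tilde{M} \subseteq T M$, coincides with the inward unit normal of~$\partial Y$ in~$Y$; it is normal to~$W$ in~$M$ by hypothesis on~$Y$, and hence, in particular, normal to~$\tilde{W}$ in~$\tilde{M}$.

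The main (rather mild) obstacle is ensuring that the maximum principle applies to each connected component of~$Y$; this is automatic once one recalls that local deformations are parametrised by sections of the normal bundle with small $\CC^0$\dash-norm, so~$Y$ inherits the component structure of~$X$. I deliberately avoid routing the argument through the volume-minimising result \propositionref{prop:minimal-boundary-dim-6}, because that would only yield $\vol(Y) = \vol(X)$ and not the stronger containment $Y \subseteq \set{0} \times \tilde{M}$ that is needed here; the harmonic function argument delivers the containment directly.
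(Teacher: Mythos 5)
Your proof is correct, but it takes a genuinely different route from the paper's. The paper obtains the containment $Y \subseteq \{t\} \times \tilde{M}$ by a volume comparison: it invokes \propositionref{prop:minimal-boundary-dim-6} twice --- once to get $\vol(Y) = \vol(X)$ and once (applied to the projected submanifold $\{0\} \times \pi(Y)$, where $\pi \colon \R \times \tilde{M} \to \tilde{M}$ is the projection) to get $\vol(\pi(Y)) \geq \vol(X)$ --- and then notes that $\vol(Y) \geq \vol(\pi(Y))$ with equality if and only if $Y$ lies in a slice $\{t\} \times \tilde{M}$. You instead observe that $t|_Y$ is harmonic (since the ambient function $t$ has vanishing Hessian in the product metric and $Y$ is minimal as a calibrated submanifold), and the maximum principle with $t|_{\partial Y} = 0$ then forces $t|_Y \equiv 0$. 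Both are sound; your argument is more elementary, avoids the symplectic geometry of the scaffold entirely (the hypothesis that $(\tilde{W}, \ast_{\tilde{W}}(\tilde{\psi}|_{\tilde{W}}))$ is symplectic plays no role in your proof), and delivers the containment in one stroke. Your closing remark that the volume approach ``would only yield $\vol(Y) = \vol(X)$ and not the stronger containment'' somewhat misjudges the paper's method, since the paper does extract the containment from volume comparison by also bounding $\vol(\pi(Y))$ from below --- but this is a comment on your framing, not a gap in your argument. One small point worth being explicit about: the maximum principle step implicitly requires every connected component of $Y$ (equivalently of $\tilde{X}$) to have non-empty boundary, since otherwise $t|_Y$ could be a nonzero constant on a closed component; this same requirement is implicit in the paper's version as well.
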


Kovalev and Lotay \cite{KL09} showed that the moduli space of all local deformations of~$\tilde{X}$ as a coassociative submanifold of~$\tilde{M}$ with boundary on~$\tilde{W}$ and meeting~$\tilde{W}$ orthogonally is a smooth manifold of dimension not greater than $b_1(\partial \tilde{X})$.

\begin{remark}
  Using \cite{Bar97}, one can show that the dimension is not greater than the minimum of the first Betti numbers of the connected components of the boundary. Indeed, by \cite[Theorem~4.10 and Corollary~3.8]{KL09}, the moduli space is parametrised by
  \begin{equation*}
    (\mathcal{H}_+^2)_{\text{bc}} \defeq \set{\alpha \in \forms^2_+(\tilde{X}) \colon \text{$\D \alpha = 0$ in $\tilde{X}$ and $\D_{\partial \tilde{X}} (u \interior \alpha \vert_{\partial \tilde{X}}) = 0$ on $\partial \tilde{X}$}} \, \text{.}
  \end{equation*}
  Let $Z$ be a connected component of the boundary~$\partial \tilde{X}$, and suppose that $\alpha \in (\mathcal{H}^2_+)_{\text{bc}}$ satisfies $\alpha \vert_Z = 0$. Then also $u \interior \alpha \vert_Z = \mathord\ast_Z (\alpha \vert_Z) = 0$ since $\alpha$ is self-dual. Now \cite[Main Theorem]{Bar97} implies that $\alpha = 0$ since $\forms^2_+(\tilde{X}) \oplus \forms^4(\tilde{X}) \to \forms^3(\tilde{X})$, $(\alpha, \beta) \mapsto \D \alpha + \updelta \beta$ is a Dirac operator and $Z$ has Hausdorff dimension~$3$. Note further that $\alpha \in (\mathcal{H}^2_+)_{\text{bc}}$ implies that $\alpha \vert_Z$ is a harmonic form on the boundary since $\mathord\ast_Z (\alpha \vert_Z) = u \interior \alpha \vert_Z$. Hence the map $(\mathcal{H}_+^2)_{\text{bc}} \to \mathcal{H}^1(Z)$, $\alpha \mapsto \alpha \vert_Z$ is injective.
\end{remark}

\begin{proof}
  If $\tilde{Y}$ is a coassociative submanifold of~$\tilde{M}$ with $\partial \tilde{Y} \subseteq \tilde{W}$ such that $\tilde{Y}$ and $\tilde{W}$ meet orthogonally, then $Y \defeq \set{0} \times \tilde{Y}$ is clearly a Cayley submanifold of~$M$ with $\partial Y \subseteq W$ such that $Y$ and $W$ meet orthogonally.
  
  Conversely, suppose that $Y$ is a Cayley submanifold of~$M$ such that $Y$ and $W$ meet orthogonally. Let $\pi \colon M = \R \times \tilde{M} \to \tilde{M}$ be the projection. If $Z$ is a local deformation of~$X$, then the volume of $Z$ is greater than or equal to the volume of $\pi(Z)$, and equality holds if and only if $Z$ is a submanifold of $\set{t} \times \tilde{M}$ for some~$t \in \R$. So $Y = \set{t} \times \tilde{Y}$ for some $t \in \R$ and a $4$\dash-dimensional submanifold~$\tilde{Y}$ of~$\tilde{M}$ since the volume of~$Y$ is equal to the volume of~$X$ and the volume of~$\pi(Y)$ is greater than or equal to the volume of~$X$ by \propositionref{prop:minimal-boundary-dim-6} (note that $\set{0} \times \pi(Y)$ is a submanifold of~$M$ with $\partial (\set{0} \times \pi(Y)) = \set{0} \times \partial (\pi(Y)) = \set{0} \times \pi(\partial Y) = \partial Y$ since $\partial Y \subseteq \set{0} \times \tilde{W}$). We have $t = 0$ since $\partial Y \subseteq \set{0} \times \tilde{W}$. Furthermore, $\tilde{Y}$ is coassociative since~$Y$ is Cayley.
\end{proof}

\subsection{Cayley Deformations of Associative Submanifolds}
\label{subsec:associative-boundary}

Recall from the \hyperref[subsec:coassociative-boundary]{last section} that if $\tilde{M}$ is a $7$\dash-manifold with a $G_2$\dash-structure~$\tilde{\varphi}$, $\tilde{\psi}$~is the Hodge-dual of~$\tilde{\varphi}$, $M \defeq S^1 \times \tilde{M}$, and $t$ denotes the coordinate on the $S^1$\dash-factor, then $\Phi \defeq \D t \wedge \tilde{\varphi} + \tilde{\psi}$ defines a $\Spin(7)$\dash-structure on~$M$. Furthermore, the $\Spin(7)$\dash-structure~$\Phi$ is torsion-free if the $G_2$\dash-structure~$\tilde{\varphi}$ is torsion-free.

An orientable $3$\dash-dimensional submanifold~$\tilde{X}$ of~$\tilde{M}$ is called \emph{associative} if $\tilde{\varphi} \vert_{\tilde{X}} = \vol_{\tilde{X}}$ for some orientation of~$\tilde{X}$. So $\tilde{X}$ is an associative submanifold of~$\tilde{M}$ if and only if $X \defeq S^1 \times \tilde{X}$ is a Cayley submanifold of~$M$. Also recall from the \hyperref[subsec:coassociative-boundary]{last section} that a $4$\dash-dimensional submanifold~$\tilde{W}$ of~$\tilde{M}$ is called \emph{coassociative} if $\tilde{\varphi} \vert_{\tilde{W}} = 0$. Note that if $\tilde{X}$ is associative, $\tilde{W}$ is coassociative, and $\partial \tilde{X} \subseteq \tilde{W}$, then $\tilde{X}$ and $\tilde{W}$ meet orthogonally as the $3$\dash-form~$\tilde{\varphi}$ defines a cross product $T \tilde{M} \times T \tilde{M} \to T \tilde{M}$, $(v, w) \mapsto v \times w$ such that $g(u, v \times w) = \tilde{\varphi}(u, v, w)$, and $\tilde{\varphi} \vert_{\tilde{X}} = \vol_{\tilde{X}}$, $\tilde{\varphi} \vert_{\tilde{W}} = 0$ imply that if $v, w \in T \partial \tilde{X}$, then $v \times w$ is both tangent to~$\tilde{X}$ and orthogonal to~$\tilde{W}$.

\begin{proposition} \label{prop:cayley-associative}
  Let $\tilde{M}$ be a $7$\dash-manifold with a torsion-free $G_2$\dash-structure, let $\tilde{X}$ be a compact associative submanifold of~$\tilde{M}$ with boundary, and let $\tilde{W}$ be a coassociative submanifold of~$\tilde{M}$ with $\partial \tilde{X} \subseteq \tilde{W}$ (which implies that $\tilde{X}$ and $\tilde{W}$ meet orthogonally). Define $M \defeq S^1 \times \tilde{M}$, $X \defeq S^1 \times \tilde{X}$, and $W \defeq S^1 \times \tilde{W}$. Furthermore, let $Y$ be a local deformation of~$X$ such that $\partial Y \subseteq W$.
  
  Then $Y$ is a Cayley submanifold of~$M$ such that $Y$ and $W$ meet orthogonally if and only if $Y = S^1 \times \tilde{Y}$ for some associative submanifold~$\tilde{Y}$ of~$\tilde{M}$ with $\partial \tilde{Y} \subseteq \tilde{W}$. So the moduli space of all local deformations of~$X$ as a Cayley submanifold of~$M$ with boundary on~$W$ and meeting~$W$ orthogonally can be identified with the moduli space of all local deformations of~$\tilde{X}$ as an associative submanifold of~$\tilde{M}$ with boundary on~$\tilde{W}$.
\end{proposition}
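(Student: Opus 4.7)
The plan is to parallel the proof of Proposition \ref{prop:cayley-coassociative}, with the projection onto a single slice replaced by the coarea formula for the $S^1$-projection. The easy direction is a direct computation: if $Y = S^1 \times \tilde{Y}$ with $\tilde{Y}$ associative and $\partial \tilde{Y} \subseteq \tilde{W}$, then the product decomposition $\Phi = \D t \wedge \tilde{\varphi} + \tilde{\psi}$ yields $\Phi \vert_Y = \D t \wedge \vol_{\tilde{Y}} + 0 = \vol_Y$ (the $\tilde{\psi}$-term vanishes by dimension), so $Y$ is Cayley. Orthogonality of $Y$ and $W$ follows because the inward unit normal to $\partial Y$ in $Y$ agrees with the inward unit normal to $\partial \tilde{Y}$ in $\tilde{Y}$, which by the observation preceding the proposition is already orthogonal to $\tilde{W}$.

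For the hard direction, let $Y$ be a local Cayley deformation of $X$ with $\partial Y \subseteq W$ meeting $W$ orthogonally. I first show that $\partial Y$ is $S^1$-invariant. Since $\tilde{W}$ is coassociative, $\tilde{\varphi} \vert_{\tilde{W}} = 0$ and $\tilde{\psi} \vert_{\tilde{W}} = \vol_{\tilde{W}}$, so on the oriented $5$-dimensional scaffold $W = S^1 \times \tilde{W}$ we have $\Phi \vert_W = \vol_{\tilde{W}}$ and hence $n \defeq (\mathord\ast_W (\Phi \vert_W))^\sharp = \partial_t$. Lemma \ref{lemma:cayley-orthogonal-dim-5} then forces $\partial_t \in \sections(T \partial Y)$, so $\partial Y$ is a union of $S^1$-orbits: $\partial Y = S^1 \times \partial \tilde{Y}$ for some $2$-dimensional $\partial \tilde{Y} \subseteq \tilde{W}$.

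Next I would run a volume-comparison argument. For each $t \in S^1$, let $Y_t \defeq Y \cap (\set{t} \times \tilde{M})$ (transverse by $\CC^1$-proximity of $Y$ to $X$) and $\tilde{Y}_t \defeq \pi_{\tilde{M}}(Y_t) \subseteq \tilde{M}$, a smooth $3$-dimensional submanifold with $\partial \tilde{Y}_t = \partial \tilde{Y}$. The competitor $Z_t \defeq S^1 \times \tilde{Y}_t$ satisfies $\partial Z_t = \partial Y$ and lies in the same relative homology class as $Y$ in $H_4(M, W)$. Since $\Phi$ is closed and the class $[\Phi \vert_W]$ vanishes in $H^4$ of a tubular neighbourhood of $\partial X$ in $W$ (which deformation-retracts onto the $3$-dimensional $\partial X$), the Stokes argument of Proposition \ref{prop:cayley-coassociative} yields $\int_Y \Phi = \int_{Z_t} \Phi = 2 \pi \int_{\tilde{Y}_t} \tilde{\varphi}$ (the $\tilde{\psi}$-contribution on $Z_t$ vanishes because $\partial_t \in T Z_t$ is not tangent to $\tilde{M}$). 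Combining with the associative calibration inequality $\int_{\tilde{Y}_t} \tilde{\varphi} \le \vol(\tilde{Y}_t) = \vol(Y_t)$ and the Cayley identity $\int_Y \Phi = \vol(Y)$ yields $\vol(Y) \le 2 \pi \vol(Y_t)$ for every $t$.

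Finally, the coarea formula applied to the projection $Y \to S^1$ (whose gradient is $\partial_t^T$) gives $\vol(Y) \ge \int_{S^1} \vol(Y_t) \, \D t$, with equality if and only if $\partial_t \in T_p Y$ for every $p \in Y$. Chaining the estimates,
\begin{equation*}
  \vol(Y) \le 2 \pi \min_{t \in S^1} \vol(Y_t) \le \int_{S^1} \vol(Y_t) \, \D t \le \vol(Y) \, \text{,}
\end{equation*}
so all inequalities are equalities. Equality in the coarea bound makes $Y$ $S^1$-invariant, so $Y = S^1 \times \tilde{Y}$ for some $3$-dimensional $\tilde{Y} \subseteq \tilde{M}$; equality in the calibration inequality then forces $\tilde{Y}$ to be associative. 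The main obstacle is the relative-homology Stokes argument underlying the identity $\int_Y \Phi = \int_{Z_t} \Phi$, which must be set up carefully in a tubular neighbourhood of $\partial X$ in $W$ where $\Phi \vert_W$ admits an explicit primitive.
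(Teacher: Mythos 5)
Your proof is correct but takes a somewhat different route from the paper's. The paper derives $\vol(Y) = \vol(X)$ from Proposition~\ref{prop:minimal-boundary-dim-5} and $\vol(Y_t) \ge \vol(\tilde{X})$ from the volume-minimising property of compact associatives with boundary in a coassociative (Gayet's result, quoted in Section~\ref{subsec:volume-minimising}), then concludes by forcing equality in the coarea-type bound $\vol(Y) \ge \int_{S^1} \vol(Y_t) \, \D t$. You replace both citations by the explicit tube competitor $Z_t = S^1 \times \tilde{Y}_t$: once you extract the $S^1$-invariance of $\partial Y$ from Lemma~\ref{lemma:cayley-orthogonal-dim-5} (a step the paper leaves implicit inside its appeal to Proposition~\ref{prop:minimal-boundary-dim-5}), you get $\partial Z_t = \partial Y$, and the Stokes-plus-pointwise-calibration computation of $\int_{Z_t} \Phi$ gives $\vol(Y) \le L \, \vol(Y_t)$ with $L$ the circumference of~$S^1$; chaining with the coarea bound then forces equality throughout. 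This buys a more self-contained argument at the cost of a longer derivation. Two small points: the factor~$2\pi$ should be~$L$; and since $\partial Z_t = \partial Y$ exactly and both $Y$ and $Z_t$ are normal graphs $\CC^1$-close to~$X$, the identity $\int_Y \Phi = \int_{Z_t} \Phi$ follows directly because $Y - Z_t$ bounds the $5$-chain obtained by linearly interpolating the two normal sections (the part over $\partial X$ collapses), so the detour through relative homology in $H_4(M, W)$ and the exactness of $\Phi \vert_W$ on a tubular neighbourhood, while correct, is more machinery than this particular step requires.
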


Gayet and Witt \cite{GW11} proved that the boundary problem for associative submanifolds of a $G_2$\dash-manifold with boundary in a coassociative submanifold is an elliptic boundary problem. They also computed the index of this boundary problem.

\begin{proof}
  If $\tilde{Y}$ is an associative submanifold of~$\tilde{M}$ with $\partial \tilde{Y} \subseteq \tilde{W}$ (which implies that $\tilde{Y}$ and $\tilde{W}$ meet orthogonally), then $Y \defeq S^1 \times \tilde{Y}$ is clearly a Cayley submanifold of~$M$ with $\partial Y \subseteq W$ such that $Y$ and $W$ meet orthogonally.
  
  Conversely, suppose that $Y$ is a Cayley submanifold of~$M$ such that $Y$ and $W$ meet orthogonally. Write $Y_t \defeq Y \cap (\set{t} \times \tilde{M})$ for $t \in S^1$. Note that $Y_t$ is a submanifold of $\tilde{M}$ for all $t \in S^1$ as $Y$ is a local deformation of~$X$. Then
  \begin{equation*}
    \vol(Y) \ge \int_{S^1} \vol(Y_t) \, \D t
  \end{equation*}
  with equality if and only if $Y = S^1 \times Y_0$. Furthermore, $\vol(Y_t) \ge \vol(\tilde{X})$ since $Y_t$ is a local deformation of~$\tilde{X}$ and $\tilde{X}$ is volume-minimising in its homology class as it is associative. Here we have equality if and only if $Y_t$ is associative. So
  \begin{equation*}
    \vol(Y) \ge \int_{S^1} \vol(Y_t) \, \D t \ge \int_{S^1} \vol(\tilde{X}) \, \D t = \vol(X) \, \text{.}
  \end{equation*}
  But $\vol(Y) = \vol(X)$ by \propositionref{prop:minimal-boundary-dim-5}. Hence we get equality for both inequalities, which implies that $Y = S^1 \times Y_0$ with $Y_0$ associative. Clearly, $\partial Y_0 \subseteq \tilde{W}$ as $\partial Y \subseteq W$.
\end{proof}

\end{document}